\definecolor{labelkey}{rgb}{0,0,1}
\newcommand{\R}{\mathbb{R}}
\newcommand{\N}{\mathbb{N}}
\newcommand{\D}{\mathcal{D}}
\providecommand{\R}{\mathbb{R}}
\providecommand{\N}{\mathbb{N}}
\newcommand{\pv}{\operatorname{p.\!v.}}
\renewcommand{\leq}{\leqslant}
\renewcommand{\geq}{\geqslant}
\renewcommand{\div}{\operatorname{div}}
\newcommand{\dist}{\operatorname{dist}}
\DeclareMathOperator{\supp}{supp}
  \newtheorem{thm}{Theorem}[section]
  \newtheorem{cor}[thm]{Corollary}
  \newtheorem{Lemma}[thm]{Lemma}
  \theoremstyle{definition}
    \newtheorem{Definition}[thm]{Definition}
  \theoremstyle{remark}
  \newtheorem{remark}[thm]{Remark}
      \newtheorem{Claim}[thm]{Claim}
\patchcmd{\subsubsection}{\itshape}{\itshape\bfseries}{}{} 
\title[]{A few representation formulas for solutions of fractional Laplace equations}
\author[]{Sidy M. Djitte and Franck Sueur}
\address[S. M. Djitte]{Friedrich-Alexander-Universit\"at Erlangen-N\"urnberg, Department of Mathematics, Chair for Dynamics, Control, Machine Learning and Numerics (Alexander von Humboldt Professorship), Cauerstr. 11, 91058 Erlangen, Germany. 
}
\email{sidy.m.djitte@fau.de}
\address[F. Sueur]{IMB, Universit\'e de Bordeaux, France   $\&$ Institut  Universitaire de France}
\email{franck.sueur@math.u-bordeaux.fr}
\keywords{Pohozaev identity, Hadamard formula, Green function, Robin function, Reproducing kernel Hilbert space, point vortex system}
\begin{document}
\maketitle

\begin{abstract}
This paper is devoted to the Laplacian operator of fractional order $s\in (0,1)$ in several dimensions.
We first establish a representation formula for the partial derivatives of the solutions of the homogeneous Dirichlet problem. 
 Along the way, we obtain a Pohozaev-type identity for the fractional Green function and of the fractional Robin function. The latter extends to the fractional setting a formula obtained by Br\'ezis and Peletier, see  \cite{Bresiz}, in the classical case of the Laplacian.
As an application we consider the particle system extending the classical point vortex system to the case of a fractional Laplacian. We observe that, for a single particle in a bounded domain, the properties of the fractional Robin function are crucial for the study of the steady states.
 We also extend the classical Hadamard variational formula to the fractional Green function as well as to the shape derivative of weak solution to the  homogeneous Dirichlet problem. 
 Finally we turn to the inhomogeneous Dirichlet problem and extend a formula by J.L. Lions, see \cite{Lions}, regarding the kernel of the reproducing kernel Hilbert space of harmonic functions to the case of $s$-harmonic functions. We observe that, despite the order of the operator is not $2$, this formula looks like the  Hadamard variational formula, answering in a negative way to an open question raised in \cite{ELPL}.
\end{abstract}

\setcounter{tocdepth}{3}
\tableofcontents

\section{Introduction and main results}


\subsection{Introduction}
\label{sec-intro}

The fractional Laplace operators appear in different disciplines
of mathematics (PDEs as well as probability theory), in various
 applications  (issued from biology, ecology or finance) and have numerous definitions, based on 
Fourier analysis, harmonic extensions, semigroup theory or  quadratic form, among others, see \cite{FRO,Kwas} and the references therein for more.
One quite explicit formula for the  fractional Laplace operator of order $s$, with $s\in (0,1)$, in $N\in \N^*$ dimensions, reads 
$$
(-\Delta)^su (z) := c_{N,s}\,  \pv \int_{\R^N}\frac{u(z)-u(y)}{|z-y|^{N+2s}}dy,
$$
with 
$$c_{N,s} :=\pi^{N/2}s4^s\frac{\Gamma(\frac{N+2s}{2})}{\Gamma(1-s)},$$
where $\Gamma$ is the gamma function, and where $\pv$ refers to the Cauchy principal value. 
Above $u$ is a function from a subset of $\R^N$ with real values. 
The reason for the presence of the normalization constant $c_{N,s}$
 is to match with the Fourier definition  which sets the fractional Laplace operator $(-\Delta)^s$ as the Fourier multiplier of symbol 
$|\xi|^{2s}$. 
On the other hand, the operator  $(-\Delta)^s$ is naturally associated with the bilinear form: 
$$
\mathcal E_s(u,u):=\frac{c_{N,s}}{2}\iint_{\R^N\times\R^N}\frac{(u(x)-u(y))^2}{|x-y|^{N+2s}}dxdy.
$$

Throughout this manuscript, we shall denote by $H^s(\R^N)$ the set of all those $L^2(\R^N)$ functions $u$ for which $\mathcal E_s(u,u)<\infty$ and for a given bounded open set $\Omega\subset\R^N$, we let $\mathcal H^s_0(\Omega)$ to be elements of $ H^s(\R^N)$ for which $u\equiv 0$ in $\R^N\setminus\Omega$.

\subsection{Representation formulas for partial derivatives}
\label{sec-rep}

For a given source term $f: \Omega\to\R$, the Dirichlet problem for the fractional Laplacian reads:
\begin{equation}\label{Dir-01}
(-\Delta)^s u=f\;\;\text{in} \quad\Omega\quad\&\quad 
u=0\quad 
\textrm{in }\;\;\R^N\setminus\Omega. 
\end{equation}
Under reasonable assumption on $f:\Omega\to \R$, it is well known that the problem above admits a unique solution which can be represented by \cite[Proposition 1.2.2]{Abatangelo}
\begin{equation}\label{repres-sol-Dir}
u(x)=\int_{\Omega}G_s(x,y)f(y)dy\;\;\;\text{for}\;x\in\Omega,
\end{equation}
where $G_s(x,\cdot)$ is the Green function associated to the operator $(-\Delta)^s$, that is, the solution to \begin{equation} \label{DIR}
    (-\Delta)^sG_s(x,\cdot)=\delta_x \quad\text{in}\quad \mathcal D'(\Omega)\qquad\&\qquad G_s(x,\cdot)=0\quad\text{in}\quad \R^N\setminus\Omega.
\end{equation}
Here $\delta_x$ denote the Dirac delta distribution. Formula \eqref{repres-sol-Dir} plays an important role in the qualitative and quantitative analysis of the problem \eqref{Dir-01}. For example, if $f\in L^\infty(\Omega)$, using the estimate on the Green function (see e.g \eqref{eq:greenfunct-estimate}) one easily derives from \eqref{repres-sol-Dir} that $u\in L^\infty(\Omega)$ as well with $\|u\|_{L^\infty(\Omega)}\leq C\|f\|_{L^\infty(\Omega)}$.
Further applications of the formula \eqref{repres-sol-Dir} arise in regularity theory but also in the study of symmetry and sign properties of solutions (see \cite{FallTobias}). More general results on representation formulas for solutions to fractional Dirichlet problems can be found in \cite{ASS, Abatangelo} and the references therein.

Here we are interested in representation formulas for partial derivatives of solutions in terms of the source term $f$. 
Because of the singularity of the  Green function, one cannot simply apply a derivative to the formula \eqref{repres-sol-Dir} as for a regular integral with parameter.

In the classical case and when the data are sufficiently regular, a representation formula for partial derivatives of solutions can be derived by a simple application  of the Green's third formula which we recall in what follow. Given $u\in C^2(\Omega)\cap C^1(\overline\Omega)$, we have 
\begin{equation}\label{classical-repres-formula}
    u(x)=\int_{\Omega}G_1(x,y)(-\Delta)u(y)dy-\int_{\partial\Omega}\frac{\partial G_1(x,y)}{\partial{\nu_y}}u(y)dy,\;\;\;\text{for\,\, $x\in\Omega$},
\end{equation}
where $G_1(x,\cdot)$ is the Green function of the Laplacian with singularity at $x$.
\par\;

Indeed, if $u$ is sufficiently regular, say for instance, $u\in C^3(\Omega)\cap C^2(\overline\Omega)$ so that $\partial_i u\in C^2(\Omega)\cap C^1(\overline \Omega)$, then plugging  $\partial_ju$ into \eqref{classical-repres-formula} and integrating by parts the first term, we get 
\begin{equation*}
    \partial_iu(x)= - \int_{\Omega}\frac{\partial G_1(x,y)}{\partial y_i}(-\Delta)u(y)dy-\int_{\partial\Omega}\frac{\partial G_1(x,y)}{\partial{\nu_y}}\partial_i u(y)dy,\;\;\;\text{for\,\, $x\in\Omega$}.
\end{equation*}
Thus if $f\in C^1(\overline\Omega)$, $\Omega$ is smooth enough, and $u$ is the solution of 
$$
(-\Delta)u=f\;\;\text{in}\;\;\Omega\quad\&\quad u=0\;\;\text{on}\;\;\partial\Omega,
$$
then $\partial_iu$ can be represented by 
\begin{equation}\label{repres-deriv-classical-Dir}
    \partial_iu(x)=-\int_{\Omega}\frac{\partial G_1(x,y)}{\partial y_i}f(y)dy
   -\int_{\partial\Omega}\frac{\partial G_1(x,y)}{\partial{\nu_y}}\partial_i u(y)dy,\;\;\;\text{for\,\, $x\in\Omega$}.
\end{equation}
\par\;

Our first result establishes the counterpart of the identity \eqref{repres-deriv-classical-Dir} when the Laplacian is replaced by the fractional Laplacian $(-\Delta)^s$. In other words, we shall prove the following result.
\begin{thm}\label{main-result}
Let $N\in\N_*$ with $N\geq 2s$ and let $\Omega$ be a bounded open set of $\R^N$ of class $C^{1,1}$. Let $f: \R^N\times \R\to \R, (h,q)\mapsto f(h,q)$ be a given real-valued function such that  $f\in C^\alpha_{loc}(\R^N\times \R)$ for some $\alpha\in (0,1)$ if \;$2s>1$ and $f\in C^{1,\alpha}_{loc}(\R^N\times\R)$ if\; $2s\leq 1$. Let $u\in C^s(\R^N)\cap\mathcal H^s_0(\Omega)$ be a weak solution 
\begin{equation}\label{sect3-H}
(-\Delta)^su=f(x,u)\;\;\text{in $\Omega$}\quad\&\quad u=0\;\;\text{in}\;\;\R^N\setminus\Omega.
\end{equation}
Then for all $x\in \Omega$, there holds for any $i=1,2,\cdots N$,
\begin{equation}\label{repres-partial-deriv}
\frac{\partial u}{\partial x_i}(x)= -\int_\Omega \frac{\partial G_s(x,\cdot)}{\partial y_i} f(y,u(y))\,dy-\Gamma^2(1+s)\int_{\partial \Omega} \frac{u}{\delta^s_\Omega} \frac{G_s(x,\cdot)}{\delta^s_\Omega} \nu_i\,d\sigma ,\quad \text{if}\,\, 2s>1.
\end{equation}
\par\;

In the case $2s\leq 1$, we have
\begin{equation}\label{repres-partial-deriv-1}
\frac{\partial u}{\partial x_i}(x)=  \int_\Omega  G_s(x,\cdot)\Bigg[\frac{\partial f}{\partial h_i}(y,u(y))+\frac{\partial f}{\partial q}(y,u(y))\frac{\partial u}{\partial y_i}(y)\Bigg]dy-\Gamma^2(1+s)\int_{\partial \Omega} \frac{u}{\delta^s_\Omega} \frac{G_s(x,\cdot)}{\delta^s_\Omega} \nu_i\,d\sigma
\end{equation}
where  $\nu_i=\nu\cdot e_i$ is the $i$-coordinate of the outward unit normal and $\delta_\Omega:=\dist(\cdot,\partial\Omega)$ is the distance function to the boundary $\partial\Omega$.

In particular in the case $(-\Delta)^su=\text{const}$ in $\Omega$ \;\;\&\;\; $u=0$ in $\R^N\setminus\Omega$, we have 
\begin{equation} \label{dedu}
    \frac{\partial u}{\partial x_i}(x)= - \Gamma^2(1+s)\int_{\partial \Omega} \frac{u}{\delta^s_\Omega} \frac{G_s(x,\cdot)}{\delta^s_\Omega} \nu_i\,d\sigma\quad\text{for all $s\in (0,1)$}.
\end{equation}
\end{thm}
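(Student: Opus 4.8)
The plan is to derive the representation formula for $\partial_i u$ from the known representation formula \eqref{repres-sol-Dir} for $u$ by a careful differentiation under the integral sign, handling the singularity of $G_s(x,\cdot)$ near the diagonal $y=x$ and near the boundary $\partial\Omega$ separately. First I would recall the boundary behaviour of the Green function: from the $C^{1,1}$-regularity of $\Omega$ one has the two-sided estimate \eqref{eq:greenfunct-estimate} together with the fact that $G_s(x,\cdot)/\delta_\Omega^s$ extends continuously up to $\partial\Omega$, and likewise $u/\delta_\Omega^s \in C(\overline\Omega)$ for $u\in C^s(\R^N)\cap \mathcal H^s_0(\Omega)$ solving \eqref{sect3-H} under the stated Hölder hypotheses on $f$ (this is the fractional boundary regularity theory, e.g. of Ros-Oton--Serra). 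The constant $\Gamma^2(1+s)$ appearing in \eqref{repres-partial-deriv} is exactly the normalisation that relates the limit of $G_s(x,\cdot)/\delta_\Omega^s$ on the boundary to the outward normal derivative in the classical limit, and I would pin it down via the explicit one-dimensional / half-space computation of the Green function.

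The core of the argument is an integration-by-parts (a fractional Pohozaev / Rellich-type manipulation). Starting from $u(x)=\int_\Omega G_s(x,y)f(y,u(y))\,dy$, I would translate: $u(x+te_i)=\int_{\Omega}G_s(x+te_i,y)f(y,u(y))\,dy = \int_{\Omega - te_i} G_s(x,z) f(z+te_i, u(z+te_i))\,dz$, differentiate in $t$ at $t=0$, and account for the three contributions: (i) the $\partial_i$ falling on $G_s(x,\cdot)$, which after moving the derivative onto the $y$-variable produces $-\int_\Omega \partial_{y_i}G_s(x,y)f(y,u(y))\,dy$; (ii) the $\partial_i$ falling on $f$, giving $\int_\Omega G_s(x,y)[\partial_{h_i}f + \partial_q f\,\partial_{y_i}u]\,dy$; and (iii) the boundary term coming from the variation of the domain of integration $\Omega - te_i$, which is $\int_{\partial\Omega} G_s(x,y)f(y,u(y))\,\nu_i\,d\sigma$ — but this integrand is $O(\delta_\Omega^s\cdot\delta_\Omega^{\,?})$ and in fact one must instead transfer the derivative via the equation. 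The clean route is: use the weak formulation $\mathcal E_s(u,\varphi)=\int_\Omega f(y,u)\varphi\,dy$ with $\varphi = \partial_i$ of a mollified Green function, then invoke the fractional Pohozaev identity of Ros-Oton--Serra which converts $\int_\Omega (\partial_i u)(-\Delta)^s\varphi$-type quantities into the boundary integral $\Gamma^2(1+s)\int_{\partial\Omega}\frac{u}{\delta_\Omega^s}\frac{\varphi}{\delta_\Omega^s}\nu_i\,d\sigma$. Matching the two computations yields \eqref{repres-partial-deriv} when $2s>1$; when $2s\le 1$ the integrability of $\partial_{y_i}G_s(x,\cdot)$ fails near the diagonal, so one keeps the derivative on $f$ and on $u$ instead, producing the alternative form \eqref{repres-partial-deriv-1}.

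The case $(-\Delta)^s u = \text{const}$, i.e. \eqref{dedu}, is then immediate: when $f\equiv c$ is constant, in the regime $2s>1$ formula \eqref{repres-partial-deriv} has first term $-c\int_\Omega \partial_{y_i}G_s(x,y)\,dy$; since $\int_\Omega G_s(x,y)\,dy = u(x)/c$ is the torsion-type function, one checks by the divergence theorem that $\int_\Omega \partial_{y_i}G_s(x,\cdot)\,dy$ is a pure boundary term which vanishes because $G_s(x,\cdot)=0$ on $\partial\Omega$ (the boundary trace of $G_s$ itself, not of $G_s/\delta_\Omega^s$, is zero), leaving only the second term. In the regime $2s\le 1$, formula \eqref{repres-partial-deriv-1} has $\partial_{h_i}f = \partial_q f = 0$ since $f$ is constant, so the bulk integral drops out directly. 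Either way we land on \eqref{dedu}, valid for all $s\in(0,1)$.

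\textbf{Main obstacle.} The delicate point is rigorously justifying the differentiation under the integral and the appearance of the boundary term with the precise constant $\Gamma^2(1+s)$: one cannot naively differentiate \eqref{repres-sol-Dir} because $\nabla_y G_s(x,\cdot)$ is not integrable when $2s\le 1$, and even when $2s>1$ the exchange of limit and integral near $\partial\Omega$ is subtle. I expect the technical heart of the proof to be a careful regularisation (truncating $G_s$ away from the diagonal and away from $\partial\Omega$, or testing against smooth approximations), followed by invocation of the fractional Pohozaev identity to identify the limiting boundary contribution, with the Hölder hypotheses on $f$ (and the dichotomy $2s\gtrless 1$) being exactly what is needed to guarantee the requisite $C^{1}$- or $C^s$-regularity of $u$ up to the boundary.
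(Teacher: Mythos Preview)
Your ``clean route'' via the fractional Pohozaev identity is essentially what the paper does, and your identification of the main obstacle (double regularisation near the diagonal and near $\partial\Omega$, with the dichotomy $2s\gtrless 1$ governing integrability of $\partial_{y_i}G_s(x,\cdot)$) is exactly right. The paper's implementation is to plug $v=\eta_k u$ and $w=\eta_k\psi_{\mu,x}G_s(x,\cdot)$ directly into the identity \eqref{RS-DFW}, expand via the fractional product rule, and pass to the double limit $k\to\infty$, $\mu\to 0^+$; the bulk of the work lies in two lemmas computing these limits (one recovers $\partial_i u(x)$, the other recovers the volume integral involving $f$ or its derivatives), with the boundary term coming from a known lemma in \cite{DFW,DFW2023}.

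One correction to your opening detour: the translation step $u(x+te_i)=\int_{\Omega-te_i}G_s(x,z)f(z+te_i,u(z+te_i))\,dz$ is wrong, because the Green function of a bounded domain is \emph{not} translation-invariant---$G_s(x+te_i,y)\neq G_s(x,y-te_i)$ in general. So the three contributions (i)--(iii) you list do not arise in that form, and in particular the boundary term you write down, $\int_{\partial\Omega}G_s(x,y)f(y,u(y))\nu_i\,d\sigma$, is not where the $\Gamma^2(1+s)$ boundary integral comes from. You seem to sense this and switch to the Pohozaev route, which is correct; just be aware that the first paragraph of your argument cannot be salvaged as written.

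Your treatment of the constant case \eqref{dedu} is fine in spirit; the paper handles it by showing directly that the regularised volume term $\mathcal C_{k,\mu}(x)$ has limit zero for all $s\in(0,1)$ via an integration by parts and the fact that the cutoff Green function vanishes on $\partial\Omega$, which is morally the same as your divergence-theorem remark.
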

\begin{remark}A few remarks are in order:

\begin{itemize}
\item First, by regularity theory (see \cite{RS}) for each $x\in \Omega$, the function: $\partial\Omega\to \R, z\mapsto G_s(x,z)/\delta^s_\Omega(z)$ is well defined and  any $u$ satisfying \eqref{sect3-H} is in $C^1_{loc}(\Omega)$ and $u/\delta^s_\Omega\in C^\alpha(\overline{\Omega})$ for some $\alpha\in (0,1)$. Moreover,  $G_s(x,\cdot)/\delta^s_\Omega\in L^1(\partial\Omega)$ and also $\frac{\partial G_s(x,\cdot)}{\partial z_i}\in L^1(\Omega)$ provided that $s\in (1/2,1)$ (see appendix \ref{subsec-9.1}). Finally, since $\delta^{1-s}\nabla u\in L^\infty(\Omega)$ (see e.g \cite{RS}), it is also not difficult to check that $\delta^{s-1}G_s(x,\cdot)\in L^1(\Omega)$.  Therefore all the quantities appearing in the identities above are well defined.

\item When applied to the overdetermined boundary value problem
\begin{equation}\label{over-pde}
\left\{ \begin{array}{rcll} (-\Delta)^su &=& f(x,u) &\textrm{in }\;\;\Omega \\ u&=&0& 
\textrm{in }\;\;\R^N\setminus\Omega\\ u/\delta^s_\Omega&=&0&\text{on}\;\; \partial\Omega, \end{array}\right. 
\end{equation}
Theorem \ref{main-result} gives the following nontrivial result: Let $u\in L^\infty(\Omega)$ be a bounded solution of \eqref{over-pde}, then there holds $\nabla u\in L^\infty(\Omega)$ provided that $2s>1$.

Under certain condition on $f$, it is expected that zero is the only solution of \eqref{over-pde}. The latter is known (thanks to Pohozaev identity) when $f$ is either a constant or a pure power non-linearity see e.g \cite{RO-S, DFW2023}. We believe that  the regularity result stated above may play somehow a role into proving this conjecture.

\item When the data are sufficiently regular, it is possible to derive a formula like \eqref{repres-partial-deriv-1} by using the following Green's type identity for the fractional Laplacian (see \cite[Proposition 1.2.2]{Abatangelo}): for any $u\in C^{2s+\varepsilon}_{loc}(\Omega)$ such that $\delta^{1-s}_\Omega u\in C(\overline{\Omega})$, $\|u\|_{\mathcal L^1_s(\R^N)}:=\int_{\R^N}\frac{|u(y)|}{1+|y|^{N+2s}}dy<\infty$ and $u=0$ in $\R^N\setminus\overline{\Omega}$, there holds
\begin{equation}\label{Abatangelo}
u(x)=\int_{\Omega}G_s(x,y)(-\Delta)^su(y)dy+\int_{\partial\Omega}\frac{G_s(x,\theta)}{\delta^s_\Omega(\theta)}Eu(\theta)d\theta\quad\text{for $x\in\Omega$},
\end{equation}
where $$ Eu(\theta)=\lim_{\Omega\ni x\to\theta}\frac{\delta^{1-s}_\Omega(x)u(x)}{\int_{\partial\Omega}M_\Omega(x,\theta')d\theta'}\quad\text{and}\quad M_\Omega(x,\theta')=\lim_{\Omega\ni y\to \theta'}\frac{G_s(x,y)}{\delta^s_\Omega(y)}.$$
\par\;
Indeed, if $u$ solves \eqref{sect3-H} with $f(x,u)=f(x)\in C^2(\overline{\Omega})$, then $\partial_ju\in C^{2s+\varepsilon}_{loc}(\Omega)\cap \mathcal L^1_s(\R^N)$ solves 
    
\begin{equation}\label{partial-u-pde}
\left\{ \begin{array}{rcll} (-\Delta)^s\partial_ju &=& \partial_jf &\textrm{in }\;\;\Omega \\ u&=&0&
\textrm{in }\;\;\R^N\setminus\overline\Omega\\ \delta^{1-s}_\Omega(x)\partial_ju(x)&=&s(u/\delta^s_\Omega)(x)\nu_j(x)&\text{on}\;\; \partial\Omega. \end{array}\right. 
\end{equation}
The last identity in \eqref{partial-u-pde} follows from recent regularity result (see e.g \cite[Eq (1.5)]{FallSven}). Using \eqref{Abatangelo} with $\partial_ju$ and integrating by parts in the first integral, we get
\begin{align*}\label{Abatangelo}
\partial_ju(x)&=\int_{\Omega}G_s(x,y)(-\Delta)^s(\partial_ju)(y)dy+\int_{\partial\Omega}\frac{G_s(x,\theta)}{\delta^s_\Omega(\theta)}E(\partial_ju)(\theta)d\theta,\\
&=\int_{\Omega}G_s(x,y)\partial_jf(y)dy+s\int_{\partial\Omega}\frac{G_s(x,\theta)}{\delta^s_\Omega(\theta)}\frac{u(\theta)}{\delta^s_\Omega(\theta)}E\nu_j(\theta)d\theta,
\end{align*}
where we denote 
$$
E\nu_j(\theta)=\lim_{\Omega\ni x\to\theta}\frac{\nu_j(x)}{\int_{\partial\Omega}M_\Omega(x,\theta')d\theta'}.
$$
We arrive at the desire identity once we prove that $sE\nu_j(\theta)=-\Gamma^2(1+s)\nu_j(\theta)$ holds for all $\theta\in \partial\Omega$. Unfortunately, it is not clear to us how to get the latter for an arbitrary bounded open set $\Omega$.
\end{itemize}
\end{remark}
\par\;

To prove the identity \eqref{repres-partial-deriv}, we use instead the following identity (see e.g \cite[Theorem 1.3]{DFW2023} or \cite[Theorem 1.9]{RO-S}).
\begin{equation}\label{RS-DFW}
\int_\Omega \frac{\partial v}{\partial z_i}(-\Delta)^sw\,dz =
-\int_\Omega \frac{\partial w}{\partial z_i}(-\Delta)^sv\,dz-\Gamma^2(1+s)\int_{\partial\Omega}\frac{v}{\delta^{s}_\Omega}\frac{w}{\delta^{s}_\Omega}\,\nu_i\,d\sigma.
\end{equation}
Formally, we get \eqref{repres-partial-deriv} by using $G_s(x,\cdot)$ and $u$ as test functions into \eqref{RS-DFW}. Unfortunately, the function $G_s(x,\cdot)$ is too irregular to be admissible in \eqref{RS-DFW}. To overcome this difficulty, we approximate $G_s(x,\cdot)$ by a $C^\infty_csomehow $-function of the form $\eta_k \psi_{\mu,x}G_s(x,\cdot)$ where $\eta_k$  and $\psi_{\mu,x}$ are suitable cut-off functions which vanish near the boundary $\partial\Omega$ and near the singular point $x$ of the Green function respectively. Next we plug $\eta_k \psi_{\mu,x}G_s(x,\cdot)$ and $\eta_k u$ into \eqref{RS-DFW}. We expand by using the product rule for the fractional Laplacian, and then pass to the limit as $k\to\infty$ and $\mu\to 0^+$ respectively to arrive at the desire identity. The work for the passage to the limit w.r.t to $k$ was mainly done in \cite{DFW}. The novelty here concerns the passage to the limit w.r.t to $\mu$. We refer to Section \ref{sec4.1} below for more details.

\subsection{Representation formulas for  gradient of the fractional Robin function}
\label{sec-rep-grad}

The fractional Green function $G_s$ can be split into:
\begin{equation}\label{Eq-spliting-of-Green}
    G_s(x,\cdot)= F_s(x,\cdot)-H_\Omega(x,\cdot),
\end{equation}
where 
\begin{equation}\label{funda}
   F_s(x,\cdot) :=\frac{b_{N,s}}{|x-\cdot|^{N-2s}}, 
\end{equation}
 is the  fundamental solution of $(-\Delta)^s$ and 
$H_\Omega(x,\cdot)$ solves the equation
\begin{equation}\label{regular-part-of-Green}
\left\{ \begin{array}{rcll} (-\Delta)^s H_\Omega(x,\cdot)&=& 0  &\textrm{in }\Omega, \\ H_\Omega(x,z)&=&F_s(x,z)&
\textrm{in }\R^N\setminus\Omega. \end{array}\right. 
\end{equation}

In this section, we are interested in a representation formula for the gradient of the fractional Robin function. We recall that the fractional Robin function associated with the domain $\Omega$, denoted as $\mathcal R_s^\Omega:\Omega\to \R$ is given for $x$ in $\Omega$ by 
\begin{equation}\label{def-s-robin}
\mathcal R_s^\Omega(x):= H_\Omega(x,x),
\end{equation}
 where $H_\Omega(x,\cdot)$ is the regular part of the Green function $G_s(x,\cdot)$ as defined in \eqref{regular-part-of-Green}.  To ease the notation, from now on, we simply write $\mathcal R_s$ at the place of $\mathcal R_s^\Omega$. When $s=1$, i.e, in the classical case of the Laplacian we write $\mathcal R$ instead of $\mathcal R_1$.  The Robin function plays an important role in various fields of the mathematics such as geometric function theory, capacity theory, concentration problems (see e.g \cite{Bandle, Bresiz,Flucher} and the references therein). In particular, critical points of the Robin function (or the so-called harmonic centers of $\Omega$) appear when studying elliptic boundary value problems involving concentration of energy \cite{Flucher}. And also for elliptic problems involving critical Sobolev exponent, the number of solutions is linked to the the number of (non-degenerate) critical points of the Robin function (see e.g \cite{O.Rey}). Despite the great importance of Robin function, many questions regarding its properties are still unanswered even in the classical case. Below, we prove some basics properties (integral representation of the gradient and non-degeneracy in symmetric domains) of the fractional Robin function. First, let us recall the following result by Br\'ezis and Peletier regarding the variation of the classical Robin function with respect to the space variable $x$.
 \begin{thm}\cite{Bresiz}\label{thm-brezis}
     Let $\Omega$ be a bounded open set of $\R^N$ ($N\geq 2$) and let  $\mathcal R$ be the Robin function of $\Omega$ associated with the classical Laplacian. Then  for any $x$ in $\Omega$,
     \begin{align}\label{repres-gradient-class-Robin}
         \nabla\mathcal R(x)=\int_{\partial\Omega}(\partial_\nu G_1(x,\cdot))^2\nu d\sigma,
     \end{align}
     where $\nu$ is the outward unit normal to $\partial\Omega$.
 \end{thm}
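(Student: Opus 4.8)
The plan is to reduce $\nabla\mathcal R$ to a derivative along the diagonal of the regular part of the Green function, and then to produce the boundary expression by applying a Rellich-Pohozaev (Green-type) identity with a constant vector field to $G_1(x,\cdot)$ after excising its singularity. Throughout I would assume $\partial\Omega$ regular enough --- $C^{1,1}$ suffices, by the boundary regularity of $G_1$ --- so that $\partial_\nu G_1(x,\cdot)$ is defined on $\partial\Omega$ and the integrations by parts below are licit; a general bounded open set is then handled by exhausting $\Omega$ by smooth subdomains $\Omega_k\uparrow\Omega$ and passing to the limit, using the locally uniform convergence $G_1^{\Omega_k}\to G_1^{\Omega}$.

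First, write $G_1(x,\cdot)=F_1(x,\cdot)-H_\Omega(x,\cdot)$ as in the $s=1$ instance of \eqref{Eq-spliting-of-Green}, so that $\mathcal R(x)=H_\Omega(x,x)$ and $H_\Omega$ is smooth near the diagonal of $\Omega\times\Omega$. Since $G_1$, hence $H_\Omega$, is symmetric, the chain rule together with $\nabla_1H_\Omega(x,x)=\nabla_2H_\Omega(x,x)$ gives
\[
\nabla\mathcal R(x)=2\,\nabla_2H_\Omega(x,x),
\]
where $\nabla_1,\nabla_2$ denote the gradients in the first and second variables; so it is enough to identify $\nabla_2H_\Omega(x,x)$. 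Next fix $i\in\{1,\dots,N\}$ and put $u:=G_1(x,\cdot)$, which is harmonic in $\Omega\setminus\{x\}$ and vanishes on $\partial\Omega$. Testing $\Delta u=0$ against $\partial_iu$ and integrating by parts on $D_\varepsilon:=\Omega\setminus\overline{B_\varepsilon(x)}$ yields the Rellich identity $\tfrac12\int_{\partial D_\varepsilon}\nu_i|\nabla u|^2\,d\sigma=\int_{\partial D_\varepsilon}(\partial_\nu u)\,\partial_iu\,d\sigma$. On $\partial\Omega$ the condition $u=0$ forces $\nabla u=(\partial_\nu u)\,\nu$, so the two $\partial\Omega$-contributions combine into $-\tfrac12\int_{\partial\Omega}\nu_i(\partial_\nu G_1(x,\cdot))^2\,d\sigma$, and rearranging leaves
\[
\tfrac12\int_{\partial\Omega}\nu_i(\partial_\nu G_1(x,\cdot))^2\,d\sigma=\int_{\partial B_\varepsilon(x)}\Bigl[\tfrac12\,\nu_i|\nabla u|^2-(\partial_\nu u)\,\partial_iu\Bigr]\,d\sigma ,
\]
the right-hand side being independent of $\varepsilon$, with the normal on $\partial B_\varepsilon(x)$ pointing towards $x$.

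It remains to evaluate the right-hand side by letting $\varepsilon\to0^+$. On $\partial B_\varepsilon(x)$ one has $\nabla_zF_1(x,z)=-|S^{N-1}|^{-1}\varepsilon^{1-N}\omega$ with $\omega=(z-x)/\varepsilon$, uniformly for $N\geq2$ (including the logarithmic case $N=2$), whereas $\nabla_zH_\Omega(x,z)\to\nabla_2H_\Omega(x,x)$. Inserting $u=F_1(x,\cdot)-H_\Omega(x,\cdot)$ and expanding, the purely singular terms carry the factor $\int_{\partial B_\varepsilon(x)}\omega_i\,d\sigma=0$ and drop out, the purely regular terms are $O(\varepsilon^{N-1})\to0$, and only the mixed terms survive; evaluating them via $\int_{S^{N-1}}\theta_i\theta_j\,d\sigma=N^{-1}|S^{N-1}|\,\delta_{ij}$ and the normalization $b_{N,1}(N-2)=|S^{N-1}|^{-1}$ (respectively its $N=2$ analogue), a direct computation gives that the right-hand side equals $\bigl(\nabla_2H_\Omega(x,x)\bigr)_i$. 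Together with the previous step this yields $\int_{\partial\Omega}\nu_i(\partial_\nu G_1(x,\cdot))^2\,d\sigma=2\bigl(\nabla_2H_\Omega(x,x)\bigr)_i=\bigl(\nabla\mathcal R(x)\bigr)_i$, which is \eqref{repres-gradient-class-Robin}.

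I expect the bookkeeping in this last step to be the main obstacle: one must check that every divergent power of $\varepsilon$ cancels --- which is exactly where the oddness of $\omega\mapsto\omega_i$ over the sphere enters --- and that the surviving mixed terms recombine, with precisely the right constant, into $\nabla_2H_\Omega(x,x)$. A secondary point is the reduction from a general bounded open set to a smooth one, which requires controlling the relevant boundary integrals along $\Omega_k\uparrow\Omega$. As an alternative route, \eqref{repres-gradient-class-Robin} can also be obtained from the Hadamard variational formula for $G_1$ combined with the translation invariance of the Laplacian --- translating $\Omega$ in the direction $e_i$, so that $\mathcal R$ is evaluated at a moving point with scalar normal boundary velocity $\nu_i$ --- at the cost of a separate justification of the shape differentiability of the Green function.
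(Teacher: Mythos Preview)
The paper does not supply its own proof of this statement: Theorem~\ref{thm-brezis} is quoted from Br\'ezis--Peletier \cite{Bresiz} as motivation for the fractional generalization, Theorem~\ref{Thm.2}. Your direct argument via the Rellich identity on $\Omega\setminus\overline{B_\varepsilon(x)}$ is correct and is essentially the classical route to \eqref{repres-gradient-class-Robin}; the bookkeeping you flag (cancellation of the $\varepsilon^{1-N}$ term by oddness of $\omega_i$ on the sphere, survival of the single mixed term yielding $(\nabla_2H_\Omega(x,x))_i$) goes through exactly as you outline.

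For comparison, the paper's proof of the fractional analogue \eqref{repres-gradient-Robin func} proceeds quite differently. Instead of excising a ball and using a local Pohozaev identity, it inserts the cut-off approximations $\eta_k\psi_{\mu,x}G_s(x,\cdot)$ and $\eta_k\psi_{\gamma,y}G_s(y,\cdot)$ into the fractional integration-by-parts formula \eqref{RS-DFW}, expands via the product rule \eqref{pl}, and passes successively to the limits $k\to\infty$, $\mu\to0^+$, $\gamma\to0^+$ to obtain \eqref{partial-Green}; the Robin formula then follows by stripping off the translation-invariant part $F_s$ and letting $y\to x$. Your excision argument is more elementary and exploits that harmonicity is local, so the computation on $\partial B_\varepsilon(x)$ decouples from the rest of the domain; the paper's cut-off machinery is what is needed to handle the nonlocality of $(-\Delta)^s$ and the weaker boundary regularity of $G_s$. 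The alternative you mention at the end --- deriving \eqref{repres-gradient-class-Robin} from the Hadamard variational formula with a translation field --- is in the same spirit as the paper's \eqref{var-Robin-func} specialized to $Y=e_i$.
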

Our first result generalizes Theorem \ref{thm-brezis} to the fractional setting. It reads as follows. 

\begin{thm}[Pohozaev-type identities for the fractional Green function]\label{Thm.2}
Let $N\in \N_*$ with $N\geq 2s$ and let $\Omega$ be a bounded open set of $\R^N$ of class $C^{1,1}$. Let $x,y\in \Omega$ with $x\neq y$. Then for all $s\in (0,1)$ there holds:
\begin{equation}\label{partial-Green}
 \frac{\partial G_s}{\partial x_i}(y,x)+\frac{\partial G_s}{\partial y_i}(x,y)=  -\Gamma^2(1+s)\int_{\partial \Omega} 
 \frac{G_s(x,\cdot)}{\delta^s_\Omega} \frac{G_s(y,\cdot)}{ \delta^s_\Omega}
 \nu_id\sigma.    
\end{equation}

Moreover,
\begin{equation}\label{repres-gradient-Robin func}
    \nabla \mathcal R_s(x)=\Gamma^2(1+s)\int_{\partial \Omega} \Bigg(\frac{G_s(x,\cdot)}{ \delta^s_\Omega}\Bigg)^2\nu\,d\sigma .
\end{equation}
where $\nu$ is the outward unit normal to $\partial\Omega$.
\end{thm}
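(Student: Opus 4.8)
The plan is to deduce the symmetrised identity \eqref{partial-Green} from Theorem \ref{main-result} applied to mollified Dirac data, and then to obtain \eqref{repres-gradient-Robin func} by letting the two poles of the Green function coalesce.

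\textit{Step 1: proof of \eqref{partial-Green}.} Fix $x\neq y$ in $\Omega$ and $\varepsilon_0>0$ with $\overline{B_{2\varepsilon_0}(x)}\subset\Omega$ and $\dist(x,y)>3\varepsilon_0$. For $\varepsilon\in(0,\varepsilon_0)$ let $f_\varepsilon:=\rho_\varepsilon(\cdot-x)\in C^\infty_c(B_\varepsilon(x))$ be an approximate identity ($f_\varepsilon\geq 0$, $\int_{\R^N}f_\varepsilon=1$), and put $u_\varepsilon(p):=\int_\Omega G_s(p,z)f_\varepsilon(z)\,dz$, which by \eqref{repres-sol-Dir} is the weak solution of $(-\Delta)^su_\varepsilon=f_\varepsilon$ in $\Omega$ vanishing outside. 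Since $f_\varepsilon\in L^\infty(\Omega)$, the regularity theory of \cite{RS} gives $u_\varepsilon\in C^s(\R^N)\cap\mathcal H^s_0(\Omega)$ with $u_\varepsilon/\delta^s_\Omega\in C^\alpha(\overline\Omega)$, so Theorem \ref{main-result} applies to $u_\varepsilon$ with the space-only datum $f(h,q)=f_\varepsilon(h)$. Writing the free variable of \eqref{repres-partial-deriv}--\eqref{repres-partial-deriv-1} as $p$ and evaluating at $p=y$ — using \eqref{repres-partial-deriv} if $2s>1$, and \eqref{repres-partial-deriv-1} followed by an integration by parts over $\Omega$ (licit since $\supp f_\varepsilon$ stays away from the pole) if $2s\leq 1$ — one gets in both cases
\begin{equation*}
\frac{\partial u_\varepsilon}{\partial p_i}(y)= -\int_\Omega \frac{\partial G_s(y,\cdot)}{\partial z_i}(z)\,f_\varepsilon(z)\,dz-\Gamma^2(1+s)\int_{\partial\Omega}\frac{u_\varepsilon}{\delta^s_\Omega}\,\frac{G_s(y,\cdot)}{\delta^s_\Omega}\,\nu_i\,d\sigma .
\end{equation*}
Now let $\varepsilon\to 0^+$. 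Since $G_s$ is smooth off the diagonal, $u_\varepsilon$ and $\partial_{p_i}u_\varepsilon$ converge uniformly near $y$ to $G_s(x,\cdot)$ and $\frac{\partial G_s}{\partial y_i}(x,\cdot)$, so the left-hand side tends to $\frac{\partial G_s}{\partial y_i}(x,y)$; since $z\mapsto \frac{\partial G_s(y,\cdot)}{\partial z_i}(z)$ is continuous near $x$, the first term on the right tends to $-\frac{\partial G_s}{\partial x_i}(y,x)$. For the boundary term, the trace of $u_\varepsilon/\delta^s_\Omega$ on $\partial\Omega$ is $\theta\mapsto\int_\Omega M_\Omega(z,\theta)f_\varepsilon(z)\,dz$, with $M_\Omega(z,\theta):=\lim_{\Omega\ni w\to\theta}G_s(z,w)/\delta^s_\Omega(w)$ jointly continuous on $\Omega\times\partial\Omega$ by boundary regularity of $G_s$; hence it converges uniformly on $\partial\Omega$ to $M_\Omega(x,\cdot)=(G_s(x,\cdot)/\delta^s_\Omega)|_{\partial\Omega}$, and since $G_s(y,\cdot)/\delta^s_\Omega\in C(\partial\Omega)$ the integral converges. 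Rearranging yields \eqref{partial-Green}.

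\textit{Step 2: proof of \eqref{repres-gradient-Robin func}.} Split $G_s=F_s-H_\Omega$ as in \eqref{Eq-spliting-of-Green}; then $G_s$, hence $H_\Omega$, is symmetric, and $H_\Omega(x,\cdot)$ is $s$-harmonic in $\Omega$ by \eqref{regular-part-of-Green}, so $(x,y)\mapsto H_\Omega(x,y)$ is smooth in $\Omega$ separately in each variable and, by continuous dependence on the exterior datum, of class $C^1$ on $\Omega\times\Omega$ up to the diagonal. Denoting by $\partial_1,\partial_2$ the gradients in the first and second argument, symmetry gives $\frac{\partial G_s}{\partial x_i}(y,x)=(\partial_1 G_s)_i(x,y)$, so \eqref{partial-Green} becomes
\begin{equation*}
(\partial_1 G_s)_i(x,y)+(\partial_2 G_s)_i(x,y)=-\Gamma^2(1+s)\int_{\partial\Omega}\frac{G_s(x,\cdot)}{\delta^s_\Omega}\frac{G_s(y,\cdot)}{\delta^s_\Omega}\nu_i\,d\sigma .
\end{equation*}
Inserting $G_s=F_s-H_\Omega$ and using $F_s(x,y)=b_{N,s}|x-y|^{2s-N}$, the two contributions $\partial_{x_i}F_s$ and $\partial_{y_i}F_s$ cancel, leaving $-(\partial_1 H_\Omega)_i(x,y)-(\partial_2 H_\Omega)_i(x,y)$ on the left. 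Letting $y\to x$: the left side tends to $-\partial_{x_i}\big(H_\Omega(x,x)\big)=-\partial_{x_i}\mathcal R_s(x)$ by the chain rule; on the right, $G_s(y,\cdot)/\delta^s_\Omega\to G_s(x,\cdot)/\delta^s_\Omega$ uniformly on $\partial\Omega$ (joint continuity of $M_\Omega$) while $G_s(x,\cdot)/\delta^s_\Omega\in L^1(\partial\Omega)$, so the integral tends to $\int_{\partial\Omega}(G_s(x,\cdot)/\delta^s_\Omega)^2\nu_i\,d\sigma$. This gives \eqref{repres-gradient-Robin func}.

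\textit{Main difficulty.} The delicate point is Step 1: one must check that the mollified solutions $u_\varepsilon$ are regular enough for Theorem \ref{main-result} to apply, and, more seriously, control the boundary term — $u_\varepsilon/\delta^s_\Omega$ must converge on $\partial\Omega$ in a topology strong enough to pair against the merely $L^1$ factor $G_s(y,\cdot)/\delta^s_\Omega$, which rests on the boundary behaviour of $G_s$ (continuity of $M_\Omega$ up to $\partial\Omega$). One may instead bypass Theorem \ref{main-result} and prove \eqref{partial-Green} directly by inserting into the bilinear identity \eqref{RS-DFW} suitably truncated versions of both $G_s(x,\cdot)$ and $G_s(y,\cdot)$ — cut off near $\partial\Omega$ and near their respective, disjoint, poles — expanding with the product rule for $(-\Delta)^s$ and passing to the limit exactly as in the proof of Theorem \ref{main-result}; since $x\neq y$ the two interior truncations do not interfere.
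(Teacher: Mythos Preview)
Your Step 2 is essentially identical to the paper's argument for \eqref{repres-gradient-Robin func}: decompose $G_s=F_s-H_\Omega$, use that the contributions of $F_s$ cancel, and let $y\to x$ in the identity for $H_\Omega$.

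Your Step 1, however, is genuinely different from the paper's route. The paper does \emph{not} invoke Theorem \ref{main-result} as a black box; instead it inserts into \eqref{RS-DFW} the \emph{doubly} truncated functions $\eta_k\psi_{\mu,x}G_s(x,\cdot)$ and $\eta_k\psi_{\gamma,y}G_s(y,\cdot)$, expands via the product rule \eqref{pl}, and passes to the triple limit $k\to\infty$, $\mu\to0^+$, $\gamma\to0^+$. This is precisely the alternative you sketch in your ``Main difficulty'' paragraph. Your mollification approach is more economical --- it recycles Theorem \ref{main-result} rather than redoing the cut-off analysis --- but it trades the interior truncation work for a boundary convergence issue: you must pass to the limit in $\int_{\partial\Omega}(u_\varepsilon/\delta^s_\Omega)(G_s(y,\cdot)/\delta^s_\Omega)\nu_i\,d\sigma$, and since the second factor is only known to be in $L^1(\partial\Omega)$, you need $u_\varepsilon/\delta^s_\Omega\to G_s(x,\cdot)/\delta^s_\Omega$ in $L^\infty(\partial\Omega)$. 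This is true (from \eqref{eq:greenfunct-estimate} one has the uniform bound $M_\Omega(z,\theta)\le C\delta^s_\Omega(z)\dist(z,\partial\Omega)^{-N}$ for $z$ in a fixed compact, and joint continuity of the Martin kernel on $\Omega\times\partial\Omega$ then gives uniform convergence of $\int M_\Omega(z,\cdot)f_\varepsilon(z)\,dz$), but it does require citing properties of $M_\Omega$ that the paper itself only uses tangentially. The paper's direct approach avoids this by never leaving the interior approximation framework; the price is the extra Lemma \ref{lem-lim-G-x-y} handling the interaction of the two interior cut-offs at $x$ and $y$.
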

\begin{remark}
    Formula \eqref{repres-gradient-Robin func} is consistent with \eqref{repres-gradient-class-Robin} since the latter can be obtained by formally sending $s$ to  $1^-$ in \eqref{repres-gradient-Robin func}.
\end{remark}

The proof of Theorem \ref{Thm.2} is similar to the proof of Theorem \ref{main-result}. To get the former, we use simultaneously the functions $\R^N\to\R, \;z\mapsto \psi_{\gamma,y}(z)G_s(y,z)\eta_k(z)$ and $\R^N\to\R,\;z\mapsto\psi_{\mu,x}(z)G_s(x,z)\eta_k(z)$ with $x\neq y$ as test functions into \eqref{RS-DFW} and then argue as in the proof of Theorem \ref{main-result}. We refer to Section \ref{sec4} for more details.
\par\;

As a corollary we obtain the following result in the case where the domain enjoys some symmetries. It is inspired by \cite{G} and proved,  thanks to formula \eqref{repres-gradient-Robin func},  in Section \ref{sec-robin}. 

\begin{cor}\label{non-deg-frac-Robin}
    Let $\Omega$ be a bounded open set of class $C^{1,1}$ of $\R^N$ with $N\geq 2$. 
  Assume that there is $j\in \{1,2,\cdots, N\}$ such that $\Omega$ is symmetric with respect to the hyperplane $H_j:=\big\{x\in\R^N:x_j=0\big\}$ and that $0$ is in $\Omega$. 
 Then $\partial_j  \mathcal R_s(0)=0$.
 If moreover, there is $i\in \{1,2,\cdots, N\}$, with $i \neq j$, such that $\Omega$ is also symmetric with respect to the hyperplane $H_i$, then $\partial_{ij}  \mathcal R_s(0)=0$.
\end{cor}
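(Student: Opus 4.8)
The plan is to deduce both identities directly from the representation formula \eqref{repres-gradient-Robin func}, exploiting the reflection symmetries that the hypotheses impose on $G_s$. For $k\in\{1,\dots,N\}$ let $\rho_k\colon\R^N\to\R^N$ denote the reflection across $H_k$, i.e.\ $\rho_k$ negates the $k$-th coordinate and leaves the others fixed; note $\rho_k(0)=0$. Since $\rho_k$ is orthogonal and $(-\Delta)^s$ is the Fourier multiplier with radial symbol $|\xi|^{2s}$, the operator $(-\Delta)^s$ commutes with the change of variables $\rho_k$. Assuming $\rho_j\Omega=\Omega$, uniqueness for \eqref{DIR} gives $G_s(\rho_j x,\rho_j y)=G_s(x,y)$ for $x\neq y$; since $F_s$ is invariant under every isometry, uniqueness for \eqref{regular-part-of-Green} likewise gives $H_\Omega(\rho_j x,\rho_j y)=H_\Omega(x,y)$, hence $\mathcal R_s\circ\rho_j=\mathcal R_s$. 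Moreover $\delta_\Omega\circ\rho_j=\delta_\Omega$, the surface measure $d\sigma$ on $\partial\Omega$ is $\rho_j$-invariant, $\nu(\rho_j z)=\rho_j\nu(z)$ (so $\nu_j\circ\rho_j=-\nu_j$ while $\nu_k\circ\rho_j=\nu_k$ for $k\neq j$), and the boundary kernel $M_\Omega(x,z):=\lim_{\Omega\ni y\to z}G_s(x,y)/\delta_\Omega^s(y)$ satisfies $M_\Omega(\rho_j x,\rho_j z)=M_\Omega(x,z)$; in particular $M_\Omega(0,\rho_j z)=M_\Omega(0,z)$.

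For the first assertion I would evaluate \eqref{repres-gradient-Robin func} at $x=0$, so that its $j$-th component reads $\partial_j\mathcal R_s(0)=\Gamma^2(1+s)\int_{\partial\Omega}M_\Omega(0,z)^2\,\nu_j(z)\,d\sigma(z)$. Changing variables $z\mapsto\rho_j z$ leaves $M_\Omega(0,\cdot)^2$ and $d\sigma$ unchanged but flips $\nu_j$, so the integral equals its own negative and therefore vanishes: $\partial_j\mathcal R_s(0)=0$.

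For the second assertion I add the hypothesis $\rho_i\Omega=\Omega$ with $i\neq j$ and repeat the argument one derivative higher. Set $g_j(x):=\partial_j\mathcal R_s(x)=\Gamma^2(1+s)\int_{\partial\Omega}M_\Omega(x,z)^2\,\nu_j(z)\,d\sigma(z)$. Using $M_\Omega(\rho_i x,z)=M_\Omega(x,\rho_i z)$, the change of variables $z\mapsto\rho_i z$, and $\nu_j\circ\rho_i=\nu_j$ (valid precisely because $i\neq j$), one gets $g_j(\rho_i x)=g_j(x)$: the function $g_j$ is even in the variable $x_i$. Differentiating this identity in $x_i$ at $x=0$ yields $\partial_i g_j(0)=-\partial_i g_j(0)$, i.e.\ $\partial_{ij}\mathcal R_s(0)=0$. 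Equivalently, and more transparently, from $\mathcal R_s\circ\rho_j=\mathcal R_s=\mathcal R_s\circ\rho_i$ the function $\mathcal R_s$ is even in $x_j$ and in $x_i$, so $\partial_j\mathcal R_s$ is odd in $x_j$ (whence it vanishes on $H_j\ni 0$) and even in $x_i$, so $\partial_{ij}\mathcal R_s$ is odd in $x_i$ and vanishes at $0$.

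The step I expect to be the main obstacle is the regularity that makes $g_j$ (equivalently $\mathcal R_s$ near $0$) differentiable, so that one may differentiate under the integral sign in \eqref{repres-gradient-Robin func}. For fixed $z\in\partial\Omega$ the map $x\mapsto M_\Omega(x,z)$ is $s$-harmonic in $\Omega$ (it is, up to normalization, the $s$-Poisson kernel, being the locally uniform limit as $\Omega\ni y\to z$ of the $s$-harmonic functions $x\mapsto G_s(x,y)/\delta_\Omega^s(y)$), hence $C^\infty$ near $0$; and the standard Green function estimate (see \eqref{eq:greenfunct-estimate}) bounds $M_\Omega(x,z)$ uniformly for $x$ in a fixed neighbourhood of $0$ and $z\in\partial\Omega$, so interior gradient estimates for $s$-harmonic functions bound $|\nabla_x M_\Omega(x,z)|$ uniformly in the same range. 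Since $\partial\Omega$ has finite measure, dominated convergence then legitimises differentiating \eqref{repres-gradient-Robin func} in $x$ near $0$, after which the symmetry and change-of-variables manipulations above are routine computations.
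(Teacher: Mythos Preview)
Your proof is correct and follows essentially the same route as the paper: both arguments rely on the representation formula \eqref{repres-gradient-Robin func} together with the reflection symmetry $G_s(\rho_j x,\rho_j y)=G_s(x,y)$ and the sign flip $\nu_j\circ\rho_j=-\nu_j$ to make the boundary integral odd under $z\mapsto\rho_j z$. The only minor difference is in the second assertion: the paper observes that the first computation in fact gives $\partial_j\mathcal R_s(\overline y)=0$ for every $\overline y\in\Omega\cap H_j$ (not just at $0$) and then differentiates this identity tangentially in the direction $e_i$, whereas you argue via the parity $g_j\circ\rho_i=g_j$; both are equivalent, and your treatment of the regularity needed to differentiate under the integral is more explicit than the paper's.
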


In the case of the Laplace operator, it is known that the classical  Robin function of a convex domain in two dimensions is convex and  admits only one non-degenerate critical point, see \cite{CF} by
Caffarelli and Friedman and \cite{G} for an alternative complex variable proof. Both  proofs use that, for simply connected domain, the classical  Robin function satisfies 
 the Liouville equation, as a consequence of the Riemann conformal mapping theorem. 
Since such an equation is not available for the fractional  Robin function, the question of whether or not a similar convexity property holds as well in the fractional setting is an interesting open problem.

In the particular case $N=1$, $\Omega=(-1,1)$ and $s=1/2$, a direct computations shows that $0$ is the only non-degenerate critical point of $\mathcal R_{1/2}$ since in this case, the Robin function is simply given by $R_{1/2}(x)=\frac{1}{\pi}\log 2(1-x^2)$ (see e.g \cite{Bucur}.\par\;

More recently, it has been proved by  Bartsch, Micheletti and Pistoia in \cite{GMP} that for a bounded domain, possibly multiply connected, in any dimensions, critical points have generically the Morse property. 
The proof makes use of the shape derivative of the classical Green function and of a Baire type argument, which can therefore be adapted to the case of the fractional Robin function.

Let us also mention that such results regarding shape derivatives are very useful in order to prove the eigenvalues are generically unique, see \cite{Albert,BMP,CKL,OZ} in the classical case and \cite{FGMP} in the fractional case.

\subsection{Application to the $s$-point vortices}
\par\;
\par\;

As an application of the previous section, we consider the particle system, so-called $s$-point vortices system, extending to the case of a Laplacian operator of fractional order $s\in (0,1)$ the  point vortex system, a classical model  which goes back to Helmholtz \cite{Helmholtz}, Kirchhoff \cite{Kirchhoff}, Poincar\'e \cite{Poincare}, Routh \cite{Routh}, Kelvin \cite{Kelvin} and Lin \cite{Lin1,Lin2}. 
In these works it was thought  as an idealized fluid model where the vorticity of an ideal incompressible two-dimensional fluid is discrete.  Although it does not constitute a solution of the incompressible Euler equations, even in the sense of distributions,  point vortices can be viewed as Dirac masses obtained as  limits of concentrated smooth vortices which evolve according to the Euler equations. 
In  the case of several vortices, including in  the case where there is also a part of the vorticity which is absolutely continuous with respect to Lebesgue measure, a justification  was given by Marchioro and Pulvirenti, see \cite{MP}. 
Let us also mention that Gallay has   proven in \cite{Gallay} that the point vortex system can also be obtained as vanishing viscosity limits of concentrated smooth vortices evolving according to the incompressible Navier-Stokes equations.
Finally, it has been recently proved in a series of works, see \cite{GS} and the reference therein, that 
the classical point vortex motion  can also be viewed as the limit of the dynamics of rigid particles in motion in an irrotational flow, when these particles 
shrink into  point massless particles, while the circulation is held fixed.
\medskip

 For  $N$ point vortices  occupying the positions 
 $x_{i}  \in  \mathbb \R^2$ with respective strength   $\gamma_i \in \R^*$, where $1 \leq i \leq N$,
 the dynamics reads:
\begin{align}
\label{dvp1}
 x'_{i} &=  \sum_{j\neq i } \gamma_j  \frac{ (x_{i} -  x_{j} )^\perp }{2\pi | x_{i} -  x_{j}   |^2 }   .
 \end{align}
For distinct initial positions there is a unique corresponding local-in-time smooth solution, as a direct consequence of the  Cauchy-Lipschitz theorem. Moreover this system can be reformulate as a  Hamiltonian system under the form
\begin{align}
\label{vp1}
\gamma_i x'_{i} &= 
 \nabla^\perp_{x_{i}} H(X),
\end{align}
where  $X = (x_1,\ldots, x_N)$ and 
$$H(X): =  \frac{1}{2\pi} \sum_{(i,j) / \, i\neq j} \, \gamma_i \gamma_j  \ln  | x_{i}  - x_{j}   |     .$$
As long as there is no collision between any pair of point vortices, it follows from the symmetries of the problem and from the Noether theorem that the Hamiltonian  $H$ itself and  the following quantities:
$$M:= \sum_{i} \gamma_i x_i  , \quad 
 I := \sum_{i} \gamma_i  | x_{i}    |^2, \quad   \text{ and } \quad
  T := \sum_{(i,j) / \, i\neq j } \gamma_i \gamma_j | x_{i} -  x_{j}   |^2 ,$$
 are conserved in time. 
 In particular, these invariance can be used to establish that, in the case where all the strengths 
 $\gamma_i $ have the same sign, then there is no collision at all, and the local smooth solution is actually global, see for instance  \cite{MP}. With these notations, the link with the incompressible Euler equations is made by setting  the fluid vorticity $\omega$ as 
$$\omega := \sum_{i=1}^N  \gamma_i\,\delta_{x_i}.$$

\medskip
In~\cite{Geldhauser_Romito_2020}, the authors suggest the following variant of the point-vortex model, for  $s$ in $(0,1)$, 
\begin{equation}\label{eq:evo alpha}
      x'_i:=\sum_{\substack{j=1\\j\neq i}}^2  \gamma_j\, 
       b_{2,s}
        \frac{ (x_{i} -  x_{j} )^\perp }{| x_{i} -  x_{j}   |^{3-2s} }  ,
\end{equation}
which we refer to as a system of $s$-point vortices.

Again, for distinct initial positions, this equation admits a unique solution by the Cauchy-Lipschitz theorem provided that the right-hand side of~\eqref{eq:evo alpha} remains bounded and Lipschitz, that is 
 as long as there is no collision. 
The system ~\eqref{eq:evo alpha}
also has a Hamiltonian structure. Indeed, in this case, we define the Hamiltonian of the system for $X = (x_1,\ldots, x_N)$  by
\begin{equation}\label{def:Hamiltonien}
    H(X):=\frac{1}{2}\sum_{(i,j) / \,i\neq j} \gamma_i\, \gamma_j\,
    b_{2,s} | x_i - x_j|^{2s-2} ,
\end{equation}
 recalling that $b_{2,s} | x_i - x_j|^{2s-2}$ is the fundamental solution of $(-\Delta)^s$ in $\R^2$.
Then it is a direct computation from~\eqref{eq:evo alpha} to check that
\eqref{vp1} again holds true. 
Moreover, with this Hamiltonian structure of the equation, it is possible to make use again of the Noether theorem (direct computations using the point-vortex equation~\eqref{eq:evo alpha} are also possible, see for instance~\cite{DGC,GC,GC2}) to determine the other quantities that are invariant by the dynamics: the invariance of the Hamiltonian with respect to the translations (respectively to rotations) of the plane
implies the preservation of 
\begin{equation}\label{def:M}
    M(X):=\sum_{i=1}^N  \gamma_i\,x_i \quad (\text{resp. } I(X):=\sum_{i=1}^N  \gamma_i\,|x_i|^2 .
\end{equation}
Again, this system \eqref{eq:evo alpha} can be connected with a PDE of importance in geophysical  fluid mechanics:  the surface quasi-geostrophic equations,  which models the evolution of a quasi-stratified fluid subject  evolving in a rapidly rotating frame. The transport equation of the vorticity is then the same as for the incompressible Euler equations. 
The difference lays in the Biot-Savart law that involves a fractional Laplace operator in this case. As the interaction kernel is more singular than the one of the classical point vortex and of the incompressible Euler equations, the rigorous justification of this derivation is more delicate, and to our knowledge still open. On the other hand such a justification should be possible in the case where $s >1$, which we choose to left aside in this paper. 

\medskip

On the other hand, another connection between, on the one hand the classical point vortex and the incompressible Euler equations, and on the other hand 
$s$-variant of the point-vortex model \eqref{eq:evo alpha} and the surface quasi-geostrophic equations of order $s$, is to consider the mean-field limit for which the number $N$ of point vortices goes to infinity while their strengths $\gamma_i$ is renormalized by a factor $1/N$. Such a limit is justified with strong quantitative estimates in \cite{DS}.

\medskip

 In the case where some point vortices occupy a $2$D bounded domain $ \Omega$, the dynamics is more intricate, as the interactions then depend on the boundary. 
 Indeed, even the self-interaction of a single classical point vortex is not trivial: the dynamics \eqref{vp1}  then holds with $H$ given by $H := \frac12 \mathcal R$ where we recall that  $\mathcal R$ is the Robin function of $\Omega$ associated with the classical Laplacian.
 In this case, Turkington  proved in  \cite{Turk} that this  dynamics  can be viewed as a limit of the dynamics of a concentrated vorticity patch, given as   solution to the  incompressible Euler equations in  $\Omega$. 
He also observed that the solution is global in time, and in particular that there is no collision of the vortex point with the external boundary $\partial \Omega$. This follows from the conservation of the $H$, since $H$ blows up at the boundary  $\partial \Omega$. 
 The stationary point vortices are the critical points of the Robin function.
Moreover the sum of the eigenvalues of the second derivative of the Robin function is positive, meaning that at least one of them is positive. So two main cases can occur : either both eigenvalues are positive, either one is positive and one is negative. In the first case, a point vortex initially close enough to  the critical point then remains indefinitely close to it; this is the stable situation. Otherwise there exists a point vortex evolving on a level set of $H$ going away from the the critical point; this is the unstable situation. 
Let us refer here to \cite{FG} for more on the subject.
\medskip

Now, in the case where the interaction kernel is given by the fractional Laplacian, the corresponding counterpart of what precedes for the dynamics of a single $s$-point vortex in a  $2$D bounded domain $ \Omega$ 
is given by the equation 
\eqref{vp1}  with $H$ given by $H := \frac12 \mathcal R_s^\Omega$
where we recall that  $\mathcal R_s^\Omega$ is the fractional Robin function
defined in \eqref{def-s-robin}.
Again, the stationary points correspond to the 
critical points of the Hamiltonian and their stability depends on the signature of the Hessian. 
Corollary \ref{non-deg-frac-Robin} and the genericity of  the Morse property of fractional Robin function hinted at the end of Section \ref{sec-rep-grad}  therefore yield precious informations on the dynamical stability of the steady states of such single $s$-point vortex in bounded domain. 

\subsection{Hadamard variational formulas for shape derivative of solutions}
\label{sec-HG}
\par\;

In this subsection, we examine Hadamard's formula for the variation
of the solution to fractional Dirichlet problem due to change of the domain. There are several reasons that motivate this. One such reasons could be the following. Thanks to the formula \eqref{repres-sol-Dir}, the study of the Dirichlet problem \eqref{Dir-01} is significantly simplified once the Green function of the domain is known. However, the computation of a Green function is prohibitively difficult except for some nice domains. To overcome this difficulty, one approach aims to replace the Green function (and hence the solution) of the problem with that of an easier problem which is, in some sense, nearby. One way of doing this is the variation of the underlying domain. In here we establish formulas for the first variation of the solution to the Dirichlet problem \eqref{Dir-01} when the source term $f$ is either a distribution or belongs to some Sobolev space to be precise later.  

One of the earliest result regarding the variation of solution to a boundary value problem is due to  Hadamard in the pioneering work \cite{Hadamard}, where he discovered the following celebrated identity which is today known in the literature as "Hadamard variational formula": let $\Omega_t$ be a perturbation of $\Omega$ whose boundary is given by $\partial\Omega_t=\Big\{y=x+t\alpha(x)\nu(x), \;x\in\partial\Omega\Big\}$, where $\alpha\in C^\infty(\partial\Omega)$ and $\nu$ is the outer unit normal. Then 
\begin{align}\label{Hadamard}
    \delta G(x,y):=\lim_{t\to 0}\frac{G_{\Omega_t}(x,y)-G_\Omega(x,y)}{t}=\int_{\partial\Omega}\frac{\partial G_\Omega(x,z)}{\partial{\nu_z}}\frac{\partial G_\Omega(y,z)}{\partial{\nu_z}}\alpha(z)d\sigma(z) .
\end{align}
\par\;

A rigorous proof of Hadamard's formula was given later on by  Garabedian who also considered more general perturbations. After these pioneering works, shape derivative computations has received significant interest and several extensions of Hadamard's formula for more general elliptic boundary values problems  were obtained in \cite{Oz}.
\par\;

In the fractional setting, first results on Hadamard variational types formula was obtained in \cite{DalibardVaret} by A. L. Dalibard and D. G\'erard-Varet followed by \cite{DFW}. In the later, the first author and co-authors computed, for the first time, Hadamard variational formula for the first variation of fractional Sobolev best constants in the general setting. The result of \cite{DFW} are the best known regarding shape derivative of nonlocal domain dependent functions involving the fractional Laplacian.

Our aim here, is to establish a Hadamard variational formula for the shape derivative of the fractional Green function as well as for weak solution of the Dirichlet problem \eqref{Dir-01}.
To state our results properly, some preliminaries are in order.  Let $(\phi_t)_{t\in (-1,-1)}$ with $\phi_t\in C^2(\R^N,\R^N)$ be a family of transformations such that
\begin{equation}\label{transformations}
t\mapsto \phi_t\in C^2(\R^N,\R^N)\;\text{is of class\; $C^2$}\quad\&\quad \phi_0=\textrm{id}_{\R^N}.
\end{equation}
Note that \eqref{transformations} implies that $\phi_t : \R^N \to \R^N$ is a global diffeomorphism if $|t|$ is small enough, see e.g. \cite[Chapter 4.1]{Zolesio}. We denote by $\phi_t^{-1}$ the inverse of $\phi_t$. Given $\D\subset \R^N$ such that $\Omega\Subset D$  and $h\in W^{1,\infty}(D)$, we shall write $\Omega_t:=\phi_t(\Omega)$ and let $u_t\in \mathcal H^s_0(\Omega_t)$ the weak solution of the Dirichlet problem:
\begin{equation}\label{purturbed-Dirich-pb-0}
\left\{ \begin{array}{rcll} (-\Delta)^s u_t&=& h  &\textrm{in }\;\;\Omega_t \\ u_t&=&0&
\textrm{in }\;\;\R^N\setminus\Omega_t. \end{array}\right. 
\end{equation}
For $t=0$, we simply write $u$ at the place of $u_0$. By a standard argument based on the implicit function theorem, one proves that the map $t\mapsto v_t:=u_t\circ \phi_t\in \mathcal H^s_0(\Omega)$ is differentiable at $0$ (see e.g \cite[Lemma 3.1]{DalibardVaret} for the case $s=1/2$ and \cite{HenrotPierre} for $s=1$). By regularity, the function 
\begin{equation}\label{shape-deriv}
u':=v'-\nabla u\cdot Y\in L^1(\Omega)\end{equation} 
is well defined for every $x\in\Omega$ and even continuous. Here $v'=\partial_t{v_t}\big|_{t=0}\in \mathcal H^s_0(\Omega)$ and $Y=\partial_t{\phi_t}\big|_{t=0}\in C^1(\R^N,\R^N)$.

For the reader's convenience we recall from \cite{Sokolowski} the following definition.
\begin{Definition}
    The function $u'$ defined in \eqref{shape-deriv} is refer to as the shape derivative of the solution $u$ to 
    the Dirichlet problem:
\begin{equation}
 (-\Delta)^s u= h  \quad \textrm{in }\;\;\Omega , \\ \quad u=0 \quad  
\textrm{in }\;\;\R^N\setminus\Omega.  ,
\end{equation}
in the direction of the vector field $Y$.
\end{Definition}
The shape derivative measures the variation of the solution $u$ to the 
    the Dirichlet problem  due to change of the domain. Note that the shape derivative depends on the family of transformations $\phi_t$
     only through the vector field $Y$.
 In particular, for a given  family of transformations $\phi_t$, the shape derivative $u'$ appears as the first order correction in the following expansion: 
 for all $x\in K\subset\Omega$, where $K$ is compact, 
  $$
  u_t(x)=u(x)+t\Bigg(\Gamma^2(1+s)\int_{\partial\Omega}\frac{G_s(x,\cdot)}{\delta^s_\Omega}(\sigma)\frac{u}{\delta^s_\Omega}(\sigma)Y\cdot\nu(\sigma)d\sigma\Bigg)+o(t)\quad\text{as $t\to 0$.}
  $$

Recall that $u\in \mathcal H^s_0(\Omega)$ is the weak solution of the problem 
\begin{equation}
\left\{ \begin{array}{rcll} (-\Delta)^s u&=& h  &\textrm{in }\;\;\Omega \\ u&=&0&
\textrm{in }\;\;\R^N\setminus\Omega. \end{array}\right. 
\end{equation}
in the sense that $\mathcal E_s(u,v)=\int_{\Omega}h(y)v(y)dy$ for all $v\in \mathcal H^s_0(\Omega)$  and  $Y=\partial_t{\phi_t}\big|_{t=0}$ where $\phi_t$ is the family of transformations given in \eqref{transformations}. Our third main result reads as follows.
\begin{thm}[Hadamard formula for shape derivative of weak solutions]\label{thm.3}
  Let $N\in \N_*$ with $N\geq 2s$ and let $\Omega$ be a bounded open set of $\R^N$ of class $C^{1,1}$. Let $u'$ be defined as in \eqref{shape-deriv}. Then for all $x\in\Omega$ we have 
  \begin{equation}\label{repres-shape-deriv}
      u'(x)=\Gamma^2(1+s)\int_{\partial\Omega}\frac{G_s(x,\cdot)}{\delta^s_\Omega}(\sigma)\frac{u}{\delta^s_\Omega}(\sigma)Y\cdot\nu(\sigma)d\sigma,
  \end{equation}
  where $\nu$ denotes the outward unit normal to the boundary. 
\end{thm}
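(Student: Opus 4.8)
The plan is to reduce the shape-derivative formula \eqref{repres-shape-deriv} to the already-established boundary representation \eqref{dedu}, using the structure $u' = v' - \nabla u\cdot Y$ together with the PDE satisfied by $v'$ and by $\nabla u\cdot Y$. First I would record the equation solved by $v_t := u_t\circ\phi_t\in\mathcal H^s_0(\Omega)$: conjugating \eqref{purturbed-Dirich-pb-0} by $\phi_t$ turns $(-\Delta)^s$ into an operator $L_t$ with variable (matrix-valued) coefficients on the fixed domain $\Omega$, and $v_t$ solves $L_t v_t = h\circ\phi_t\cdot J_t$ in $\Omega$ (in weak form, via the pulled-back bilinear form $\mathcal E_s^t$), with $v_t=0$ outside $\Omega$. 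Differentiating this identity at $t=0$ — this is where one invokes the differentiability of $t\mapsto v_t$ quoted from \cite{DalibardVaret,HenrotPierre} and standard smoothness of $t\mapsto\phi_t$ from \eqref{transformations} — yields a linear fractional Dirichlet problem for $v'$ of the form $(-\Delta)^s v' = g$ in $\Omega$, $v'=0$ in $\R^N\setminus\Omega$, where $g$ is an explicit expression involving $\nabla u$, $\nabla^2 u$, $Y$, $\nabla Y$ and $h$ coming from $\partial_t|_{t=0}$ of the coefficients. In parallel, since $(-\Delta)^s u = h$ in $\Omega$ one can compute $(-\Delta)^s(\nabla u\cdot Y)$ on $\Omega$; using the commutator/Leibniz structure of the fractional Laplacian applied to a product with a $C^1$ vector field (as in \eqref{partial-u-pde} and the references \cite{FallSven,DFW2023}), this too is an explicit function on $\Omega$. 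The key algebraic point is that these two source terms match exactly on $\Omega$, so that $u' = v' - \nabla u\cdot Y$ is $s$-harmonic in $\Omega$, i.e. $(-\Delta)^s u' = 0$ there, while $u'$ need not vanish outside $\Omega$ — indeed its exterior trace is governed entirely by how $\partial\Omega$ moves, i.e. by $Y\cdot\nu$ and the boundary behaviour $u/\delta^s_\Omega$.

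Next I would identify the exterior data / boundary singularity of $u'$ and feed it into the representation machinery. Because $u_t$ vanishes on $\R^N\setminus\Omega_t$ and $u$ vanishes on $\R^N\setminus\Omega$, the nonzero contribution to $u' = v'-\nabla u\cdot Y$ in the collar near $\partial\Omega$ comes from the mismatch between the two domains; quantitatively, near a boundary point one has $u(x)\sim (u/\delta^s_\Omega)(\sigma)\,\delta_\Omega(x)^s$ and the transport by $\phi_t$ shifts the level $\delta_\Omega=0$ by an amount $tY\cdot\nu$, which at leading order produces the weighted boundary density $(u/\delta^s_\Omega)\,Y\cdot\nu$. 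This is precisely the object appearing in \eqref{repres-shape-deriv}. So the target statement is: the unique solution $u'$ of the fractional Dirichlet problem with zero interior source but with this prescribed singular boundary/exterior behaviour is represented by $\Gamma^2(1+s)\int_{\partial\Omega}\tfrac{G_s(x,\cdot)}{\delta^s_\Omega}\,\tfrac{u}{\delta^s_\Omega}\,Y\cdot\nu\,d\sigma$. To prove this I would not try to characterize $u'$ by its singular data directly; instead I would rerun the approximation argument of Theorem \ref{main-result}: pair the regularized Green function $\eta_k\psi_{\mu,x}G_s(x,\cdot)$ against $v_t$ and against $u$ in the antisymmetric identity \eqref{RS-DFW} (or rather its coefficient-perturbed analogue on $\Omega$), subtract, differentiate in $t$ at $0$, and pass to the limits $k\to\infty$, $\mu\to 0^+$. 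The interior terms cancel by the source-matching observed above, and only the boundary term of \eqref{RS-DFW}, carrying the factor $-\Gamma^2(1+s)\int_{\partial\Omega}\tfrac{v}{\delta^s_\Omega}\tfrac{w}{\delta^s_\Omega}\nu_i$, survives; differentiating this boundary integral in $t$ and using $\partial_t|_{t=0}\delta^s_{\Omega_t}$-type identities converts $\partial_t$ of the moving boundary into the factor $Y\cdot\nu$, giving \eqref{repres-shape-deriv}.

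An alternative, perhaps cleaner route: differentiate the representation $u_t(x)=\int_{\Omega_t}G_s^{\Omega_t}(x,y)h(y)\,dy$ directly, combining the Hadamard formula for the shape derivative of $G_s^{\Omega}$ (which the paper also establishes, see Section \ref{sec-HG} around \eqref{Hadamard}) with a domain-variation term $\int_{\partial\Omega}G_s(x,\sigma)h(\sigma)Y\cdot\nu\,d\sigma$; but since $h$ need only live in a Sobolev space and $G_s(x,\cdot)$ vanishes like $\delta^s_\Omega$ at $\partial\Omega$ while $u/\delta^s_\Omega$ captures exactly the Hopf-type boundary rate, one shows the two expressions coincide. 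I would present the first route as the main argument because it parallels Theorems \ref{main-result} and \ref{Thm.2} and reuses their technical lemmas verbatim.

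The main obstacle I anticipate is the justification of differentiating under the limit: one must commute $\partial_t|_{t=0}$ with the limits $k\to\infty$ and $\mu\to0^+$ in the regularized pairing, which requires uniform-in-$t$ control of the weighted boundary quantities $v_t/\delta^s_{\Omega}$ and of $\delta^{1-s}_\Omega\nabla v_t$ in the relevant Hölder/$L^\infty$ norms, plus continuity of $t\mapsto v_t$ in a strong enough topology. The second subtlety is tracking, at leading order in $t$, how the boundary weight $w/\delta^s_{\Omega_t}$ for the moving domain relates to $u/\delta^s_\Omega$ and produces cleanly the single factor $Y\cdot\nu$ with no stray curvature terms — here the $C^{1,1}$ regularity of $\Omega$ and the boundary regularity theory of \cite{RS,FallSven} are exactly what is needed. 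Everything else is bookkeeping already developed in Sections \ref{sec4.1}–\ref{sec4}.
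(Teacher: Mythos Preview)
Your proposal gets the coarse structure right --- you correctly identify that $u' = v' - \nabla u\cdot Y$ is $s$-harmonic in $\Omega$ and that its singular boundary behaviour is $u'/\delta^{s-1}_\Omega = -s(u/\delta^s_\Omega)\,Y\cdot\nu$, and you correctly anticipate that the proof should go by pairing $u'$ against the regularized Green function $\eta_k\psi_{\mu,x}G_s(x,\cdot)$. This is indeed the skeleton of the paper's argument.

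However, two concrete points are off. First, the identity \eqref{RS-DFW} is the wrong tool: it is an integration-by-parts formula for \emph{spatial} partial derivatives $\partial_i$, not for shape derivatives, and there is no useful ``$t$-differentiated'' version of it here. The paper does not differentiate any $t$-dependent pairing at all. Instead, once $(-\Delta)^s u' = 0$ in $\mathcal D'(\Omega)$ is established (your first paragraph), one simply tests this equation against $\eta_k\psi_{\mu,x}G_s(x,\cdot)\in C^\infty_c(\Omega)$ and expands via the product rule \eqref{pl}. One piece converges to $u'(x)$ exactly as in Lemma \ref{Lem4.2}/Lemma \ref{ess-convergence-result}; the other piece is a boundary-layer term that must be analysed directly in tubular coordinates $(\sigma,r)\mapsto\sigma+r\nu(\sigma)$, splitting $u' = v' - \nabla u\cdot Y$ and using $\delta^{1-s}\nabla u\to s(u/\delta^s)\nu$ on $\partial\Omega$. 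The $v'$ contribution vanishes because $v'\in\mathcal H^s_0(\Omega)$ is too regular; only the $-\nabla u\cdot Y$ part survives and produces the boundary integral with the factor $Y\cdot\nu$.

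Second, and this is the genuine gap: the limit of that boundary-layer term does \emph{not} come with the constant $\Gamma^2(1+s)$ for free. What drops out is
\[
d_s\int_{\partial\Omega}\frac{u}{\delta^s_\Omega}\,\frac{G_s(x,\cdot)}{\delta^s_\Omega}\,Y\cdot\nu\,d\sigma,
\qquad d_s = s\int_0^\infty\Big(r^s(-\Delta)^s\eta - \tilde I_s(r)\Big)\,dr,
\]
a one-dimensional integral involving the cutoff $\eta=1-\rho$ whose value is not obvious. The paper determines $d_s=\Gamma^2(1+s)$ by a separate argument: plugging in the explicit $1$D solution on $\Omega=(-1,1)$ with $\phi_t(x)=(1+t)x$, where $u_t$, $G_s$, $u'$ are all known in closed form, and matching both sides. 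Your proposal does not mention this step, and without it the constant is undetermined. Your alternative route via differentiating $u_t(x)=\int_{\Omega_t}G_s^{\Omega_t}(x,y)h(y)\,dy$ is circular here: the Hadamard formula for $G_s^{\Omega_t}$ is Theorem \ref{thm.5}, proved \emph{after} Theorem \ref{thm.3}, and only for $s\in(0,1/2)$ under extra hypotheses (H1)--(H2).
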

\par \;
Our next result is the extension of the Hadamard formula for the Green function \eqref{Hadamard} to the fractional setting. It reads as follows.
\begin{thm}[Hadamard formula for the fractional Green function]\label{thm.5}Let $s\in (0,1/2)$ and let $\Omega$ be a bounded open set of class $C^{1,1}$ of $\R^N$ ($N\geq 2s$). Let $G_{\Omega_t}\big(\cdot,\cdot\big)$ be the Green function of $\Omega_t:=\phi_t(\Omega)$. For a fixed $x\in\Omega$, consider the real-valued function $A^x_t: \Omega\to \R, y\mapsto G_{\Omega_t}(\phi_t(x),\phi_t(y)) $. Assume that the map $t\mapsto A_{t,x}$ is differentiable at $0$ with 
\begin{align}
(H1)\qquad\partial_tA_{t}^x{\big|_{t=0}}\in \mathcal L^1_s(\R^N)=\Big\{u:\R^N\to\R, \int_{\R^N}\frac{|u(x)|}{1+|x|^{N+2s}}dx<\infty\Big\}.
\end{align}
Moreover, we suppose that \begin{align}
    (H2)\qquad\qquad\partial_t{H_{\Omega_t}(\phi_t(x),\phi_t(\cdot))}\big|_{t=0}\in C(\Omega).
\end{align} Let $y\in\Omega$ with $x\neq y$ and defined $$\delta G_s(x,y):=\Big(\partial_tA_{t}^x{\big|_{t=0}}\Big)(y)-\nabla_x G_s(x,y)\cdot Y(x)-\nabla_y G_s(x,y)\cdot Y(y).
$$
Under the assumptions $(H1)$ and $(H2)$, there holds: 
\begin{align}\label{var-green}
    \delta G_s(x,y)=\Gamma^2(1+s)\int_{\partial\Omega}\frac{G_s(x,\cdot)}{\delta^s_\Omega}\frac{G_s(y,\cdot)}{\delta^s_\Omega}Y\cdot\nu \;d\sigma.
\end{align}
In particular if $x,y\in K\subset \Omega$ with $x\neq y$ and $K$ is compact. Then the map $t\mapsto G_{\Omega_t}(x,y)$ is differentiable at $0$ and 
\begin{align} \label{var-green-bis}
    \partial_t G_{\Omega_t}(x,y){\Big|_{t=0}}=\Gamma^2(1+s)\int_{\partial\Omega}\frac{G_s(x,\cdot)}{\delta^s_\Omega}\frac{G_s(y,\cdot)}{\delta^s_\Omega}Y\cdot\nu \;d\sigma.
\end{align}
Furthermore, we have
\begin{align}\label{var-Robin-func}
\partial_t{\mathcal R_s^{\Omega_t}(\phi_t(x))}\Big|_{t=0}-\nabla \mathcal R_s^\Omega(x)\cdot Y(x)=-\Gamma^2(1+s)\int_{\partial\Omega}\Bigg(\frac{G_s(x,\cdot)}{\delta^s_\Omega}\Bigg)^2Y\cdot\nu \;d\sigma
\end{align}
where $\mathcal R_s^\Omega(x)=H_\Omega(x,x)$ and $H_\Omega(\cdot,\cdot)$ is the regular part of the Green function as defined in the beginning of subsection \ref{sec-rep-grad}
\end{thm}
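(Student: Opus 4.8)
The plan is to reduce the three asserted identities \eqref{var-green}, \eqref{var-green-bis}, \eqref{var-Robin-func} to the Pohozaev-type formula \eqref{partial-Green} of Theorem \ref{Thm.2} by a variational/limiting argument analogous to the proof of Theorem \ref{thm.3}. First I would set up the pulled-back family: with $\Omega_t=\phi_t(\Omega)$, write $B^x_t(y):=G_{\Omega_t}(\phi_t(x),\phi_t(y))$ and split it, following \eqref{Eq-spliting-of-Green}, as $B^x_t(y)=F_s(\phi_t(x),\phi_t(y))-H_{\Omega_t}(\phi_t(x),\phi_t(y))$. The fundamental-solution part is explicit: $F_s(\phi_t(x),\phi_t(y))=b_{N,s}|\phi_t(x)-\phi_t(y)|^{-(N-2s)}$, so for fixed $x\neq y$ it is smooth in $t$ near $0$ and its $t$-derivative at $0$ equals $\nabla_x F_s(x,y)\cdot Y(x)+\nabla_y F_s(x,y)\cdot Y(y)$. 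Hence the entire $t$-dependence of $\delta G_s(x,y)$ is carried by the regular part $H$, and assumption $(H2)$ ensures that $\partial_tH_{\Omega_t}(\phi_t(x),\phi_t(\cdot))|_{t=0}$ is a well-defined continuous function on $\Omega$; combined with $(H1)$, the quantity $w^x:=\partial_tB^x_t|_{t=0}-\nabla_xG_s(x,y)\cdot Y(x)$ (as a function of $y$) lies in $\mathcal L^1_s(\R^N)$ and is $s$-harmonic in $\Omega\setminus\{x\}$.

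Next I would derive the PDE solved by the shape derivative $\delta G_s(x,\cdot)$. Differentiating the defining relations \eqref{DIR}–\eqref{regular-part-of-Green} for $G_{\Omega_t}$, transported back to $\Omega$ via $\phi_t$, one finds — exactly as in the computation leading to \eqref{repres-shape-deriv} in Theorem \ref{thm.3}, now applied with the (distributional) source $\delta_x$ in place of $h$ — that for fixed $x\in\Omega$ the function $y\mapsto \delta G_s(x,y)$ is $s$-harmonic in $\Omega$, vanishes in $\R^N\setminus\overline\Omega$, and its boundary behavior is governed by $Y\cdot\nu$ and by $G_s(x,\cdot)/\delta^s_\Omega$ on $\partial\Omega$. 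Concretely, $\delta G_s(x,\cdot)$ should be the weak solution of the homogeneous Dirichlet problem whose "boundary datum" (in the nonlocal trace sense of \eqref{Abatangelo}) is $\Gamma^2(1+s)\,\bigl(G_s(x,\cdot)/\delta^s_\Omega\bigr)\,Y\cdot\nu$. Then I would invoke Theorem \ref{thm.3}: representing $\delta G_s(x,\cdot)$ through the Green function and using \eqref{repres-shape-deriv} with $u$ replaced by $G_s(x,\cdot)$ (justified by the same cut-off approximation $\eta_k\psi_{\mu,x}G_s(x,\cdot)$ used throughout the paper, since $G_s(x,\cdot)$ is too singular to plug in directly), yields precisely the right-hand side of \eqref{var-green}. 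Symmetry in $x$ and $y$ of this right-hand side is consistent with — and can alternatively be deduced from — the symmetry $G_s(x,y)=G_s(y,x)$ together with \eqref{partial-Green}.

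From \eqref{var-green} the remaining two identities follow quickly. For \eqref{var-green-bis}, when $x,y$ range over a compact $K\Subset\Omega$ with $x\neq y$, one writes $G_{\Omega_t}(x,y)=B^{\phi_t^{-1}(x)}_t(\phi_t^{-1}(y))$ and uses the chain rule together with the already-established differentiability of $B$ and the smoothness of $t\mapsto\phi_t^{-1}$; the correction terms $\nabla_xG_s\cdot Y(x)+\nabla_yG_s\cdot Y(y)$ generated by moving the evaluation points cancel exactly against the same terms in the definition of $\delta G_s$, leaving \eqref{var-green-bis}. For \eqref{var-Robin-func}, I would take $y\to x$ in the version of \eqref{var-green} written for $H$ rather than $G_s$: since $\mathcal R_s^{\Omega_t}(\phi_t(x))=H_{\Omega_t}(\phi_t(x),\phi_t(x))$ and the fundamental-solution singularity is diagonal and $t$-independent in the relevant sense, the limit $y\to x$ of the regular part is legitimate by $(H2)$, and one obtains $\partial_t\mathcal R_s^{\Omega_t}(\phi_t(x))|_{t=0}-\nabla\mathcal R_s^\Omega(x)\cdot Y(x)=-\,\delta G_s^{\mathrm{reg}}(x,x)$, which by the $H$-analogue of \eqref{var-green} equals $-\Gamma^2(1+s)\int_{\partial\Omega}(G_s(x,\cdot)/\delta^s_\Omega)^2\,Y\cdot\nu\,d\sigma$ (the boundary integrand involves $G_s$, not $H$, because $F_s$ is regular up to $\partial\Omega$ away from $x$).

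The main obstacle is the passage-to-the-limit step: justifying that $\delta G_s(x,\cdot)$, defined through the possibly delicate limits in $(H1)$–$(H2)$, actually coincides with the weak solution produced by Theorem \ref{thm.3}, and that the singular test function $G_s(x,\cdot)$ may be used in the shape-derivative identity. This requires the same two-parameter cut-off $\eta_k\psi_{\mu,x}G_s(x,\cdot)$ approximation, the product rule for $(-\Delta)^s$, and the fractional Green-type identity \eqref{RS-DFW}/\eqref{Abatangelo}, with uniform control of the boundary terms as $k\to\infty$ and $\mu\to0^+$ — precisely the estimates already carried out in Sections \ref{sec4.1}–\ref{sec4} for Theorems \ref{main-result} and \ref{Thm.2}. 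The restriction $s\in(0,1/2)$ enters here to guarantee that the relevant boundary integrals and the term $\partial G_s/\partial y_i$ are integrable (cf. the first remark after Theorem \ref{main-result}), so that no extra boundary contributions survive in the limit.
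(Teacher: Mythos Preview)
Your high-level strategy—treat $\delta G_s(x,\cdot)$ as the shape derivative of the solution to a Dirichlet problem and then invoke the representation \eqref{repres-shape-deriv}—is exactly the heuristic the paper itself advertises at the start of Section~\ref{HGatlast}: ``Formally, identity \eqref{var-green} follows from \eqref{repres-shape-deriv} by taking $u_0=G_s(y,\cdot)$.'' But the paper immediately adds that this is only formal because $G_s(x,\cdot)\notin H^s(\R^N)$, and that this ``makes the proof of Theorem~\ref{thm.5} much more difficult compared to the one of Theorem~\ref{thm.3}.'' Your proposal does not bridge this gap: you assert that the two-parameter cut-off $\eta_k\psi_{\mu,x}G_s(x,\cdot)$ and ``the estimates already carried out in Sections~\ref{sec4.1}--\ref{sec4}'' suffice, but they do not. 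The paper's actual proof does not pass through Theorem~\ref{thm.3}. Instead it (i) derives directly, via the deformation kernel $\mathcal K_Y$ of \eqref{defok}, the distributional equation \eqref{eq-shap-green} for $\partial_tG_{\Omega_t}(\phi_t(x),\phi_t(\cdot))|_{t=0}$ (Lemma~\ref{lem-eq-shape-deriv-Green}); (ii) tests this against a \emph{three}-parameter cut-off $\mathcal W_{x,y}=\eta_k^2 G_s(y,\cdot)\psi_{\mu,x}\psi_{\gamma,y}$, with a second localisation around the singularity at $y$; (iii) uses an algebraic identity \eqref{64} to rearrange $\mathcal E_Y(G_s(x,\cdot),\mathcal W_{x,y})$; and (iv) proves several new limit lemmas (Lemma~\ref{appen}, Lemma~\ref{lem-crucial}, Lemma~\ref{lem-lim-G-k-H-k}, and the identities \eqref{C1}--\eqref{C2}) that have no counterpart in Sections~\ref{sec4.1}--\ref{sec4}. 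In particular, the residual terms coming from $\mathcal K_Y$ produce nontrivial contributions involving $DY(x)$ (see \eqref{defWsam} and \eqref{surprise2}) that must be shown to cancel exactly between \eqref{C1} and \eqref{C2}; this cancellation is the technical heart of the argument and is nowhere present in your outline.

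Your explanation of the restriction $s\in(0,1/2)$ is also backwards. You cite the remark after Theorem~\ref{main-result}, but that remark says $\partial G_s(x,\cdot)/\partial z_i\in L^1(\Omega)$ when $s\in(1/2,1)$, not $s<1/2$. In the paper the constraint $2s<1$ enters for a different reason: it is needed so that certain double integrals with kernel $|y-z|^{-N-2s}$ against differences of $|x-\cdot|^{2s-N}$ remain finite—see the integrability step at the end of the proof of Lemma~\ref{lem-eq-shape-deriv-Green} (controlling $\mathcal R_t$) and estimate \eqref{h-est-2} in Lemma~\ref{lem-lim-G-k-H-k}. The paper even remarks that \eqref{eq-shap-green} is expected for all $s\in(0,1)$ but that the uniform-in-$t$ control of $(H_{\Omega_t}(\phi_t(x),\phi_t(\cdot))-H_\Omega(x,\cdot))/t$ is missing for $s\ge 1/2$. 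Your derivations of \eqref{var-green-bis} and \eqref{var-Robin-func} from \eqref{var-green} are fine and match the paper's in spirit.
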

\begin{remark}
Let $f\in C^\infty_c(\Omega)$ and let $u$ be the weak solution of $$
(-\Delta)^su=f\quad\text{in}\;\;\; \Omega\quad\&\quad u=0\;\;\text{in $\R^N\setminus\Omega$} .
$$
Let $J:\Omega \mapsto J(\Omega)\in \R$ be the energy functional associated to the equation above. That is to say $$J(\Omega)=\frac{c_{N,s}}{2}\iint_{\R^N\times\R^N}\frac{(u(x)-u(y))^2}{|x-y|^{N+2s}}dxdy=\int_{\Omega}f(y)u(y)dy.$$ 
A direct computation gives 
\begin{align}
      \partial_t J(\Omega_t){\Big|_{t=0}}&=\partial_t {\Big|_{t=0}} \int_{\Omega_t}u_t(y)f(y)dy=\int_{\Omega}u'(y)f(y)dy,\nonumber\\
      &=\Gamma^2(1+s)\int_{\Omega}f(y)\Bigg[\int_{\partial\Omega}\frac{G_s(y,z)}{\delta^s_\Omega(z)}\frac{u(z)}{\delta^s_\Omega(z)}Y\cdot\nu(z)d\sigma(z)\Bigg]dy
\end{align}
where in the last line we have used \eqref{repres-shape-deriv}. By \cite[Lemma 3.1.3]{Abatangelo} with $h=(u/\delta^s)Y\cdot\nu\in C(\partial\Omega)$,
\begin{align}
      \partial_t J(\Omega_t){\Big|_{t=0}}
      &=\Gamma^2(1+s)\int_{\Omega}f(y)\Bigg[\int_{\partial\Omega}\frac{G_s(y,z)}{\delta^s_\Omega(z)}\frac{u(z)}{\delta^s_\Omega(z)}Y\cdot\nu(z)d\sigma(z)\Bigg]dy\nonumber\\
      &=\Gamma^2(1+s)\int_{\partial\Omega}(u/\delta^s)^2 Y\cdot \nu\,d\sigma.
\end{align}
This gives an alternative way of deriving the fractional Hadamard formula proved in \cite{DFW} (see also \cite{DalibardVaret}).
\end{remark}

\subsection{Reproducing kernel Hilbert space}
\label{sec-rkhs}

As another application of representation formulas of solutions to fractional Laplace equations,  in this section, we address the issue of representing the kernel of the reproducing kernel Hilbert space of  $s$-harmonic functions by means of a boundary integral  formula, what extends the analysis initiated by  J.L. Lions, see \cite{Lions}, in the case of  the harmonic functions.
Let us first recall that a reproducing kernel Hilbert space (RKHS) is a Hilbert space of functions in which pointwise evaluations are continuous linear functionals. It then follows from Riesz' theorem that each of these functionals can be represented as an inner product with an element of this Hilbert space. One then defines a two-points kernel by considering the inner products of any pair of such elements. 

\begin{Definition}
We say that a space $H$ is RKHS as a Hilbert space $H$ of  real-valued functions defined on an open domain $\Omega \subset \R^N$ such that 
 for any $x$ in $\Omega$, the evaluation mapping $E_x$ of functions of $H$  at $x$, that is the  mapping defined for any $f$ in $H$ by $E_x (f) = f(x)$, 
is linear continuous. Then, it follows from Riesz' theorem that there exists $K_x$ in $H$ such that for any $g$ in $H$,  $E_x (f) = <f,g>$, the inner product of $f$ and $g$ in $H$. Then the  two-points kernel is defined for any pair of $x$ and $y$ in $\Omega$  by $K(x,y)= <K_x,K_y> $. It follows from the properties of the inner product that the kernel $K$ is symmetric and positive definite. 
\end{Definition}

This notion was first introduced in 1907 by Stanislaw Zaremba for boundary value problems for harmonic and biharmonic functions, and simultaneously by James Mercer in the theory of integral equations, before to be more systematically tackled by Nachman Aronszajn and Stefan Bergman. These spaces have various applications in complex analysis, harmonic analysis, quantum mechanics and statistical learning theory. Let us here only mention the famous Moore–Aronszajn theorem, see \cite{Aronszajn}, which states that, conversely, every symmetric, positive definite kernel defines a unique reproducing kernel Hilbert space. 
\medskip 

One of the last theorems by Jacques-Louis Lions was about a formula for the reproducing kernel of some function spaces associated with second order elliptic boundary value problems, see \cite{Lions}. Lions used a variational approach and obtained his formula through a penalization limit. 
One striking example  considered in \cite{Lions} is the space $\mathcal H$ of the distributions $u$ satisfying $\Delta u=0$ in $\Omega$ and such that the trace of $u$ on $\partial \Omega$ is in the Lebesgue space  $L^2 (\partial \Omega)$. Observe that the definition of the trace is already not trivial, in particular for domains which have only poor regularity. We restrict here to nice domains for simplicity. 
This space $\mathcal H$ is endowed with the inner product: 
 $$<u,v> :=  \int_{\partial \Omega} u(z)v(z) d\sigma(z).$$ 
 Then, Lions proved the following result, see \cite{Lions}.
\begin{thm} \label{Lions-theo}
The space $H$ is RKHS and its  two-points kernel is given,  for any pair of $x$ and $y$ in $\Omega$,  by 
 \begin{equation} \label{l-formu}
 K(x,y)=  \int_{\partial\Omega} (\partial_\nu G_1 (x,\cdot))( \partial_\nu G_1 (y,\cdot) )\,d\sigma .
 \end{equation}
\end{thm}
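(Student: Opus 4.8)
The plan is to realize the kernel $K(x,y)$ as the solution, evaluated at $y$, of an $s$-harmonic Dirichlet problem whose boundary datum is itself expressed through the Green function, and then to recognize the resulting double boundary integral as the claimed formula. First I would make the identification $K_x = \wh{G}(x,\cdot)$, where $\wh{G}(x,\cdot)$ denotes the unique $s$-harmonic function in $\Omega$ (i.e. $(-\Delta)^s \wh{G}(x,\cdot) = 0$ in $\Omega$, in the appropriate weak/distributional sense) whose boundary trace on $\partial\Omega$ is the function $z \mapsto -\partial_\nu G_1(x,z)$ — or, in the genuinely fractional analogue, whose ``boundary object'' is $\Gamma^2(1+s)\, G_s(x,\cdot)/\delta_\Omega^s$. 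To justify this I would check two things: (i) $\wh{G}(x,\cdot)$ indeed lies in $H$ (it is $s$-harmonic by construction and its trace is in $L^2(\partial\Omega)$ by the regularity theory recalled in the remark after Theorem~\ref{main-result}, namely $G_s(x,\cdot)/\delta_\Omega^s \in L^1(\partial\Omega)$ together with the boundedness $\delta_\Omega^{1-s}\nabla u \in L^\infty$ improving integrability to $L^2$ on a $C^{1,1}$ boundary); and (ii) the reproducing identity $\langle f, \wh{G}(x,\cdot)\rangle = f(x)$ holds for every $f \in H$.

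The heart of the argument is step (ii), and this is where I expect the main obstacle to lie. The natural route is a Green-type / integration-by-parts identity: for $f$ $s$-harmonic in $\Omega$ and $G_s(x,\cdot)$ the Green function with pole at $x$, one wants
\begin{equation}\label{plan-green}
f(x) = \int_\Omega G_s(x,y)\,(-\Delta)^s f(y)\,dy + (\text{boundary term}) = (\text{boundary term}),
\end{equation}
the bulk integral vanishing by $s$-harmonicity, and the boundary term being exactly $\Gamma^2(1+s)\int_{\partial\Omega} \frac{G_s(x,\cdot)}{\delta_\Omega^s}\, f|_{\partial\Omega}\, d\sigma$ once the trace of $f$ is interpreted correctly. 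This is precisely the content of the Green identity \eqref{Abatangelo} of Abatangelo, applied with $Eu$ identified (up to the universal constant $\Gamma^2(1+s)$) with the $L^2$-trace appearing in the inner product of $H$. The delicate point is that $f\in H$ is only assumed to have an $L^2$ boundary trace and no interior smoothness beyond $s$-harmonicity, so \eqref{Abatangelo} is not directly applicable; I would therefore approximate $f$ by smoother $s$-harmonic functions — e.g. solving the fractional Dirichlet problem with mollified boundary data, or using the density, in $H$, of $s$-harmonic functions with smooth boundary traces — apply \eqref{Abatangelo} to the approximants, and pass to the limit using the continuity of the trace map and of pointwise evaluation on $H$ (the latter being exactly the RKHS property one must first establish, so some care about circularity is needed; the cleanest order is: first prove $E_x$ is bounded via the explicit bound $|f(x)| \le \|G_s(x,\cdot)/\delta_\Omega^s\|_{L^2(\partial\Omega)}\,\|f\|_H$ coming from \eqref{Abatangelo} for smooth $f$, then extend by density, then identify the Riesz representative).

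Once $K_x = \Gamma^2(1+s)\,\wh G(x,\cdot)$ is established with the reproducing property, the formula follows by evaluating: $K(x,y) = \langle K_x, K_y\rangle = K_x(y)$ by the reproducing identity applied to $f = K_y \in H$, and then unwinding one more time via \eqref{Abatangelo} (or directly via the definition of the inner product) to write
\begin{equation}\label{plan-final}
K(x,y) = \langle K_x, K_y \rangle = \int_{\partial\Omega} (K_x)|_{\partial\Omega}(z)\,(K_y)|_{\partial\Omega}(z)\,d\sigma(z) = \int_{\partial\Omega}(\partial_\nu G_1(x,\cdot))(\partial_\nu G_1(y,\cdot))\,d\sigma
\end{equation}
in the classical case $s=1$, after identifying the trace of the Riesz representative $K_x$ with $-\partial_\nu G_1(x,\cdot)$ — which is itself a consequence of \eqref{plan-green} with $s=1$, i.e. of Green's third identity \eqref{classical-repres-formula}: taking $f$ $1$-harmonic there kills the volume term and leaves $f(x) = -\int_{\partial\Omega}\partial_\nu G_1(x,z)\, f(z)\,d\sigma(z)$, which is precisely $\langle f, -\partial_\nu G_1(x,\cdot)\rangle$ in the $L^2(\partial\Omega)$ inner product. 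Symmetry and positive-definiteness of $K$ are then automatic from the general RKHS formalism recalled in the definition above, so no separate verification is needed. The only genuinely new ingredient compared to Lions' original argument is replacing his variational/penalization derivation of the boundary representation by the already-available fractional Green identity \eqref{Abatangelo}, and tracking the constant $\Gamma^2(1+s)$ through it; the structural parallel with the Hadamard formula \eqref{repres-shape-deriv}, which is the point emphasized in the abstract, then emerges simply because both are built from the same boundary object $G_s(x,\cdot)/\delta_\Omega^s$.
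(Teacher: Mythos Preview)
Your approach is correct and follows the same route as the paper: use the Poisson representation formula (Green's third identity \eqref{classical-repres-formula} with $\Delta u=0$) to write $u(x)=\langle u,-\partial_\nu G_1(x,\cdot)\rangle$, deduce boundedness of the evaluation maps, and then identify the Riesz representative $K_x$ via its boundary trace $-\partial_\nu G_1(x,\cdot)$, so that $K(x,y)=\langle K_x,K_y\rangle$ becomes the stated boundary integral.

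The only comment is that you are over-engineering the argument for the theorem as stated. Theorem~\ref{Lions-theo} is the \emph{classical} case $s=1$, reproduced from \cite{Lions,ELPL} as background; the paper's proof is four lines, using only the standard Poisson formula and Riesz' theorem. Your discussion of $s$-harmonic extensions, Abatangelo's identity \eqref{Abatangelo}, the trace object $G_s(x,\cdot)/\delta_\Omega^s$, and density/approximation arguments belongs to the genuinely fractional analogue, which in the paper is a separate result (Theorem~\ref{s-Lions-theo}) proved via Grubb's integration-by-parts formula rather than \eqref{Abatangelo}. For the classical theorem you are asked about, none of the regularity worries you raise arise: the Poisson kernel $-\partial_\nu G_1(x,\cdot)$ is smooth on $\partial\Omega$ for each interior $x$, and the Poisson formula holds for all $u\in\mathcal H$ directly, so the circularity concern you flag does not materialize.
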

Above, we recall that $G_1(x,\cdot)$ is the Green function of the classical Laplacian with singularity at $x$.
Indeed,  another, more direct, approach to Theorem \ref{Lions-theo}, was proposed later in \cite{ELPL}, together with some extensions. We reproduce here their proof of \eqref{l-formu} as a preparation for our extension to the case of fractional Laplace operators. Let us highlight that \eqref{l-formu} is the most simple form of Lions formula, which holds for more general topology and operators, as can be seen in  \cite{Lions} and \cite{ELPL}.
\begin{proof}
First, a consequence of the  representation formula 
\eqref{classical-repres-formula} is the  Poisson formula: for any $u$ in  $\mathcal H$, for any $x$ in $\Omega$, 
$u(x)=<u, \partial_\nu G_1 (x,\cdot) > $.
Because for any $x$ in $\Omega$, that is away from the boundary, the trace of $\partial_\nu G_1 (x,\cdot) $ on $\partial\Omega$ is in $L^2 (\partial \Omega)$, 
 the evaluation mapping $E_x$ from $\mathcal H$ to $\R$ which maps $f$ to $E_x (f) = f(x)$
is linear continuous and  $\mathcal H$ is a Hilbert space.
Then it follows from Riesz' theorem that there exists $K_x$ in $H$ such that for any $u$ in $H$, 
$u(x) = < u , K_x >$.
 Then by identification, 
 we conclude that 
 $K_x = \partial_\nu G_1 (x,\cdot)$ and the formula \eqref{l-formu} follows.
\end{proof}

An interesting observation in \cite{ELPL} is the resemblance of \eqref{l-formu} with the Hadamard variation formula \eqref{Hadamard} for the Green function of the classical Laplacian. 
In \cite{ELPL}, it is said that the authors  "are of the opinion that the connection between Hadamard’s varation formula and the reproducing kernel just indicated, in the classical harmonic case, seems to be an isolated phenomenon peculiar to the second order case." 
In this section, we establish that actually, a similar connection between Lions formula for RKHS and Hadamard’s varation formula 
occurs as well for  fractional Laplace operators. 
Indeed, let us define the space $\mathcal H_s$ of the distributions $u$ satisfying 
$(-\Delta)^s u=0  $ in $\Omega$ and such that $\frac{u}{\delta^{s-1}_\Omega}$ is in $L^2 (\partial \Omega)$. 
Then this space $\mathcal H_s$ is endowed with the inner product: 
 $$<u,v>_s :=  \int_{\partial \Omega}   \frac{u}{\delta^{s-1}_\Omega}  \frac{v}{\delta^{s-1}_\Omega}  d\sigma.$$

\begin{thm} \label{s-Lions-theo}
The space $\mathcal H_s$ is RKHS and its  two-points kernel, which we still denote $K$, is given,  for any pair of $x$ and $y$ in $\Omega$,  by 
 \begin{equation} \label{l-formu-s}
 K(x,y)=  \Gamma^2(s) \Gamma^2(s+1)  \int_{\partial \Omega}   \frac{G_s(x,\cdot)}{\delta^{s}_\Omega}  \frac{G_s(y,\cdot)}{\delta^{s}_\Omega}  d\sigma.
 \end{equation}
\end{thm}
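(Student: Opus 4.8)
The plan is to follow, step by step, the classical argument reproduced above for Theorem~\ref{Lions-theo}. The whole statement reduces to one ingredient, a Poisson-type representation for the elements of $\mathcal H_s$:
\begin{equation}\label{eq:poisson-s}
u(x)=\Gamma(s)\,\Gamma(s+1)\int_{\partial\Omega}\frac{G_s(x,\cdot)}{\delta^s_\Omega}\,\frac{u}{\delta^{s-1}_\Omega}\,d\sigma\qquad\text{for all }u\in\mathcal H_s,\ x\in\Omega,
\end{equation}
where $G_s(x,\cdot)/\delta^s_\Omega$ and $u/\delta^{s-1}_\Omega$ on $\partial\Omega$ are the corresponding weighted boundary traces. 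By the boundary regularity theory for $s$-harmonic functions and the sharp two-sided bounds \eqref{eq:greenfunct-estimate}, these traces exist, $\theta\mapsto G_s(x,\theta)/\delta^s_\Omega(\theta)$ being bounded and continuous on $\partial\Omega$ for each fixed $x\in\Omega$, while $u/\delta^{s-1}_\Omega\in L^2(\partial\Omega)$ by the very definition of $\mathcal H_s$. I also note that, once \eqref{eq:poisson-s} is in hand, it renders transparent the announced resemblance between \eqref{l-formu-s} and the Hadamard-type identities \eqref{repres-gradient-Robin func}, \eqref{var-green-bis}.

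Granting \eqref{eq:poisson-s}, I would conclude exactly as in the proof of Theorem~\ref{Lions-theo}. Since $\theta\mapsto G_s(x,\theta)/\delta^s_\Omega(\theta)\in L^2(\partial\Omega)$, the Cauchy--Schwarz inequality turns \eqref{eq:poisson-s} into $|u(x)|\le\Gamma(s)\Gamma(s+1)\,\|G_s(x,\cdot)/\delta^s_\Omega\|_{L^2(\partial\Omega)}\,\|u\|_{\mathcal H_s}$, so the evaluation map $E_x$ is continuous on $\mathcal H_s$; and $\mathcal H_s$ is complete because, by definition of its norm and by solvability of the large-solution problem, the weighted-trace map $u\mapsto u/\delta^{s-1}_\Omega$ is an isometric bijection of $\mathcal H_s$ onto $L^2(\partial\Omega)$. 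Riesz' theorem then provides a unique $K_x\in\mathcal H_s$ with $u(x)=\langle u,K_x\rangle_s$ for every $u\in\mathcal H_s$; comparing with \eqref{eq:poisson-s} and using that the weighted trace sweeps out all of $L^2(\partial\Omega)$ identifies $K_x/\delta^{s-1}_\Omega=\Gamma(s)\Gamma(s+1)\,G_s(x,\cdot)/\delta^s_\Omega$ on $\partial\Omega$, whence
\[K(x,y)=\langle K_x,K_y\rangle_s=\int_{\partial\Omega}\frac{K_x}{\delta^{s-1}_\Omega}\,\frac{K_y}{\delta^{s-1}_\Omega}\,d\sigma=\Gamma^2(s)\,\Gamma^2(s+1)\int_{\partial\Omega}\frac{G_s(x,\cdot)}{\delta^s_\Omega}\,\frac{G_s(y,\cdot)}{\delta^s_\Omega}\,d\sigma,\]
which is \eqref{l-formu-s}; symmetry and positive definiteness are then automatic.

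So the real work is \eqref{eq:poisson-s}, which I would obtain by pairing $v:=G_s(x,\cdot)$ against $u\in\mathcal H_s$ through the bilinear form $\mathcal E_s$. On the one hand, since $(-\Delta)^sv=\delta_x$ in $\Omega$ by \eqref{DIR} and $u=0$ in $\R^N\setminus\Omega$, formally $\mathcal E_s(u,v)=u(x)$. On the other hand, because $u\notin\mathcal H^s_0(\Omega)$ — it carries the boundary layer $u\sim(u/\delta^{s-1}_\Omega)\,\delta^{s-1}_\Omega$ near $\partial\Omega$ — integrating by parts produces a boundary term, that is, a fractional Green identity
\[\mathcal E_s(u,v)=\int_\Omega v\,(-\Delta)^su\,dz+\Gamma(s)\Gamma(s+1)\int_{\partial\Omega}\frac{v}{\delta^s_\Omega}\,\frac{u}{\delta^{s-1}_\Omega}\,d\sigma,\]
whose first term vanishes since $(-\Delta)^su=0$ in $\Omega$. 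Comparing the two expressions yields \eqref{eq:poisson-s}. To make this rigorous I would argue as in the proofs of Theorems~\ref{main-result} and~\ref{Thm.2}: neither $G_s(x,\cdot)$ nor $u$ is admissible in the Pohozaev-type identity \eqref{RS-DFW} (or in a non-differentiated version of it), so one approximates $v$ by $\eta_k\,\psi_{\mu,x}\,G_s(x,\cdot)\in C^\infty_c(\Omega)$, with $\eta_k$ a cut-off vanishing near $\partial\Omega$ and $\psi_{\mu,x}$ one vanishing near the pole $x$, and $u$ by $\eta_k u$; one then expands with the product rule for $(-\Delta)^s$ and lets $k\to\infty$ and subsequently $\mu\to0^+$. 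The interior terms recover $\int_\Omega v(-\Delta)^su\,dz=0$ and $\int_\Omega u(-\Delta)^sv\,dz=u(x)$, while the boundary commutator terms converge to the advertised integral.

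I expect two points to carry the difficulty. First, in contrast with Theorems~\ref{main-result} and~\ref{Thm.2}, the function $u$ is now unbounded near $\partial\Omega$ (it blows up like $\delta^{s-1}_\Omega$), so the estimates of \cite{DFW} that controlled the cut-off layer as $k\to\infty$ must be re-examined with this growth, the relevant inputs being $\delta^{1-s}_\Omega u\in C(\overline\Omega)$, $u\in\mathcal L^1_s(\R^N)$ and the sharp bounds \eqref{eq:greenfunct-estimate}. Second, and this is the crux, one must pin down the numerical constant $\Gamma(s)\Gamma(s+1)$: it is the normalization of the boundary pairing between a trace of order $s-1$ and a trace of order $s$, as opposed to the pairing of two traces of order $s$ that produced $\Gamma^2(1+s)$ in \eqref{RS-DFW}; as there, I would compute it by flattening the boundary and reducing to the half-space model. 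A possibly cleaner alternative route to \eqref{eq:poisson-s} is to invoke the representation of large $s$-harmonic functions through their Poisson/Martin kernel (see \cite{Abatangelo, ASS} and the references therein): every $u\in\mathcal H_s$ satisfies $u(x)=\int_{\partial\Omega}P_\Omega(x,\xi)\,(u/\delta^{s-1}_\Omega)(\xi)\,d\sigma(\xi)$, and by the boundary Harnack principle $P_\Omega(x,\cdot)$ is a constant multiple of $\xi\mapsto\lim_{y\to\xi}G_s(x,y)/\delta^s_\Omega(y)$; the multiple equals $\Gamma(s)\Gamma(s+1)$, as one reads off from the constant-trace large solution $w(x):=\int_{\partial\Omega}\lim_{y\to\theta}\big(G_s(x,y)/\delta^s_\Omega(y)\big)\,d\sigma(\theta)$, which satisfies $\delta^{1-s}_\Omega(x)\,w(x)\to 1/\big(\Gamma(s)\Gamma(s+1)\big)$ as $x\to\partial\Omega$ — a fact one checks on the half-space, or directly on $\Omega=(-1,1)$ with $s=1/2$, where the limit equals $2/\pi=1/(\Gamma(1/2)\Gamma(3/2))$.
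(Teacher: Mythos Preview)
Your overall architecture is correct and matches the paper's: reduce everything to the Poisson-type representation \eqref{eq:poisson-s}, then run the standard Riesz identification to get \eqref{l-formu-s}. Where you diverge is in how to obtain \eqref{eq:poisson-s}. You propose either to re-derive the fractional Green identity
\[
\int_\Omega v\,(-\Delta)^s u\,dz=\int_\Omega u\,(-\Delta)^s v\,dz-\Gamma(s)\Gamma(s+1)\int_{\partial\Omega}\frac{v}{\delta^s_\Omega}\,\frac{u}{\delta^{s-1}_\Omega}\,d\sigma
\]
from scratch via the $\eta_k\psi_{\mu,x}$ approximation machinery of Sections~\ref{sec4.1}--\ref{sec4}, or to go through the Martin-kernel representation of \cite{Abatangelo}. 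The paper instead simply \emph{quotes} this identity as an established result of Grubb \cite{Grubb-2014,Grubb-2015,Grubb-2016,Grubb-2020}, stated just before the proof, and applies it with $v=G_s(x,\cdot)$ and $w=u$; the proof is then three lines. So your plan is sound but works much harder than necessary: the boundary constant $\Gamma(s)\Gamma(s+1)$ you flag as ``the crux'' is already packaged in Grubb's formula, and the delicate $k\to\infty$ analysis with $u\sim\delta^{s-1}_\Omega$ that you (rightly) anticipate as nontrivial is precisely what Grubb's $\mu$-transmission calculus handles. Your Martin-kernel alternative is closer in spirit to a direct citation and would also work, but again the paper sidesteps it entirely.
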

The proof of Theorem \ref{s-Lions-theo} is greatly simplified by some recent progresses on the the nonhomogeneous boundary value problem for the fractional laplacian.
Actually, it has been recently studied through Fourier technics by Grubb in the series of paper \cite{Grubb-2014,Grubb-2015,Grubb-2016,Grubb-2017,Grubb-2020,Grubb-2021-en,Grubb-2022} with, in particular, the following outcomes: 
the nonhomogeneous Dirichlet problem:
\begin{equation}\label{eq-intro}
\left\{ \begin{array}{rcll} 
(-\Delta)^s u&=&0  &\textrm{in }\Omega \\ 
 \frac{u}{\delta_\Omega^{s-1} }&=&g& \textrm{on } \partial \Omega, \end{array}\right. 
\end{equation}
is well-posed in some appropriate Sobolev-type spaces (at least when  the integrability index is $p=2$ otherwise the natural trace spaces are rather Besov spaces). Also, Grubb has obtained an integration by parts formula for functions  $v$ and $w$ satisfying some nonhomogeneous boundary conditions in the sense that
 $\frac{v}{\delta_\Omega^{s-1}}$ and  $\frac{w}{\delta_\Omega^{s-1}}$ are not zero on $ \partial \Omega$. In the case where only one of them is not zero, that is the case which we need below, say $w$, this formula reads:
\begin{equation}
\int_\Omega  v(-\Delta)^sw\,dz =
\int_\Omega  w(-\Delta)^sv\,dz
 - \Gamma(s) \Gamma(s+1) \int_{\partial\Omega}\frac{v}{\delta^{s}_\Omega}\frac{w}{\delta^{s-1}_\Omega} \,d\sigma.
\end{equation}
We can now embark on the proof of Theorem \ref{s-Lions-theo}.
\begin{proof}
By applying the latter formula to the Green's functions  $G_s(x,\cdot)$, we obtain for any function $u$ in the space $\mathcal H_s$, the representation formula:  for any $x$ in $\Omega$,
\begin{equation}\label{eq:po1s}
u(x) =  < u, \Gamma(s) \Gamma(s+1) \frac{G_s(x,\cdot)}{\delta_\Omega}>_s .
\end{equation}
Since for any $x$ in $\Omega$, the function $\frac{G_s(x,\cdot)}{\delta^{s}_\Omega}$ is in $L^2 (\partial \Omega)$, we deduce that  the space  $\mathcal H_s$ is  a Hilbert space and that the evaluation mappings 
 $E_x$ from $\mathcal H$ to $\R$ which maps $f$ to $E_x (f) = f(x)$
are linear continuous. Then it follows from Riesz' theorem that  for any $x$ in $\Omega$, there exists $K_x$ in $\mathcal H_s$ such that for any $u$ in $\mathcal H_s$, $u(x) =<u,K_x>_s$.
 Then by identification,
 we conclude that $K_x = \Gamma(s) \Gamma(s+1) \frac{G_s(x,\cdot)}{\delta_\Omega}$, hence the formula  \eqref{l-formu-s} for the  $2$-points kernel $K$.
\end{proof}

Thus let us stress that  the formula \eqref{l-formu} is  very similar to the Hadamard variation formula for the Green function $G_s$, see \eqref{var-green-bis}, proving that the connection in \cite{ELPL} 
 between the classical Hadamard variation formula and the reproducing kernel associated with the classical Laplace operator and for a large class of second order elliptic systems, also holds true for some elliptic operators of fractional order $2s$, invalidating the conjecture made in  \cite{ELPL}.

One may also easily check that 
 the formula  \eqref{l-formu-s} is consistent with the classical Lions formula  \eqref{l-formu} for the Laplace operator, since as $s \rightarrow 1^-$, 
 $\frac{G_s(y,\cdot)}{\delta^{s}_\Omega} \rightarrow \partial_\nu G(y,\cdot)$ and 
 $\Gamma^2(s) \Gamma^2(s+1) \rightarrow \Gamma(1)^2 \Gamma(2)^2 = 1$.

Let us mention that it could be useful for various applications to determine the boundary behaviour of the reproducing kernel thanks, what could, perhaps, be done by using the knowledge of  the boundary behaviour of the Green function $G_s$ and the fractional Lions formula \eqref{l-formu-s}.

\begin{remark}
As a complementary remark regarding the similarity, or not, of Lions' type formula for two-points kernel of  RKHS with the Hadamard variation formula, let us highlight that there are some second order systems for which the two formula do not like quite the same. Indeed, consider the space $\mathfrak H$ of the traces on $\Omega$ of solutions to the 
the steady Stokes problem:
\begin{equation} \label{eq_stoke}
\left\{
\begin{array}{rcl}
- \Delta u + \nabla p &=& 0 \, \\
\operatorname{div} u &= & 0 \,.
\end{array}
\right.
\quad \text{ on $\Omega$} ,
\end{equation}
endowed with 
$$<u,v> :=  \int_{\partial \Omega}  u \cdot v  \, d\sigma.$$
We  recall that  the system  \eqref{eq_stoke} aims  at describing steady, incompressible fluids with zero-Reynolds number. %
In particular the second equation encodes the incompressibility of the fluid velocity field $u$, which is vector-valued, and this incompressibility constraint  translates into the presence  of the gradient of the fluid pressure in the first equation, the fluid pressure $p$ being itself scalar-valued.  We refer to \cite{Galdi}  for more.
\begin{equation} \label{eq_Newt2}
\Sigma(u,p) = 2 D(u) - p \mathbb{I}_3 \quad   \text{ where } 2 D(u):= \nabla u + (\nabla u)^T . 
\end{equation}
We also recall the following classical Poisson formula: for any solution $(u,p)$ of the steady Stokes problem \eqref{eq_stoke} we have 
\begin{equation} \label{eq_Green-St}
u(x)
= \int_{\partial \Omega} ( \Sigma (G(x,\cdot),P(x,\cdot))  n) \cdot u \,d\sigma.
\end{equation}
where $(G(x,\cdot),P(x,\cdot))$ is the Green function associated with 
 the Stokes system and Dirac mass at position $x$, in the first equation of \eqref{eq_stoke}.
 A reasoning similar to the ones above leads to the conclusion that the space $\mathfrak H$ is a RHKS with the following
 Lions type formula for its $2$-points kernel $K$: 
$$K(x,y) =   \int_{\partial \Omega}  \Sigma(G(x,\cdot),P(x,\cdot))n \cdot \Sigma(G(y,\cdot),P(y,\cdot))n d\sigma. $$

 On the other hand, the Hadamard variation formula reads, see \cite{Simon,Ushikoshi}: 
 $$\delta G(x,y) =   \int_{\partial \Omega}  \partial_n G(x,\cdot) \cdot \partial_n G(y,\cdot) n d\sigma. $$
\end{remark}


\subsection{Organization of the rest of the paper}

The paper is organised as follow: in Section \ref{sec4.1} we give the proof of Theorem \ref{main-result}. Section \ref{sec4}, Section  \ref{sec-robin}, Section \ref{sec-thm4} and Section  \ref{HGatlast} are respectively devoted to the proofs of Theorem  \ref{Thm.2}, of  Corollary \ref{non-deg-frac-Robin}, of Theorem \ref{thm.3} and of  Theorem \ref{thm.5}.
 In the appendix we collect some technical lemmas that are used in the proof of the main results.

\section{Proof of Theorem \ref{main-result}}\label{sec4.1}

This section is devoted to the proof of Theorem \ref{main-result}.
In Section \ref{sec-scheme} we give the scheme of the proof of Theorem \ref{main-result}.
Two intermediate results are necessary in the course of the proof: Lemma \ref{Lem4.2} and Lemma \ref{Lem4.3} which are respectively proved in Section \ref{sec-pr-l22} and in Section \ref{sec8}.

\subsection{Scheme of the proof}\label{sec-scheme}

The starting point of the proof of Theorem \ref{main-result} is the following identity:
\begin{equation}\label{eq:pohozaev.1}
\int_\Omega \frac{\partial v}{\partial z_i}(-\Delta)^sw\,dz =
-\int_\Omega \frac{\partial w}{\partial z_i}(-\Delta)^sv\,dz-\Gamma^2(1+s)\int_{\partial\Omega}\frac{v}{\delta^{s}_\Omega}\frac{w}{\delta^{s}_\Omega}\,\nu_i\,d\sigma.
\end{equation}
which has been established for regular enough functions $v$
and $w$ in \cite[Theorem 1.3]{DFW2023} and \cite[Theorem 1.9]{RO-S}.
\par \,

However,  for any $x\in \Omega$, the function $G_s(x,\cdot)$  is too irregular to be admissible in \eqref{eq:pohozaev.1} in both these results.
To overcome this difficulty, we approximate $G_s(x,\cdot)$ by a $C^\infty_c(\Omega)$-function of the form $\eta_k \psi_{\mu,x}G_s(x,\cdot)$ where $\eta_k$  and $\psi_{\mu,x}$ are suitable cut-off functions which vanish near the boundary $\partial\Omega$ and near the singular point $x$ of the Green function respectively.
More precisely, let $\rho\in C^\infty_c(-2,2)$ such that $0\leq \rho\leq 1$ and $\rho=1$ in $(-1,1)$.
We have the following result regarding a double integral which involves the  radial cut-off associated with the function $\rho$. 
This  lemma is used below in the proof of Lemma \ref{Lem4.2} and of Lemma \ref{lem-lim-G-x-y}. Its proof is given in Section \ref{ser-rad}.
\begin{Lemma}\label{Lem6.1}
Let  $\rho\in C^\infty_c(-2,+2)$ such that $\rho\equiv 1$ in $(-1,+1)$. Then for all $s\in (0, 1)$ we have 

$$
\iint_{\R^N\times \R^N}\frac{\big(\rho(|y|^2)-\rho(|z|^2)\big)\big(|z|^{2s-N}-|y|^{2s-N}\big)}{|z-y|^{N+2s}}dydz<\infty.
$$
Moreover,
\begin{equation}\label{sec7-15}
b_{N,s}c_{N,s}\iint_{\R^N\times \R^N}\frac{\big(\rho(|y|^2)-\rho(|z|^2)\big)\big(|z|^{2s-N}-|y|^{2s-N}\big)}{|z-y|^{N+2s}}dydz
=-2.
\end{equation}
\end{Lemma}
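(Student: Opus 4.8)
The plan is to recognize the double integral as essentially the pairing $\langle \rho(|\cdot|^2), (-\Delta)^s F \rangle$ where $F(z) = b_{N,s}|z|^{2s-N}$ is the fundamental solution, up to the normalization constants. More precisely, since $(-\Delta)^s$ acts via the singular kernel $c_{N,s}\,\pv\!\int \frac{g(z)-g(y)}{|z-y|^{N+2s}}\,dy$, one can symmetrize: writing $\varphi(z) = \rho(|z|^2)$ and $g(z) = |z|^{2s-N}$, the integrand
$$
\frac{\big(\varphi(y)-\varphi(z)\big)\big(g(z)-g(y)\big)}{|z-y|^{N+2s}}
$$
is, after integrating in $y$ and using the definition of $(-\Delta)^s$, equal to $-\frac{1}{c_{N,s}}\varphi(z)\,(-\Delta)^s g(z)$ plus a symmetric remainder; more cleanly, the symmetrized quadratic form identity
$$
\iint \frac{(\varphi(y)-\varphi(z))(g(z)-g(y))}{|z-y|^{N+2s}}\,dy\,dz
= -\frac{2}{c_{N,s}}\int_{\R^N} \varphi(z)\,(-\Delta)^s g(z)\,dz
$$
holds once absolute convergence is justified, because the bilinear form $\iint \frac{(\varphi(y)-\varphi(z))(g(y)-g(z))}{|y-z|^{N+2s}}$ is symmetric in the roles of $\varphi$ and $g$ and equals $\frac{2}{c_{N,s}}\langle \varphi,(-\Delta)^s g\rangle = \frac{2}{c_{N,s}}\langle (-\Delta)^s\varphi,g\rangle$. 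Since $b_{N,s}|z|^{2s-N}$ is the fundamental solution, $(-\Delta)^s\big(b_{N,s}|z|^{2s-N}\big) = \delta_0$ in $\mathcal D'(\R^N)$, so $b_{N,s}\int \varphi\,(-\Delta)^s g = \varphi(0) = \rho(0) = 1$. This yields $b_{N,s}c_{N,s}\cdot(\text{integral}) = -2$, which is \eqref{sec7-15}.

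First I would establish the finiteness claim, which is the genuinely analytic part: split the region of integration into three zones — the near-diagonal zone $|y-z|\le \tfrac12\max(|y|,|z|)$, the zone near the origin where $|y|,|z| \lesssim 1$, and the far zone where both $|y|,|z| \gtrsim 1$ or the singularities of $g$ and the support of $\rho$ are well separated. In the near-diagonal zone, Taylor-expand: $|\varphi(y)-\varphi(z)| \lesssim |y-z|\,(1+|z|)$ on the support considerations and $|g(z)-g(y)| \lesssim |y-z|\,|z|^{2s-N-1}$ away from the origin, so the integrand is $\lesssim |y-z|^{2-N-2s}\cdot(\text{integrable weight in }z)$, and the $y$-integral over $|y-z|\le 1$ converges since $2-N-2s > -N$. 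Near the origin one exploits that $\rho(|y|^2)-\rho(|z|^2)$ is smooth and the product with $|z|^{2s-N}$ remains integrable because $2s-N > -N$; away from the origin and the support of $\rho$, exact decay bounds ($g$ decays like $|z|^{2s-N}$, the kernel like $|y-z|^{-N-2s}$) give convergence at infinity. This is standard but must be done carefully; it is the main obstacle in that it requires the zone decomposition and Taylor estimates, though nothing conceptually deep.

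Once absolute convergence is in hand, the symmetrization and the identification with the pairing against the fundamental solution are essentially formal: one approximates $g$ by $g_\varepsilon = (|z|^2+\varepsilon^2)^{(2s-N)/2}$ (or truncates near the origin) to make $(-\Delta)^s g_\varepsilon$ a genuine function, applies the symmetric integration-by-parts identity for the Gagliardo bilinear form with the Schwartz-class factor $\varphi = \rho(|\cdot|^2)$, uses $(-\Delta)^s g_\varepsilon \to \delta_0$ weakly (equivalently, $b_{N,s}\int \varphi\,(-\Delta)^s g_\varepsilon \to \varphi(0)$), and passes to the limit using the dominated convergence justified by the finiteness bound. Alternatively — and this may be cleaner to write — one notes that \eqref{sec7-15} can be read off directly from the splitting $G_s = F_s - H_\Omega$ and the Pohozaev-type machinery of the paper, but the self-contained route through $(-\Delta)^s F_s = \delta_0$ is shorter. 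The constant $-2$ then comes out automatically from $\varphi(0)=\rho(0)=1$ and the factor $2$ in the symmetrized form.
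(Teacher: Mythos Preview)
Your approach is correct and, for the identity \eqref{sec7-15}, essentially the same as the paper's: once absolute convergence is known, Fubini and the symmetry of the bilinear form reduce the double integral to $-\tfrac{2}{c_{N,s}}\int b_{N,s}|z|^{2s-N}(-\Delta)^s\varphi\,dz$ with $\varphi=\rho(|\cdot|^2)$, and the fundamental-solution property gives $\varphi(0)=1$. The paper goes directly to this by putting $(-\Delta)^s$ on the smooth compactly supported factor $\varphi$, which avoids your regularization $g_\varepsilon$ and the distributional reading of $(-\Delta)^s g=\delta_0$ altogether; you already note this alternative, and it is the cleaner route.

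For finiteness the decompositions differ. The paper does not use a near-diagonal/off-diagonal split but instead partitions according to the support structure of $\rho$ (the regions $B_1$, $B_4\setminus B_1$, $\R^N\setminus B_4$), treats $2s<1$ and $2s\ge 1$ separately, and in the borderline case $s=\tfrac12$ invokes a H\"older-type bound $|1-b^{1-N}|\le C(b-1)^\alpha$ to gain integrability. Your zone decomposition also works, but note that your ``near-origin'' zone is actually trivial: when $|y|,|z|<1$ one has $\rho(|y|^2)-\rho(|z|^2)=0$, so the only place the singularity of $g$ matters is the off-diagonal region where one variable is near $0$ and the other lies in the transition annulus $1\le|\cdot|^2\le 2$; there the kernel is bounded and integrability follows from $\int_{|y|<1}|y|^{2s-N}\,dy<\infty$. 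Making this explicit would sharpen your sketch.
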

Now, fix $\delta\in C^{1,1}(\R^N)$ such that $\delta$ coincides with the signed distance function $\delta_\Omega(x)=\dist(x,\R^N\setminus\Omega)-\dist(x,\Omega)$ near the boundary of $\Omega$. Moreover, we assume that $\delta$ is positive in $\Omega$ and negative in $\R^N\setminus\Omega$. Next for any $k\in \mathbb N^*$ we define 
\begin{equation}\label{eta-k}
    \eta_k: \R^N\to \R,\; y\mapsto \eta_k(y)=1-\rho(k\delta(y)) .
\end{equation}
Let us also from now on fixed $x\in \Omega$ , and for any $\mu\in(0,1)$, we also define  $\psi_{\mu,x}$ by 
\begin{equation}\label{psi-mu}
     \psi_{\mu,x}: \R^N\to \R,\; y\mapsto \psi_{\mu,x}(y)=1-\rho\Big(\frac{8}{\delta^2_\Omega(x)}\frac{|x-y|^2}{\mu^2}\Big).
\end{equation}
Note that thanks to the normalising constant $8/\delta^2_\Omega(x)$, the function
\begin{equation}\label{rho-mu}
\rho_{\mu,x}(\cdot):=\rho\Big(\frac{8}{\delta^2_\Omega(x)}\frac{|x-\cdot|^2}{\mu^2}\Big),
\end{equation}
is $C^\infty$ \underline{with} compact support in $B_{\frac{\delta_\Omega(x)}{2}}(x)\Subset\Omega$. This will be important to ensure that $\psi_{\mu,x}(\cdot)G_s(x,\cdot)$ satisfies the hypothesis of Lemma \ref{DFW} in further below. The following lemma contains some important properties of the cut-off function $\psi_{\mu,x}$. These properties will be used repeatedly throughout this manuscript.

\begin{Lemma}\label{lem-ps-mu-x}Let $\psi_{\mu,x}$ be defined as above. Then the following properties holds true:
\begin{itemize}
    \item For all $z\in\R^N$ we have 
    \begin{equation}\label{rescapsi}
    (-\Delta)^s\psi_{\mu,x}(z)=-\widetilde\mu^{-2s}(-\Delta)^s\big(\rho\circ |\cdot|^2\big)\big(\frac{z-x}{\widetilde\mu}\big)\;\;\text{where\;\;  $\widetilde\mu:=\frac{\delta_\Omega(x)}{2\sqrt{2}}\mu$}.
\end{equation}
    \item Let $G_s(x,\cdot)$ be the Green with singularity at $x$. Then $$(-\Delta)^s\big(\psi_{\mu,x}(\cdot)G_s(x,\cdot)\big)\in L^\infty(\Omega).$$ 
    \item Let $\mathcal I_s[\cdot,\cdot]$ be the bilinear form defined in \eqref{def-I-s} and let $\eta_k$ be given by \eqref{eta-k}. Then,  for all $\varepsilon>0$, we have 
\begin{gather} \label{psi-mu-equ}
    \lim_{\mu\to 0^+}
    \lim_{k\to\infty}\int_{\Omega}\eta_k^2(z)\partial_iu(z)\Bigg[G_s(x,\cdot)(-\Delta)^s\psi_{\mu,x}-\mathcal I_s\big[\psi_{\mu,x},G_s(x,\cdot)\big]\Bigg]dz 
    \\ \nonumber   =\lim_{\mu\to 0^+}\lim_{k\to\infty}\int_{B_\varepsilon(x)}\eta_k^2(z)\partial_i u(z)\Bigg[G_s(x,\cdot)(-\Delta)^s\psi_{\mu,x}-\mathcal I_s\big[\psi_{\mu,x},G_s(x,\cdot)\big]\Bigg]dz.
\end{gather}
\end{itemize}
Here we used the notation $\partial_i=\frac{\partial}{\partial z_i}$.
\end{Lemma}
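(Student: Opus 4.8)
\textbf{Proof proposal for Lemma \ref{lem-ps-mu-x}.}

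The plan is to treat the three items separately, using the scaling structure of $\psi_{\mu,x}$ as the organizing principle. For the first item, I would simply record the definition \eqref{psi-mu}: writing $\psi_{\mu,x}(z) = 1-\rho\big(\tfrac{8}{\delta_\Omega^2(x)}\tfrac{|z-x|^2}{\mu^2}\big)$, one recognizes $\rho_{\mu,x}(z) = (\rho\circ|\cdot|^2)\big(\tfrac{z-x}{\widetilde\mu}\big)$ with $\widetilde\mu = \tfrac{\delta_\Omega(x)}{2\sqrt2}\mu$, since $\tfrac{8}{\delta_\Omega^2(x)\mu^2} = \tfrac{1}{\widetilde\mu^2}$. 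Because $(-\Delta)^s$ is translation invariant and scales like $(-\Delta)^s\big(g(\cdot/\lambda)\big) = \lambda^{-2s}\big((-\Delta)^sg\big)(\cdot/\lambda)$, and since $(-\Delta)^s$ kills constants, we get $(-\Delta)^s\psi_{\mu,x}(z) = -(-\Delta)^s\rho_{\mu,x}(z) = -\widetilde\mu^{-2s}\big((-\Delta)^s(\rho\circ|\cdot|^2)\big)\big(\tfrac{z-x}{\widetilde\mu}\big)$, which is \eqref{rescapsi}. This is routine.

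For the second item, I would use the product rule for the fractional Laplacian: for suitable $v,w$,
\begin{equation*}
(-\Delta)^s(vw) = v\,(-\Delta)^sw + w\,(-\Delta)^sv - \mathcal I_s[v,w],
\end{equation*}
where $\mathcal I_s[v,w](z) = c_{N,s}\int_{\R^N}\tfrac{(v(z)-v(y))(w(z)-w(y))}{|z-y|^{N+2s}}\,dy$. Apply this with $v=\psi_{\mu,x}$ and $w=G_s(x,\cdot)$. The term $G_s(x,\cdot)(-\Delta)^s\psi_{\mu,x}$ is bounded on $\Omega$: away from $x$, $\psi_{\mu,x}\equiv 1$ so $(-\Delta)^s\psi_{\mu,x}$ is a convergent integral that decays, while near $x$ it is smooth and $G_s(x,\cdot)$ is integrable against it. The term $\psi_{\mu,x}(-\Delta)^sG_s(x,\cdot)$ vanishes in $\Omega$ away from $x$ (since $(-\Delta)^sG_s(x,\cdot)=0$ there in the distributional sense, and $\psi_{\mu,x}$ cuts off a neighborhood of $x$), so it contributes nothing as an $L^\infty$ function on $\Omega$. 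For $\mathcal I_s[\psi_{\mu,x},G_s(x,\cdot)]$, since $\psi_{\mu,x}$ is Lipschitz with $\psi_{\mu,x}(z)-\psi_{\mu,x}(y)$ supported (in $y$) near $x$ or vanishing, one splits the $y$-integral according to whether $|z-y|$ is small or large; the small-scale part uses Lipschitz continuity of $\psi_{\mu,x}$ and local integrability of $G_s(x,\cdot)$ (which is in $L^1_{loc}$ and even better away from its singularity), and the large-scale part uses the decay/boundedness of $G_s(x,\cdot)$ together with $|z-y|^{-N-2s}$ being integrable at infinity. Hence $(-\Delta)^s(\psi_{\mu,x}G_s(x,\cdot))\in L^\infty(\Omega)$. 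I would also note this is exactly the condition needed to invoke Lemma \ref{DFW}, as flagged in the text.

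For the third item, the point is that the integrand $G_s(x,\cdot)(-\Delta)^s\psi_{\mu,x} - \mathcal I_s[\psi_{\mu,x},G_s(x,\cdot)]$ is supported, up to negligible contributions, in $B_\varepsilon(x)$ once $\mu$ is small. Indeed $(-\Delta)^s\psi_{\mu,x}$ and the difference $\psi_{\mu,x}(z)-\psi_{\mu,x}(y)$ both involve $\rho_{\mu,x}$, which for $\mu$ small is supported in $B_{\widetilde\mu\sqrt2}(x)\subset B_\varepsilon(x)$. So for $z\notin B_\varepsilon(x)$, once $\mu\ll\varepsilon$: one has $(-\Delta)^s\psi_{\mu,x}(z) = -\widetilde\mu^{-2s}\big((-\Delta)^s(\rho\circ|\cdot|^2)\big)\big(\tfrac{z-x}{\widetilde\mu}\big)$, whose argument has norm $\geq \varepsilon/\widetilde\mu \to\infty$, and since $(-\Delta)^s(\rho\circ|\cdot|^2)(\xi)$ decays like $|\xi|^{-N-2s}$ at infinity, this is $O(\widetilde\mu^{-2s}(\varepsilon/\widetilde\mu)^{-N-2s}) = O(\widetilde\mu^{N}\varepsilon^{-N-2s})$ pointwise, which when integrated against the $L^1$ function $\eta_k^2\partial_iu\, G_s(x,\cdot)$ over $\Omega\setminus B_\varepsilon(x)$ tends to $0$ as $\mu\to 0$, uniformly in $k$. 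Similarly $\mathcal I_s[\psi_{\mu,x},G_s(x,\cdot)](z)$ for $z\notin B_\varepsilon(x)$ equals $c_{N,s}\int_{B_\varepsilon(x)}\tfrac{(\rho_{\mu,x}(z)-\rho_{\mu,x}(y))(G_s(x,z)-G_s(x,y))}{|z-y|^{N+2s}}dy$ with $\rho_{\mu,x}(z)=0$, so it is $-c_{N,s}\int_{B_{\widetilde\mu\sqrt2}(x)}\tfrac{\rho_{\mu,x}(y)(G_s(x,z)-G_s(x,y))}{|z-y|^{N+2s}}dy$, and since $|z-y|\geq \varepsilon - \widetilde\mu\sqrt2 \geq \varepsilon/2$ on this set, one bounds it by $C\varepsilon^{-N-2s}\big(|G_s(x,z)|\widetilde\mu^N + \int_{B_{\widetilde\mu\sqrt2}(x)}|G_s(x,y)|dy\big)$; using $G_s(x,\cdot)\in L^1_{loc}$ near $x$ and the estimate $|G_s(x,y)|\lesssim |x-y|^{2s-N}$, this too goes to $0$ after integrating over $\Omega\setminus B_\varepsilon(x)$ against the $L^1$ weight, uniformly in $k$. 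Subtracting the $\Omega$-integral and the $B_\varepsilon(x)$-integral gives exactly the $\Omega\setminus B_\varepsilon(x)$-integral, which we have just shown is $o(1)$ as $\mu\to 0^+$ uniformly in $k$, and then taking first $k\to\infty$ and then $\mu\to 0^+$ yields \eqref{psi-mu-equ}.

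The main obstacle I anticipate is the uniformity in $k$ of these estimates in the third item: one must be careful that the bounds on the tail contributions do not deteriorate as the boundary cut-off $\eta_k\to 1$, which requires controlling $\int_{\Omega\setminus B_\varepsilon(x)}|\partial_iu(z)||G_s(x,z)|\,dz$ independently of $k$ — this follows from the a priori regularity recalled in the first remark after Theorem \ref{main-result} ($\partial_i u\in C^1_{loc}(\Omega)$, $\delta^{1-s}\nabla u\in L^\infty$, and $G_s(x,\cdot)/\delta_\Omega^s$ integrability, hence $\partial_iu\, G_s(x,\cdot)\in L^1(\Omega)$). A secondary technical point is making the product rule and the splitting of $\mathcal I_s$ rigorous given only the limited regularity of $G_s(x,\cdot)$; here one leans on the fact that $\rho_{\mu,x}$ is smooth and compactly supported strictly inside $\Omega$ and away from where $G_s(x,\cdot)$ is irregular except at the single point $x$, so all the singular-kernel manipulations are justified by dominated convergence with the explicit power-law bounds on $G_s$.
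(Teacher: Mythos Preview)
Your arguments for the first and third items are correct and essentially match the paper's: for item~3 the paper likewise shows that the contribution from $\Omega\setminus B_\varepsilon(x)$ vanishes by bounding $(-\Delta)^s\psi_{\mu,x}(z)$ and $\mathcal I_s[\psi_{\mu,x},G_s(x,\cdot)](z)$ for $|z-x|>\varepsilon$ and then applying dominated convergence, exactly as you propose.

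The gap is in item~2. Your product-rule decomposition yields three terms, but the claim that $G_s(x,\cdot)\,(-\Delta)^s\psi_{\mu,x}\in L^\infty(\Omega)$ is false. For $z$ close to $x$ (say $|z-x|<\widetilde\mu/2$), the quantity $(-\Delta)^s\psi_{\mu,x}(z)$ is a \emph{nonzero} bounded number (the support of $\rho_{\mu,x}$ stays at distance $\geq\widetilde\mu/2$ from $z$, so the integral converges to something of fixed sign), while $G_s(x,z)\sim|x-z|^{2s-N}\to\infty$. Hence the product blows up at $x$. The same blow-up is present in $\mathcal I_s[\psi_{\mu,x},G_s(x,\cdot)](z)$; the two singular parts cancel in the difference, but since you argue term by term, your reasoning as written does not go through. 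The paper sidesteps this by using the splitting $G_s(x,\cdot)=F_s(x,\cdot)-H_\Omega(x,\cdot)$ and writing
\[
\psi_{\mu,x}G_s(x,\cdot)=b_{N,s}\,\psi_{\mu,x}|x-\cdot|^{2s-N}-H_\Omega(x,\cdot)+\rho_{\mu,x}H_\Omega(x,\cdot):
\]
the first summand is in $C^\infty(\R^N)\cap L^\infty(\R^N)$ (the cutoff kills the singularity), the second is $s$-harmonic in $\Omega$, and the third is $C^\infty_c(\Omega)$, so $(-\Delta)^s$ of each piece is separately in $L^\infty(\Omega)$. Your route can be repaired by explicitly exhibiting the cancellation between $G_s(x,\cdot)(-\Delta)^s\psi_{\mu,x}$ and $\mathcal I_s[\psi_{\mu,x},G_s(x,\cdot)]$ near $x$, but the decomposition argument is more direct.
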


\begin{proof}
The first item is a simple consequence of the integral definition of the operator $(-\Delta)^s$. To check the second item, we write 
 $$
 \psi_{\mu,x}(z)G_s(x,z)=b_{N,s} \frac{\psi_{\mu,x}(z)}{|x-z|^{N-2s}}-H_s(x,z)-\rho_{\mu,x}(z)H_s(x,z)
 $$
 where we recall the notation \eqref{rho-mu}. It is clear that $\frac{\psi_{\mu,x}(z)}{|x-z|^{N-2s}}\in C^\infty(\R^N)\cap L^\infty(\R^N)$ and therefore $(-\Delta)^s\big(\psi_{\mu,x}(\cdot)|x-\cdot|^{-N-2s}\big)\in L^\infty(\Omega)$. We also have $(-\Delta)^s H_s(x,\cdot)=0$ in $\Omega$ by definition. Finally, since $\rho_{\mu,x}(\cdot)H_s(x,\cdot)\in C^\infty_c(B_{\frac{\delta(x)}{2}}(x))$, one easily check that $(-\Delta)^s\big(\rho_{\mu,x}(\cdot)H_s(x,\cdot)\big)\in L^\infty(\Omega)$. To check the last item, we expand and examine, separately, the two terms corresponding to the integrals on the complement set $\Omega\setminus B_\varepsilon(x)$.
First, 
 if $|x-z|>\varepsilon$ for some $\varepsilon>0$, then for $\mu>0$ sufficiently small, we have $\psi_{\mu,x}(z)=1$. Thus for $\mu>0$ sufficiently small, we get 
\begin{align}\label{Eq-Delta-s-psi-1}
  \frac{1}{c_{N,s}}\big|(-\Delta)^s\psi_{\mu,x}&(z)\big|=\pv\int_{\R^N}\frac{1-\psi_{\mu,x}(y)}{|z-y|^{N+2s}}dy=\int_{B_{\widetilde\mu}(x)}\frac{\rho\Big(\frac{8}{\delta^2_\Omega(x)}\frac{|x-y|^2}{\mu^2}\Big)}{|z-y|^{N+2s}}dy\nonumber\\
  &\leq \int_{B_{\varepsilon/4}(x)}\frac{\rho\Big(\frac{8}{\delta^2_\Omega(x)}\frac{|x-y|^2}{\mu^2}\Big)}{|x-y|^{N+2s}}dy\nonumber\\
  &\leq C(\varepsilon)\int_{B_{\varepsilon/4}(x)}\rho\Big(\frac{8}{\delta^2_\Omega(x)}\frac{|x-y|^2}{\mu^2}\Big)dy\leq C(\varepsilon).
\end{align}
where $\widetilde\mu=\frac{\delta_\Omega(x)}{2}\mu$. Moreover, by \eqref{Eq-Delta-s-psi-1} we have $\lim_{\mu\to 0}(-\Delta)^s\psi_{\mu,x}(z)=0$. Next, since $G_s(x,\cdot)\partial_iu\in L^1(\Omega\setminus B_\varepsilon(x))$, we deduce by the dominated convergence theorem that 
\begin{align}\label{sec7-3}
    \lim_{\mu\to 0}\lim_{k\to\infty}\int_{\Omega\setminus B_\varepsilon(x)}\eta_k^2G_s(x,\cdot)\partial_iu(-\Delta)^s\psi_{\mu,x}dz=0.
\end{align}

Similarly, if $|x-z|>\varepsilon$, then for $\mu>0$ sufficiently small we have 
\begin{align*}\label{sec7-4}
    \big|\mathcal I_s[\psi_{\mu,x},G_s(x,\cdot)](z)\big|&=\Big|\pv\int_{\R^N}\frac{(1-\psi_{\mu,x}(y))(G_s(x,z)-G_s(x,y))}{|y-z|^{N+2s}}dy\Big|\nonumber\\
    &=\Big|\int_{B_{\widetilde\mu}(x)}\frac{\rho\Big(\frac{8}{\delta^2_\Omega(x)}\frac{|x-y|^2}{\mu^2}\Big)\Big(G_s(x,z)-G_s(x,y)\Big)}{|y-z|^{N+2s}}dy\Big|\nonumber\\
    &\leq C(\varepsilon)\int_{B_{\varepsilon/4}(x)}\rho\Big(\frac{8}{\delta^2_\Omega(x)}\frac{|x-y|^2}{\mu^2}\Big)\Big(1+|G_s(x,y)|\Big)dy\leq C(\varepsilon),
\end{align*}
and $\mathcal I_s[\psi_{\mu,x},G_s(x,\cdot)](z)\to 0$ as $\mu\to 0^+$. Since $\partial_iu\in L^1(\Omega)$, we deduce by the dominated convergence theorem that
\begin{equation}\label{sec7-5}
    \lim_{\mu\to 0^+}\lim_{k\to\infty}\int_{\Omega\setminus B_\varepsilon(x)}\eta_k^2(z)\partial_i u(z)\mathcal I_s[\psi_{\mu,x},G_s(x,\cdot)](z)dz=0.
\end{equation}
Claim \eqref{sec7-2} follows by combining  \eqref{sec7-3} and \eqref{sec7-5}.
\end{proof}

From now on, let also the integer $i$ be fixed equal to $1$ or $2$ or $\cdots $ or $N$.
 For any $k\in \mathbb N^*$, 
we use \eqref{eq:pohozaev.1} with
$\eta_k u\in C^1_c(\Omega)$ and $\eta_k\psi_{\mu,x}G_s(x,\cdot)\in C^\infty_c(\Omega)$ instead of $v$ and $w$; we arrive at the identity: 
\begin{align}\label{key-id}
     \int_{\Omega}\partial_i(\eta_k u)(-\Delta)^s\Big(\eta_k\psi_{\mu,x}G_s(x,\cdot)\Big)dz=-\int_{\Omega}\partial_i\big(\eta_k \psi_{\mu,x}G_s(x,\cdot)\big)(-\Delta)^s(\eta_ku)\,dz,
 \end{align}
 where we write $\partial_i=\frac{\partial }{\partial z_i}$.    Let us highlight that there is no boundary term since $\eta_k u$ is compactly supported.  Let us now recall the fractional product rule: 
 
 \begin{equation}\label{pl}
(-\Delta)^s (uv) =  v(-\Delta)^s u + u (-\Delta)^s v  -\mathcal I_s [u,v],
\end{equation}
where
\begin{align}\label{def-I-s}
\mathcal I_s [u,v] :=  c_{N,s} \, \pv\int_{\R^N} \frac{(u(\cdot)-u(y)) (v(\cdot)-v(y))}{|\cdot-y|^{N+2s}}dy .
\end{align}
 On the one hand we apply the product law \eqref{pl} to 
 $u= \eta_k$ and $v=G_s(x,\cdot)\psi_{\mu,x} $ to obtain that 
 the left hand side of \eqref{key-id} also reads as
 %
 \begin{align}
     &\int_{\Omega}\partial_i(\eta_k u)(z)(-\Delta)^s\Big(\eta_k\psi_{\mu,x}G_s(x,\cdot)\Big)(z)dz\nonumber\\
     &=\int_{\Omega}\partial_i (\eta_ku)(z)\Bigg(\psi_{\mu,x}G_s(x,\cdot)(-\Delta)^s\eta_k-\mathcal I_s\big[\eta_k, G_s(x,\cdot)\psi_{\mu,x}\big]\Bigg)dz\nonumber\\
     &\quad+\int_{\Omega}\eta_k(z)\partial_i (\eta_ku)(z)(-\Delta)^s\Big(\psi_{\mu,x}G_s(x,\cdot)\Big)dz . \label{rhs1}
 \end{align}
 On the other hand, we  apply the product law \eqref{pl} to 
 $u$ and $v= \eta_k$ to obtain that 
 the opposite of the right hand side of \eqref{key-id} also reads as
 \begin{align}
     \int_{\Omega}\partial_i\Big(\eta_k \psi_{\mu,x}G_s(x,\cdot)\Big)(-\Delta)^s(\eta_ku)\,dz\nonumber
     &=\int_{\Omega}\partial_i\Big(\eta_k \psi_{\mu,x}G_s(x,\cdot)\Big)\Big(u(-\Delta)^s\eta_k-\mathcal I_s(u,\eta_k)\Big)dz\nonumber\\
     &\quad+\int_{\Omega}\eta_k\partial_i\Big(\eta_k \psi_{\mu,x}G_s(x,\cdot)\Big)(-\Delta)^su(z)dz . \label{rhs2}
 \end{align}

Let us set for any $\mu\in(0,1)$,  for any $k\in \mathbb N*$, 
\begin{align*}
 \mathcal A_{k,\mu}(x) &:= \int_{\Omega}\eta_k(z)\partial_i\big(\eta_k u\big)(z)(-\Delta)^s\Big(\psi_{\mu,x}G_s(x,\cdot)\Big)(z)dz, 
\\ \mathcal  B_{k,\mu} (x)&:=  \int_{\Omega}\partial_i (\eta_ku)(z)\Bigg(\psi_{\mu,x}G_s(x,\cdot)(-\Delta)^s\eta_k-\mathcal I_s\big[\eta_k, \psi_{\mu,x} G_s(x,\cdot)\big]\Bigg)dz,
\\ \mathcal C_{k,\mu}(x) &:= \int_{\Omega}\eta_k\partial_i\Big(\eta_k \psi_{\mu,x}G_s(x,\cdot)\Big)(-\Delta)^su(z)dz,
\\ \mathcal D_{k,\mu}(x) &:=  \int_{\Omega}\partial_i\Big(\eta_k \psi_{\mu,x}G_s(x,\cdot)\Big)\Big(u(-\Delta)^s\eta_k-\mathcal I_s[u,\eta_k]\Big)dz . \end{align*}
 Therefore, combining \eqref{key-id},  \eqref{rhs1} and  \eqref{rhs2}, we arrive at 
\begin{equation} \label{all}
   \mathcal  A_{k,\mu}(x) +\mathcal  C_{k,\mu}(x) = - \mathcal B_{k,\mu}(x) -\mathcal  D_{k,\mu}(x)  .
\end{equation}
Note that all the quantities above are well defined. The rest of the proof of Theorem \ref{main-result}  consists in passing to the limit, as $\mu\to 0^+$ and $k\to\infty$, in all the terms of \eqref{all}.

For the first terms in these r.h.s. we apply the following result from  \cite{DFW, DFW2023}. 

\begin{Lemma}\label{DFW}
Let $v$ and $w$ be such that $v\equiv 0\equiv w$ in $\R^N\setminus\Omega$. Assume moreover that $w\in C^s(\R^N)$, $v/\delta_\Omega^s, w/\delta_\Omega^s\in C^\alpha(\overline{\Omega})$ and 
 \begin{equation}\label{Gredient-estimate-FallJarohs}
\delta^{1-\alpha}_\Omega\nabla\big(v/\delta_\Omega^s\big) \;\;\text{is bounded near the boundary $\partial\Omega$}\; \text{for some}\; \alpha\in (0,1).
\end{equation}
 Then for all $Y\in C^{0,1}(\R^N,\R^N)$, there holds
 \begin{align}\label{DFW-id}
     &\lim_{k\to\infty}\int_{\Omega}\nabla(\eta_k w)\cdot Y \Big[w(-\Delta)^s\eta_k-\mathcal I_s\big[\eta_k, w\big]\Big]dz
     =\frac{\Gamma^2(1+s)}{2}\int_{\partial\Omega}\frac{v}{\delta_\Omega^s}\frac{w}{\delta_\Omega^s}Y\cdot\nu d\sigma
 \end{align}
where $\nu$ is the outward unit normal to the boundary.
\end{Lemma}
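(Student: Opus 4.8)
The statement is established in \cite{DFW, DFW2023} (with $\nabla(\eta_k v)$ in place of $\nabla(\eta_k w)$ in the first factor, consistently with the right-hand side and with the way the lemma is used below); let me recall the strategy, since the same mechanism underlies the present paper. The first step is to collapse the bracket into a single principal value: by the fractional product rule \eqref{pl},
$$w(-\Delta)^s\eta_k - \mathcal I_s[\eta_k,w] = (-\Delta)^s(\eta_k w) - \eta_k(-\Delta)^s w = c_{N,s}\,\pv\int_{\R^N}\frac{(\eta_k(\cdot)-\eta_k(\xi))\,w(\xi)}{|\cdot-\xi|^{N+2s}}\,d\xi =: B_k ,$$
so that the quantity to compute is $\int_\Omega \nabla(\eta_k v)\cdot Y\,B_k\,dz$. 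Since $1-\eta_k=\rho(k\delta_\Omega)$ is supported in $\{|\delta_\Omega|<2/k\}$, the factor $B_k$ decays away from $\partial\Omega$, with $|B_k(z)|\lesssim k^{-1-s}\delta_\Omega(z)^{-1-2s}$ once $\delta_\Omega(z)\ge 2/k$; combined with $|v|,|w|\lesssim\delta_\Omega^s$ and $|\nabla v|\lesssim\delta_\Omega^{s-1}$ (the latter from \eqref{Gredient-estimate-FallJarohs}), this shows the contribution of $\{\delta_\Omega\ge c\}$ tends to $0$ for every fixed $c>0$, so that the whole limit localizes in an $O(1/k)$-neighbourhood of $\partial\Omega$.

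The heart of the proof is a blow-up at the boundary. Using a finite partition of unity subordinate to a $C^{1,1}$ atlas of $\partial\Omega$, I would flatten $\partial\Omega$ near each point $y_0$ and rescale by a factor $k$, writing $z=y_0+k^{-1}x$ with $x_N=k\delta_\Omega(z)>0$. Then $\eta_k$ converges to the one-dimensional profile $\chi(x_N):=1-\rho(x_N)$; the $C^\alpha$-regularity of $v/\delta_\Omega^s$ and $w/\delta_\Omega^s$ allows one to freeze these (and $Y$) at $y_0$ and to replace $\delta_\Omega$ by the flat distance $x_N$, all remainders being of lower order in $k$. Keeping track of the Jacobians, $\nabla(\eta_k v)\cdot Y$ rescales to $k^{1-s}(v/\delta_\Omega^s)(y_0)(Y\cdot\nu)(y_0)\,\partial_{x_N}(x_N^s\chi)$ and $B_k$ to $c_{N,s}\,k^{s}(w/\delta_\Omega^s)(y_0)\,\Phi(x_N)$, with $\Phi$ a function of $x_N$ only; the normal slice of $dz$ carries a factor $k^{-1}$, so the powers of $k$ cancel and, summing over the atlas,
$$\int_\Omega \nabla(\eta_k v)\cdot Y\,B_k\,dz \;\longrightarrow\; c_{N,s}\Big(\int_0^\infty \partial_{x_N}(x_N^s\chi)\,\Phi(x_N)\,dx_N\Big)\int_{\partial\Omega}\frac{v}{\delta_\Omega^s}\,\frac{w}{\delta_\Omega^s}\,Y\cdot\nu\,d\sigma .$$
It then remains to evaluate the one-dimensional constant, which equals $\Gamma^2(1+s)/2$: this is the half-line analogue of the computation behind Lemma \ref{Lem6.1} (and, as there, is independent of the choice of $\rho$), and it coincides with the Pohozaev constant of \cite{RO-S, DFW2023}.

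The main obstacle is to make this blow-up rigorous. The coefficient $\partial_{x_N}(x_N^s\chi)$ decays only like $x_N^{s-1}$ at infinity while $\Phi(x_N)$ decays like $x_N^{-1-2s}$, so the limiting integral is only borderline absolutely convergent; consequently each replacement of $v/\delta_\Omega^s$, $w/\delta_\Omega^s$, $Y$, and of the curved geometry by their frozen or flattened counterparts has to be controlled in an $L^1$ sense with a quantitative gain, uniformly in $k$, not merely pointwise. One must also dispose of the long-range part $|z-\xi|\gtrsim 1$ of the principal value defining $B_k$, which does not localize but is readily seen to be $O(1/k)$. Once these estimates and the one-dimensional constant identity are in place, the lemma follows; the complete argument is carried out in \cite{DFW, DFW2023}.
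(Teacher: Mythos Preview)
The paper does not prove this lemma at all: it is quoted verbatim as ``the following result from \cite{DFW, DFW2023}'' and then applied. Your proposal is therefore not to be compared with a proof in the paper but with the argument in those references, and your sketch is a faithful outline of that argument---the product-rule rewriting of the bracket, the localization to an $O(1/k)$ collar via the decay of the $\eta_k$-difference kernel, the boundary blow-up through the tubular map $\Psi(\sigma,r)=\sigma+r\nu(\sigma)$ (exactly the device the present paper reuses in Lemma~\ref{lemma-other} and in Section~\ref{HGatlast}), and the identification of the one-dimensional constant. You also correctly flag the typo in the stated identity: the first factor should be $\nabla(\eta_k v)\cdot Y$, not $\nabla(\eta_k w)\cdot Y$, as is clear from the right-hand side and from how the lemma is invoked in \eqref{B}--\eqref{D}.
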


We apply this result to both the couples 
$(v,w) =( u ,\psi_{\mu,x}G_s(x,\cdot) )$
and 
$(v,w) =( \psi_{\mu,x}G_s(x,\cdot) , u)$ with $Y=e_i=(0,\cdots,1,0,\cdots,0)$, and then pass to the limit as $\mu\to 0^+$
to obtain that %
\begin{align} \label{B}
\lim_{\mu\to 0^+}\lim_{k\to\infty} \mathcal B_{k,\mu}(x) &= \frac{\Gamma^2(1+s)}{2}\int_{\partial \Omega} \frac{u(z)}{\delta^s_\Omega (z)} \frac{G_s(x,z)}{\delta^s_\Omega (z)} \nu_i(z)d\sigma(z) ,
    \\ \label{D}
    \lim_{\mu\to 0^+}\lim_{k\to\infty} \mathcal D_{k,\mu}(x) &= \frac{\Gamma^2(1+s)}{2}\int_{\partial \Omega} \frac{u(z)}{\delta^s_\Omega (z)} \frac{G_s(x,z)}{\delta^s_\Omega (z)} \nu_i(z)d\sigma(z) .
\end{align}
\par\;

We note that the couple $(u,\psi_{\mu,x}G_s(x,\cdot))$ satisfies the hypothesis of the Lemma above by regularity theory. Indeed, by \cite{RS, FallSven} to get that a function $w:\R^N\to \R$ with $w\equiv 0$ in $\R^N\setminus\Omega$ satisfies the hypothesis of the Lemma, it suffices to check that $(-\Delta)^sw\in L^\infty(\Omega)$. By assumption, it is clear that $(-\Delta)^su\in L^\infty(\Omega)$ and also by Lemma \ref{lem-ps-mu-x} we also have $(-\Delta)^s\Big(G_s(x,\cdot)\psi_{\mu,x}\Big)\in L^\infty(\Omega)$.
\par\;

Now, regarding 
 the passage to the limit of $\mathcal A_{k,\mu}(x)$ and $\mathcal C_{k,\mu}(x)$ 
 we have the following pair of results which are respectively proved in Section \ref{sec-pr-l22} and in Section \ref{sec8}.
 \begin{Lemma}\label{Lem4.2}
For all $x\in \Omega$, we have 
  \begin{equation} \label{Lem4.2-id}
\lim_{\mu\to 0^+}\lim_{k\to\infty} \mathcal A_{k,\mu}(x)
=
\frac{\partial u}{\partial x_i}(x).
\end{equation}
 \end{Lemma}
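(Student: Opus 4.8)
The plan is to identify $\mathcal{A}_{k,\mu}(x)$ as, up to controlled errors, a regularized version of the representation formula $\partial_i u(x) = \partial_i \int_\Omega G_s(x,y) f(y,u(y))\,dy$ evaluated away from the singularity, and to show that the cut-offs $\eta_k$ and $\psi_{\mu,x}$ do not contribute in the limit. First I would expand the Leibniz rule in the integrand of $\mathcal{A}_{k,\mu}(x)$, writing $\partial_i(\eta_k u) = (\partial_i\eta_k)u + \eta_k \partial_i u$, so that
\begin{equation*}
\mathcal{A}_{k,\mu}(x) = \int_\Omega \eta_k^2\,\partial_i u\,(-\Delta)^s\bigl(\psi_{\mu,x}G_s(x,\cdot)\bigr)\,dz + \int_\Omega \eta_k (\partial_i\eta_k)\,u\,(-\Delta)^s\bigl(\psi_{\mu,x}G_s(x,\cdot)\bigr)\,dz.
\end{equation*}
Because $(-\Delta)^s(\psi_{\mu,x}G_s(x,\cdot)) \in L^\infty(\Omega)$ by Lemma \ref{lem-ps-mu-x}, and $\eta_k(\partial_i\eta_k) u$ is supported in a shrinking neighbourhood of $\partial\Omega$ where $u/\delta_\Omega^s$ is bounded while $|\partial_i\eta_k| \lesssim k$ on a band of width $\sim 1/k$, the second integral is $O(1)$ but one must check it actually vanishes as $k\to\infty$; I would estimate it using $|u| \lesssim \delta_\Omega^s \lesssim k^{-s}$ on the support of $\partial_i\eta_k$, which gives a bound $\lesssim k \cdot k^{-s} \cdot k^{-1} = k^{-s} \to 0$. (Alternatively one recognizes this as a term already handled in \cite{DFW} via Lemma \ref{DFW}-type arguments; but the direct estimate seems cleaner here since the weight $u$ is genuinely small.)

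Next, for the first integral, since $\eta_k \uparrow \1_\Omega$ and $\partial_i u \cdot (-\Delta)^s(\psi_{\mu,x}G_s(x,\cdot)) \in L^1(\Omega)$ — using that $\partial_i u \in L^1(\Omega)$ (stated in the Remark following Theorem \ref{main-result}) and the said $L^\infty$ bound — dominated convergence gives
\begin{equation*}
\lim_{k\to\infty}\mathcal{A}_{k,\mu}(x) = \int_\Omega \partial_i u(z)\,(-\Delta)^s\bigl(\psi_{\mu,x}G_s(x,\cdot)\bigr)(z)\,dz.
\end{equation*}
Now I would pass to the limit $\mu\to 0^+$. Away from $x$, i.e. on $\Omega\setminus B_\varepsilon(x)$, one has $\psi_{\mu,x}\equiv 1$ for $\mu$ small, so $(-\Delta)^s(\psi_{\mu,x}G_s(x,\cdot)) \to (-\Delta)^s G_s(x,\cdot) = 0$ there (this is exactly the kind of localization recorded in \eqref{psi-mu-equ}). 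The contribution from $B_\varepsilon(x)$ is where the Dirac mass sits: writing $\psi_{\mu,x}G_s(x,\cdot) = \psi_{\mu,x}(b_{N,s}|x-\cdot|^{2s-N}) - \psi_{\mu,x}H_\Omega(x,\cdot)$, the regular part contributes $-\int_{B_\varepsilon(x)}\partial_i u \cdot (-\Delta)^s(\psi_{\mu,x}H_\Omega(x,\cdot))\to 0$ as $\varepsilon\to 0$ (uniformly small $L^\infty$ norm times small measure), while the fundamental-solution part is, after the change of variables $z = x + \widetilde\mu\,\zeta$ and using the scaling identity \eqref{rescapsi}, precisely a mollifier-type convergence: $b_{N,s}(-\Delta)^s(\psi_{\mu,x}|x-\cdot|^{2s-N})$ behaves like an approximate identity concentrating at $x$, with total mass computed via Lemma \ref{Lem6.1} (the identity \eqref{sec7-15} with value $-2$ encodes the correct normalization $b_{N,s}(-\Delta)^s(|\cdot|^{2s-N}) = \delta_0$ together with the $(-\Delta)^s$ applied to $\rho\circ|\cdot|^2$). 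Since $\partial_i u$ is continuous near $x$, this produces exactly $\partial_i u(x)$.

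The main obstacle, and the step I would be most careful about, is the $\mu\to 0^+$ limit of $\int_{B_\varepsilon(x)}\partial_i u(z)\,(-\Delta)^s(\psi_{\mu,x}b_{N,s}|x-z|^{2s-N})\,dz$: one must show this genuinely converges to $\partial_i u(x)$ and not to $\partial_i u(x)$ times some spurious constant. The subtlety is that $(-\Delta)^s(\psi_{\mu,x}F_s(x,\cdot))$ is not simply $\psi_{\mu,x}(-\Delta)^sF_s(x,\cdot) = \psi_{\mu,x}\delta_x$ (which would be zero on the support of $1-\rho_{\mu,x}$!) — rather the product rule \eqref{pl} produces the extra terms $F_s(x,\cdot)(-\Delta)^s\psi_{\mu,x}$ and $-\mathcal{I}_s[\psi_{\mu,x},F_s(x,\cdot)]$, and it is the balance of these two, integrated against the test weight, that Lemma \ref{Lem6.1} pins down. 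Concretely I would use the scaling \eqref{rescapsi} to reduce to a fixed ($\mu$-independent) kernel acting on the rescaled variable, Taylor-expand $\partial_i u(x + \widetilde\mu\zeta) = \partial_i u(x) + O(\widetilde\mu)$ using $u\in C^1_{loc}(\Omega)$ (Remark after Theorem \ref{main-result}), and invoke the absolute integrability from Lemma \ref{Lem6.1} to justify dominated convergence in $\zeta$; the leading term then carries the constant $b_{N,s}c_{N,s}\iint(\cdots) = -2$ divided by $2$ (from the $\tfrac12$ in $\mathcal{E}_s$ / symmetrization, or equivalently $\mathcal I_s$ being the symmetric bilinear correction), yielding the clean answer $\partial_i u(x)$. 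The $O(\widetilde\mu)$ remainder vanishes provided the rescaled kernel has enough decay, which is guaranteed by the finiteness statement in Lemma \ref{Lem6.1}. Finally I would let $\varepsilon\to 0$ to discard all the $\Omega\setminus B_\varepsilon(x)$ and regular-part contributions, arriving at \eqref{Lem4.2-id}.
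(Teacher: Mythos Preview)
Your approach is essentially identical to the paper's: the same Leibniz split, the same $k^{-s}$ estimate for the $\partial_i\eta_k$ term (this is exactly \eqref{sec7-19}), the same localization to $B_\varepsilon(x)$ via \eqref{psi-mu-equ}, and the same scaling argument for the limit $\mu\to 0^+$. The only correction concerns the bookkeeping you flag as delicate: the constant is not ``$-2$ divided by $2$''; rather, the paper (Lemma~\ref{ess-convergence-result}) computes the two product-rule pieces separately, obtaining $\int_{B_\varepsilon(x)} w\,G_s(x,\cdot)(-\Delta)^s\psi_{\mu,x}\to -w(x)$ (from $\int b_{N,s}|z|^{2s-N}(-\Delta)^s(\rho\circ|\cdot|^2)\,dz=\rho(0)=1$) and $\int_{B_\varepsilon(x)} w\,\mathcal I_s[\psi_{\mu,x},G_s(x,\cdot)]\to -2w(x)$ (this is where Lemma~\ref{Lem6.1} enters), so that the combination $(-w(x))-(-2w(x))=w(x)=\partial_i u(x)$.
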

\begin{Lemma}\label{Lem4.3}
  For  all $x\in\Omega$, we have 
    \begin{align}\label{Lem4.3-id-1}
  \lim_{\mu\to 0^+}\lim_{k\to\infty} 
 \mathcal  C_{k,\mu}(x)
  =
  \int_{\Omega}\frac{\partial G_s}{\partial z_i}(x,\cdot) (-\Delta)^su\,dy=\int_{\Omega}\frac{\partial G_s}{\partial z_i}(x,y) f(y,u(y))\,dy,\quad\text{if}\;\; 2s>1,
  \\\label{Lem4.3-id-2}
  \lim_{\mu\to 0^+}\lim_{k\to\infty} 
 \mathcal  C_{k,\mu}(x)
  = -\int_\Omega  G_s(x,\cdot)\Bigg[\frac{\partial f}{\partial h_i}(y,u(y))+\frac{\partial f}{\partial q}(y,u(y))\frac{\partial u}{\partial y_i}(y)\Bigg]dy,\quad\text{if}\;\; 2s\leq 1.
  \end{align}
  In the case $f\equiv \text{const}$, we have 
  \begin{align}\label{Lem4.3-id-3}
     \lim_{\mu\to 0^+}\lim_{k\to\infty} 
 \mathcal  C_{k,\mu}(x)=0\quad\text{for all}\, s\in (0,1).
  \end{align}
\end{Lemma}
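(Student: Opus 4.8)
Recall that $\mathcal C_{k,\mu}(x)=\int_\Omega\eta_k\,\partial_i\big(\eta_k\psi_{\mu,x}G_s(x,\cdot)\big)(-\Delta)^su\,dz$ with $\partial_i=\partial/\partial z_i$. The plan is to use $(-\Delta)^su=f(\cdot,u)$ in $\Omega$ and to expand $\partial_i\big(\eta_k\psi_{\mu,x}G_s(x,\cdot)\big)$ by the classical product rule, which splits $\mathcal C_{k,\mu}(x)$ into three integrals, in which $\partial_i$ falls respectively on $\eta_k$, on $\psi_{\mu,x}$, and on $G_s(x,\cdot)$ (the other two of these factors being undifferentiated, the integrand also carrying an extra $\eta_k$ and the bounded factor $f(\cdot,u)$). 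The first piece has integrand supported in the boundary band $\{1/k\le\delta_\Omega\le 2/k\}$, where $|\partial_i\eta_k|\lesssim k$ and, by the Green function estimate \eqref{eq:greenfunct-estimate}, $G_s(x,\cdot)\lesssim\delta_\Omega^s\lesssim k^{-s}$; since this band has measure $\lesssim 1/k$, this piece is $O(k^{-s})$ and hence vanishes as $k\to\infty$ for every fixed $\mu$. This part of the argument is the same for all $s\in(0,1)$.

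Suppose first that $2s>1$ (hence $N>2s$). The piece carrying $\partial_i\psi_{\mu,x}$ has integrand supported in an annulus centred at $x$ of radii of order $\mu$, where $|\partial_i\psi_{\mu,x}|\lesssim 1/\mu$ — elementary from the definition \eqref{psi-mu} — while, by the splitting \eqref{Eq-spliting-of-Green} with $H_\Omega(x,\cdot)$ bounded near $x$, $G_s(x,\cdot)\lesssim|x-\cdot|^{2s-N}\lesssim\mu^{2s-N}$ there; taking $k\to\infty$ first (by dominated convergence, the support being, for fixed $\mu$, a fixed compact set avoiding $x$) this piece is $O(\mu^{2s-1})$ and therefore tends to $0$ as $\mu\to0^+$. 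For the piece carrying $\partial_iG_s(x,\cdot)$ we invoke $\partial_iG_s(x,\cdot)\in L^1(\Omega)$ — which holds precisely because $s\in(1/2,1)$, see appendix \ref{subsec-9.1} — so that its integrand is dominated in $L^1(\Omega)$ uniformly in $k$ and $\mu$; dominated convergence, first as $k\to\infty$ and then as $\mu\to0^+$, yields $\int_\Omega\partial_{y_i}G_s(x,y)f(y,u(y))\,dy$, which is \eqref{Lem4.3-id-1}.

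Suppose now that $2s\le1$. Then $\partial_iG_s(x,\cdot)$ is no longer integrable near $x$ and the annular piece is not controllable, so instead of the naive split we integrate by parts. Since $\eta_k^2\psi_{\mu,x}G_s(x,\cdot)$ is $C^{1,1}$ and compactly supported in $\Omega$, and $z\mapsto f(z,u(z))$ is $C^1$ on $\Omega$ (using $u\in C^1_{\loc}(\Omega)$ and $f\in C^{1,\alpha}_{\loc}$), we write $\eta_k\,\partial_i\big(\eta_k\psi_{\mu,x}G_s(x,\cdot)\big)=\partial_i\big(\eta_k^2\psi_{\mu,x}G_s(x,\cdot)\big)-\eta_k(\partial_i\eta_k)\psi_{\mu,x}G_s(x,\cdot)$ and move the derivative in the first term onto $f(\cdot,u)$. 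The remainder with $\partial_i\eta_k$ is $O(k^{-s})$ as before, and the chain rule gives $\partial_i[f(z,u(z))]=\frac{\partial f}{\partial h_i}(z,u(z))+\frac{\partial f}{\partial q}(z,u(z))\partial_{z_i}u(z)$. For the main term we pass to the limit by dominated convergence: near $\partial\Omega$ one uses $G_s(x,\cdot)\lesssim\delta_\Omega^s$, boundedness of $\frac{\partial f}{\partial h_i}(\cdot,u)$, and $\delta_\Omega^{1-s}\nabla u\in L^\infty(\Omega)$ (so that $G_s(x,\cdot)\,\partial_{y_i}u\lesssim\delta_\Omega^{2s-1}\in L^1$), while near $x$ one has $G_s(x,\cdot)\in L^1$ and the bracket bounded. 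Letting $k\to\infty$ and then $\mu\to0^+$ gives $-\int_\Omega G_s(x,y)\big[\frac{\partial f}{\partial h_i}(y,u(y))+\frac{\partial f}{\partial q}(y,u(y))\partial_{y_i}u(y)\big]\,dy$, that is \eqref{Lem4.3-id-2}. Finally \eqref{Lem4.3-id-3} follows at once from this same integration by parts: if $f$ is constant then $\partial_i[f(\cdot,u)]\equiv0$, only the $O(k^{-s})$ remainder survives, and the double limit equals $0$ for every $s\in(0,1)$.

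The step I expect to be the main obstacle is the passage to the limit near the singularity $x$, especially when $2s\le1$: differentiating $G_s(x,\cdot)$ directly produces a non-integrable singularity at $x$ together with an annular contribution of size $\mu^{2s-1}$ that blows up as $\mu\to0^+$, which forces the integration-by-parts step transferring the derivative onto the smoother source term; one must then check $L^1$-domination simultaneously near $x$ and near $\partial\Omega$ using the sharp boundary behaviour of $G_s(x,\cdot)$ and of $\nabla u$, and verify that the two iterated limits may be taken in the stated order. A minor additional point is the admissibility (namely $C^{1,1}$, compactly supported in $\Omega$) of $\eta_k^2\psi_{\mu,x}G_s(x,\cdot)$ as a test function for integration by parts.
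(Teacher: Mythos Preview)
Your proof is correct and follows essentially the same approach as the paper. The split in Case~1 and the $O(\mu^{2s-1})$ estimate for the annular piece match the paper's Section~\ref{sec8} exactly; in Case~2 the paper integrates by parts by writing $\int_\Omega\partial_i(\eta_k\psi_{\mu,x}G_s)\,\eta_k f=-\int_\Omega\eta_k\psi_{\mu,x}G_s\,\partial_i(\eta_k f)$ whereas you first pull the outer $\eta_k$ into the derivative, but the resulting terms are identical and the $L^1$-domination argument (using $G_s(x,\cdot)\lesssim\delta_\Omega^s$ and $\delta_\Omega^{1-s}\nabla u\in L^\infty$ near $\partial\Omega$) is the same.
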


 Therefore, combining \eqref{all},  \eqref{B}, \eqref{D}, \eqref{Lem4.2-id},  \eqref{Lem4.3-id-1} and \eqref{Lem4.3-id-2}, we arrive at \eqref{repres-partial-deriv} and \eqref{repres-partial-deriv-1}. Up to the
proofs of Lemma \ref{Lem6.1}, Lemma \ref{Lem4.2} and Lemma \ref{Lem4.3}
the proof of Theorem \ref{main-result} is therefore done.


\subsection{Proof of Lemma \ref{Lem6.1}}
\label{ser-rad}

Let $B_r$ be the centered Euclidean ball of radius $r>0$. We decompose the integral into two parts, depending on whether both $x$ and $y$ belong to the ball $B_4$ or if only one of them does:
\begin{align}
  &\iint_{\R^N\times \R^N}\frac{\big(\rho(|y|^2)-\rho(|z|^2)\big)\big(|z|^{2s-N}-|y|^{2s-N}\big)}{|z-y|^{N+2s}}dydz\nonumber\\
  &=\iint_{B_4\times B_4}\frac{\big(\rho(|y|^2)-\rho(|z|^2)\big)\big(|z|^{2s-N}-|y|^{2s-N}\big)}{|z-y|^{N+2s}}dydz\label{Lem.3.2-1}\\ 
  &\;\;+ 2 \int_{B_2}\rho(|z|^2) \Big(\int_{\R^N\setminus B_4}\frac{|z|^{2s-N}-|y|^{2s-N}}{|z-y|^{N+2s}}dy \Big) dz\nonumber
\end{align}
Since 
$$\int_{\R^N}\frac{|z|^{2s-N}}{1+|z|^{N+2s}}dz<\infty,$$
it is clear that the last integral is finite. To continue the proof, we distinguish two cases. 
\par\;
\underline{Case 1}: $2s< 1$. In this case, we estimate
\begin{align}
   &\Bigg| \iint_{B_4\times B_4}\frac{\big(\rho(|y|^2)-\rho(|z|^2)\big)\big(|z|^{2s-N}-|y|^{2s-N}\big)}{|z-y|^{N+2s}}dydz\Bigg|\nonumber\\
   &\leq C \iint_{B_4\times B_4}\frac{|z|^{2s-N}+|y|^{2s-N}}{|z-y|^{N+2s-1}}dydz<\infty.
\end{align}
\underline{Case 2}: $2s\geq 1$.
In this case we decompose the integral in \eqref{Lem.3.2-1} into two parts, depending on whether both $x$ and $y$ do not belong to the ball $B_1$ or if only one of them does:
\begin{align}
  &\iint_{B_4\times B_4}\frac{\big(\rho(|y|^2)-\rho(|z|^2)\big)\big(|z|^{2s-N}-|y|^{2s-N}\big)}{|z-y|^{N+2s}}dydz\nonumber\\
  &=\iint_{(B_4\setminus B_1)\times (B_4\setminus B_1)}\frac{\big(\rho(|y|^2)-\rho(|z|^2)\big)\big(|z|^{2s-N}-|y|^{2s-N}\big)}{|z-y|^{N+2s}}dydz \label{Lem.3.2-2}\\ \label{Lem.3.2-3}
  &\;\;+2\iint_{B_1\times (B_4\setminus B_1)}\frac{\big(\rho(|y|^2)-\rho(|z|^2)\big)\big(|z|^{2s-N}-|y|^{2s-N}\big)}{|z-y|^{N+2s}}dydz.
\end{align}
Since $z\mapsto|z|^{2s-N}$ is $C^\infty(B_4\setminus B_1)$, by Taylor expansion we easily check that the integral in \eqref{Lem.3.2-2} is finite.  Next, we recall the following elementary estimate: 
for any $a, b\geq 0$ and $a\neq 0$, we have 
\begin{equation}\label{elem-id}
|a^\alpha-b^\alpha|\leq \frac{a^{\alpha-1}}{\alpha}\max\big(1,(b/a)^\alpha\big)|a-b|.
\end{equation}
Apply this with $\alpha=2s-N$, $a=|z|$ and $b=|y|$ yields
\begin{equation}\label{es-1}
  \big||z|^{2s-N}-|y|^{2s-N}\big|\leq C \frac{1}{|y|^{N-2s+1}}|z-y|\quad\text{for $y\in B_1\;\;\&\;\;z\in B_4\setminus B_1$.}  
\end{equation}
Now using \eqref{es-1} and that $\rho\in C^\infty_c(\R)$, it is straightforward to check the integral in \eqref{Lem.3.2-3} is finite provided that $2s>1$. It remains the case 
 $s=1/2$. For this we used the estimate: for $b>1$ and $\alpha\in (0,1)$ there holds:  
\begin{align*}
    \frac{1-b^{1-N}}{N-1}=\int_1^b t^{-N}dt&\leq \Big(\int_1^bdt\Big)^\alpha\Big(\int_1^b t^{-\frac{N}{1-\alpha}}dt\Big)^{1-\alpha}\nonumber\\
    &\leq \big(b-1\big)^\alpha\Big(\int_1^\infty t^{-\frac{N}{1-\alpha}}\Big)^{1-\alpha}\nonumber\\
    &\leq \big(b-1\big)^\alpha\Big(\frac{1-\alpha}{N+\alpha-1}\Big)^{1-\alpha}.
\end{align*}
Apply this to $\frac{|z|}{|y|}$ with $y\in B_1$ and $z\in B_4\setminus B_1$ gives 
\begin{align*}
    \Big||y|^{1-N}-|z|^{1-N}\Big|=|y|^{1-N}\Big|1-\Big(\frac{|z|}{|y|}\Big)^{1-N}\Big|&\leq  (N-1)|y|^{1-N}\Big(\frac{|z|}{|y|}-1\Big)^\alpha\Big(\frac{1-\alpha}{N+\alpha-1}\Big)^{1-\alpha}\nonumber\\
    &\leq C|y|^{1-N-\alpha}|z-y|^\alpha.
\end{align*}
It follows that 
\begin{align}
&\iint_{B_1\times (B_4\setminus B_1)}\frac{\big(\rho(|y|^2)-\rho(|z|^2)\big)\big(|z|^{1-N}-|y|^{1-N}\big)}{|z-y|^{N+1}}dydz\nonumber\\
&\leq C\int_{B_1}|y|^{1-N-\alpha}\int_{B_4\setminus B_1}\frac{dz}{|y-z|^{N-\alpha}}dy< \infty.
\end{align}
In conclusion we have 
\begin{align}
   \iint_{\R^N\times \R^N}\frac{\big(\rho(|y|^2)-\rho(|z|^2)\big)\big(|z|^{2s-N}-|y|^{2s-N}\big)}{|z-y|^{N+2s}}dydz<\infty.
\end{align}
Then, by Fubini's theorem,  we have that 
\begin{align}
&b_{N,s}c_{N,s}\iint_{\R^N\times \R^N}\frac{\big(\rho(|y|^2)-\rho(|z|^2)\big)\big(|z|^{2s-N}-|y|^{2s-N}\big)}{|z-y|^{N+2s}}dydz\nonumber\\
&=-2\int_{\R^N}b_{N,s}|z|^{2s-N}(-\Delta)^s(\rho\circ|\cdot|^2)dz=-2,
\end{align}
by using  an integration by parts over the full space $\R^N$,  that 
 $b_{N,s}|z|^{2s-N}$ is the fundamental solution of $(-\Delta)^s$ in $\R^N$ and the value of $\rho(0)$.

\subsection{Proof of Lemma \ref{Lem4.2}}
\label{sec-pr-l22}

We shall need the following result in the proof of Lemma \ref{Lem4.2} below.
\begin{Lemma}\label{ess-convergence-result} Let $w\in C(\Omega)$ and fix $x\in\Omega$. Let $G_s(x,\cdot)$ be the Green function with singularity at $x$ and $\psi_{x,\mu}$ be defined as in \eqref{psi-mu}. Then for all $\varepsilon>0$ such that $\overline{B_{2\varepsilon}(x)}\subset\Omega$, we have 
\begin{align}
    \lim_{\mu\to 0^+}\int_{B_\varepsilon(x)}w(p)G_s(x,p)(-\Delta)^s\psi_{\mu,x}(p)dp=-w(x),\label{fl}\\
    \lim_{\mu\to 0^+}\int_{B_\varepsilon(x)}w(p)\mathcal I_s\big[G_s(x,\cdot), \psi_{\mu,x}\big](p)dp=-2w(x).\label{sl}
\end{align}
\end{Lemma}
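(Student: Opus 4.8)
The goal is to evaluate the limits as $\mu \to 0^+$ of $\int_{B_\varepsilon(x)} w(p) G_s(x,p) (-\Delta)^s\psi_{\mu,x}(p)\,dp$ and $\int_{B_\varepsilon(x)} w(p)\,\mathcal I_s[G_s(x,\cdot),\psi_{\mu,x}](p)\,dp$, using only the near-singularity information about $G_s$. The essential idea is that near $x$ the Green function looks like the fundamental solution: $G_s(x,p) = b_{N,s}|x-p|^{2s-N} - H_\Omega(x,p)$ with $H_\Omega(x,\cdot)$ smooth near $x$, and the cut-off $(-\Delta)^s\psi_{\mu,x}$ and $\mathcal I_s[\cdot,\psi_{\mu,x}]$ both concentrate (after rescaling) on the ball $B_{\widetilde\mu}(x)$ with $\widetilde\mu = \tfrac{\delta_\Omega(x)}{2\sqrt2}\mu \to 0$. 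So the plan is: (i) split $G_s(x,\cdot) = b_{N,s}|x-\cdot|^{2s-N} - H_\Omega(x,\cdot)$ and show the $H_\Omega$-contribution vanishes; (ii) for the singular part, perform the change of variables $p = x + \widetilde\mu\, q$ to reduce everything to a fixed (i.e.\ $\mu$-independent) integral over $\R^N$ involving $\rho\circ|\cdot|^2$; (iii) identify the resulting constant via Lemma~\ref{Lem6.1} and the fact that $b_{N,s}|\cdot|^{2s-N}$ is the fundamental solution.

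\textbf{Step-by-step.} First I would use the rescaling identity \eqref{rescapsi}: $(-\Delta)^s\psi_{\mu,x}(p) = -\widetilde\mu^{-2s}\,(-\Delta)^s(\rho\circ|\cdot|^2)\!\big(\tfrac{p-x}{\widetilde\mu}\big)$. Since $(-\Delta)^s(\rho\circ|\cdot|^2)$ is bounded and decays like $|q|^{-N-2s}$ at infinity, and $w$ is continuous (hence bounded near $x$, with $w(p) \to w(x)$), the change of variables $p = x + \widetilde\mu q$ gives
\begin{align*}
\int_{B_\varepsilon(x)} w(p)\, b_{N,s}|x-p|^{2s-N}(-\Delta)^s\psi_{\mu,x}(p)\,dp
&= -\int_{B_{\varepsilon/\widetilde\mu}} w(x+\widetilde\mu q)\, b_{N,s}|q|^{2s-N}(-\Delta)^s(\rho\circ|\cdot|^2)(q)\,dq .
\end{align*}
The integrand is dominated by an integrable function (using $|q|^{2s-N}$ near $0$ and the $|q|^{-N-2s}$ decay at infinity, plus boundedness of $w$), so by dominated convergence this converges to $-w(x)\,b_{N,s}\int_{\R^N}|q|^{2s-N}(-\Delta)^s(\rho\circ|\cdot|^2)(q)\,dq = -w(x)\,\rho(0) = -w(x)$, where the last evaluation is exactly the integration-by-parts computation at the end of the proof of Lemma~\ref{Lem6.1} (fundamental solution tested against $(-\Delta)^s(\rho\circ|\cdot|^2)$). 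For the $H_\Omega$-part, $H_\Omega(x,\cdot)$ is $C^\infty$ near $x$ and $(-\Delta)^s\psi_{\mu,x} \to 0$ pointwise on $B_\varepsilon(x)\setminus\{x\}$ with an $L^1$ bound (as already shown via \eqref{Eq-Delta-s-psi-1} in Lemma~\ref{lem-ps-mu-x}), so that contribution vanishes, proving \eqref{fl}. For \eqref{sl} I would argue identically: $\mathcal I_s[\cdot,\psi_{\mu,x}]$ also rescales — writing $\mathcal I_s[G_s(x,\cdot),\psi_{\mu,x}](p) = c_{N,s}\,\pv\int \frac{(G_s(x,p)-G_s(x,y))(\psi_{\mu,x}(p)-\psi_{\mu,x}(y))}{|p-y|^{N+2s}}dy$, substituting $p = x+\widetilde\mu q$, $y = x+\widetilde\mu r$, and again replacing $G_s$ by its singular part $b_{N,s}|\cdot|^{2s-N}$ (the $H_\Omega$ part is Lipschitz near $x$ and contributes a vanishing term by the finiteness estimates of Lemma~\ref{Lem6.1}) — reduces the limit to $-2w(x)$, where the factor $-2$ comes precisely from the normalization \eqref{sec7-15} of Lemma~\ref{Lem6.1}.

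\textbf{Main obstacle.} The delicate point is justifying the passage to the limit in the double-integral term \eqref{sl}: after rescaling one must control the principal-value integral uniformly in $\mu$, and separate cleanly the singular part $b_{N,s}|\cdot|^{2s-N}$ of $G_s$ from the regular part $H_\Omega$ — the bound on the $H_\Omega$-contribution requires the cancellation structure $(\psi_{\mu,x}(p)-\psi_{\mu,x}(y))$ together with a Taylor expansion of the smooth function $H_\Omega(x,\cdot)$, and this is exactly the kind of estimate handled by the case analysis in Lemma~\ref{Lem6.1}. A secondary technical nuisance is that after the change of variables the domain of integration is the expanding ball $B_{\varepsilon/\widetilde\mu}$ rather than all of $\R^N$, so one needs the fast decay of $(-\Delta)^s(\rho\circ|\cdot|^2)$ (and of the corresponding kernel for $\mathcal I_s$) to replace it by $\R^N$ in the limit; this is routine but must be stated. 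Once these dominations are in place, the value of the constants is pinned down entirely by Lemma~\ref{Lem6.1}, which is why that lemma is proved first.
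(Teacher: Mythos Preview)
Your proposal is correct and follows essentially the same route as the paper: rescale via \eqref{rescapsi}, change variables $p=x+\widetilde\mu q$ (and $y=x+\widetilde\mu r$ for \eqref{sl}), use the decomposition $G_s(x,\cdot)=b_{N,s}|x-\cdot|^{2s-N}-H_\Omega(x,\cdot)$, apply dominated convergence, and read off the constants from the fundamental-solution identity and Lemma~\ref{Lem6.1}. One small correction: your citation of \eqref{Eq-Delta-s-psi-1} for the $H_\Omega$-term is misplaced, since that estimate concerns the region $|x-z|>\varepsilon$; inside $B_\varepsilon(x)$ the correct bound is obtained directly from the rescaling, yielding $\|(-\Delta)^s\psi_{\mu,x}\|_{L^1(B_\varepsilon(x))}\leq C\widetilde\mu^{\,N-2s}\to 0$, which together with the local boundedness of $wH_\Omega(x,\cdot)$ kills that contribution (the paper handles this equivalently by passing to the limit in $\widetilde\mu^{\,N-2s}G_s(x,x-\widetilde\mu z)$ without splitting first).
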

\par\;
\begin{remark}\label{rem-lem-2.6}
Up to choosing $\varepsilon>0$ sufficiently small, the assumption $w\in C(\Omega)$ in the lemma above can be released to the continuity of  $w$ at the point $x$.
\end{remark}
To ease the notation, we write $\partial_i:=\frac{\partial}{\partial x_i}$. Recall that $x\in\Omega$ is given and that for any $\mu\in(0,1)$,  and any $k\in \mathbb N*$, 
$$ \mathcal A_{k,\mu}(x) := \int_{\Omega}\eta_k(z)\partial_i\big(\eta_k u\big)(z)(-\Delta)^s\Big[\psi_{\mu,x}G_s(x,\cdot)\Big](z)dz. $$
We start with the following simple observation. We have 
    \begin{align}\label{Lemid}
\mathcal A_{k,\mu}(x) &= \int_{\Omega}\eta_k^2\partial_i u\Bigg(G_s(x,\cdot)(-\Delta)^s\psi_{\mu,x}-\mathcal I_s\big(\psi_{\mu,x},G_s(x,\cdot)\big)\Bigg)dz\nonumber\\
&\quad\quad  +k\int_{\Omega}u\eta_k(z)\eta'(k\delta)(z)\partial_i\delta(z)(-\Delta)^s\Big(\psi_{\mu,x}G_s(x,\cdot)\Big)(z)dz .
  \end{align}
Indeed, by Leibniz' rule, we have 
\begin{align*}
   \mathcal A_{k,\mu}(x)
    &=\int_{\Omega}\eta_k^2(z)\partial_i u(z)(-\Delta)^s\Big[\psi_{\mu,x}G_s(x,\cdot)\Big](z)dz\nonumber\\
    &\quad+k\int_{\Omega}u\eta_k(z)\eta'(k\delta)(z)\partial_i\delta(z)(-\Delta)^s\Big[\psi_{\mu,x}G_s(x,\cdot)\Big](z)dz .
\end{align*}
Next, using the product rule \eqref{pl} with
 $u=\psi_{\mu,x} $, $v=G_s(x,\cdot) $, and using also that $(-\Delta)^s G_s(x,\cdot)=0$ in $\Omega\setminus \overline{B_\mu(x)}$, we get
     \begin{align*}
    \small\text{$\int_{\Omega}\eta_k^2\partial_i u(-\Delta)^s\Big[\psi_{\mu,x}G_s(x,\cdot)\Big]dz
    =\int_{\Omega}\eta_k^2\partial_i u\Bigg[G_s(x,\cdot)(-\Delta)^s\psi_{\mu,x}-\mathcal I_s\big[\psi_{\mu,x},G_s(x,\cdot)\big]\Bigg]dz.$}
\end{align*}
The claim \eqref{Lemid} follows by combining the two identities above.
The rest of the proof of Lemma \ref{Lem4.2}
consists in passing to the limit, as $\mu\to 0^+$ and $k\to\infty$, in the two terms in the right hand side of \eqref{Lemid}. 
We start with a result regarding the double-limit of the first term, which by Lemma \ref{lem-ps-mu-x} can be reduced to the one of the same integral on an arbitrarily small ball. That is,
\begin{gather}
    \label{sec7-2}
    \lim_{\mu\to 0^+}
    \lim_{k\to\infty}\int_{\Omega}\eta_k^2(z)\partial_iu(z)\Bigg[G_s(x,\cdot)(-\Delta)^s\psi_{\mu,x}-\mathcal I_s\big[\psi_{\mu,x},G_s(x,\cdot)\big]\Bigg]dz 
    \\ \nonumber   =\lim_{\mu\to 0^+}\lim_{k\to\infty}\int_{B_\varepsilon(x)}\eta_k^2(z)\partial_i u(z)\Bigg[G_s(x,\cdot)(-\Delta)^s\psi_{\mu,x}-\mathcal I_s\big[\psi_{\mu,x},G_s(x,\cdot)\big]\Bigg]dz.
    \end{gather} 
\par\;
In view of \eqref{Lemid} and \eqref{sec7-2} and applying Lemma \ref{ess-convergence-result} with $w=\partial_iu\in C(\Omega)$, we get 
\begin{align*}
    &\lim_{\mu \to 0^+}\lim_{k\to\infty}\mathcal A_{k,\mu}(x)\nonumber\\
    &=\lim_{\mu\to 0^+}\lim_{k\to\infty}\int_{B_\varepsilon(x)}\eta_k^2(z)\partial_i u(z)\Bigg[G_s(x,\cdot)(-\Delta)^s\psi_{\mu,x}-\mathcal I_s\big[\psi_{\mu,x},G_s(x,\cdot)\big]\Bigg]dz\nonumber\\
    &\quad\quad  +\lim_{\mu\to 0^+}\lim_{k\to\infty}k\int_{\Omega}u\eta_k(z)\eta'(k\delta)(z)\partial_i\delta(z)(-\Delta)^s\Big(\psi_{\mu,x}G_s(x,\cdot)\Big)dz\nonumber\\
    &=\lim_{\mu\to 0^+}\int_{B_\varepsilon(x)}\partial_i u(z)\Bigg[G_s(x,\cdot)(-\Delta)^s\psi_{\mu,x}-\mathcal I_s\big[\psi_{\mu,x},G_s(x,\cdot)\big]\Bigg]dz\nonumber\\
    &\quad\quad  +\lim_{\mu\to 0^+}\lim_{k\to\infty}k\int_{\Omega}u\eta_k(z)\eta'(k\delta)(z)\partial_i\delta(z)(-\Delta)^s\Big(\psi_{\mu,x}G_s(x,\cdot)\Big)dz\nonumber\\
    &=\partial_iu(x)+\lim_{\mu\to 0^+}\lim_{k\to\infty}k\int_{\Omega}u\eta_k(z)\eta'(k\delta)(z)\partial_i\delta(z)(-\Delta)^s\Big(\psi_{\mu,x}G_s(x,\cdot)\Big)dz.
\end{align*}
Thus to deduce Lemma \ref{Lem4.2} it suffices to  prove that
\begin{align}\label{pf}
   \lim_{\mu\to 0^+}\lim_{k\to\infty}k\int_{\Omega}u\eta_k(z)\eta'(k\delta)(z)\partial_i\delta(z)(-\Delta)^s\Big(\psi_{\mu,x}G_s(x,\cdot)\Big)dz=0. 
\end{align}
\par\;
To see \eqref{pf}, we note that since $(-\Delta)^s\Big[\psi_{\mu,x}G_s(x,\cdot)\Big]\in L^\infty(\Omega)$ (see Lemma \ref{lem-ps-mu-x}) and $|u(z)|\leq C\delta^s(z)$ we have 
\begin{align}\label{sec7-19}
    &\Bigg|k\int_{\Omega}u\eta_k(z)\eta'(k\delta)(z)\partial_i\delta(z)(-\Delta)^s\Big[\psi_{\mu,x}G_s(x,\cdot)\Big](z)dz\Bigg|\nonumber\\
    &\leq C(\mu)k\int_{\Omega_{\frac{2}{k}}\setminus\Omega_{\frac{1}{k}}}|\rho'(k\delta(z))|\delta^s(z)dz\nonumber\\
    &\leq C(\mu)\int_{\Omega_{\frac{2}{k}}\setminus\Omega_{\frac{1}{k}}}\delta^{s-1}(z)|\rho'(k\delta(z))|dz\to 0\;\;\text{as}\;\;k\to\infty
\end{align}
by the dominated convergence theorem. This ends the proof of \eqref{pf}. In the third line, we used that if $z\in \Omega_{\frac{2}{k}}\setminus\Omega_{\frac{1}{k}}$ then $1\leq k\delta(z)\leq 2$. The proof of Lemma \ref{Lem4.2} is therefore finished.
\par\;

\subsection{Proof of Lemma \ref{ess-convergence-result}}
In this subsection we prove Lemma \ref{ess-convergence-result}.
We start with the proof of \eqref{fl}. First \eqref{rescapsi} is straightforward. 
Using it with a translation,  we get

\begin{align}\label{sec7-6}
   &-\int_{B_\varepsilon(x)}w(z)G_s(x,z)(-\Delta)^s\psi_{\mu,x}(z)dz\nonumber\\
   &=\widetilde\mu^{-2s}\int_{B_\varepsilon(0)}G_s(x,x-z)w(x-z)(-\Delta)^s\big(\rho\circ |\cdot|\big)\big(\frac{z}{\widetilde\mu}\big)dz\nonumber\\
   &=\int_{B_{\frac{\varepsilon}{\widetilde\mu}}(0)}\widetilde\mu^{N-2s}G_s(x,x-\widetilde\mu z)w(x-\widetilde\mu z)(-\Delta)^s\big(\rho\circ |\cdot|^2\big)(z)dz.
\end{align}

Since $w\in L^\infty_{loc}(\Omega)$, we have $\big|1_{B_{\frac{\varepsilon}{\widetilde\mu}}(0)}(z)w(x-\widetilde\mu z)\big|\leq C$.  Moreover $\big|\widetilde\mu^{N-2s}G_s(x,x-\widetilde\mu z)\big|\leq C|z|^{2s-N}$ (see e.g  \eqref{eq:greenfunct-estimate}). Now since $w$ is continuous at $x$ and 
$$ 
\int_{\R^N}|z|^{2s-N}(-\Delta)^s\big(\rho\circ |\cdot|^2\big)(z)dz<\infty,
$$
we have by the Lebesgue dominated convergence theorem that
\begin{align}\label{sec7-9}
&\lim_{ \mu\to 0^+}\int_{B_{\frac{\varepsilon}{\widetilde\mu}}(0)}\widetilde\mu^{N-2s}G_s(x,x-\widetilde\mu z)w(x-\widetilde\mu z)(-\Delta)^s\big(\rho\circ |\cdot|^2\big)dz\nonumber\\&=w(x)\int_{\R^N}b_{N,s}|z|^{2s-N}(-\Delta)^s\big(\rho\circ |\cdot|^2\big)dz.
\end{align}
Here we used that 
$\widetilde\mu^{N-2s}G_s(x,x-\widetilde\mu z)=\frac{b_{N,s}}{|z|^{N-2s}}-\widetilde\mu^{N-2s}H_s(x,x-\widetilde\mu z)\to b_{N,s}|z|^{2s-N}$ as $\mu\to 0^+$ since $H_s(x,\cdot)\in C(\Omega)$.  
Finally, as we have already seen above, since $b_{N,s}|z|^{2s-N}$ is the fundamental solution of $(-\Delta)^s$ in $\R^N$ and $\rho\circ|\cdot|^2\in C^\infty_c(\R^N)$, we deduce by an integration by parts over the full space $\R^N$ that:
\begin{equation}\label{sec7-8}
\int_{\R^N}b_{N,s}|z|^{2s-N}(-\Delta)^s\big(\rho\circ |\cdot|^2\big)dz=\rho(0)=1.
\end{equation}
Combining \eqref{sec7-8} and \eqref{sec7-9} we obtain \eqref{fl}. The proof of \eqref{sl} is somehow similar to the proof of \eqref{fl} with some minor differences but for completeness we give the full details of the argument. 

We first observe that
\begin{align}\label{sec7-11}
    &\lim_{\mu\to 0^+}\lim_{k\to\infty}\int_{B_\varepsilon(x)}w(z)\mathcal I_s\big[\psi_{ \mu,x},G_s(x,\cdot)\big](z)dz\nonumber\\
    &=\lim_{\mu\to 0^+}c_{N,s}\int_{B_\varepsilon(x)}w(z)\int_{B_{2\varepsilon}(x)}\frac{(\psi_{\mu,x}(y)-\psi_{\mu,x}(z))(G_s(x,y)-G_s(x,z))}{|y-z|^{N+2s}}dydz.
\end{align}
Indeed, for $\mu\in(0,1)$ sufficiently small and $|x-y|>\varepsilon$, we have $\psi_{\mu,x}(y)=0$ and thus recalling \eqref{eq:greenfunct-estimate}, we have 
\begin{align}
    &\Bigg|\int_{B_\varepsilon(x)}w(z)\int_{\R^N\setminus B_{2\varepsilon}(x)}\frac{(\psi_{\mu,x}(y)-\psi_{\mu,x}(z))(G_s(x,y)-G_s(x,z))}{|y-z|^{N+2s}}dydz\Bigg|\nonumber\\
    &\leq C(\varepsilon,\Omega, N,s)\int_{B_\varepsilon(x)}\big(1+|G_s(x,z)|\big)\big|w(z)\big|\rho\Big(\frac{8}{\delta^2_\Omega(x)}\frac{|x-z|^2}{\mu^2}\Big)\int_{\R^N\setminus B_{2\varepsilon}(x)}\frac{dy}{1+|y|^{N+2s}}dz\label{Eq9/7}
\end{align}
The r.h.s of \eqref{Eq9/7} converges to zero as $\mu\to 0^+$ which proves the claim \eqref{sec7-11}.
\par\;

Next applying the change of variables: $\overline{z}=\frac{x-z}{\widetilde\mu}$ and $\overline{y}=\frac{x-y}{\widetilde\mu}$, we get 
\begin{align}\label{sec7-12}
  &\int_{B_\varepsilon(x)}w(z)\int_{B_{2\varepsilon}(x)}\frac{(\psi_{\widetilde\mu,x}(y)-\psi_{\widetilde\mu,x}(z))(G_s(x,y)-G_s(x,z))}{|y-z|^{N+2s}}dydz \\ \label{issy2}
  &= \int_{B_{\frac{\varepsilon}{\widetilde\mu}(0)}  }w(x-\widetilde\mu z)\int_{B_{\frac{2\varepsilon}{\widetilde\mu}}(0)}\frac{\Big(\rho(|z|^2)-\rho(|y|^2)\Big)\Big(\widetilde\mu^{N-2s}G_s(x,x-\widetilde\mu y)-\widetilde\mu^{N-2s}G_s(x,x-\widetilde\mu z)\Big)}{|y-z|^{N+2s}}dydz.
\end{align}
We are going to split the integral in the right hand side above into two parts according to the decomposition:
\begin{align*}
   &\widetilde\mu^{N-2s}G_s(x,x-\widetilde\mu y)-\widetilde\mu^{N-2s}G_s(x,x-\widetilde\mu z)\\
   &=b_{N,s}\big(|y|^{2s-N}-|z|^{2s-N}\big)-\widetilde\mu^{N-2s}\big(H_s(x,x-\widetilde\mu y)-H_s(x,x-\widetilde\mu z)\big).
\end{align*}
Since $H_s(x,\cdot)$ is $s$-harmonic, we know $\nabla_yH_s(x,y)$ is locally bounded in $\Omega$. Consequently 
$$\big|H_s(x,x-\widetilde\mu y)-H_s(x,x-\widetilde\mu z)\big|\leq C(x,\varepsilon)\widetilde\mu|y-z|\quad \text{for all}\,z\in B_{\frac{\varepsilon}{\widetilde\mu}}(0)\;\;\&\;\;\text{ for all}\;\;y\in B_{\frac{2\varepsilon}{\widetilde\mu}}(0) .$$
Thus, we have
\begin{align}
&\Bigg|\iint_{B_{\frac{\varepsilon}{\widetilde\mu}(0)}\times B_{\frac{2\varepsilon}{\widetilde\mu}(0)}}\frac{\Big(\rho(|z|^2)-\rho(|y|^2)\Big)\Big(\widetilde\mu^{N-2s}H_s(x,x-\widetilde\mu y)-\widetilde\mu^{N-2s}H_s(x,x-\widetilde\mu z)\Big)}{|y-z|^{N+2s}}dydz\Bigg|\nonumber\\  \label{sec7-13}
&\leq C(x)\widetilde\mu^{N-2s+1}\iint_{B_{\frac{\varepsilon}{\widetilde\mu}(0)}\times B_{\frac{2\varepsilon}{\widetilde\mu}(0)}}\frac{\big|\rho(|z|^2)-\rho(|y|^2)\big||z-y|}{|y-z|^{N+2s}}dydz\;\;\to\;\; 0\;\;\text{as}\;\; \mu\to 0^+.
\end{align}
On the other hand, by Lemma \ref{Lem6.1} and the Lebesgue dominated convergence theorem, 
\begin{align}\label{sec7-14}
&\lim_{\mu\to 0^+}\iint_{B_{\frac{\varepsilon}{\widetilde\mu}(0)}\times B_{\frac{2\varepsilon}{\mu}(0)}}w(x-\widetilde\mu z)\frac{\Big(\rho(|z|^2)-\rho(|y|^2)\Big)\Big(b_{N,s}|y|^{2s-N}-b_{N,s}|z|^{2s-N}\Big)}{|y-z|^{N+2s}}dydz  \nonumber\\
&= w(x)b_{N,s}\iint_{\R^N\times\R^N}\frac{\Big(\rho(|z|^2)-\rho(|y|^2)\Big)\Big(|y|^{2s-N}-|z|^{2s-N}\Big)}{|y-z|^{N+2s}}dydz =- \frac{ 2 }{c_{N,s }} w(x).
\end{align}
Thus, combining \eqref{sec7-13} and \eqref{sec7-14} we get
\begin{align}
\label{issy}
  &\small\text{$\lim_{\mu\to 0^+}c_{N,s}\int_{B_{\frac{\varepsilon}{\widetilde\mu}(0)}}w(x-\widetilde\mu z)\int_{B_{\frac{2\varepsilon}{\widetilde\mu}}}\frac{\Big(\rho(|z|^2)-\rho(|y|^2)\Big)\Big(\widetilde\mu^{N-2s}G_s(x,x-\widetilde\mu y)-\widetilde\mu^{N-2s}G_s(x,x-\widetilde\mu z)\Big)}{|y-z|^{N+2s}}dydz$}\nonumber\\
  &\quad\quad\quad\quad=-2w(x).
\end{align}
Combining 
\eqref{sec7-11},  \eqref{issy2} and 
\eqref{issy} we arrive at  \eqref{sl}.

\subsection{Proof of Lemma \ref{Lem4.3}}
\label{sec8}

 Let us recall that 
\begin{align}
\mathcal  C_{k,\mu}(x):= \int_{\Omega}\eta_k\partial_i\Big(\eta_k \psi_{\mu,x}G_s(x,\cdot)\Big)(-\Delta)^su(z)dz\quad\text{for $x\in\Omega$}.
 \end{align}
 We consider the two different cases separately.
 \par\;

 \underline{Case 1}: $2s>1$. By Leibniz rule, we have 
\begin{align}\label{sec8-1}
\mathcal C_{k,\mu}(x)&=\int_{\Omega}\eta_k^2(z)\psi_{\mu,x}(z)\partial_i G_s(x,z)(-\Delta)^su(z)\,dz\nonumber\\
 &\;\;+\int_{\Omega}\eta_k(z)G_s(x,z)\partial_i\big(\eta_k\psi_{\mu,x}\big)(z)(-\Delta)^su(z)dz
\end{align}
Using that $(-\Delta)^su\in L^\infty(\Omega)$ and recalling \eqref{eq:greenfunct-estimate}, we estimate 
\begin{align}\label{sec8-2}
&\lim_{\mu\to 0^+} \lim_{k\to\infty}\Big|\int_{\Omega}\eta_k^2(z)\partial_i\psi_{\mu,x}G_s(x,z)(-\Delta)^su(z)dz\Big|\nonumber\\
&\leq C(x)\lim_{\mu\to 0^+}\int_{B\big(x, \frac{\delta(x)}{2}\mu\big)\setminus B\big(x, \frac{\delta(x)}{2\sqrt{2}}\mu\big)}\Bigg|\frac{(x-z)\cdot e_i}{\mu^2}\rho'\Big(\frac{8}{\delta^2(x)}\frac{|x-z|^2}{\mu^2}\Big)\Bigg||x-z|^{2s-N}dz\nonumber\\
&\leq C(x) \lim_{\mu\to 0^+}\int_{B\big(x, \frac{\delta(x)}{2}\mu\big)\setminus B\big(x, \frac{\delta(x)}{2\sqrt{2}}\mu\big)}  \frac{1}{\mu}\Bigg|\rho'\Big(\frac{8}{\delta^2(x)}\frac{|x-z|^2}{\mu^2}\Big)\Bigg||x-z|^{2s-N}dz\nonumber\\
&\leq C(x)\lim_{\mu\to 0^+} \mu^{2s-1}
\int_{ \frac{\delta(x)}{2\sqrt{2}} }^{\frac{\delta(x)}{2}} 
r^{2s-1} |\rho'(r^2)|dr=0 ,
\end{align}
since $2s>1$. A similar argument as in \eqref{pf} gives 
\begin{align}\label{sec8-3}
\lim_{\mu\to 0^+}\lim_{k\to\infty}\int_{\Omega}\eta_k(z)\psi_{\mu,x}\partial_i\eta_k(z)G_s(x,z)(-\Delta)^su(z)dz=0.
\end{align}
The first identity in the Lemma follows by taking the limit in \eqref{sec8-1} and using \eqref{sec8-2} and \eqref{sec8-3}.
\par\;

\underline{Case 2}: $2s\leq 1$. In this case, we have by an integration by parts
\begin{align}\label{cases-fl}
\mathcal C_{k,\mu}(x):&= \int_{\Omega}\eta_k\partial_i\Big(\eta_k \psi_{\mu,x}G_s(x,\cdot)\Big)(-\Delta)^su(z)dz\nonumber\\
 &= \int_{\Omega}\partial_i\Big(\eta_k \psi_{\mu,x}G_s(x,\cdot)\Big)(z)\eta_k(z)f(z,u(z))dz\nonumber\\
 &=-\int_{\Omega}\eta_k \psi_{\mu,x}G_s(x,\cdot) \partial_i\big(\eta_kf(\cdot, u(\cdot))\big)(z)dz\nonumber\\
 &=-\int_{\Omega}\eta_k^2\psi_{\mu,x}G_s(x,\cdot)\Bigg[\frac{\partial f}{\partial h_i}(z,u(z))+\frac{\partial f}{\partial q}(z,u(z))\frac{\partial u}{\partial y_i}(z)\Bigg]dz\nonumber\\
 &\quad\quad-k\int_{\Omega}\eta_k(z)\psi_{\mu,x}(z)G_s(x,z)\rho'(k\delta(z))\partial_i\delta(z) f(z,u(z))dz
 \end{align}
In the one hand, a similar argument to the proof of \eqref{pf} gives 
\begin{align}\label{case2-sl}
    \lim_{k\to\infty}k\int_{\Omega}\eta_k(z)\psi_{\mu,x}(z)G_s(x,z)\rho'(k\delta(z))\partial_i\delta(z) f(z,u(z))dz=0.
\end{align}
On the other hand by Lebesgue dominated convergence theorem we have 
\begin{align}\label{case2-tl}
 &\lim_{\mu\to 0^+}\lim_{k\to\infty}\int_{\Omega}\eta_k^2\psi_{\mu,x}G_s(x,\cdot)\Bigg[\frac{\partial f}{\partial h_i}(z,u(z))+\frac{\partial f}{\partial q}(z,u(z))\frac{\partial u}{\partial y_i}(z)\Bigg]dz\nonumber\\
 &\quad\quad \quad \quad=  \int_{\Omega}G_s(x,z)\Bigg[\frac{\partial f}{\partial h_i}(z,u(z))+\frac{\partial f}{\partial q}(z,u(z))\frac{\partial u}{\partial y_i}(z)\Bigg]dz.
\end{align}
In view of \eqref{case2-sl} and \eqref{case2-tl}, we get the desired identity by taking the limit as $k\to\infty$ and as $\mu\to 0^+$ into \eqref{cases-fl}.  Finally, to see \eqref{Lem4.3-id-3} we use integration by parts to write
\begin{align*}
 \lim_{\mu\to 0^+}\lim_{k\to\infty} \mathcal C_{k,\mu}(x)&=\lim_{\mu\to 0^+}\lim_{k\to\infty} \int_{\Omega}\eta_k\partial_i\Big(\eta_k \psi_{\mu,x}G_s(x,\cdot)\Big)(-\Delta)^su(z)dz\nonumber\\
 &=c\lim_{\mu\to 0^+} \int_{\Omega}\partial_i\Big(\psi_{\mu,x}G_s(x,\cdot)\Big)+c\lim_{\mu\to 0^+}\lim_{k\to\infty}k\int_{\Omega}\eta_k\rho'(k\delta)\partial_i\delta\psi_{\mu,x}G_s(x,\cdot)dz.
 \end{align*}
Since $\psi_{\mu,x}G_s(x,\cdot)=0$ on $\partial\Omega$, it is clear that the first limit in the identity above is zero by the divergence theorem. We also know from above that the second limit is zero as well. Consequently, we have $ \lim_{\mu\to 0^+}\lim_{k\to\infty}\mathcal C_{k,\mu}(x)=0$ which prove \eqref{Lem4.3-id-3}.
The proof of Lemma \ref{Lem4.3} is therefore finished.


\section{Proof of Theorem \ref{Thm.2}}\label{sec4}
This section is devoted to the proof of Theorem  \ref{Thm.2}. 
We start with the proof of  \eqref{partial-Green}, for which an intermediate result: Lemma \ref{lem-lim-G-x-y} is necessary. Its proof is postponed to Section \ref{sec9}. Finally we  turn to the proof of  \eqref{repres-gradient-Robin func}.

\subsection{Proof of  \eqref{partial-Green}}

First, we use  the functions $\R^N\to\R, \;z\mapsto \psi_{\gamma,y}(z)G_s(y,z)\eta_k(z)$ and $\R^N\to\R,\;z\mapsto\psi_{\mu,x}(z)G_s(x,z)\eta_k(z)$ with $x\neq y$ as test functions into \eqref{eq:pohozaev.1} and expand to get
\begin{equation} \label{all-2}
   \mathcal  E_{k,\mu,\gamma}(x,y) + \mathcal G_{k,\mu,\gamma}(x,y) = -\mathcal  F_{k,\mu,\gamma}(x,y) -\mathcal  H_{k,\mu,\gamma}(x,y)  
\end{equation}
with 
\begin{align*}
 \mathcal E_{k,\mu,\gamma}(x,y) &:= \int_{\Omega}\eta_k(z)\partial_i\big(\eta_k \psi_{\gamma,y}G_s(y,\cdot)\big)(z)(-\Delta)^s\Big[\psi_{\mu,x}G_s(x,\cdot)\Big](z)dz, 
\\ \mathcal F_{k,\mu,\gamma} (x,y)&:=  \int_{\Omega}\partial_i (\eta_k\psi_{\mu,x}G_s(x,\cdot))(z)\Big[\psi_{\gamma,y}G_s(y,\cdot)(-\Delta)^s\eta_k-\mathcal I_s\big[\eta_k\psi_{\gamma,y}, G_s(y,\cdot)\big]\Big]dz ,
\\ \mathcal G_{k,\mu,\gamma}(x,y) &:= \int_{\Omega}\eta_k\partial_i\Big(\eta_k \psi_{\mu,x}G_s(x,\cdot)\Big)(-\Delta)^s\Big[\psi_{\gamma,y}G_s(y,\cdot)\Big](z)dz ,
\\ \mathcal H_{k,\mu,\gamma}(x,y) &:=  \int_{\Omega}\partial_i\Big(\eta_k \psi_{\mu,x}G_s(x,\cdot)\Big)\Big[\psi_{\gamma,y}G_s(y,\cdot)(-\Delta)^s\eta_k-\mathcal I_s[\psi_{\mu,x}G_s(x,\cdot),\eta_k]\Big]dz .
 \end{align*}
On the one hand, we applied Lemma \ref{DFW} to the couple $\big(\psi_{\gamma,y}G_s(y,\cdot),\psi_{\mu,x}G_s(x,\cdot)\big)$ and then pass to the limit, first as $\mu\to 0^+$ and then as $\gamma\to 0^+$ to get that
\begin{align}\label{lim-F-x-y}
    \lim_{\gamma\to 0^+}\lim_{\mu\to 0^+}\lim_{k\to\infty}\mathcal F_{k,\mu,\gamma} (x,y)=\frac{\Gamma^2(1+s)}{2}\int_{\partial \Omega}\frac{G_s(x,z)}{\delta^s_\Omega (z)} \frac{G_s(y,z)}{\delta^s_\Omega (z)} \nu_i(z)d\sigma(z),
\end{align}
\begin{align}\label{lim-H-x-y}
   \lim_{\gamma\to 0^+} \lim_{\mu\to 0^+}\lim_{k\to\infty}\mathcal H_{k,\mu,\gamma} (x,y)=\frac{\Gamma^2(1+s)}{2}\int_{\partial \Omega} \frac{G_s(x,z)}{\delta^s_\Omega (z)} \frac{G_s(y,z)}{\delta^s_\Omega (z)} \nu_i(z)d\sigma(z)
\end{align}
where $\nu$ is the outward unit normal to the boundary. On the other hand, Lemma \ref{Lem4.2} applied with $u=\psi_{\gamma,y}G_s(y,\cdot)$ yields
\begin{align*}
    \lim_{\mu\to 0^+}\lim_{k\to\infty}\mathcal E_{k,\mu,\gamma}(x,y) &=\frac{\partial(\psi_{\gamma,y}G_s(y,\cdot))}{\partial x_i}(x)\nonumber\\
    &=\psi_{\gamma,y}(x)\frac{\partial G_s(y,x)}{\partial x_i}-\frac{16}{\delta^2(x)}\frac{(x-y)\cdot e_i}{\gamma^2}\rho'\Big(\frac{8}{\delta^2(x)}\frac{|x-y|^2}{\gamma^2}\Big).
\end{align*}
Passing to the limit as $\gamma\to 0^+$, and observing that 
$\rho'\Big(\frac{8}{\delta^2(x)}\frac{|x-y|^2}{\gamma^2}\Big)$ vanishes  when $\gamma>0$ is sufficiently small, 
we get
\begin{align}\label{lim-E-x-y}
    \lim_{\gamma\to 0^+} \lim_{\mu\to 0^+}\lim_{k\to\infty}\mathcal E_{k,\mu,\gamma}(x,y)=\frac{\partial G_s(y,x)}{\partial x_i}.
\end{align}

In view of \eqref{all-2}, \eqref{lim-F-x-y}, \eqref{lim-H-x-y}, \eqref{lim-E-x-y}, the formula announced in \eqref{partial-Green} follows once we prove the following result.
\begin{Lemma}\label{lem-lim-G-x-y}
Let $s\in (0,1)$. Then for any $x,y\in\Omega$ with $x\neq y$. 
there holds: 
\begin{equation}
\lim_{\gamma\to 0^+}  \lim_{\mu\to 0^+}\lim_{k\to\infty} G_{k,\mu, \gamma}(x,y) = \frac{\partial G_s}{\partial y_i}(x,y).
\end{equation}
\end{Lemma}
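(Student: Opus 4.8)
The plan is to treat $\mathcal G_{k,\mu,\gamma}(x,y)$ in exactly the same way as $\mathcal A_{k,\mu}(x)$ was treated in the proof of Lemma \ref{Lem4.2}, with the crucial simplification that here the source term $(-\Delta)^s\big[\psi_{\gamma,y}G_s(y,\cdot)\big]$ plays the role of $(-\Delta)^su$, but — unlike $u$ — it is smooth and \emph{supported away from $x$} once $\gamma$ is small (since the singularity of $G_s(y,\cdot)$ sits at $y\neq x$ and $\psi_{\gamma,y}$ kills a neighbourhood of $y$). First I would apply Leibniz' rule to split
$$
\mathcal G_{k,\mu,\gamma}(x,y)=\int_{\Omega}\eta_k^2\psi_{\mu,x}\,\partial_iG_s(x,\cdot)\,(-\Delta)^s\big[\psi_{\gamma,y}G_s(y,\cdot)\big]dz+\int_{\Omega}\eta_k\,G_s(x,\cdot)\,\partial_i(\eta_k\psi_{\mu,x})\,(-\Delta)^s\big[\psi_{\gamma,y}G_s(y,\cdot)\big]dz,
$$
and then, exactly as in \eqref{Lemid}, further split the first integral using the product rule \eqref{pl} applied to $\psi_{\mu,x}$ and $G_s(x,\cdot)$, together with $(-\Delta)^sG_s(x,\cdot)=0$ away from $x$, to isolate the terms $G_s(x,\cdot)(-\Delta)^s\psi_{\mu,x}-\mathcal I_s[\psi_{\mu,x},G_s(x,\cdot)]$.

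Next I would pass to the limit in $k$. The boundary contribution, i.e. the piece carrying the factor $k\,\rho'(k\delta)\,\partial_i\delta$, vanishes in the limit $k\to\infty$ by the very same dominated-convergence argument as in \eqref{pf} and \eqref{sec7-19}: one uses $|G_s(x,z)|\le C(x)$ uniformly on the shell $\Omega_{2/k}\setminus\Omega_{1/k}$ (since $x$ is a fixed interior point, $G_s(x,\cdot)$ is bounded near $\partial\Omega$), that $(-\Delta)^s[\psi_{\gamma,y}G_s(y,\cdot)]\in L^\infty(\Omega)$ by Lemma \ref{lem-ps-mu-x}, and that $\int_{\Omega_{2/k}\setminus\Omega_{1/k}}\delta^{-1}|\rho'(k\delta)|\,\mathrm{d}z\to 0$. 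After dropping those terms one is left, in the limit $k\to\infty$, with
$$
\int_{\Omega}\partial_iG_s(x,z)\Big[G_s(x,z)(-\Delta)^s\psi_{\mu,x}(z)-\mathcal I_s[\psi_{\mu,x},G_s(x,\cdot)](z)\Big]dz+\int_{\Omega}G_s(x,z)\,\partial_i\psi_{\mu,x}(z)\,(-\Delta)^s\big[\psi_{\gamma,y}G_s(y,\cdot)\big](z)\,dz,
$$
the first integral requiring a word on integrability of $\partial_iG_s(x,\cdot)$ near $x$, which is fine since the bracket is supported (for $\mu$ small) in the shell $B_{\widetilde\mu}(x)\setminus$ small ball and there $\partial_iG_s(x,\cdot)\sim|x-\cdot|^{2s-N-1}$ is integrated against an $O(\mu^{-2s}\cdot|x-\cdot|^{2s-N})$ kernel — the scaling $\widetilde\mu=\delta_\Omega(x)\mu/(2\sqrt2)$ makes this bounded, exactly as in \eqref{sec8-2} but now \emph{without} needing $2s>1$ because the extra factor $\partial_iG_s(x,\cdot)$ is multiplied by $\psi_{\mu,x}(-\Delta)^s[\psi_{\gamma,y}G_s(y,\cdot)]$ which is genuinely smooth near $x$.

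Then I would let $\mu\to 0^+$. For the first integral, apply Lemma \ref{ess-convergence-result} (in the form of Remark \ref{rem-lem-2.6}) with the weight $w=\partial_iG_s(x,\cdot)$, which is continuous at every interior point — but here is the subtlety: $w$ must be continuous at the point where the cut-off $\psi_{\mu,x}$ concentrates, namely at $x$ itself, and $\partial_iG_s(x,\cdot)$ is \emph{singular} at $x$. This is the main obstacle. The resolution is that $\partial_iG_s(x,\cdot)$ is multiplied by the \emph{full} bracket $G_s(x,\cdot)(-\Delta)^s\psi_{\mu,x}-\mathcal I_s[\psi_{\mu,x},G_s(x,\cdot)]$, and one should not apply Lemma \ref{ess-convergence-result} naively; instead, regroup so that the genuinely singular factor of $G_s(x,\cdot)$ itself (not its derivative) pairs with $(-\Delta)^s\psi_{\mu,x}$ and with $\mathcal I_s[\psi_{\mu,x},\cdot]$ to produce, after the rescaling $z\mapsto x+\widetilde\mu z$, the universal constants $1$ and $2$ from \eqref{sec7-8} and Lemma \ref{Lem6.1}, multiplied now by $\partial_iG_s(x,x+\widetilde\mu z)\sim\widetilde\mu^{2s-N-1}\cdot(\text{bounded})$; the net power of $\widetilde\mu$ after including the Jacobian $\widetilde\mu^N$ and the $\widetilde\mu^{-2s}$ from $(-\Delta)^s\psi_{\mu,x}$ is $\widetilde\mu^{N}\cdot\widetilde\mu^{-2s}\cdot\widetilde\mu^{2s-N-1}=\widetilde\mu^{-1}$, which diverges — so in fact the bracket must instead be paired the \emph{other} way: the singular $|x-\cdot|^{2s-N}$ part of $G_s(x,\cdot)$ in the factor $G_s(x,\cdot)(-\Delta)^s\psi_{\mu,x}$, combined with $\partial_iG_s(x,\cdot)$ being integrated, yields by the divergence structure a derivative falling on the argument. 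The clean way is: integrate by parts once in the first integral to move $\partial_i$ off $G_s(x,\cdot)$, producing $-\int G_s(x,\cdot)\,\partial_i\big[\,G_s(x,\cdot)(-\Delta)^s\psi_{\mu,x}-\mathcal I_s[\psi_{\mu,x},G_s(x,\cdot)]\,\big]$ plus the second integral's partner $\int G_s(x,\cdot)\partial_i\psi_{\mu,x}(-\Delta)^s[\psi_{\gamma,y}G_s(y,\cdot)]$, and recognize the combination as $\partial_i$ acting on the localized Green representation: by Lemma \ref{ess-convergence-result} applied to $w=\psi_{\gamma,y}(\cdot)G_s(y,\cdot)$ evaluated and differentiated at $x$ — i.e. mimic verbatim the computation in the proof of Lemma \ref{Lem4.2} with $u$ replaced by $\psi_{\gamma,y}G_s(y,\cdot)$ — one obtains $\partial_{x_i}\big(\psi_{\gamma,y}(x)G_s(y,x)\big)$. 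Finally, letting $\gamma\to0^+$ and noting $\psi_{\gamma,y}(x)=1$, $\partial_i\psi_{\gamma,y}(x)=0$ for $\gamma$ small (as $x\neq y$), gives $\partial_{y_i}G_s(x,y)=\partial G_s/\partial y_i(x,y)$ as claimed. Thus, modulo bookkeeping, Lemma \ref{lem-lim-G-x-y} is Lemma \ref{Lem4.2} applied to the admissible test function $u=\psi_{\gamma,y}G_s(y,\cdot)$, followed by $\gamma\to0^+$.
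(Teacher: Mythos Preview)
There is a genuine structural confusion in your plan. In the term
\[
\mathcal G_{k,\mu,\gamma}(x,y)=\int_{\Omega}\eta_k\,\partial_i\big(\eta_k\psi_{\mu,x}G_s(x,\cdot)\big)\,(-\Delta)^s\big[\psi_{\gamma,y}G_s(y,\cdot)\big]\,dz,
\]
the fractional Laplacian acts on $\psi_{\gamma,y}G_s(y,\cdot)$, \emph{not} on $\psi_{\mu,x}G_s(x,\cdot)$. Hence the product rule \eqref{pl} must be applied to the pair $\big(\psi_{\gamma,y},G_s(y,\cdot)\big)$, producing the bracket $G_s(y,\cdot)(-\Delta)^s\psi_{\gamma,y}-\mathcal I_s[\psi_{\gamma,y},G_s(y,\cdot)]$, which localizes near $y$ as $\gamma\to0^+$. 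Your step ``split using the product rule applied to $\psi_{\mu,x}$ and $G_s(x,\cdot)$, together with $(-\Delta)^sG_s(x,\cdot)=0$'' simply does not arise from the integrand of $\mathcal G_{k,\mu,\gamma}$; there is no $(-\Delta)^s$ acting on $\psi_{\mu,x}G_s(x,\cdot)$ here. This misidentification is what forces you into the artificial singularity crisis with $\partial_iG_s(x,\cdot)$ at $x$, and into the subsequent ad hoc integration by parts.

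Your concluding sentence — that Lemma \ref{lem-lim-G-x-y} ``is Lemma \ref{Lem4.2} applied to $u=\psi_{\gamma,y}G_s(y,\cdot)$'' — describes precisely the computation of $\mathcal E_{k,\mu,\gamma}(x,y)$, not of $\mathcal G_{k,\mu,\gamma}(x,y)$ (compare their definitions: the roles of $(\mu,x)$ and $(\gamma,y)$ are swapped between the $\partial_i$-factor and the $(-\Delta)^s$-factor). That route yields $\partial_{x_i}G_s(y,x)$, which is the limit already recorded in \eqref{lim-E-x-y}, and is \emph{not} equal to $\partial_{y_i}G_s(x,y)$; these two derivatives are the two distinct summands on the left-hand side of \eqref{partial-Green}. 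The paper's proof instead expands $(-\Delta)^s[\psi_{\gamma,y}G_s(y,\cdot)]$, localizes to a ball $B_{\varepsilon_0}(y)$ with $\varepsilon_0\le|x-y|/2$ (so that $\partial_iG_s(x,\cdot)$ is smooth there and the $\psi_{\mu,x}$-dependence disappears as $\mu\to0^+$), and then applies the rescaling argument of Lemma \ref{ess-convergence-result} centred at $y$ with the continuous weight $w=\partial_iG_s(x,\cdot)$, yielding $\partial_{y_i}G_s(x,y)$. The structural analogue of $\mathcal G_{k,\mu,\gamma}$ among the terms of Theorem \ref{main-result} is $\mathcal C_{k,\mu}$, not $\mathcal A_{k,\mu}$.
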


\subsection{Proof of Lemma \ref{lem-lim-G-x-y}}\label{sec9}

First we note that since $(-\Delta)^s G_s(y,\cdot)=0$ in $\Omega\setminus\overline{B_\varepsilon(y)}$ for all $\varepsilon>0$, and we  apply the fractional product law \eqref{pl} to obtain:
\begin{align}\label{G-k-mu-gamma}
\mathcal  G_{k,\mu,\gamma}(x,y) = \int_{\Omega}\eta_k\partial_i\Big(\eta_k \psi_{\mu,x}G_s(x,\cdot)\Big) \Bigg[G_s(y,\cdot)(-\Delta)^s\psi_{\gamma,y}-\mathcal I_s\big[G_s(y,\cdot),\psi_{\gamma,y}\big]\Bigg]dz.
\end{align}
Let $x,y\in\Omega$ such that $x\neq y$ and let $G_s(x,\cdot)$ and $G_s(y,\cdot)$ be the Green functions with singularity $x$ and $y$ respectively. Recall 
$$
\psi_{y,\gamma}(z)=1-\rho\Big(\frac{8}{\delta^2(y)}\frac{|z-y|^2}{\gamma^2}\Big) \;\;\;\text{and}\;\;\; \psi_{x,\mu}(z)=1-\rho\Big(\frac{8}{\delta^2(x)}\frac{|z-x|^2}{\gamma^2}\Big)\quad\text{for \;$\gamma,\mu\in (0,1)$.}
$$
 By Leibniz rule, we have 
 \begin{equation}\label{dedede}
    \mathcal  G_{k,\mu,\gamma}(x,y) = \mathcal T^1_{k,\mu,\gamma}(x,y)+\mathcal T^2_{k,\mu,\gamma}(x,y) , 
 \end{equation}
 where 
\begin{align*}  \mathcal T^1_{k,\mu,\gamma}(x,y) &:= \int_{\Omega}\eta_k^2\partial_i\Big(\psi_{\mu,x}G_s(x,\cdot)\Big) \Bigg\{G_s(y,\cdot)(-\Delta)^s\psi_{\gamma,y}-\mathcal I_s\big[G_s(y,\cdot),\psi_{\gamma,y}\big]\Bigg\}dz , \nonumber\\
 \mathcal T^2_{k,\mu,\gamma}(x,y) :=&2k\int_{\Omega}\eta_k(z)\rho'(k\delta)\partial_i\delta(z)\Big(\psi_{\mu,x}G_s(x,\cdot)\Big) (-\Delta)^s\big(\psi_{\gamma,y}G_s(z,\cdot)\big)dz .
\end{align*}
Next, arguing as in the proof of \eqref{psi-mu-equ} in Lemma \ref{lem-ps-mu-x} we have:
\begin{align}\label{sec9-1}
    &\lim_{\gamma\to 0^+}\lim_{\mu\to 0^+}\lim_{k\to \infty}\mathcal T^1_{k,\mu,\gamma}(x,y)\nonumber\\
    &=\small\text{$\lim_{\gamma\to 0^+}\lim_{\mu\to 0^+}\lim_{k\to\infty}\int_{B_\varepsilon(y)}\eta_k^2(z)\partial_i\Big(\psi_{\mu,x}G_s(x,\cdot)\Big)(z) \Bigg\{G_s(y,\cdot)(-\Delta)^s\psi_{\gamma,y}-\mathcal I_s\big[G_s(y,\cdot),\psi_{\gamma,y}\big]\Bigg\}dz,$}
\end{align}
for all $\varepsilon>0$.  To proceed further, we pick
\begin{equation}\label{Eq-choice-of-vareps}
\varepsilon=\varepsilon_0:=\min\Big(\frac{\delta_\Omega(y)}{4},\frac{|x-y|}{2}\Big)>0\;\;\; \text{so that}\;\;\;\overline{B_{2\varepsilon_0}(y)}\subset \Omega.
\end{equation}
\par\;
By the choice of $\varepsilon_0$, we know that if $z\in B_{\varepsilon_0}(y)$, then $|x-z|\geq |x-y|/2>0$ and hence $|G_s(x,z)|\leq C(\varepsilon_0)$ for all $z\in B_{\varepsilon_0}(y)$. Consequently
\begin{align}\label{sec9-7}
 &\Bigg|\int_{B_{\varepsilon_0}(y)}G_s(x,z)\partial_i\psi_{\mu,x}(z) \Bigg\{G_s(y,\cdot)(-\Delta)^s\psi_{\gamma,y}-\mathcal I_s\big[G_s(y,\cdot),\psi_{\gamma,y}\big]\Bigg\}dz\Bigg|\nonumber\\
 &\leq C(\varepsilon_0, \Omega,\gamma)\int_{B_{\varepsilon_0}(y)}|z-y|^{2s-N}\frac{1}{\mu}\Big|\rho'\Big(\frac{8}{\delta^2(x)}\frac{|x-z|^2}{\mu^2}\Big)\Big|dz\to 0\quad\text{as}\quad\mu\to 0^+ .
\end{align}
Indeed, since by the choice of $\varepsilon_0>0$ we have $|x-z|\geq |x-y|/2>0$ for all $z\in B_{\varepsilon_0}(y)$, therefore $\rho'\Big(\frac{8}{\delta^2(x)}\frac{|x-z|^2}{\mu^2}\Big)=0$ when $\mu>0$ is sufficiently small.  In \eqref{sec9-7} we used that $(-\Delta)^s\psi_{\gamma,y}\in L^\infty(\R^N)$ and $\mathcal I_s\big[G_s(y,\cdot),\psi_{\gamma,y}\big]\in L^\infty(B_{\varepsilon_0}(y))$. In view of \eqref{sec9-1} and \eqref{sec9-7}, we have 
    \begin{align}
    &\lim_{\gamma\to 0^+}\lim_{\mu\to 0^+}\lim_{k\to \infty}\mathcal T^1_{k,\mu,\gamma}(x,y)\nonumber\\
    &=\small\text{$\lim_{\gamma\to 0^+}\lim_{\mu\to 0^+}\lim_{k\to\infty}\int_{B_{\varepsilon_0}(y)}\eta_k^2(z)\partial_i\Big(\psi_{\mu,x}G_s(x,\cdot)\Big)(z) \Bigg\{G_s(y,\cdot)(-\Delta)^s\psi_{\gamma,y}-\mathcal I_s\big[G_s(y,\cdot),\psi_{\gamma,y}\big]\Bigg\}dz$}\nonumber\\
    &=\lim_{\gamma\to 0^+}\lim_{\mu\to 0^+}\int_{B_{\varepsilon_0}(y)}\partial_i\Big(\psi_{\mu,x}G_s(x,\cdot)\Big)(z) \Bigg\{G_s(y,\cdot)(-\Delta)^s\psi_{\gamma,y}-\mathcal I_s\big[G_s(y,\cdot),\psi_{\gamma,y}\big]\Bigg\}dz\nonumber\\
    &=\lim_{\gamma\to 0^+}\int_{B_{\varepsilon_0}(y)}\partial_iG_s(x,\cdot)(z) \Bigg\{G_s(y,\cdot)(-\Delta)^s\psi_{\gamma,y}-\mathcal I_s\big[G_s(y,\cdot),\psi_{\gamma,y}\big]\Bigg\}dz. \label{findeli}
\end{align}
Using the scaling law \eqref{rescapsi} (with $\gamma$ instead of $\mu$),  we get 
\begin{align}\label{sec9-8}
   &-\int_{B_{\varepsilon_0}(y)}\frac{\partial  G_s}{\partial z_i}(x,z) G_s(y,z)(-\Delta)^s\psi_{\gamma,y}(z)dz,\nonumber\\
   &=\widetilde\gamma^{-2s}\int_{B_{\varepsilon_0}(0)}G_s(y,y-z)\frac{\partial  G_s}{\partial z_i}(x,y-z)(-\Delta)^s\big(\rho\circ |\cdot|^2\big)\big(\frac{z}{\widetilde\gamma}\big)dz,\nonumber\\
   &=\widetilde\gamma^{N-2s}\int_{B_{\frac{\varepsilon_0}{\widetilde\gamma}}(0)}G_s(y,y-\widetilde\gamma z)\frac{\partial  G_s}{\partial z_i}(x,y-\widetilde\gamma z)(-\Delta)^s\big(\rho\circ |\cdot|^2\big)(z)dz,\nonumber\\
   &=\int_{B_{\frac{\varepsilon_0}{\widetilde\gamma}}(0)}\widetilde\gamma^{N-2s}G_s(y,y-\widetilde\gamma z)\Big[\frac{\partial  G_s}{\partial z_i}(x,y-\widetilde\gamma z)-\frac{\partial  G_s}{\partial z_i}(x,y)\Big](-\Delta)^s\big(\rho\circ |\cdot|^2\big)(z)dz,\nonumber\\
   &\;\;\;+\frac{\partial  G_s}{\partial y_i}(x,y)\int_{B_{\frac{\varepsilon_0}{\widetilde\gamma}}(0)}\widetilde\gamma^{N-2s}G_s(y,y-\gamma z)(-\Delta)^s\big(\rho\circ |\cdot|^2\big)(z)dz.
\end{align}
By the choice of $\varepsilon_0$ and since that $G_s(x,\cdot)\in C^\infty_{loc}(\Omega\setminus\{x\})$, we have 
$$
\Big|\frac{\partial  G_s}{\partial z_i}(x,y-\widetilde\gamma z)-\frac{\partial  G_s}{\partial z_i}(x,y)\Big|\leq \widetilde\gamma C(\varepsilon_0)|z|\quad\forall\,z\in B_{\frac{\varepsilon_0}{\widetilde\gamma}}(0).
$$
Using this and that $\big|\widetilde\gamma^{N-2s}G_s(y,y-\gamma z)\big|\leq C|z|^{2s-N}$, we get 
\begin{align}\label{sec9-9}
   &\Bigg|\int_{B_{\frac{\varepsilon_0}{\widetilde\gamma}}(0)}\widetilde\gamma^{N-2s}G_s(y,y-\widetilde\gamma z)\Big[\frac{\partial  G_s}{\partial z_i}(x,y-\widetilde\gamma z)-\frac{\partial  G_s}{\partial z_i}(x,y)\Big](-\Delta)^s\big(\rho\circ |\cdot|^2\big)(z)dz\Bigg|\nonumber\\
   &\leq C(\varepsilon_0)\widetilde\gamma\int_{\R^N}\frac{dz}{|z|^{N-2s-1}(1+|z|^{N+2s})}\to 0\;\;\text{as}\;\; \gamma\to 0^+.
\end{align}
On the one hand, we know by \eqref{sec7-8} that
\begin{equation}\label{sec9-10}
   \lim_{\gamma\to 0^+}\int_{B_{\frac{\varepsilon_0}{\widetilde\gamma}}(0)}\widetilde\gamma^{N-2s}G_s(y,y-\widetilde\gamma z)(-\Delta)^s\big(\rho\circ |\cdot|^2\big)(z)dz= 1.
\end{equation}
Passing into the limit in \eqref{sec9-8} and taking into account \eqref{sec9-9} and \eqref{sec9-10}, we end up with 
\begin{equation}\label{sec9-11}
 \lim_{\gamma\to 0^+}\int_{B_{\varepsilon_0}(y)}\frac{\partial  G_s}{\partial z_i}(x,z)G_s(y,z)(-\Delta)^s\psi_{\gamma,y}(z)dz =-\frac{\partial G_s}{\partial y_i}(x,y).   
\end{equation}
A similar argument as in the proof of \eqref{sl} above yields also
\begin{align}\label{sec9-12}
 &\lim_{\gamma\to 0^+}\int_{\Omega}\frac{\partial  G_s}{\partial z_i}(x,z)\mathcal I_s\big[G_s(y,z),\psi_{\gamma,y}\big](z)dz\nonumber\\
 &=\lim_{\gamma\to 0^+}\int_{B_{\varepsilon_0}(y)}\frac{\partial  G_s}{\partial z_i}(x,z)\mathcal I_s\big[G_s(y,z),\psi_{\gamma,y}\big](z)dz\nonumber\\
 &=-2\frac{\partial G_s}{\partial y_i}(x,y).   
\end{align}

In view of \eqref{dedede}, \eqref{findeli}, \eqref{sec9-11} and \eqref{sec9-12}, we are done once we know that 
\begin{align*}
  &\lim_{\gamma\to 0^+}\lim_{\mu\to 0^+}\lim_{k\to\infty} \mathcal T^2_{k,\mu,\gamma}(x,y)\\
 &=\lim_{\gamma\to 0^+}\lim_{\mu\to 0^+}\lim_{k\to\infty} 2k\int_{\Omega}\eta_k(z)\rho'(k\delta)\partial_i\delta(z)\Big(\psi_{\mu,x}G_s(x,\cdot)\Big) (-\Delta)^s\big(\psi_{\gamma,y}G_s(z,\cdot)\big)dz\\
 &=0.
\end{align*}
The later follows by a similar argument as in \eqref{pf}. The proof of Lemma \ref{lem-lim-G-x-y} is therefore finished.

\subsection{Proof of  \eqref{repres-gradient-Robin func}.}

We recall the decomposition $G_s(x,y)=\frac{b_{N,s}}{|x-y|^{N-2s}}-H_s(x,y)$ with $H_s(\cdot,\cdot)\in C^\infty(\Omega\times\Omega)$. Since  
$$\frac{\partial(|x-y|^{2s-N})}{\partial x_i}+\frac{\partial(|x-y|^{2s-N})}{\partial y_i}=0,$$ 
the identity \eqref{partial-Green} becomes: for any $(x,y)$ in $\Omega\times\Omega$, with $x\neq y$,
$$
\frac{\partial H_s}{\partial x_i}(y,x)+\frac{\partial H_s}{\partial y_i}(x,y)= - \Gamma^2(1+s)\int_{\partial \Omega} \frac{G_s(x,z)}{\delta^s_\Omega (z)} \frac{G_s(y,z)}{\delta^s_\Omega (z)} \nu_i(z)d\sigma(z). 
$$
But now that we got rid of the singular part, we can let $y$ go to $x$, and since 
$$  \partial_i \mathcal R_s(x) = \frac{\partial H_s}{\partial x_i}(x,x)+\frac{\partial H_s}{\partial y_i}(x,x),$$
we arrive at \eqref{repres-gradient-Robin func}.

\section{Proof of Corollary \ref{non-deg-frac-Robin}}
\label{sec-robin}

This section is devoted to the proof of Corollary \ref{non-deg-frac-Robin}.
The proof is an adaptation of the proof given in \cite{G} regarding the corresponding problem for the classical case of the Laplacian. For $j\in \{1,2,\cdots, N\}$, 
 we  set
$$H_j:=\big\{x\in\R^N:x_j=0\big\} \quad \text{ and } \quad 
\Omega_j := H_j \cap \Omega,$$ 
 and we assume that  $\Omega$ is symmetric with respect to the hyperplane $H_j$ and that $0$ is in $\Omega$. 
We call  $T_j:\R^N\to\R^N$  the reflection with respect to the hyperplane $H_j$.
We start with the following simple lemma regarding the symmetry of the fractional Green function.
\begin{Lemma}
    Fix $j\in\{1,2,\cdots, N\}$ and let $\overline y\in \Omega_j$. Then we have 

    \begin{equation}\label{lab-claim}
    G_s(\overline y, T_j(x))=G_s(\overline y, x)\quad \text{for a.e $x\in \R^N$}.
    \end{equation}
\end{Lemma}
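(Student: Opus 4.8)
The plan is to exploit the invariance of the fractional Laplacian under the reflection $T_j$ together with the uniqueness of the Green function. I would set $w := G_s(\overline y, T_j(\cdot))$, that is $w(x) = G_s(\overline y, T_j(x))$. Since $\Omega$ is symmetric with respect to $H_j$ we have $T_j(\Omega)=\Omega$, hence also $T_j(\R^N\setminus\Omega)=\R^N\setminus\Omega$; as $G_s(\overline y,\cdot)$ vanishes in $\R^N\setminus\Omega$, so does $w$. Moreover $w\in L^1_\loc(\R^N)$ and has the same boundary behaviour at infinity as $G_s(\overline y,\cdot)$ (both being supported in the bounded set $\Omega$), so the distributional fractional Laplacian of $w$ is well defined. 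Thus $w$ and $G_s(\overline y,\cdot)$ share the same exterior datum, and it remains to check that they solve the same interior equation.

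First I would record that $T_j$ is a linear involution of $\R^N$ with $|\det DT_j|=1$ and $|T_j(a)-T_j(b)|=|a-b|$ for all $a,b\in\R^N$. Inserting this isometry into the singular-integral definition of $(-\Delta)^s$ and changing variables $y\mapsto T_j(y)$ gives the pointwise identity $\bigl((-\Delta)^s v\bigr)\circ T_j=(-\Delta)^s(v\circ T_j)$ for smooth $v$; by duality the same identity holds in $\mathcal D'(\Omega)$. Concretely, for every $\phi\in C^\infty_c(\Omega)$, using the change of variables $z\mapsto T_j(z)$, the isometry invariance just stated, that $\phi\circ T_j\in C^\infty_c(\Omega)$ (again by the symmetry of $\Omega$), that $G_s(\overline y,\cdot)$ solves \eqref{DIR}, and finally $T_j(\overline y)=\overline y$ since $\overline y\in\Omega_j\subset H_j$, one gets
\begin{align*}
\langle (-\Delta)^s w,\phi\rangle &= \int_{\R^N} G_s(\overline y, T_j(z))\,(-\Delta)^s\phi(z)\,dz \\
&= \int_{\R^N} G_s(\overline y, z)\,(-\Delta)^s(\phi\circ T_j)(z)\,dz = (\phi\circ T_j)(\overline y) = \phi(\overline y).
\end{align*}
Hence $(-\Delta)^s w=\delta_{\overline y}$ in $\mathcal D'(\Omega)$, so $w$ solves exactly the problem \eqref{DIR} which characterizes $G_s(\overline y,\cdot)$.

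By uniqueness of the solution of the fractional Dirichlet problem (see \cite[Proposition 1.2.2]{Abatangelo}), I conclude that $w=G_s(\overline y,\cdot)$ a.e., which is precisely \eqref{lab-claim}. I do not expect any serious obstacle here: the only points deserving a line of justification are that $w$ belongs to the function class in which the distributional $(-\Delta)^s$ is defined and that the change of variables above is licit, both immediate because $T_j$ is a rigid motion. As an alternative route avoiding distributions, one can observe that the difference $w-G_s(\overline y,\cdot)$ is $s$-harmonic in $\Omega\setminus\{\overline y\}$, vanishes in $\R^N\setminus\Omega$, and remains bounded near $\overline y$ because the two fundamental-solution singularities coincide (as $|T_j(x)-\overline y|=|T_j(x)-T_j(\overline y)|=|x-\overline y|$); a removable-singularity argument together with the maximum principle then forces this difference to vanish identically.
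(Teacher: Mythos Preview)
Your proof is correct and follows essentially the same approach as the paper: both test $G_s(\overline y,T_j(\cdot))$ against $(-\Delta)^s\phi$ for $\phi\in C^\infty_c(\Omega)$, use the change of variables $z\mapsto T_j(z)$ together with the isometry-invariance identity $(-\Delta)^s(\phi\circ T_j)=((-\Delta)^s\phi)\circ T_j$, and then conclude by uniqueness of the Green function (the paper phrases this last step as ``comparing and density'').
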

\begin{proof}
    We have, on the one hand, that
    \begin{align}\label{lab1}
        \int_{\Omega}G_s(\overline y, z)(-\Delta)^s\phi(z)dz=\phi(\overline y)\quad\forall\, \phi\in C^\infty_c(\Omega).
    \end{align}
    On the other hand, 
   using the symmetry of  $\Omega$ and the definition of the fractional Laplacian, we have that for any $\phi\in C^\infty_c(\Omega)$, for any $z\in \Omega$, 
   $$(-\Delta)^s\phi(T_j(z))= (-\Delta)^s(\phi\circ T_j)(z)=\phi(\overline z). $$
   By changing variables, and using \eqref{lab1}  with $\phi\circ T_j$ instead of $\phi$, 
   we deduce that 
    \begin{align}\label{lab2}
        \int_{\Omega}G_s(\overline y, T_j(z))(-\Delta)^s\phi(z)dz =\phi(\overline y) .
    \end{align}
    Comparing \eqref{lab1} and \eqref{lab2}, and by a density argument, we get the result.
\end{proof}
\par\;
Now, for all $\overline y\in \Omega_j$, by Theorem \ref{Thm.2}, 
\begin{align}
 \partial_{y_j}\mathcal R_s(\overline y)  &=\Gamma^2(1+s)\int_{\partial\Omega}\Big(\frac{G_s(\overline y,\cdot)}{\delta^s_\Omega}\Big)^2(\sigma)\nu_j(\sigma)d\sigma \label{lab-fin}    .
\end{align}
Using the symmetry of  $\Omega$, \eqref{lab-claim} and that the normal vector satisfies 
 $\nu_j(T_j(\sigma))=-\nu_j(\sigma)$, we deduce that 
 $\partial_{y_j}\mathcal R_s(\overline y) =0$.
  If moreover, there is $i\in \{1,2,\cdots, N\}$, with $i \neq j$, such that $\Omega$ is also symmetric with respect to the hyperplane $H_i$, then one may differentiate the previous identity in the direction $i$ so that $\partial_{ij}  \mathcal R_s(\overline y)=0$.
\section{Proof of Theorem \ref{thm.3} }\label{sec-thm4}

This section is devoted to the proof of Theorem \ref{thm.3}.
In Section \ref{ssec-1} we establish a preliminary result on the shape derivative. 
In Section \ref{ssec-2} we apply an integration by parts formula to 
the shape derivative $u'$ and a regularization, with the two parameters  $\mu$ and $k$, of the Green function $ G_s(x,\cdot)$. 
In Section \ref{ssec-3} we pass to the double limit, as $\mu\to 0^+$ and $k\to\infty$, in the identity above. 
This provides the result, aka Theorem \ref{thm.3}, up to the determination of a constant. 
In Section \ref{sec10.2} we establish the value of this constant by considering explicit computation in the $1$D case.

\subsection{A preliminary result on the shape derivative}\label{ssec-1}
For an arbitrary globally Lipschitz vector field $Y$, we shall denote by $\mathcal K_Y$ the deformation kernel defined by 
\begin{equation} \label{defok}
\mathcal K_Y(x,y)=\frac{c_{N,s}}{2}\Bigg\{\div Y(x)+\div Y(y)-(N+2s)\frac{(Y(x)-Y(y))\cdot(x-y)}{|x-y|^2}\Bigg\}|x-y|^{-N-2s}.
\end{equation}
We note that, since $Y\in C^{0,1}(\R^N,\R^N)$, we have $|\mathcal K_Y(x,y)|\leq C|x-y|^{-2s-N}$. For $u\in H^s(\R^N)$ we defined $g_Y[u]\in (H^s(\R^N))'$ by 
\begin{align}\label{def-gu}
    \big<g_Y[u],\phi\big>=\iint_{\R^N\times\R^N}(u(x)-u(y))(\phi(x)-\phi(y))\mathcal K_Y(x,y)dxdy\quad\text{for all $\phi\in H^s(\R^N)$}.
\end{align}
Note that we have 
\begin{equation}\label{p00}
\| g[u] \|_{ (H^s(\R^N))'} \leq C  [ u ]_{ H^s(\R^N)} ,
\end{equation}
for some positive constant $C$. We start with the following result.
\begin{Lemma}\label{App-1}
    Let $h\in W^{1,\infty}(D)$ with $\Omega\Subset D$ and $u_t$ be the weak solution of the Dirichlet problem:
\begin{equation}\label{purturbed-Dirich-pb-00}
\left\{ \begin{array}{rcll} (-\Delta)^s u_t&=& h  &\textrm{in }\;\;\Omega_t , \\ u_t&=&0&
\textrm{in }\;\;\R^N\setminus\Omega_t \end{array}\right. ,
\end{equation}
where $\Omega_t:=\phi_t(\Omega)$ and $\phi_t$ is given as in \eqref{transformations}. Defined 
\begin{equation}
u':=v'-\nabla u_0\cdot Y\;\; \text{where}\;\;v':=\partial_t(u_t\circ\phi_t)_{\big|t=0}\in \mathcal H^s_0(\Omega).
\end{equation}
Then we have  
\begin{equation}\label{equ-u'}
(-\Delta)^s u'=0\;\;\text{in}\;\;\mathcal D'(\Omega) .
\end{equation}
If moreover, $h\in C^\mu_{loc}(\Omega)$ for some $\mu\in (0,1)$, then 
\begin{equation}\label{equ-v'}
    (-\Delta)^s v'=h\div Y+\nabla h\cdot Y-g_Y[u_0] \quad\text{weakly in \;\;} \mathcal H^s_0(\Omega)
\end{equation}
where $g_Y[u_0]\in (H^s(\R^N))'$ is given as in \eqref{def-gu}. Furthermore, there holds:

\begin{equation}\label{norm-v'-estim}
  \|v'\|_{H^s(\R^N)}\leq C\|u_0\|_{H^s(\R^N)} + C \|h\|_{H^{s+1} (\R^N)}
 \end{equation}
 for some $C=C(\Omega,N,s)>0$.
\end{Lemma}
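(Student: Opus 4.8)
The plan is to pull the perturbed problem \eqref{purturbed-Dirich-pb-00} back to the fixed domain $\Omega$ through the diffeomorphism $\phi_t$, to differentiate the resulting weak formulation at $t=0$, and then to read off the three assertions. Setting $J_t:=|\det D\phi_t|$ and performing the change of variables $x\mapsto\phi_t(x)$, $y\mapsto\phi_t(y)$ in the weak formulation $\mathcal E_s(u_t,w)=\int_{\Omega_t}h\,w$ with $w=\psi\circ\phi_t^{-1}$, $\psi\in\mathcal H^s_0(\Omega)$ (the map $w\mapsto w\circ\phi_t$ being an isomorphism of $\mathcal H^s_0(\Omega_t)$ onto $\mathcal H^s_0(\Omega)$ for $|t|$ small), one obtains that $v_t=u_t\circ\phi_t$ satisfies
\begin{equation*}
a_t(v_t,\psi):=\iint_{\R^N\times\R^N}\big(v_t(x)-v_t(y)\big)\big(\psi(x)-\psi(y)\big)K_t(x,y)\,dx\,dy=\int_\Omega h(\phi_t(y))\,\psi(y)\,J_t(y)\,dy
\end{equation*}
for every $\psi\in\mathcal H^s_0(\Omega)$, where $K_t(x,y):=\tfrac{c_{N,s}}{2}\,|\phi_t(x)-\phi_t(y)|^{-N-2s}J_t(x)J_t(y)$. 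Since $\phi_0=\mathrm{id}_{\R^N}$ we have $K_0=\tfrac{c_{N,s}}{2}|x-y|^{-N-2s}$ and $J_0\equiv1$; using $\partial_t\phi_t|_{t=0}=Y$, $\partial_t J_t|_{t=0}=\div Y$ and $\partial_t(|\phi_t(x)-\phi_t(y)|^{-N-2s})|_{t=0}=-(N+2s)|x-y|^{-N-2s-2}(Y(x)-Y(y))\cdot(x-y)$, a direct computation gives $\partial_t K_t|_{t=0}=\mathcal K_Y$, the kernel of \eqref{defok}.

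Next I would differentiate both sides of the displayed identity at $t=0$. The differentiability of $t\mapsto v_t\in\mathcal H^s_0(\Omega)$ at $0$ is already available (quoted above from \cite{DalibardVaret,HenrotPierre}), and the uniform bounds $K_t(x,y)\le C|x-y|^{-N-2s}$ and $[v_t]_{H^s(\R^N)}\le C$ for $t$ near $0$ (consequences of $\phi_t\to\mathrm{id}$ in $C^1$) allow one to pass $\partial_t$ under the double integral; near the diagonal the factor $|\psi(x)-\psi(y)|\le C|x-y|$ together with $u_0\in H^s(\R^N)$ makes everything absolutely convergent. Splitting $\partial_t a_t(v_t,\psi)|_{t=0}$ into the contribution of $\partial_t v_t$, which equals $\mathcal E_s(v',\psi)$, and the contribution of $\partial_t K_t$ with $v_0=u_0$ frozen, which equals $\langle g_Y[u_0],\psi\rangle$ by \eqref{def-gu}, while $\partial_t$ of the right-hand side yields $\int_\Omega(h\div Y+\nabla h\cdot Y)\psi\,dy$, one arrives at
\begin{equation*}
\mathcal E_s(v',\psi)+\langle g_Y[u_0],\psi\rangle=\int_\Omega\big(h\div Y+\nabla h\cdot Y\big)\psi\,dy\qquad\text{for all }\psi\in\mathcal H^s_0(\Omega),
\end{equation*}
which is exactly \eqref{equ-v'}.

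For \eqref{equ-u'} I would argue more directly. Given $\psi\in C^\infty_c(\Omega)$, for $|t|$ small one has $\supp\psi\Subset\Omega_t$, so $\mathcal E_s(u_t,\psi)=\int_{\Omega_t}h\psi=\int_\Omega h\psi$ is independent of $t$; writing $u_t=v_t\circ\phi_t^{-1}$ and using $\partial_t\phi_t^{-1}|_{t=0}=-Y$ gives $\partial_t u_t|_{t=0}=v'-\nabla u_0\cdot Y=u'$, with convergence of the difference quotients in $L^1$ of a fixed bounded set (combining the $\mathcal H^s_0$-convergence of $(v_t-v_0)/t$ with a $W^{1,1}$ difference-quotient estimate for $v_0\circ\phi_t^{-1}$). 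Since $\mathcal E_s(u_t,\psi)=\int_{\R^N}u_t\,(-\Delta)^s\psi$ and $(-\Delta)^s\psi\in L^1\cap L^\infty(\R^N)$, differentiating in $t$ yields $0=\mathcal E_s(u',\psi)$, i.e.\ $(-\Delta)^su'=0$ in $\mathcal D'(\Omega)$. Finally, the bound \eqref{norm-v'-estim} follows by testing \eqref{equ-v'} with $\psi=v'$ and using the coercivity of $\mathcal E_s$ on $\mathcal H^s_0(\Omega)$ (fractional Poincaré inequality): this gives $\|v'\|_{H^s(\R^N)}\le C\big(\|h\div Y+\nabla h\cdot Y\|_{L^2(\Omega)}+\|g_Y[u_0]\|_{(\mathcal H^s_0(\Omega))'}\big)$, the first term being controlled by $C\|h\|_{H^{s+1}(\R^N)}$ since $Y,\div Y$ are bounded, and the second by $C[u_0]_{H^s(\R^N)}$ thanks to \eqref{p00}.

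The step I expect to be the main obstacle is the rigorous justification of the differentiation at $t=0$ of the singular double integral $a_t(v_t,\psi)$: interchanging $\partial_t$ with the kernel integration requires careful estimates near the diagonal of $K_t$ and of its $t$-derivative, and the differentiability of $t\mapsto v_t$ is only known in $\mathcal H^s_0(\Omega)$, so this must be combined with the cancellation $|\psi(x)-\psi(y)|\lesssim|x-y|$ and a dominated-convergence argument uniform in $t$. By contrast, the algebraic identities ($\partial_t K_t|_{t=0}=\mathcal K_Y$, the pull-back formula, the computation of $\partial_t u_t|_{t=0}$) are routine.
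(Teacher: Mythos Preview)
Your derivation of \eqref{equ-v'} and of the estimate \eqref{norm-v'-estim} coincides with the paper's: both pull back to $\Omega$, differentiate the weak formulation at $t=0$ to obtain $\mathcal E_s(v',\phi)+\langle g_Y[u_0],\phi\rangle=\int_\Omega(h\div Y+\nabla h\cdot Y)\phi$ for $\phi\in C^\infty_c(\Omega)$ (this is the paper's equation \eqref{App-2}, which is already \eqref{equ-v'} in weak form), and then test with $\phi=v'$.

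The genuine difference is in the proof of \eqref{equ-u'}. The paper stays on the fixed domain throughout: it recasts the term $\langle g_Y[u_0],\phi\rangle$ by approximating $u_0$ with $u_0\eta_k$ and invoking an integration-by-parts identity from \cite[Lemma~4.2]{DFW}, arriving at $\langle g_Y[u_0],\phi\rangle=-\int_\Omega\nabla u_0\cdot Y\,(-\Delta)^s\phi-\int_\Omega h\,\nabla\phi\cdot Y$; substituting this into \eqref{equ-v'} and integrating by parts the $h\,\nabla\phi\cdot Y$ term then gives $\int_\Omega(v'-\nabla u_0\cdot Y)(-\Delta)^s\phi=0$. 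Your route is more direct: you exploit that for $\psi\in C^\infty_c(\Omega)$ the pairing $\int_{\R^N}u_t\,(-\Delta)^s\psi=\int h\psi$ is $t$-independent and differentiate under the integral. This bypasses the $\eta_k$-approximation machinery entirely, but it requires the $L^1$-convergence of the difference quotients $(u_t-u_0)/t\to u'$ on a fixed bounded set; that step rests on $u_0\in W^{1,1}(\R^N)$, which is available here because $\delta_\Omega^{1-s}\nabla u_0\in L^\infty(\Omega)$ by the regularity theory of \cite{RS,FallSven}, though it is not an explicit hypothesis of the lemma. Both arguments are valid; yours is shorter, while the paper's stays entirely within the $\mathcal H^s_0$-framework and, as a by-product, produces the identity relating $\langle g_Y[u_0],\phi\rangle$ to $\int_\Omega\nabla u_0\cdot Y\,(-\Delta)^s\phi$, which is precisely what connects \eqref{equ-u'} and \eqref{equ-v'}.
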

\begin{proof}
To see \eqref{equ-u'}, we first note that under the assumptions \eqref{transformations}, the function $\phi_t:\R^N\to\R^N$ is a diffeomorphism for $|t|>0$ sufficiently small. Next, let $\phi\in C^\infty_c(\Omega)$ and use $\phi\circ \phi_t^{-1}\in C^{1,1}_c(\Omega_t)$ as a test function in \eqref{purturbed-Dirich-pb-00}. This gives after changing variables 
\begin{align}
    &\frac{c_{N,s}}{2}\iint_{\R^N\times\R^N}(v_t(x)-v_t(y))(\phi(x)-\phi(y))\frac{\textrm{Jac}_{\phi_t}(x)\textrm{Jac}_{\phi_t}(y)}{|\phi_t(x)-\phi_t(y)|^{N+2s}}dxdy\nonumber\\ \label{iden}
    &=\int_{\Omega}\phi(x)(h\circ \phi_t)(x)\textrm{Jac}_{\phi_t}(x)dx
\end{align}
Here $v_t:=u_t\circ \phi_t$ and $\textrm{Jac}_{\phi_t}$ is the Jacobian of the transformation $\phi_t$. 
We introduce the symmetric $2$-point kernel:
 \begin{equation}
  \label{knono}
   \mathcal K_t(x,y) :=\frac{c_{N,s}}{2}\frac{\textrm{Jac}_{\phi_t}(x)\textrm{Jac}_{\phi_t}(y)}{|\phi_t(x)-\phi_t(y)|^{N+2s}}\quad\text{for $y\neq y\in\R^N$}.
\end{equation}
We recall that by the Liouville theorem, 
$\partial_t \textrm{Jac}_{\phi_t}(x)\big|_{t=0} = \textrm{div }Y(x)$, so that by the Leibniz identity, 
\begin{align*}
 \partial_t \mathcal K_t(x,y) 
 \big|_{t=0} = \mathcal K_Y(x,y).
\end{align*}
Using that $t\mapsto v_t\in \mathcal H^s_0(\Omega)$ is differentiable at zero and differentiating the equation \eqref{iden} we thus get that 
\begin{align}\label{App-2}
 & \iint_{\R^N\times\R^N}(u_0(x)-u_0(y))(\phi(x)-\phi(y))\mathcal K_Y(x,y)dxdy\nonumber\\
    &+\frac{c_{N,s}}{2}\iint_{\R^N\times\R^N}(v'(x)-v'(y))(\phi(x)-\phi(y))\frac{dxdy}{|x-y|^{N+2s}}\nonumber\\
    &=\int_{\Omega}\phi(x)h(x)\textrm{div}Y(x)dx +\int_{\Omega}\phi(x)\nabla h(x)\cdot Y(x)dx 
\end{align}

Let us recast the first term in the left hand side of \eqref{App-2}.
Noting that
\begin{equation}\label{App-4}
    u_0\eta_k\to u_0\;\;\text{in}\;\;\mathcal H^s_0(\Omega) \;\;\text{as}\;\;k\to\infty
\end{equation}
    (see \cite[Lemma 2.2]{DFW}) and $|\mathcal K_Y(x,y)|\leq C|x-y|^{2s-N}$, we have 
\begin{align}\label{App-5}
  &  \iint_{\R^N\times\R^N}(u_0(x)-u_0(y))(\phi(x)-\phi(y))\mathcal K_Y(x,y)dxdy\nonumber\\  
  &=\lim_{k\to\infty}\iint_{\R^N\times\R^N}(u_0(x)\eta_k(x)-u_0(y)\eta_k(y))(\phi(x)-\phi(y))\mathcal K_Y(x,y)dxdy.
\end{align}
Denoting 
$$(-\Delta)^s_\mu w(x) :=c_{N,s}\int_{|x-y|>\mu}\frac{w(x)-w(y)}{|x-y|^{N+2s}}dy,$$ 
and following closely the proof of \cite[Lemma 4.2]{DFW}, we integrate by parts to get
\begin{align}\label{App-6}
   &\iint_{\R^N\times\R^N}(u_0(x)\eta_k(x)-u_0(y)\eta_k(y))(\phi(x)-\phi(y))\mathcal K_Y(x,y)dxdy\nonumber\\
    &=-\lim_{\mu\to 0^+}\int_{\Omega}\nabla(u_0\eta_k)\cdot Y(-\Delta)^s_\mu\phi\,dx-\lim_{\mu\to 0^+}\int_{\Omega}\nabla\phi\cdot Y(-\Delta)^s_\mu(u_0\eta_k)\,dx\nonumber\\
    &=-\int_{\Omega}\nabla(u_0\eta_k)\cdot Y(-\Delta)^s\phi\,dx\nonumber\\
    &\;\;\;\;-\frac{c_{N,s}}{2}\iint_{\R^N\times\R^N}\frac{((\nabla\phi\cdot Y)(x)-(\nabla\phi\cdot Y)(y))(u_0(x)\eta_k(x)-u_0(y)\eta_k(y))}{|x-y|^{N+2s}}dxdy.
\end{align}
By definition of $\eta_k$, it is not difficult to see that 
\begin{equation}\label{App-8}
\lim_{k\to\infty}\int_{\Omega}\nabla(u_0\eta_k)\cdot Y(-\Delta)^s\phi\,dx=\int_{\Omega}\nabla u_0\cdot Y(-\Delta)^s\phi\,dx.
\end{equation}
Moreover, by \eqref{App-4}, we have 
\begin{align}\label{App-7}
 \lim_{k\to\infty}&\frac{c_{N,s}}{2}\iint_{\R^N\times\R^N}\frac{((\nabla\phi\cdot Y)(x)-(\nabla\phi\cdot Y)(y))(u_0(x)\eta_k(x)-u_0(y)\eta_k(y))}{|x-y|^{N+2s}}dxdy\nonumber\\
 &=\frac{c_{N,s}}{2}\iint_{\R^N\times\R^N}\frac{((\nabla\phi\cdot Y)(x)-(\nabla\phi\cdot Y)(y))(u_0(x)-u_0(y))}{|x-y|^{N+2s}}dxdy\nonumber\\
 &=\int_{\Omega}h(x)\nabla \phi(x)\cdot Y(x)\,dx,
\end{align}
using that $u_0$ is the weak solution of the Dirichlet problem 
\begin{equation}\label{purturbed-Dirich-pb-0000}
\left\{ \begin{array}{rcll} (-\Delta)^s u_0&=& h  &\textrm{in }\;\;\Omega , \\ u_0&=&0&
\textrm{in }\;\;\R^N\setminus\Omega , \end{array}\right. 
\end{equation}
with $\nabla\phi\cdot Y\in C^1_c(\Omega)$ as a test function.

Therefore, combining \eqref{App-5}, \eqref{App-6}, \eqref{App-8} and \eqref{App-7}, we arrive at
\begin{align}\label{Apppp}
   \iint_{\R^N\times\R^N}(u_0(x)-u_0(y))(\phi(x)-\phi(y))\mathcal K_Y(x,y)dxdy
   = - \int_{\Omega}\nabla u_0\cdot Y(-\Delta)^s\phi\,dx
 \\ \nonumber  - \int_{\Omega}h(x)\nabla \phi(x)\cdot Y(x)\,dx .
\end{align}

On the other hand, since $\phi\in C^\infty_c(\Omega)$, it is clear that the second term in the left hand side of \eqref{App-2} satisfies:
\begin{equation}\label{App-3}
\frac{c_{N,s}}{2}\iint_{\R^N\times\R^N}(v'(x)-v'(y))(\phi(x)-\phi(y))\frac{dxdy}{|x-y|^{N+2s}}=\int_{\Omega}v'(x)(-\Delta)^s\phi(x)dx.    
\end{equation}

Combining \eqref{App-2}, \eqref{Apppp} and \eqref{App-3} 
we end up with 
\begin{align*}
  &-\int_{\Omega}\nabla u_0\cdot Y(-\Delta)^s\phi\,dx- \int_{\Omega}h\nabla \phi\cdot Y\,dx+ \int_{\Omega}v'(-\Delta)^s\phi dx\nonumber\\
  &=\int_{\Omega}\phi h \textrm{div}Ydx +\int_{\Omega}\phi\nabla h\cdot Ydx .
\end{align*}
Since  $\phi\in C^\infty_c(\Omega)$, by integration by parts, 
\begin{equation*}
 - \int_{\Omega}h\nabla \phi\cdot Y\,dx =\int_{\Omega}\phi h \textrm{div}Ydx +\int_{\Omega}\phi\nabla h\cdot Ydx ,
\end{equation*}
and therefore, by substraction, we arrive at:
\begin{equation}
    \label{jeu}
    \int_{\Omega}\big(v'-\nabla u_0\cdot Y\big)(-\Delta)^s\phi\,dx=0.
\end{equation} 
This finishes the proof of \eqref{equ-u'}. 

Since $\phi\in C^\infty_c(\Omega)$, by \cite[Theorem 1.3]{DFW2023}, we have 
\begin{align*}
\int_{\Omega}\nabla u_0\cdot Y(-\Delta)^s\phi\,dx&=-\int_{\Omega}\nabla\phi\cdot Y(-\Delta)^su_0\,dx-\big<g_Y[u_0],\phi\big>
\nonumber\\
&=\int_{\Omega}(h\div Y+\nabla h\cdot Y)\phi\,dx-\big<g_Y[u_0],\phi\big>,
\end{align*}
where we used \eqref{purturbed-Dirich-pb-0000} and an integration by parts. 
Combined with \eqref{jeu}, this yields \eqref{equ-v'}.
And finally, \eqref{norm-v'-estim} follows by using $v'$ as a test function in \eqref{equ-v'} and that, by 
\eqref{p00}, we have:
\begin{equation*}
\Big|   \int_\Omega \Big( (h\div Y+\nabla h\cdot Y)-g_Y[u_0] \Big) v' \Big| 
\leq   C \big( \|h\|_{H^{s+1} (\R^N)} +  \|u_0\|_{H^s(\R^N)}  \big)  \|v'\|_{H^s(\R^N)}.
\end{equation*}
The proof of Lemma \ref{App-1} is finished.
\end{proof}

\subsection{Regularization step}\label{ssec-2}
Let start with the following observation. Let $u'\in L^1(\Omega)$ be defined as in \eqref{shape-deriv}, then $u'$ solves  
\begin{equation}\label{Eq-shape-deriv-u}
    (-\Delta)^s u'=0\;\; \text{in}\;\; \mathcal D'(\Omega)\quad\&\quad u'/\delta^{s-1}_\Omega=-s(u_0/\delta^s_\Omega)Y\cdot\nu\;\;\text{on}\;\;\partial\Omega.
\end{equation}
Indeed, by \cite{FallSven} we know  
\begin{equation}\label{key-reg-estimate}
\delta^{1-\alpha}_\Omega\nabla (u_0/\delta^s_\Omega)\;\;\text{is bounded near the boundary $\partial\Omega$ for some $\alpha\in (0,1)$.}
\end{equation}
Using this, it is straightforward to check that the second identity in \eqref{Eq-shape-deriv-u} is verified. The first equation is proved in Lemma \ref{App-1} above. 
\par\;

To get \eqref{repres-shape-deriv}, one might be tempted to use $u'$ and $G_s(x,\cdot)$ as test functions in the following identity (see e.g \cite{Grubb-2020}) which holds for sufficiently regular functions $u,v$, with $u$ satisfying a homogeneous Dirichlet boundary condition, 
\begin{equation}\label{degrubb}
\int_{\Omega}u(-\Delta)^sv\,dz+\int_{\Omega}v(-\Delta)^su\,dz=\Gamma(s)\Gamma(1+s)\int_{\partial\Omega}\frac{u}{\delta^s_\Omega}\frac{v}{\delta^{s-1}_\Omega}d\sigma.
\end{equation}
However, it is not clear that $u'=v'-\nabla u\cdot Y$ and $G_s(x,\cdot)$ are sufficiently regular to be admissible in the identity above. To get around this difficulty, we use again an approximation techniques as above. Or precisely we use $\eta_k G_s(x,\cdot)\psi_{\mu,x}\in C^\infty_c(\Omega)$ as a test function in the first equation of \eqref{Eq-shape-deriv-u}  
to get
\begin{align*}
    0&=\int_{\Omega}u'(-\Delta)^s\big[\eta_kG_s(x,\cdot)\psi_{\mu,x}\big]dz.
    \end{align*}
    Then, using the product law \eqref{pl} to 
 $ \eta_k$ and $G_s(x,\cdot)\psi_{\mu,x} $ we obtain that 
   \begin{align}\label{sec6-key-idd}
   0 &=\int_{\Omega}u'\Big\{G_s(x,\cdot)\psi_{\mu,x}(-\Delta)^s\eta_k-\mathcal I_s[\eta_k, G_s(x,\cdot]\psi_{\mu,x})\Big\}+\int_{\Omega}u'\eta_k(-\Delta)^s(G_s(x,\cdot)\psi_{\mu,x})dz.
\end{align}

\subsection{Passing to the limit}\label{ssec-3}

We now pass to the double limit, as $\mu\to 0^+$ and $k\to\infty$, in the identity above. 
On the one hand, a similar argument as in Lemma \ref{Lem4.2} gives 
\begin{align}\label{sec6-reprens-u'}
 \lim_{\mu\to 0^+}\lim_{k\to\infty} \int_{\Omega}u'\eta_k(-\Delta)^s(G_s(x,\cdot)\psi_{\mu,x})dz=u'(x).
\end{align}
We note that since $u'\in \mathcal L^1_s(\R^N)$ solves \eqref{equ-u'}, then $u'\in C(\Omega)$ by regularity theory and therefore satisfies the assumption of Lemma \ref{Lem4.2}.

On the other hand, we have the following result. 
 \begin{Lemma}\label{lemma-other}
 Let $x\in\Omega$. Then we have: 
 \begin{align} 
    &\lim_{\mu\to 0^+}\lim_{k\to\infty}\int_{\Omega}u'\Big\{G_s(x,\cdot)\psi_{\mu,x}(-\Delta)^s\eta_k-\mathcal I_s[\eta_k, G_s(x,\cdot)\psi_{\mu,x}]\Big\}dz\nonumber\\ \label{sec6-9}
 &=d_s\int_{\partial\Omega}\big(u/\delta^s_\Omega\big)(\sigma)\big(G_s(x,\cdot)/\delta^s_\Omega\big)(\sigma)Y\cdot\nu(\sigma)d\sigma.
\end{align}
where $\nu$ denotes the outward unit normal to the boundary and 
\begin{equation}
    \label{ds} d_s   :=   s\int_{0}^\infty \Big(r^s(-\Delta)^s\eta-\tilde{I}_s(r)\Big)dr,
\end{equation}
with
\begin{align}
    \tilde{I}_s(r):=c_{1,s}\int_{\R}\frac{\big(r^s_+-(r+t)^s_+\big)\big(\eta(r)-\eta(r+t)\big)}{|t|^{1+2s}}dr.
\end{align}
 \end{Lemma}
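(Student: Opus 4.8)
The plan is to mimic the proof of Lemma \ref{DFW} from \cite{DFW}: localize the integrand in the $O(1/k)$ boundary layer, flatten $\partial\Omega$, rescale the normal coordinate by a factor $k$, and identify the limit with a one--dimensional boundary integral. The new feature is that the factor multiplying the bracket is $u'$, whose boundary trace is carried by $\delta^{s-1}_\Omega$ rather than by $\delta^{s}_\Omega$: recall that $u'$ solves \eqref{Eq-shape-deriv-u}, is continuous on $\Omega$, and satisfies $u'/\delta^{s-1}_\Omega\in C^\alpha(\overline{\Omega})$ with boundary trace $-s(u/\delta^s_\Omega)\,Y\cdot\nu$, by the regularity theory of \cite{RS,FallSven}.

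\textbf{Step 1: reduction of the $\mu$--limit.} Set $w:=G_s(x,\cdot)\psi_{\mu,x}$ and $L_{\varepsilon}:=\{z\in\Omega:\delta(z)<\varepsilon\}$. The product rule \eqref{pl} gives
\[
\big(w(-\Delta)^s\eta_k-\mathcal I_s\big[\eta_k,w\big]\big)(z)=c_{N,s}\,\pv\int_{\R^N}\frac{\big(\eta_k(z)-\eta_k(y)\big)\,w(y)}{|z-y|^{N+2s}}\,dy ,
\]
and $\eta_k(z)-\eta_k(y)=\rho(k\delta(y))-\rho(k\delta(z))$ vanishes unless $\delta(z)<2/k$ or $\delta(y)<2/k$. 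Splitting $\Omega=L_{\varepsilon_0}\cup(\Omega\setminus L_{\varepsilon_0})$ for a fixed small $\varepsilon_0>0$, I would first check, as in the proof of Lemma \ref{lem-ps-mu-x}, that the contribution of $\Omega\setminus L_{\varepsilon_0}$ to $\int_\Omega u'\{w(-\Delta)^s\eta_k-\mathcal I_s[\eta_k,w]\}$ is $O(k^{-1-s})$ — there $\eta_k(z)=1$, $|w(y)|\le C\delta(y)^s$ on the thin layer where the kernel lives, and $|u'|$ is bounded away from $\partial\Omega$ — hence vanishes as $k\to\infty$; and on $L_{\varepsilon_0}$ one has $\psi_{\mu,x}\equiv 1$ once $\mu$ is small, so the integrand there is independent of $\mu$. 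It therefore suffices to identify
\[
\lim_{k\to\infty}\int_{L_{\varepsilon_0}}u'(z)\,c_{N,s}\,\pv\int_{\R^N}\frac{\big(\eta_k(z)-\eta_k(y)\big)\,G_s(x,y)}{|z-y|^{N+2s}}\,dy\,dz .
\]

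\textbf{Step 2: rescaling.} Using a finite partition of unity on $\partial\Omega$ and boundary normal coordinates $z=z(\sigma,t)$, $t=\delta(z)$, I would substitute $t=r/k$ in the outer variable and parametrize $y=y(\sigma+\zeta'/k,\,r'/k)$ in the inner integral. By \eqref{eta-k}, $\eta_k(z)=\eta(r)$ and $\eta_k(y)=\eta(r')$ with $\eta:=1-\rho$; boundary regularity yields $k^{-s}(G_s(x,\cdot)/\delta^s_\Omega)(z)\to(G_s(x,\cdot)/\delta^s_\Omega)(\sigma)$ and $k^{1-s}(u'/\delta^{s-1}_\Omega)(z)\to(u'/\delta^{s-1}_\Omega)(\sigma)$, while $|z-y|=k^{-1}(|\zeta'|^2+|r-r'|^2)^{1/2}(1+o(1))$, $dy=k^{-N}\,d\zeta'\,dr'\,(1+o(1))$ and $dz=k^{-1}\,dr\,d\sigma\,(1+o(1))$. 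Integrating the $N-1$ tangential directions $\zeta'$ out of the kernel and using $c_{N,s}\int_{\R^{N-1}}(|\zeta'|^2+1)^{-(N+2s)/2}\,d\zeta'=c_{1,s}$ (both constants being fixed by the Fourier symbol $|\xi|^{2s}$), all powers of $k$ cancel, and the bracket evaluated at the rescaled point behaves like $(G_s(x,\cdot)/\delta^s_\Omega)(\sigma)\,k^{s}\big(r_+^s(-\Delta)^s\eta(r)-\widetilde{I}_s(r)\big)$, with the one--dimensional operators of \eqref{ds}. Consequently the integral of Step 1 converges, as $k\to\infty$, to $\int_{\partial\Omega}\big(u'/\delta^{s-1}_\Omega\big)(\sigma)\big(G_s(x,\cdot)/\delta^s_\Omega\big)(\sigma)\,d\sigma$ times the universal radial constant produced by this computation; inserting $u'/\delta^{s-1}_\Omega=-s(u/\delta^s_\Omega)Y\cdot\nu$ on $\partial\Omega$ and collecting the scalar factors into the constant $d_s$ of \eqref{ds} gives \eqref{sec6-9}. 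The passage to the limit is by dominated convergence, the $k$--uniform majorant coming from $|G_s(x,y)|\le C\delta(y)^s$, $|u'(z)|\le C\delta(z)^{s-1}$ (which is integrable on $\Omega$ since $s>0$), the uniform bounds on the rescaled bracket of the type of Lemma \ref{Lem6.1}, and its decay $O(r^{-1-2s})$ as $r\to+\infty$.

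The hardest part will be this last passage to the limit: since $u'$ is of size $\delta^{s-1}_\Omega$ near $\partial\Omega$ — more singular than the functions to which Lemma \ref{DFW} applies directly — one has to verify simultaneously that the bracket concentrates in the $O(1/k)$ layer at exactly the rate $k^{s}$, that its deep and tangential tails are negligible uniformly in $k$, and that, after the change of variables, the product $u'\times(\text{bracket})$ admits a $k$--independent integrable majorant. The precise boundary behaviour of $G_s$ and of $u'$ from \cite{RS,FallSven}, together with the one--dimensional estimates behind \eqref{ds}, are exactly what make this work; the explicit value of $d_s$ is then determined separately, in the one--dimensional example of Section \ref{sec10.2}.
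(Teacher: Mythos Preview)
Your boundary--layer reduction and rescaling (Steps 1--2) follow the same template as the paper's proof, and your heuristic power count is correct. The real divergence from the paper is in how you treat $u'$ near $\partial\Omega$: you handle it as a single object with $u'/\delta^{s-1}_\Omega\in C^\alpha(\overline\Omega)$ and $|u'|\le C\delta^{s-1}_\Omega$, and then run dominated convergence. The paper instead \emph{splits} $u'=v'-\nabla u_0\cdot Y$ and deals with the two pieces by different mechanisms: for $v'$ it uses only $v'\in L^2(\R^N)$ (a Cauchy--Schwarz argument gives a factor $k^{-(1-s)}\|v'\|_{L^2}\to 0$, Claim~\ref{spi1}); for $-\nabla u_0\cdot Y$ it uses the pointwise bound $\delta^{1-s}_\Omega|\nabla u_0|\le C$ from \cite{FallSven} and dominated convergence (Claim~\ref{spi2}). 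The advantage of the paper's route is that it never needs $v'$ to be bounded near the boundary.

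This is exactly where your sketch has a gap. Your claim that $u'/\delta^{s-1}_\Omega\in C^\alpha(\overline\Omega)$ ``by \cite{RS,FallSven}'' is not what those references give: they treat the \emph{homogeneous} Dirichlet problem and yield $w/\delta^s_\Omega\in C^\alpha(\overline\Omega)$, not $w/\delta^{s-1}_\Omega$ for an $s$-harmonic function with nonzero $\delta^{s-1}$--trace (that theory is Grubb's). Concretely, to get $|u'|\le C\delta^{s-1}_\Omega$ you would need $v'$ bounded near $\partial\Omega$, which via \cite{RS} would require the right-hand side of \eqref{equ-v'}, namely $h\,\div Y+\nabla h\cdot Y-g_Y[u_0]$, to lie in $L^\infty(\Omega)$. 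But $g_Y[u_0]$ is only known to be in $(H^s)'$ (see \eqref{p00}); with $u_0\in C^s$ the pointwise integral $\int (u_0(x)-u_0(y))\mathcal K_Y(x,y)\,dy$ is borderline divergent near the diagonal, so an $L^\infty$ bound is not available without further work. The decomposition $u'=v'-\nabla u_0\cdot Y$ is therefore not a cosmetic choice but the device that makes the limit rigorous under the stated hypotheses: it isolates the piece ($-\nabla u_0\cdot Y$) whose $\delta^{s-1}$ behaviour is controlled pointwise, and disposes of the remainder $v'$ using only $L^2$ information.
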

 \begin{proof}
First, arguing as in \cite[Page 16]{DFW} we have that for all $\varepsilon>0$, 
\begin{align}
    &\lim_{k\to\infty}\int_{\Omega}u'\Big\{G_s(x,\cdot)\psi_{\mu,x}(-\Delta)^s\eta_k-\mathcal I_s[\eta_k, G_s(x,\cdot)\psi_{\mu,x}]\Big\}dz\nonumber\\ \label{arguing}
    &=\lim_{k\to\infty}\int_{\Omega^\varepsilon_{+}}u'\Big\{G_s(x,\cdot)\psi_{\mu,x}(-\Delta)^s\eta_k-\mathcal I_s[\eta_k, G_s(x,\cdot)\psi_{\mu,x}]\Big\}dz,
\end{align}
where we denote 
$$
\Omega^\varepsilon_{+}:=\big\{ x\in\Omega: 0<\delta(x)<\varepsilon\big\}.
$$
Let $\Psi$ be the transformation defined by (see \cite{DFW})
\begin{align}\label{Psi-trans}
    \Psi:\partial\Omega\times (0,\varepsilon)\to \Omega^\varepsilon_+, \;(\sigma,r)\mapsto \sigma+r\nu(\sigma).
\end{align}
To be short we set
\begin{equation}\label{defire}
    g_{k,\mu,x}(z):=u'(z)\Big\{G_s(x,z)\psi_{\mu,x}(z)(-\Delta)^s\eta_k(z)-\mathcal I_s[\eta_k, G_s(x,\cdot)\psi_{\mu,x}](z)\Big\}.
\end{equation}
Changing variables by using \eqref{Psi-trans}, we get 
\begin{align}
    \lim_{k\to\infty}\int_{\Omega^\varepsilon_+}g_{k,\mu,x}(z)dz=\lim_{k\to\infty}\frac{1}{k}\int_{\partial\Omega}\int_{0}^{k\varepsilon}j_k(r,\sigma)g_{k,\mu,x}(r,\sigma)drd\sigma
\end{align}
with the slight abuse of notation
$$
g_{k,\mu,x}(r,\sigma)=g_{k,\mu,x}\big(\Psi(\frac{r}{k},\sigma)\big), \quad \mathcal M_s(x,\cdot)=\frac{G_s(x,\cdot)}{\delta_\Omega^s}\quad\text{and}\quad
j_k(r,\sigma)=\textrm{Jac}_{\Psi}(\frac{r}{k},\sigma).
$$
We decompose 
\begin{equation}  \label{decog}
   g_{k,\mu,x}(r,\sigma) 
    =g_{k,\mu,x}^1(r,\sigma)+g_{k,\mu,x}^2(r,\sigma).
\end{equation}
with
\begin{align*}
  g_{k,\mu,x}^1(r,\sigma) &=
v'(\frac{r}{k},\sigma)  \Big\{\psi_{\mu,x}(\frac{r}{k},\sigma)\mathcal M_s(x,\cdot)(\frac{r}{k},\sigma)(\frac{r}{k})^s(-\Delta)^s\eta_k(\frac{r}{k},\sigma)-\mathcal I_s[\eta_k; G_s(x,\cdot)\psi_{\mu,x}](\frac{r}{k},\sigma)\Big\}\\
   g_{k,\mu,x}^2(r,\sigma) &= \Big(-\nabla u\cdot Y\big(\frac{r}{k},\sigma\big)\Big)
\\ &\quad \times  \Big\{\psi_{\mu,x}(\frac{r}{k},\sigma)\mathcal M_s(x,\cdot)(\frac{r}{k},\sigma)(\frac{r}{k})^s(\frac{r}{k})^s(-\Delta)^s\eta_k(\frac{r}{k},\sigma)-\mathcal I_s[\eta_k, G_s(x,\cdot)\psi_{\mu,x}](\frac{r}{k},\sigma)\Big\} .
\end{align*}
\begin{Claim}There exist $\varepsilon'>0$ such that \label{spi1}
 \begin{equation*}
\lim_{k\to\infty}\frac{1}{k}\int_{\partial\Omega}\int_{0}^{k\varepsilon}j_k(r,\sigma)g^1_{k,\mu,x}(r,\sigma)drd\sigma
= 0\quad \text{for}\; \varepsilon\in (0,\varepsilon').
\end{equation*} 
 \end{Claim}
\begin{proof}
On the one hand, recalling \cite[Proposition 6.3]{DFW} we know there exists $\varepsilon'>0$ such that 
\begin{align}\label{sec6-4}
    \Big|k^{-2s}(-\Delta)^s\eta_k(\Psi(\frac{r}{k},\sigma))\Big|\leq \frac{C}{1+r^{1+2s}}\quad\text{for $k\in \N,\; 0<r<k\varepsilon'$}
\end{align}
and 
\begin{align}\label{sec6-5}
 \lim_{k\to\infty}k^{-2s}(-\Delta)^s\eta_k(\Psi(\frac{r}{k},\sigma))=  (-\Delta)^s\eta(r)\;\;\text{for $\sigma\in\partial\Omega$ and $r>0$}
\end{align}
with $\eta(r)=1-\rho(r)$. On the one hand, replacing $u$ by $G_s(x,\cdot)\psi_{\mu,x}$ into \cite[Lemma 6.8]{DFW} we also have 
\begin{align}\label{sec6-6}
 \Big|k^{-s}\mathcal I_s[\eta_k, G_s(x,\cdot)\psi_{\mu,x}](\Psi(\frac{r}{k},\sigma))\Big|  \leq \frac{C}{1+r^{1+s}}\quad\text{for $k\in \N,\; 0\leq r<k\varepsilon'$}
\end{align}
and 
\begin{align}\label{sec6-7}
\lim_{k\to\infty}k^{-s}\mathcal I_s[\eta_k, G_s(x,\cdot)\psi_{\mu,x}](\Psi(\frac{r}{k},\sigma))=    \psi_{\mu,x}(\sigma) \Big(G_s(x,\cdot)/\delta^s_\Omega\Big)(\sigma)\tilde{I}_s(r).
\end{align}
Combining \eqref{sec6-4} and \eqref{sec6-6} we get \begin{align*}
   &\frac{1}{k}\big|g_{k,\mu,x}^1(r,\sigma)\big|\nonumber\\
   &=\frac{1}{k}\small\text{$\Bigg|v'(\frac{r}{k},\sigma)\Bigg\{\psi_{\mu,x}(\frac{r}{k},\sigma)\mathcal M_s(x,\cdot)(\frac{r}{k},\sigma)(\frac{r}{k})^s(-\Delta)^s\eta_k(\frac{r}{k},\sigma)-\mathcal I_s[\eta_k, G_s(x,\cdot)\psi_{\mu,x}](\frac{r}{k},\sigma)\Bigg\}\Bigg|$}\nonumber\\
   &\;\; \leq \frac{C}{k}|v'(\frac{r}{k},\sigma)|\Bigg\{\frac{k^s}{1+r^{1+2s}}\frac{|\psi_{\mu,x}(\frac{r}{k},\sigma)|}{|\Psi(\frac{r}{k},\sigma)-x|^N}+\frac{k^s}{1+r^{1+s}}\Bigg\}\nonumber\\
   &\qquad\leq \frac{C}{k^{1-s}}\big|v'(\frac{r}{k},\sigma)\big|\Bigg\{\frac{\mu^{-N}}{1+r^{1+2s}}+\frac{1}{1+r^{1+s}}\Bigg\},
\end{align*}
where in the last line we used that $\frac{|\psi_{\mu,x}(\frac{r}{k},\sigma)|}{|\Psi(\frac{r}{k},\sigma)-x|^N}\leq C\mu^{-N}$ which follows from the definition of $\psi_{\mu,x}$. Consequently, since $|j_k(r,\sigma)|\leq C$, we have by the H\"older inequality
\begin{align*}
  \Big|\int_{\partial\Omega}\int_{0}^{k\varepsilon'}  \frac{1}{k}j_k(r,\sigma)g_{k,\mu,x}^1(r,\sigma)drd\sigma\Big|&\leq \frac{C}{k^{1-s}}\Big(\int_{\partial\Omega}\int_{0}^\infty\big|v'(\frac{r}{k},\sigma)\big|^2d\sigma\Big)^{1/2}\,dr\nonumber\\
  &\leq \frac{C}{k^{1-s}}\Big(\int_{\R^N}|v'(z)|^2dz\Big)^{1/2}\;\;\to \;\;0
\end{align*}
as $k\to\infty$ which proves Claim \ref{spi1}.\end{proof}

\begin{Claim}\label{spi2}
Let $\varepsilon'>0$ be given as in Claim \eqref{spi1}. Then there holds
\begin{align}
  &\lim_{\mu\to 0^+}\lim_{k\to\infty}\frac{1}{k}\int_{\partial\Omega}\int_{0}^{k\varepsilon'}j_k(r,\sigma)g_{k,\mu,x}^2(r,\sigma)drd\sigma
  =d_s\int_{\partial\Omega}
  \frac{u}{\delta^s_\Omega} (\sigma)
  \frac{G_s(x,\cdot)}{\delta^s_\Omega}  (\sigma)
 Y\cdot\nu(\sigma)d\sigma.
\end{align}
\end{Claim}
\begin{proof}
 First using \eqref{key-reg-estimate}, we easily check that
\begin{align*}
   k^{-1}\Big|\nabla u\cdot Y\big(r/k,\sigma\big)\Big|\leq Ck^{-s}\big(r^{s-1}+r^{s-1+\alpha}\big).
\end{align*}
Combining this with \eqref{sec6-4} and \eqref{sec6-6} we get
\begin{align}\label{sec6-estim-g-2}
  &\small\text{$\frac{1}{k}\Bigg|\nabla u\cdot Y\big(\frac{r}{k},\sigma\big)\Bigg\{\psi_{\mu,x}(\frac{r}{k},\sigma)\mathcal M_s(x,\cdot)(\frac{r}{k},\sigma)(\frac{r}{k})^s(-\Delta)^s\eta_k(\frac{r}{k},\sigma)-\mathcal I_s[\eta_k, G_s(x,\cdot)\psi_{\mu,x}](\frac{r}{k},\sigma)\Bigg\} \Bigg|$}\nonumber\\
  &\leq C\big(r^{s-1}+r^{s-1+\alpha}\big) \nonumber\\
  &\quad\quad\quad\quad\times \Bigg\{r^s\frac{|\psi_{\mu,x}(\frac{r}{k},\sigma)|}{|\Psi(\frac{r}{k},\sigma)-x|^N}\Big|k^{-2s}(-\Delta)^s\eta_k(\frac{r}{k},\sigma)\Big|+\Big|k^{-s}\mathcal I_s[\eta_k, G_s(x,\cdot)\psi_{\mu,x}](\frac{r}{k},\sigma)\Big|\Bigg\}\nonumber\\
  &\leq C\big(r^{s-1}+r^{s-1+\alpha}\big)\Bigg\{\mu^{-N}\frac{r^s}{1+r^{1+2s}}+\frac{1}{1+r^{1+s}}\Bigg\}\;\;\text{for $k\in\N, 0<r<k\varepsilon'.$}
\end{align}
In view of \eqref{sec6-estim-g-2}, we have by the dominated convergence theorem that 
\begin{align}\label{sec6-8}
    &\lim_{k\to\infty}\frac{1}{k}\int_{\partial\Omega}\int_{0}^{k\varepsilon'}j_k(r,\sigma)g_{k,\mu,x}^2(r,\sigma)drd\sigma\nonumber\\
    &=\int_{\partial\Omega}\int_{0}^{\infty}\lim_{k\to\infty}\Bigg[\frac{1}{k}j_k(r,\sigma)g_{k,\mu,x}^2(r,\sigma)\Bigg]drd\sigma\nonumber\\
    &=-s\int_{0}^\infty \Big(r^s(-\Delta)^s\eta-\tilde{I}_s(r)\Big)dr\int_{\partial\Omega}\psi_{\mu,x}(\sigma)\big(u/\delta^s_\Omega\big)(\sigma)\big(G_s(x,\cdot)/\delta^s_\Omega\big)(\sigma)Y\cdot\nu(\sigma)d\sigma
\end{align}
where we used \eqref{sec6-5}, \eqref{spi1} and that $\lim_{k\to\infty}\Big(\delta^{1-s}\nabla u\cdot Y\Big)(r/k,\sigma)=s(u_0/\delta^s_\Omega)(\sigma)Y\cdot\nu(\sigma)$.
We conclude the proof of  Claim \ref{spi2} By taking the limit w.r.t $\mu$ into \eqref{sec6-8}.
\end{proof}
Combining 
the identities \eqref{arguing} and 
\eqref{spi1}, the definition \eqref{defire},  the decomposition  \eqref{decog}, 
Claim \ref{spi1}  and Claim \ref{spi2}
 this ends the proof of
Lemma \ref{lemma-other}. 
\end{proof}

We are now ready to complete the proof of Theorem \ref{thm.3}.
    Combining \eqref{sec6-key-idd}, \eqref{sec6-reprens-u'} and \eqref{sec6-9} we end up with the identity:
\begin{align}\label{sec6-10}
u'(x)=d_s\int_{\partial\Omega}
\frac{u_0}{\delta^s_\Omega}
(\sigma) 
\frac{G_s(x,\cdot)}{\delta^s_\Omega}
(\sigma)
Y\cdot\nu(\sigma)d\sigma.
\end{align}
We conclude the proof of Theorem \ref{thm.3} by the following result.

\begin{Lemma}\label{comput}
The following identity holds true:
\begin{equation}\label{d-s}
d_s 
=\Gamma^2(1+s).    
\end{equation} 
\end{Lemma}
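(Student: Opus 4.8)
The plan is to identify $d_s$ by a dimensional-reduction/explicit-computation strategy: since $d_s$ is a purely one-dimensional constant (it depends only on $s$, the cut-off $\rho$ through $\eta=1-\rho$, and the one-dimensional kernel $c_{1,s}$), I will evaluate it by applying the already-proven representation formula \eqref{repres-shape-deriv} (equivalently the intermediate identity \eqref{sec6-10}) in a single explicit example where every object — the Green function, the distance function, the solution $u_0$, and the shape derivative $u'$ — can be computed in closed form, and then read off $d_s$ by comparison. The natural choice is $N=1$, $s=1/2$, $\Omega=(-1,1)$, where $G_{1/2}((-1,1);x,y)$ is classically known, $u_0$ solving $(-\Delta)^{1/2}u_0=1$ on $(-1,1)$ with $u_0=0$ outside is the explicit multiple of $(1-x^2)^{1/2}$, and a simple dilation field $Y(x)=x$ gives $\Omega_t=(-e^t,e^t)$ so that $u_t$ and hence $u'$ are completely explicit.

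Concretely I would first recall the known formulas on $(-1,1)$: $u_0(x)=\frac{1}{2}(1-x^2)^{1/2}$ (up to the correct normalizing constant, which I would pin down from $(-\Delta)^{1/2}(1-x^2)^{1/2}_+ = \text{const}$, a standard identity), so that $u_0/\delta^{1/2}_\Omega$ has an explicit boundary value at $x=\pm1$; and the Green function $G_{1/2}(x,y)$ on $(-1,1)$, for which $G_{1/2}(x,\cdot)/\delta^{1/2}_\Omega$ at the two boundary points is also explicit (these are in the references already cited, e.g. \cite{Bucur}). With $Y(x)=x$, scaling gives $u_t(x)=e^{t/2}u_0(e^{-t}x)$ on $(-e^t,e^t)$, hence $v_t=u_t\circ\phi_t$ with $\phi_t(x)=e^tx$ yields $v_t(x)=e^{t/2}u_0(x)$, so $v'=\tfrac12 u_0$ and $u'=v'-\nabla u_0\cdot Y=\tfrac12 u_0-x u_0'(x)$, which is an explicit function; in particular I can evaluate $u'$ at any interior point, say $x=0$, getting $u'(0)=\tfrac12 u_0(0)=\tfrac14$. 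Plugging $x=0$, $Y\cdot\nu=1$ at both endpoints, into \eqref{sec6-10} gives $u'(0)=d_{1/2}\cdot 2\cdot\big(u_0/\delta^{1/2}_\Omega\big)(1)\cdot\big(G_{1/2}(0,\cdot)/\delta^{1/2}_\Omega\big)(1)$, and solving for $d_{1/2}$ must return the value $\Gamma^2(3/2)=\pi/4$. Finally, to upgrade from the single value $s=1/2$ to all $s\in(0,1)$, I would invoke that $d_s$, as defined by the convergent integral \eqref{ds}, is a continuous (indeed real-analytic) function of $s$ on $(0,1)$ — which follows from differentiating under the integral sign using the uniform bounds on $(-\Delta)^s\eta$ and $\tilde I_s$ already established in \cite{DFW} (cf. \eqref{sec6-4}, \eqref{sec6-6}) — and that the same computation with a general dilation in a ball $B_R\subset\R^N$ (where again $u_0$ is a constant times $(R^2-|x|^2)^s_+$ and $G_s$, $\delta_\Omega$ are radially explicit) forces $d_s=\Gamma^2(1+s)$ for every $s$; alternatively one checks the identity at a dense set of $s$ and concludes by continuity.

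The main obstacle I anticipate is purely bookkeeping: assembling the correct closed-form expressions and normalizing constants for $u_0$, for $G_s$ on the ball, and for their ratios by $\delta^s_\Omega$ at the boundary, with all the $\Gamma$-factors and the constants $b_{N,s}$, $c_{N,s}$ tracked exactly — a single misplaced constant would give the wrong power of $\Gamma(1+s)$. A secondary, smaller point is justifying rigorously that the explicit $u'$ built from scaling coincides with the abstract shape derivative of Lemma \ref{App-1} (this is immediate from uniqueness of the weak solution and the differentiability of $t\mapsto v_t$ already granted), and that the boundary integral in \eqref{sec6-10} is being evaluated with the correct orientation of $\nu$. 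Once the $s=1/2$ (or general-ball) computation is done correctly, matching it against $d_s$ is a one-line algebraic step, and continuity in $s$ closes the argument.
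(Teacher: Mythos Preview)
Your approach is essentially the same as the paper's: compute $d_s$ by evaluating the already-established identity \eqref{sec6-10} in an explicit one-dimensional example where $u_0$, $u'$, $G_s$, and the boundary values of $u_0/\delta^s_\Omega$ and $G_s(x,\cdot)/\delta^s_\Omega$ are all known in closed form. The paper carries this out directly for \emph{every} $s\in(0,1)$ on $\Omega=(-1,1)$ with $\phi_t(x)=(1+t)x$ (same vector field $Y(x)=x$ as yours), using the explicit torsion function $u_t(x)=l_s\big((1+t)^2-x^2\big)^s_+$ and the explicit Green function from \cite{Bucur}; plugging everything into \eqref{sec6-10} and simplifying gives $d_s=s^2\Gamma^2(s)=\Gamma^2(1+s)$ in one shot. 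Your proposed detour through $s=1/2$ followed by an extension argument is unnecessary --- and indeed you already note that the extension really amounts to redoing the computation for every $s$ in a ball, which is exactly what the paper does on the interval. One small caution: your scaling $u_t(x)=e^{t/2}u_0(e^{-t}x)$ is off by a factor; with $\Omega_t=(-e^t,e^t)$ one has $u_t(x)=e^{2st}u_0(e^{-t}x)$, hence $v_t=e^{2st}u_0$ and $v'=2s\,u_0$, not $\tfrac12 u_0$. This is precisely the bookkeeping hazard you flagged, and it is avoided entirely by working with general $s$ from the start as the paper does.
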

We give the proof of \eqref{d-s} in what follows.
\subsection{Proof of \eqref{d-s}} \label{sec10.2} 
Let $\eta=1-\rho$.
Let $N=1$, $\Omega= (-1,+1)$ and the family of transformations $\phi_t:\R\to\R$ given  by $\phi_t(x)=(1+t)x$ so that $\Omega_t = (-1-t,1+t)$ and $Y(x)=\partial_t{\phi_t}\big|_{t=0}(x)$ is simply given by $Y(x)=x$. Let $u_t\in C^s([-1-t,1+t])$ be the solution of 
$$
(-\Delta)^su_t=1\quad\text{in}\quad (-1-t,1+t)\quad \&\quad u_t=0\quad\text{in}\quad\R\setminus (-1-t,1+t).
$$ 
It is well known that $u_t$ is explicitly given by 
\begin{align*}
    u_t(x)=l_s\Big((1+t)^2-|x|^2\Big)^s_+\quad\text{with}\quad l_s :=\frac{2^{-2s}\Gamma(1/2)}{\Gamma(s+1/2)\Gamma(1+s)},
\end{align*}
where the index $+$ indicates that we consider the nonnegative part.
We also know, see e.g \cite{Bucur}, that the fractional Green function $G_s$, for $s\neq 1/2$, 
in $1D$ is explicitly given by  
$$
G_s(x,y)=\frac{\Gamma(1/2)}{2^{2s}\pi^{1/2}\Gamma^2(s)}|x-y|^{2s-1}\int_{0}^{r_0(x,y)}\frac{t^{s-1}dt}{(1+t)^{1/2}},
\, \text{with }
\, r_0(x,y)=\frac{(1-|x|^2)(1-|y|^2)}{|x-y|^2}.
$$
It is straightforward to check that 
$$\frac{u_0}{\delta^s}(y)=2^s l_s\;\; \text{for  $y= \pm 1$}, \quad 
 u_0'(x)=-2s l_s\frac{x}{(1-|x|^2)^{1-s}},  \text{for  $x \in  (-1,1)$},
$$
$$
\frac{G_s(x,\cdot)}{\delta^s}(y)=\frac{2^s}{s}\frac{\Gamma(1/2)}{2^{2s}\pi^{1/2}\Gamma^2(s)}(1-|x|^2)^s|x-y|^{-1},
$$
and
$$
v'(x)=\partial_t(u_t\circ\phi_t)(x)\big|_{t=0}=2sl_s(1-|x|^2)^s.
$$
Replacing these values into \eqref{sec6-10}, and recalling \eqref{shape-deriv}, we get 
\begin{align*}
    &2sl_s(1-x^2)^s+2sl_s\frac{x^2}{(1-x^2)^{1-s}}
    =2^sl_sd_s\frac{2^s}{s}\frac{\Gamma(1/2)}{2^{2s}\pi^{1/2}\Gamma^2(s)}(1-x^2)^s\Big(\frac{1}{1+x}+\frac{1}{1-x}\Big),
\end{align*}
that is, after simplifications, 
$d_s=s^2\Gamma^2(s)=\Gamma^s(1+s)$.
The proof of Lemma \ref{comput} is thus finished.
\section{Proof of Theorem \ref{thm.5}}
\label{HGatlast}
In this section we give a rigorous proof of Theorem \ref{thm.5}. Formally, identity \eqref{var-green} follows from \eqref{repres-shape-deriv} by taking $u_0=G_s(y,\cdot)$. But of course, this is only formal since $G_s(x,\cdot)\notin H^s(\R^N)$. This lack of Sobolev regularity of the Green function makes the proof of Theorem \ref{thm.5} much more difficult compared to the one of Theorem \ref{thm.3}. In fact, here, even the differentiability of the mapping $(-\varepsilon_0,\varepsilon_0)\ni t\mapsto G_{\Omega_t}(\phi_t(x),\cdot)\circ \phi_t\in \mathcal L^1_s(\R^N)$ is not clear to us. However, it is not unreasonable to believe that this must be the case. We therefore add it as an assumption. Here and hereafter, the weighted Lebesgue space $\mathcal L^1_s(\R^N)$ is defined as 
$$
\mathcal L^1_s(\R^N)=\Big\{u:\R^N\to\R: \int_{\R^N}\frac{|u(x)|}{1+|x|^{N+2s}}dx<\infty\Big\}.
$$
\subsection{Sketch of proof of Theorem \ref{thm.5}}

For  any $w\in C^\infty_c(\Omega)$, for any $x\in\Omega$, we  set:
\begin{align}\label{duality-pairing}
\mathcal E_Y\big(G_s(x,\cdot),w\big)&:=
\iint_{\R^N\times\R^N}\Big(G_s(x,y)-G_s(x,z)\Big)\Big(w(y)-w(z)\Big)\mathcal K_Y(y,z)dydz ,
\end{align}
where we recall that  $\mathcal K_Y(\cdot,\cdot)$ is the deformation kernel defined in \eqref{defok}. 
We note that since $\mathcal K_Y(y,z)$ is comparable to $|z-y|^{-N-2s}$, the integral in \eqref{duality-pairing} is well defined by Lemma \ref{lem-7.2}.

We start the proof with the following non-trivial result the proof of which is postponed to Subsection \ref{subsec-6.2}
\begin{Lemma}\label{lem-eq-shape-deriv-Green}
    Fix $x\in\Omega$ and assume that the mapping $(-\varepsilon_0,\varepsilon_0)\ni t\mapsto G_{\Omega_t}(\phi_t(x),\cdot)\circ \phi_t\in \mathcal L^1_s(\R^N)$ is differentiable at $0$ and assume that $s\in (0,1/2)$.
    Then we have \begin{align}\label{eq-shap-green}
(-\Delta)^s\Bigg(\partial_t G_{\Omega_t}\big(\phi_t(x),\phi_t(\cdot)\big) {\Big|_{t=0}}\Bigg)  =-F_Y\big[G_s(x,\cdot)\big]\quad \text{in } \quad \mathcal D'(\Omega)  
\end{align}
in the sense that 
$$
\int_{\Omega}\partial_t G_{\Omega_t}\big(\phi_t(x),\phi_t(\cdot)\big) {\Big|_{t=0}}(-\Delta)^sw\,dz
=-\Big<F_Y\big[G_s(x,\cdot)\big],w\Big>=\mathcal E_Y\big(G_s(x,\cdot),w\big)$$
for all $w\in C^\infty_c(\Omega)$.
\end{Lemma}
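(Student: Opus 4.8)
The plan is to start from the defining identity for the perturbed Green function and differentiate it at $t=0$. Fix $x\in\Omega$ and $w\in C^\infty_c(\Omega)$; for $|t|$ small $\phi_t(x)\in\Omega_t$ and $w\circ\phi_t^{-1}\in C^{1,1}_c(\Omega_t)$, so by definition of $G_{\Omega_t}$ we have $\int_{\Omega_t}G_{\Omega_t}(\phi_t(x),\zeta)(-\Delta)^s\big(w\circ\phi_t^{-1}\big)(\zeta)\,d\zeta=w(x)$. Changing variables $\zeta=\phi_t(z)$ and writing $B_t^x(z):=G_{\Omega_t}(\phi_t(x),\phi_t(z))$ this becomes, after expanding $(-\Delta)^s(w\circ\phi_t^{-1})\circ\phi_t$ through the kernel, a bilinear expression
\[
\frac{c_{N,s}}{2}\iint_{\R^N\times\R^N}\big(B_t^x(y)-B_t^x(z)\big)\big(w(y)-w(z)\big)\,\mathcal K_t(y,z)\,dy\,dz=w(x),
\]
with $\mathcal K_t$ the symmetric two-point kernel from \eqref{knono}, exactly as in the derivation of \eqref{iden} in the proof of Lemma \ref{App-1}. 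The right-hand side is constant in $t$, so differentiating at $t=0$ and using $\partial_t\mathcal K_t|_{t=0}=\mathcal K_Y$ (Liouville's theorem, as recalled before \eqref{App-2}) together with the assumed differentiability of $t\mapsto B_t^x\in\mathcal L^1_s(\R^N)$, one obtains
\[
\frac{c_{N,s}}{2}\iint\big(G_s(x,y)-G_s(x,z)\big)\big(w(y)-w(z)\big)\,\frac{dy\,dz}{|y-z|^{N+2s}}
+\mathcal E_Y\big(G_s(x,\cdot),w\big)=0,
\]
where the first term is $\int_\Omega \big(\partial_t B_t^x|_{t=0}\big)(-\Delta)^s w\,dz$ once we justify that $(-\Delta)^s w\in\mathcal L^\infty$ pairs against $\partial_t B_t^x|_{t=0}\in\mathcal L^1_s(\R^N)$; this rewriting is exactly the content of the asserted weak identity \eqref{eq-shap-green} with $\langle F_Y[G_s(x,\cdot)],w\rangle:=\mathcal E_Y(G_s(x,\cdot),w)$.

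The technical core is justifying the differentiation under the double integral. Here the hypothesis $s\in(0,1/2)$ is used: since $G_s(x,\cdot)\in C^{s}_{\mathrm{loc}}$ away from $x$ and behaves like $|x-\cdot|^{2s-N}$ near $x$, and $w$ is smooth and compactly supported in $\Omega$ while $x\in\supp w$ is possible, the worst contribution is the region $\{|y-z|$ small, both near $x\}$; there the factor $(w(y)-w(z))$ gains a full power of $|y-z|$, $\mathcal K_t(y,z)\lesssim|y-z|^{-N-2s}$ uniformly in small $t$, and $|G_s(x,y)-G_s(x,z)|\lesssim |y-z|^{s}(|y-x|^{s-N}+|z-x|^{s-N})$, which is integrable precisely when $2s<1$. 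A dominated-convergence / mean-value argument on the difference quotient $\tfrac1t\big(B_t^x-B_0^x\big)$, combined with the assumed $\mathcal L^1_s$-differentiability to control the far field $|z|\to\infty$ (using $|w(y)-w(z)|\mathcal K_t(y,z)\lesssim \mathbf 1_{\supp w}(y)(1+|z|)^{-N-2s}$), then lets us pass to the limit and identify $\partial_t$ of the bilinear form with the sum displayed above. I expect this uniform domination near the diagonal-at-$x$ to be the main obstacle, and it is exactly where $s<1/2$ enters; everything else (the change of variables, the Liouville identity, and the identification of the non-singular part with $\int_\Omega(\partial_tB_t^x)(-\Delta)^sw$) follows the template already established in the proof of Lemma \ref{App-1}.
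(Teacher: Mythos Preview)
Your overall strategy---pull back the defining identity for $G_{\Omega_t}$ by $\phi_t$, obtain a bilinear form with kernel $\mathcal K_t$, and differentiate at $t=0$---is exactly the paper's route. Two points, however, need correction.

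First, the displayed identity after differentiation is miswritten: the first double integral should carry $\partial_t B_t^x|_{t=0}$, not $G_s(x,\cdot)$. As written, that term equals $\mathcal E_s(G_s(x,\cdot),w)=w(x)$, which would give the false relation $w(x)+\mathcal E_Y(G_s(x,\cdot),w)=0$. Your prose afterwards makes clear you intended the $\partial_t B_t^x$ term, so this is a slip, but it obscures the real structure.

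Second, and more substantively, your localization of the constraint $s<1/2$ is off. The bound you quote, $|G_s(x,y)-G_s(x,z)|\lesssim |y-z|^s(|y-x|^{s-N}+|z-x|^{s-N})$, combined with $|w(y)-w(z)|\lesssim|y-z|$ and $\mathcal K_t\lesssim|y-z|^{-N-2s}$, yields an integrand $\lesssim |y-z|^{1-s-N}(|y-x|^{s-N}+|z-x|^{s-N})$, which is locally integrable for \emph{every} $s\in(0,1)$; so your own estimate does not single out $2s<1$. In fact the finiteness of $\mathcal E_Y(G_s(x,\cdot),w)$ holds for all $s\in(0,1)$ (this is Lemma~\ref{lem-7.2}, since $|\mathcal K_Y|\lesssim|y-z|^{-N-2s}$). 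The genuine obstruction lies elsewhere: when you pass from the difference quotient to the limit, a \emph{cross term}
\[
\mathcal R_t=\iint\big(F_t(x,y)-F_t(x,z)\big)(w(y)-w(z))\big(\mathcal K_t(y,z)-\mathcal K_0(y,z)\big)\,dy\,dz
\]
appears, with $F_t=(B_t^x-B_0^x)/t$. You have no usable pointwise bound on $F_t(x,y)-F_t(x,z)$ (the hypothesis is only $\mathcal L^1_s$-differentiability). The paper handles this by bounding $t\cdot(F_t(x,y)-F_t(x,z))$ crudely by $|x-y|^{2s-N}+|x-z|^{2s-N}$ via the explicit fundamental solution and the maximum principle for $H_\Omega$; this crude bound has \emph{no} gain in $|y-z|$, so after multiplying by $|\mathcal K_t-\mathcal K_0|/t\lesssim|y-z|^{-N-2s}$ and $|w(y)-w(z)|\lesssim|y-z|$ one needs $|y-z|^{1-N-2s}$ to be locally integrable, i.e.\ $2s<1$. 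Your ``dominated-convergence / mean-value'' sketch does not isolate this term, and without doing so you cannot explain why the argument breaks at $s=1/2$. The fix is precisely the paper's: write the difference quotient of the full bilinear form, split it as (term with $G_s$ and $(\mathcal K_t-\mathcal K_0)/t$) $+$ (term with $F_t$ and $\mathcal K_0$, which collapses to $\int_\Omega F_t(-\Delta)^s w$) $+$ $\mathcal R_t$, and then show $\mathcal R_t\to0$ using $2s<1$.
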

\begin{remark}
We expect that the identity \eqref{eq-shap-green} holds for all $s\in (0,1)$. However, because of lack of good --uniform in $t$--  estimate for the difference quotient $\big(H_{\Omega_t}(\phi_t(x),\phi_t(y))-H_{\Omega}(x,z)\big)/t$, we were not able to extend it for $s\in (1/2,1)$.
\end{remark}

From now on, we fixed $x,y\in\Omega$ such that $x\neq y$ and defined  
\begin{equation}
 \mathcal W_{x,y}(\cdot):=\eta_k^2(\cdot)G_s(y,\cdot)\psi_{\mu,x}(\cdot)\psi_{\gamma,y}(\cdot)   \qquad\text{for $\mu,\gamma\in (0,1)$}
\end{equation}

 where $\eta_k$ and $\psi_{\mu,z}$ are defined as in \eqref{psi-mu} and \eqref{eta-k}. We use $\mathcal W_{x,y}\in C^\infty_c(\Omega)$ as a test function into \eqref{eq-shap-green} to get
\begin{align}\label{Key-idd-02}
 \int_{\Omega}\partial_t G_t\big(\phi_t(x),\phi_t(\cdot)\big) {\Big|_{t=0}}(-\Delta)^s\Big(\eta_k^2 G_s(y,\cdot)\psi_{\mu,x}\psi_{\gamma,y}\Big)dz+\mathcal E_Y\big(G_s(x,\cdot), \mathcal W_{x,y}\big)=0.
\end{align}
Note that since $\mathcal W_{x,y}\in C^\infty_c(\Omega)$, then by assumption and thanks to Lemma \ref{lem-7.2}, the quantities in the identity above are well defined.
\par\;

For a real valued function $u$, and to positions $p$ and $q$, set $[ u] := u(p) - u(q)$. Then we observe that for any triple of functions  $u,v, w$, 
\begin{align}\label{64}
  [uv]    [w] =  [u]    [vw] +  [v]  \Big(u(q)w(p)-u(p)w(q)\Big).
  \end{align}
To establish this, that is only a matter to develop on both sides. The left hand sides then contain four terms, the right one contains twice more, but one may observe that two terms with different signs of $u(q)v(p)w(p)$ cancel out as well as two terms with different signs of $u(p)v(q)w(q)$. The remaining terms match hence the identity above. 
Therefore
 \begin{equation}
    \mathcal E_Y\big(uv,w\big)=\mathcal E_Y\big(u,vw\big)+\pv\iint_{\R^{2N}}\Big(v(p)-v(q)\Big)\Big(u(q)w(p)-u(p)w(q)\Big)\mathcal K_Y(p,q)\, dp\, dq  .
\end{equation}
Applying the identity above with
\begin{align} \label{def-u-v-w}
w=G_s(x,\cdot), \quad v=\eta_k\psi_{\mu,x},\quad \& \quad u =\eta_kG_s(y,\cdot)\psi_{y,\gamma} 
\end{align}
gives
\begin{align}
\mathcal E_Y\Big(G_s(x,\cdot);\mathcal W_{x,y}\Big)&=\mathcal E_Y\Big(G_s(x,\cdot);\; \eta_k^2 G_s(y,\cdot)\psi_{\mu,x}\psi_{\gamma,y}\Big)\nonumber\\
&=\mathcal E_Y\Big(\eta_k\psi_{\mu,x}G_s(x,\cdot);\; \eta_k G_s(y,\cdot)\psi_{\gamma,y}\Big) +\mathcal R_{k,\mu,\gamma}(x,y)
\end{align}
where we denote 
\begin{align}\label{def-R}
    \mathcal R_{k,\mu,\gamma}(x,y)=\pv\iint_{\R^{2N}}\Big(v(p)-v(q)\Big)\Big(u(q)w(p)-u(p)w(q)\Big)\mathcal K_Y(p,q)\, dp\, dq 
\end{align}
with $u,v$ and $w$ given as in \eqref{def-u-v-w}.  Since $\eta_k\psi_{\mu,x}G_s(x,\cdot)$ and $ \eta_k G_s(y,\cdot)\psi_{\gamma,y}$ are $C^\infty$ with compact support in $\Omega$, by \cite[Lemma 2.1]{DFW} we have 
\begin{align}
&\mathcal E_Y\Big(\eta_k\psi_{\mu,x}G_s(x,\cdot), \eta_k G_s(y,\cdot)\psi_{\gamma,y}\Big)\nonumber\\
&=-\int_{\Omega}Y\cdot\nabla\Big(G_s(x_1,\cdot)\eta_k\psi_{x,\mu}\Big)(-\Delta)^s\Big(\eta_k G_s(y,\cdot)\psi_{\gamma,y}\Big)dz\nonumber\\
&\quad-\int_{\Omega}Y\cdot\nabla\Big(G_s(x,\cdot)\eta_k\psi_{x,\mu}\Big)(-\Delta)^s\Big(\eta_k G_s(y,\cdot)\psi_{\gamma,y}\Big)dz\nonumber\\
&:=-\mathcal A_{k,\mu,\gamma}(x,y)-\mathcal B_{k,\mu,\gamma}(x,y).
\end{align}

Applying the product rule \eqref{pl} for the fractional Laplacian we decompose again each of the two terms above into two terms:
\begin{align}
\mathcal A_{k,\mu,\gamma}(x,y)&=    \int_{\Omega}Y\cdot\nabla\Big(G_s(x,\cdot)\eta_k\psi_{x,\mu}\Big)\Bigg\{G_s(y,\cdot)\psi_{\gamma,y}(-\Delta)^s\eta_k-\mathcal I_s[\eta_k,G_s(y,\cdot)\psi_{\gamma,y}]\Bigg\}dz\nonumber\\
&\quad +\int_{\Omega}\eta_kY\cdot\nabla\Big(G_s(x,\cdot)\eta_k\psi_{x,\mu}\Big)(-\Delta)^s\Big(G_s(y,\cdot)\psi_{x,\mu}\Big)dz\nonumber\\
&\quad \quad :=\mathcal A_{k,\mu,\gamma}^1(x,y)+\mathcal A_{k,\mu,\gamma}^2(x,y),
\end{align}
and 
\begin{align}
\mathcal B_{k,\mu,\gamma}(x,y)&=    \int_{\Omega}Y\cdot\nabla\Big(G_s(y,\cdot)\eta_k\psi_{y,\mu}\Big)\Bigg\{G_s(x,\cdot)\psi_{\gamma,x}(-\Delta)^s\eta_k-\mathcal I_s[\eta_k,G_s(x,\cdot)\psi_{\gamma,x}]\Bigg\}dz\nonumber\\
&\quad +\int_{\Omega}\eta_kY\cdot\nabla\Big(G_s(y,\cdot)\eta_k\psi_{y,\mu}\Big)(-\Delta)^s\Big(G_s(y,\cdot)\psi_{y,\mu}\Big)dz \nonumber\\
&\quad\quad :=\mathcal B_{k,\mu,\gamma}^1(x,y)+\mathcal B_{k,\mu,\gamma}^2(x,y).
\end{align}
With these notations, the identity \eqref{Key-idd-02} becomes
\begin{align}\label{Key-idd-03}
&\mathcal C_{k,\mu,\gamma}(x,y)-\mathcal A_{k,\mu,\gamma}^2(x,y)-\mathcal B_{k,\mu,\gamma}^2(x,y)+\mathcal R_{k,\mu,\gamma}(x,y)=\mathcal A_{k,\mu,\gamma}^1(x,y)+\mathcal B_{k,\mu,\gamma}^1(x,y) 
\end{align}
with 
\begin{align}\label{def-C}
 \mathcal  C_{k,\mu,\gamma}(x,y) :=  \int_{\Omega}\partial_tG_{\Omega_t}\Big(\phi_t(x),\phi_t(z)\Big)\Big|_{t=0}(-\Delta)^s\Big(\eta_k^2 G_s(y,\cdot)\psi_{\mu,x}\psi_{\gamma,y}\Big)(z)dz.
\end{align}
We know (see e.g \cite[Proposition 2.2]{DFW} or Lemma \ref{DFW} above) that 
\begin{align}
    \lim_{\gamma\to 0^+} \lim_{\mu\to 0^+}\lim_{k\to\infty}\mathcal A_{k,\mu,\gamma}^1(x,y)=\frac{\Gamma^2(1+s)}{2}\int_{\partial\Omega}\frac{G_s(x,\cdot)}{\delta^s_\Omega}\frac{G_s(y,\cdot)}{\delta^s_\Omega}Y\cdot\nu d\sigma,\label{lim-A1}\\
    \lim_{\gamma\to 0^+} \lim_{\mu\to 0^+}\lim_{k\to\infty}\mathcal B_{k,\mu,\gamma}^1(x,y)=\frac{\Gamma^2(1+s)}{2}\int_{\partial\Omega}\frac{G_s(x,\cdot)}{\delta^s_\Omega}\frac{G_s(y,\cdot)}{\delta^s_\Omega}Y\cdot\nu d\sigma, \label{lim-B1} 
\end{align}
where $\nu$ denotes the outward unit normal.  Moreover, a similar argument as in Lemma \ref{Lem4.2} yields 
\begin{align}
   \lim_{\gamma\to 0^+} \lim_{\mu\to 0^+}\lim_{k\to\infty}\mathcal A_{k,\mu,\gamma}^2(x,y)=\nabla_y G_s(x,y)\cdot Y(y),\label{lim-A2}\\
   \lim_{\gamma\to 0^+} \lim_{\mu\to 0^+}\lim_{k\to\infty}\mathcal B_{k,\mu,\gamma}^2(x,y)=\nabla_x G_s(x,y)\cdot Y(x).\label{lim-B2}
\end{align}
In view of \eqref{Key-idd-03}, \eqref{def-C}, \eqref{lim-A1}, \eqref{lim-B1}, \eqref{lim-A2} and \eqref{lim-B2}, to prove identity \eqref{var-green} it suffices to check that for all $x,y\in \Omega$ with $x\neq y$, there holds:
\begin{align}\label{toproove}
\lim_{\gamma\to 0^+}\lim_{\mu\to 0^+}\lim_{k\to\infty} \Big[\mathcal C_{k,\mu,\gamma}(x,y)+\mathcal R_{k,\mu,\gamma}(x,y)    \Big]= \partial_tG_{\Omega_t}\Big(\phi_t(x),\phi_t(y)\Big)\Big|_{t=0}.
\end{align}
\par\;

To prove \eqref{toproove}, we shall establish the following two identities:
\begin{align}\label{C1}
    &\lim_{\gamma\to 0^+}\lim_{\mu\to 0^+}\lim_{k\to\infty}\mathcal  C_{k,\mu,\gamma}(x,y)\nonumber\\
    &= \small\text{$\partial_tG_{\Omega_t}\Big(\phi_t(x),\phi_t(y)\Big)\Big|_{t=0}+b_{N,s}(N-2s)G_s(x,y)\int_{\R^N}\frac{[DY(x)\cdot z]\cdot z}{|z|^{N-2s+2}}(-\Delta)^s(\rho\circ |\cdot|^2)(z)dz$,}
\end{align}
and
\begin{align}\label{C2}
  &\lim_{\gamma\to 0^+}\lim_{\mu\to 0^+}\lim_{k\to\infty} \mathcal R_{k,\mu,\gamma}(x,y)\nonumber\\
  &=G_s(y,x)b_{N,s}\iint_{\R^N\times\R^N}\frac{\big(\rho(|q|^2)-\rho(|p|^2)\big(|p|^{2s-N}-|q|^{2s-N}\big)}{|p-q|^{N+2s}}\mathcal W_{Y,x}(p,q)\, dp\, dq  . 
\end{align}

   To deduce \eqref{toproove} from \eqref{C1} and \eqref{C2}, we shall make use of the following: for a $x\in\Omega$ fixed, and a given vector field $Y\in C^{0,1}(\R^N,\R^N)$ such that $p$ and $q$ in $\R^N$, we set
\begin{equation} \label{defWsam}
 \mathcal W_{Y,x}(p,q) := \frac{c_{N,s}}{2}\Bigg\{ 2\div Y(x)-(N+2s)\frac{[DY(x)\cdot(p-q)]\cdot(p-q)}{|p-q|^2}\Bigg\}.     
   \end{equation}
With this notation, we have:
\begin{Lemma} \label{appen}
For all $w\in C^\infty_c(\R^N)$, there holds:
   \begin{align}\label{surprise2}
   & \iint_{\R^N\times\R^N}\frac{\big(w(q)-w(p)\big(|p|^{2s-N}-|q|^{2s-N}\big)}{|p-q|^{N+2s}}\mathcal W_{Y,x}(p,q)\, dp\, dq \nonumber\\
     &= - (N-2s)\int_{\R^N}\frac{[DY(x)\cdot p]\cdot p}{|p|^{N-2s+2}}(-\Delta)^sw(p)dp    .
   \end{align}
   \end{Lemma}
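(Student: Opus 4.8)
\textbf{Proof plan for Lemma \ref{appen}.}

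The plan is to reduce the claimed identity to an integration-by-parts statement for the fractional Laplacian over the full space $\R^N$, exploiting that $b_{N,s}|p|^{2s-N}$ is the fundamental solution of $(-\Delta)^s$. First I would recognize that the kernel $\mathcal W_{Y,x}(p,q)$ defined in \eqref{defWsam} is, up to the common factor $|p-q|^{-N-2s}$ extracted on the left-hand side, exactly the deformation kernel $\mathcal K_Y$ from \eqref{defok} frozen at the constant linearized vector field $p \mapsto Y(x) + DY(x)\cdot(p-x)$: indeed for that affine field one has $\div Y \equiv 2\,\mathrm{tr}\,DY(x)$ wait — more precisely $\div$ of the affine field equals $\mathrm{tr}\,DY(x) = \div Y(x)$ at every point, and the difference of the field at $p$ and $q$ is $DY(x)\cdot(p-q)$, so $\mathcal K_{\widetilde Y}(p,q) = \mathcal W_{Y,x}(p,q)\,|p-q|^{-N-2s}$ with $\widetilde Y$ the affine field. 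Thus the left-hand side of \eqref{surprise2} is $\mathcal E_{\widetilde Y}(|\cdot|^{2s-N}, w)$ in the notation of \eqref{duality-pairing}, with the constant $b_{N,s}$ divided out.

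Next I would apply the known shape-derivative integration-by-parts identity of \cite[Lemma 2.1]{DFW} (used already in Section \ref{HGatlast}) to the pair of functions $b_{N,s}|\cdot|^{2s-N}$ and $w$. Since $w \in C^\infty_c(\R^N)$ and, over the whole space with no boundary, the boundary term vanishes, this identity gives
\begin{equation*}
\iint_{\R^N\times\R^N}\big(|p|^{2s-N}-|q|^{2s-N}\big)\big(w(p)-w(q)\big)\mathcal W_{Y,x}(p,q)\,|p-q|^{-N-2s}\,dp\,dq = -\int_{\R^N}\widetilde Y\cdot\nabla\big(|\cdot|^{2s-N}\big)\,(-\Delta)^sw\,dp - \int_{\R^N}\widetilde Y\cdot\nabla w\,(-\Delta)^s\big(|\cdot|^{2s-N}\big)\,dp .
\end{equation*}
The second term on the right drops out because $(-\Delta)^s\big(|\cdot|^{2s-N}\big) = b_{N,s}^{-1}\delta_0$ as a distribution, so $\int_{\R^N}\widetilde Y\cdot\nabla w\,(-\Delta)^s(|\cdot|^{2s-N}) = b_{N,s}^{-1}(\widetilde Y\cdot\nabla w)(0)$, and — after choosing coordinates so that the center $x$ is translated to the origin, which is legitimate since only $DY(x)$ enters $\mathcal W_{Y,x}$ — the remaining first term involves only $\nabla(|\cdot|^{2s-N})(p) = (2s-N)|p|^{2s-N-2}p$, and only the linear part $DY(x)\cdot p$ of $\widetilde Y$ survives against the homogeneity, since the constant part $Y(x)$ pairs against $\nabla(|\cdot|^{2s-N})$ to give $(2s-N)Y(x)\cdot\int p|p|^{2s-N-2}(-\Delta)^sw$, which one checks cancels with the corresponding part of the dropped $\delta_0$-term, or is absorbed. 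Carrying out this bookkeeping gives precisely $-(N-2s)\int_{\R^N}\frac{[DY(x)\cdot p]\cdot p}{|p|^{N-2s+2}}(-\Delta)^sw(p)\,dp$, which is \eqref{surprise2}.

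The main obstacle I anticipate is the careful treatment of the constant-part contribution $Y(x)$ of the affine field and the principal-value/distributional handling of the $\delta_0$ term: one must verify that the $w(0)$-type boundary contributions genuinely cancel so that only the symmetric quadratic form $[DY(x)\cdot p]\cdot p$ remains, and that all the integrals converge (the weight $|p|^{2s-N-2}$ near the origin is integrable against the smooth, compactly supported $(-\Delta)^sw$ only for $2s-N-2 > -N$, i.e. $s<1$, which holds, but one should check the behaviour at infinity of $(-\Delta)^sw$, which decays like $|p|^{-N-2s}$, making the integral convergent). A clean alternative, which I would likely prefer to present, is to bypass \cite[Lemma 2.1]{DFW} and instead expand the left-hand side directly by symmetrizing in $p \leftrightarrow q$, writing $w(q)-w(p)$ and the kernel difference, then recognizing the resulting double integral as $\int_{\R^N}(-\Delta)^sw(p)\cdot\big[\text{p.v.}\int \ldots\big]dp$ after one application of Fubini, where the inner integral evaluates the fractional Laplacian of $p\mapsto |p|^{2s-N}$ weighted by the quadratic form — and a scaling/homogeneity argument (the integrand is homogeneous of the right degree) pins down the constant $N-2s$ exactly. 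Either route reduces to the same convergence and cancellation checks, which are routine once set up correctly.
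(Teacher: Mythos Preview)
Your strategy is essentially the paper's: recognise $\mathcal W_{Y,x}(p,q)|p-q|^{-N-2s}$ as the deformation kernel $\mathcal K_{\widetilde Y}$ for a linearized field, then invoke the integration-by-parts identity from \cite[Lemma 2.1]{DFW}. But two points in your execution diverge from the paper and create avoidable difficulties.

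First, you take the \emph{affine} field $\widetilde Y(p)=Y(x)+DY(x)\cdot(p-x)$ and then worry about how the constant part $Y(x)$ interacts with the $\delta_0$ contribution from $(-\Delta)^s(|\cdot|^{2s-N})$. This is unnecessary: since $\mathcal K_Y$ depends only on $\div Y$ and on differences $Y(p)-Y(q)$, the constant part contributes nothing, and the paper works directly with the \emph{linear} field $\widetilde Y_x(p)=DY(x)\cdot p$ (see \eqref{re-mix}--\eqref{div-Y}). With this choice there is no constant-part bookkeeping and no spurious $\delta_0$ cancellation to chase.

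Second, and more substantively, you apply the bilinear identity directly to $|\cdot|^{2s-N}$, which lies outside $C^\infty_c(\R^N)$; the distributional handling you sketch (``the second term drops out because $(-\Delta)^s(|\cdot|^{2s-N})=b_{N,s}^{-1}\delta_0$'') is exactly where rigour is needed and where you yourself flag the obstacle. The paper does not attempt this. Instead it first establishes the bilinear identity for $v,w\in C^\infty_c(\R^N)$ (Lemma~\ref{preumlem}, obtained by polarising the quadratic form \eqref{FS-int}), then approximates $|\cdot|^{2s-N}$ by the doubly cut-off function $a_{\mu,R}(z)=\psi_\mu(z)\widetilde\psi_R(z)|z|^{2s-N}$ of \eqref{a-mu-r}, applies the identity to $(w,a_{\mu,R})$, and passes to the limit $\mu\to 0^+$, $R\to\infty$. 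The term $-\int \nabla a_{\mu,R}\cdot[DY(x)\cdot p]\,(-\Delta)^sw$ converges to the right-hand side of \eqref{surprise2}, while the term $-\int \nabla w\cdot[DY(x)\cdot p]\,(-\Delta)^sa_{\mu,R}$ is shown to vanish in the limit by expanding $(-\Delta)^sa_{\mu,R}$ via the product rule \eqref{pl} and estimating each piece (this is the content of \eqref{flim-a-R-mu}--\eqref{slim-a-R-mu-3}). This approximation step is the genuine technical content, and it replaces your formal $\delta_0$ argument with something that can actually be checked.
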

The proof of Lemma  \ref{appen} is given in Section \ref{yet}.

   Adding \eqref{C1} and \eqref{C2} and using Lemma \ref{appen} with $w=\rho\circ |\cdot|^2$,  we get \eqref{toproove}. 
Then the proof of Theorem \ref{thm.5} is done up to the proofs of Lemma \ref{lem-eq-shape-deriv-Green}, of Lemma  \ref{appen}, of 
 \eqref{C1} and of \eqref{C2}, what are the issues addressed in the next sections.

\subsection{Proof of Lemma \ref{lem-eq-shape-deriv-Green}}\label{subsec-6.2}

This section is devoted to the proof of  Lemma \ref{lem-eq-shape-deriv-Green}.
The following first result is used in the proof of Lemma \ref{lem-eq-shape-deriv-Green} below.

\begin{Lemma}\label{lem-7.2}
  Let $s\in (0,1)$. Then for all $w\in C^\infty_c(\Omega)$, 
    \begin{align}
        \mathcal E_s\Big(G_s(x,\cdot), w\Big):=\frac{c_{N,s}}{2}\iint_{\R^N\times\R^N} \frac{\big(G_s(x,p)-G_s(x,q)\big)\big(w(p)-w(q)\big)}{|p-q|^{N+2s}}\, dp\, dq < \infty.
    \end{align}
\end{Lemma}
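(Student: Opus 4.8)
The plan is to show the bilinear form $\mathcal E_s(G_s(x,\cdot),w)$ is finite for $w\in C^\infty_c(\Omega)$ by exploiting the known pointwise estimates on the Green function together with the compact support and smoothness of $w$. Write $G_s(x,\cdot)=:\Phi$ for brevity and recall the decomposition $\Phi = F_s(x,\cdot) - H_\Omega(x,\cdot)$, where $F_s(x,z)=b_{N,s}|x-z|^{2s-N}$ is the fundamental solution and $H_\Omega(x,\cdot)$ is $s$-harmonic in $\Omega$, hence $C^\infty_{\mathrm{loc}}(\Omega)$. Near the singular point $x$ the only difficulty comes from the $F_s$ part; away from $x$, and in particular near $\partial\Omega$, the Green function satisfies $0\leq G_s(x,z)\leq C\min(1,|x-z|^{2s-N})$ and, by the boundary regularity theory of \cite{RS,FallSven}, $G_s(x,\cdot)\in C^s(\R^N)$ with $G_s(x,\cdot)/\delta_\Omega^s$ bounded, so $G_s(x,\cdot)$ is globally bounded and globally $C^s$ away from a neighbourhood of $x$.

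First I would split the domain of integration. Fix $\varepsilon>0$ with $\overline{B_{2\varepsilon}(x)}\subset\Omega$ and with $\supp w$ either containing $x$ or not — the argument is the same, so assume $x\in\supp w$ and decompose $\R^N\times\R^N$ into (i) $B_\varepsilon(x)\times B_\varepsilon(x)$, (ii) $B_\varepsilon(x)\times(\R^N\setminus B_\varepsilon(x))$ and its symmetric counterpart, and (iii) $(\R^N\setminus B_\varepsilon(x))\times(\R^N\setminus B_\varepsilon(x))$. On region (iii) both points are away from the singularity: there $G_s(x,\cdot)$ is bounded, and moreover it is $C^s$ uniformly on $\R^N\setminus B_\varepsilon(x)$ (being $C^\infty$ inside $\Omega$ away from $x$, identically $0$ outside $\Omega$, and globally $C^s$ across $\partial\Omega$ by \cite{RS}); since $w\in C^\infty_c$ is Lipschitz with compact support, the product $(\Phi(p)-\Phi(q))(w(p)-w(q))$ is bounded by $C\min(|p-q|^{2s},|p-q|)\cdot\mathbf 1_{\{p\in K\ \text{or}\ q\in K\}}$ for a fixed compact $K$, which is integrable against $|p-q|^{-N-2s}$ because $2s>0$ controls the local behaviour and the support cut-off controls the behaviour at infinity. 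On region (ii), $q$ stays away from $x$ so $|\Phi(q)-\Phi(p)|\leq C(|x-p|^{2s-N}+1)$, and since $|p-q|\geq$ a fixed fraction of $\max(\varepsilon,|q|)$ one integrates $|x-p|^{2s-N}$ over $B_\varepsilon(x)$ (finite as $2s-N>-N$) times $\int (1+|q|)^{-N-2s}(1+|w(q)-w(p)|)\,dq<\infty$.

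The main work is region (i), the diagonal block near the singularity. Here the regular part $H_\Omega(x,\cdot)$ is smooth, so its contribution is handled exactly as region (iii); it remains to bound $\iint_{B_\varepsilon(x)\times B_\varepsilon(x)} |F_s(x,p)-F_s(x,q)|\,|w(p)-w(q)|\,|p-q|^{-N-2s}\,dp\,dq$. Use $|w(p)-w(q)|\leq \|\nabla w\|_\infty |p-q|$ and the elementary estimate (the inequality \eqref{elem-id} used in the proof of Lemma \ref{Lem6.1}, applied with $\alpha=2s-N$, $a=|x-p|$, $b=|x-q|$) to get $|F_s(x,p)-F_s(x,q)|\leq C\,|x-p|^{2s-N-1}\max(1,(|x-q|/|x-p|)^{N-2s})\,|p-q|$; after symmetrising in $p,q$ this is bounded by $C\big(|x-p|^{2s-N-1}+|x-q|^{2s-N-1}\big)|p-q|$, reducing matters to $\iint_{B_\varepsilon\times B_\varepsilon}|x-p|^{2s-N-1}|p-q|^{1-N-2s}|p-q|\,dp\,dq = \iint |x-p|^{2s-N-1}|p-q|^{-N-2s+ \ldots}$; more carefully, combining the two factors of $|p-q|$ gives integrand $|x-p|^{2s-N-1}|p-q|^{2-N-2s}$, and integrating first in $q$ over $B_{2\varepsilon}(x)$ yields $\int_{B}|p-q|^{2-N-2s}dq\leq C$ (finite since $2-N-2s>-N$, i.e. $2s<2$) uniformly in $p$, leaving $\int_{B_\varepsilon(x)}|x-p|^{2s-N-1}dp$, which is finite precisely when $2s-N-1>-N$, i.e. $2s>1$. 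For $2s\leq 1$ this crude bound fails, so there one must not throw away the cancellation in $w(p)-w(q)$ together with that in $F_s(x,p)-F_s(x,q)$ simultaneously: instead split $B_\varepsilon(x)\times B_\varepsilon(x)$ according to whether $|p-q|\leq \tfrac12 |x-p|$ (the "near-diagonal" zone, where a second-order Taylor expansion of $F_s(x,\cdot)$ gives $|F_s(x,p)-F_s(x,q)|\leq C|x-p|^{2s-N-2}|p-q|^2$ and the integral converges since the total power of $|p-q|$ is $2+1-N-2s>-N$ after using one power from $w$, while $\int |x-p|^{2s-N-2}$ near $x$ needs care — here one uses instead the first-order bound $|F_s(x,p)-F_s(x,q)|\le C|x-p|^{2s-N-1}|p-q|$ and that on this zone $|x-q|\sim|x-p|$, reducing to $\int_{B_\varepsilon}|x-p|^{2s-N-1}\big(\int_{|p-q|\le \frac12|x-p|}|p-q|^{2-N-2s}dq\big)dp \le C\int_{B_\varepsilon}|x-p|^{2s-N-1}|x-p|^{2-2s}dp = C\int_{B_\varepsilon}|x-p|^{1-N}dp<\infty$) versus the "far" zone $|p-q|>\tfrac12|x-p|$, where $|F_s(x,p)|+|F_s(x,q)|\le C(|x-p|^{2s-N}+|x-q|^{2s-N})$ and $|p-q|^{-N-2s}|p-q| \le C|x-p|^{-N-2s+1}$, giving again a locally integrable power. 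The hard part is precisely this bookkeeping in the case $2s\leq 1$ near the singularity — it is where one must combine the Hölder/Lipschitz regularity of $w$ with the pointwise and gradient bounds on $F_s$ rather than estimating the two factors independently; once that is organised, summing the finitely many contributions yields $\mathcal E_s(G_s(x,\cdot),w)<\infty$, completing the proof of Lemma~\ref{lem-7.2}.
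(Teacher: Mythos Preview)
Your overall strategy --- split $G_s = F_s - H_\Omega$ and isolate the singularity at $x$ --- matches the paper's, but the organisation differs: the paper splits the \emph{function} first (handling $\mathcal E_s(F_s(x,\cdot),w)$ by referring back to Lemma~\ref{Lem6.1}, and $\mathcal E_s(H_\Omega(x,\cdot),w)$ via interior smoothness of $H_\Omega(x,\cdot)$ together with the maximum-principle bound $|H_\Omega(x,y)|\le b_{N,s}|x-y|^{2s-N}$), whereas you split the \emph{domain} first into regions near and far from $x$. Both routes work in principle, but your execution contains two genuine gaps.

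First, in region (ii) the claim that $|p-q|$ is bounded below by a fixed fraction of $\max(\varepsilon,|q|)$ is false: if $p$ lies just inside $\partial B_\varepsilon(x)$ and $q$ just outside, $|p-q|$ can be arbitrarily small. This is a minor slip, repaired by enlarging region (i) to $B_{2\varepsilon}(x)\times B_{2\varepsilon}(x)$ and taking region (ii) to be $B_\varepsilon(x)\times(\R^N\setminus B_{2\varepsilon}(x))$ (plus its symmetric counterpart).

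Second, and more seriously, your far-zone estimate in region (i) does not produce an integrable power. On $\{|p-q|>\tfrac12|x-p|\}$ you bound $|p-q|^{1-N-2s}\le C|x-p|^{1-N-2s}$ and multiply by $|x-p|^{2s-N}$, obtaining $|x-p|^{1-2N}$; but $\int_{B_\varepsilon(x)}|x-p|^{1-2N}\,dp=\infty$ for every $N\ge 1$. The remedy is to integrate in $q$ \emph{before} replacing $|p-q|$ by $|x-p|$: over the shell $\{\tfrac12|x-p|<|p-q|<2\varepsilon\}$ one gets $\int |p-q|^{1-N-2s}\,dq \le C|x-p|^{1-2s}$ when $s>\tfrac12$, $\le C$ when $s<\tfrac12$, and $\le C\log(C/|x-p|)$ when $s=\tfrac12$, and in each case the remaining integral against $|x-p|^{2s-N}$ is finite. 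This case distinction $2s<1$ versus $2s\ge 1$ is exactly the content of the proof of Lemma~\ref{Lem6.1}, which is why the paper simply writes ``arguing as in the proof of Lemma~\ref{Lem6.1}'' for the $F_s$ contribution and avoids redoing the bookkeeping. Once you correct the far-zone integration and the region-(ii) split, your argument goes through.
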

\begin{proof}
We use the decomposition \eqref{Eq-spliting-of-Green} of the fractional Green function to split the quantity at stake into 
    $$
    \mathcal E_s\Big(G_s(x,\cdot), w\Big)=
    \mathcal E_s\Big(F_s(x,\cdot) , w\Big)
    -\mathcal E_s\Big(H_s(x,\cdot), w\Big).
    $$
    Arguing as in the proof of Lemma \ref{Lem6.1} above, we check that $ \mathcal E_s\Big(F_s(x,\cdot) , w\Big)
    <\infty$. Next choose $K\Subset K'\Subset\Omega$ with $K, K'$ compact and such that $\supp w\subset K$ and write 
    \begin{align}\label{eq-lem-7.2}
      \frac{2}{c_{N,s}}\mathcal E_s\Big(H_s(x,\cdot), w\Big)&=  \iint_{K'\times  K'} \frac{\big(H_s(x,p)-H_s(x,q)\big)\big(w(p)-w(q)\big)}{|p-q|^{N+2s}}\, dp\, dq \nonumber\\
      &+2\int_{\supp(w)}w(p)\int_{\R^N\setminus K'}\frac{H_s(x,p)-H_s(x,q)}{|p-q|^{N+2s}}\, dp\, dq 
    \end{align}
    Since $H_s(x,\cdot)\in C^\infty_{loc}(\Omega)$, it is clear that the first integral in LHS of \eqref{eq-lem-7.2} is finite.  On the other hand, 
    by   the maximum principle (see e.g \cite[Lemma 3.2.1]{Abatangelo}), we have $|H_s(x,y)|\leq b_{n,s} |x-y|^{-N+2s}$ for all $x,y\in\R^N$. Consequently, 
\begin{align}
&\Bigg|\int_{\supp(w)}w(p)\int_{\R^N\setminus K'}\frac{H_s(x,p)-H_s(x,q)}{|p-q|^{N+2s}}\, dp\, dq \Bigg|\nonumber\\
&\leq C\int_{K}|w(p)|\int_{\R^N\setminus K'}\Big(\frac{1}{|x-p|^{N-2s}}+\frac{1}{|x-q|^{N-2s}}\Big)\frac{\, dp\, dq }{1+|q|^{N+2s}} <\infty.   
\end{align}
The proof of Lemma \ref{lem-7.2}  is thus finished.
\end{proof}

We now turn to the proof of Lemma \ref{lem-eq-shape-deriv-Green}.
Let $w\in C^\infty_c(\Omega)$ so that $w\circ \phi_t^{-1}\in C^\infty_c(\Omega_t)$.
We set
\begin{gather*}
    \mathcal E_{t}\Big(G_{\Omega_t}(\phi_t(x),\phi_t(\cdot)),w\Big) := \\
\iint_{\R^N\times\R^N}\Big(G_{\Omega_t}(\phi_t(x),\phi_t(y))-G_{\Omega_t}(\phi_t(x),\phi_t(z))\Big)\Big(w(y)-w(z)\Big)\mathcal K_t(y,z)dzdy ,
\end{gather*}
where we recall that  $\mathcal K_t$ is the kernel  defined in \eqref{knono}.
    We note that the integral above is finite as a consequence of Lemma \ref{lem-7.2}.
    
  Then,   we observe that for any $x$ in $\Omega$, for any small $t$, 
\begin{equation} \label{same}
w(x)=     \mathcal E_{t}\Big(G_{\Omega_t}(\phi_t(x),\phi_t(\cdot)),w\Big) .
\end{equation}
  Indeed, 
    by using the representation 
  \eqref{repres-sol-Dir} in the domain $\Omega_t$, we have 
    \begin{align}
      &w(x)=  \int_{\Omega_t}G_{\Omega_t}(\phi_t(x),y)(-\Delta)^s(w\circ \phi_t^{-1})(y)dy,\nonumber\\
      &=\frac{c_{N,s}}{2}\iint_{\R^N\times\R^N}\Big(G_{\Omega_t}(\phi_t(x),y)-G_{\Omega_t}(\phi_t(x),z)\Big)\Big(w\circ \phi_t^{-1}(y)-w\circ \phi_t^{-1}(z)\Big)|z-y|^{-2s-N}dzdy,
      \nonumber
    \end{align}
    which, by changing variables, leads to \eqref{same}.

Now, we are going to compute  in two different ways the following ratio: 
\begin{equation} \label{ratio}
    \frac{\mathcal E_t\Big(G_{\Omega_t}(\phi_t(x),\phi_t(\cdot)),w\Big)-\mathcal E_0\Big(G_s(x,\cdot),w\Big)}{t} .
\end{equation}
 The first way is very easy: it follows from 
\eqref{same} that the ratio in  \eqref{ratio} is $0$ since the left hand side of 
\eqref{same}  does not depend on $t$. 
Next, we claim that the ratio in  \eqref{ratio} is also equal to
\begin{align} \label{60}
&\iint_{\R^N\times\R^N}\Big(G_s(x,y)-G_s(x,z)\Big)\Big(w(y)-w(z)\Big)\frac{\mathcal K_t(y,z)-\mathcal K_0(y,z)}{t}\nonumber\\
&\quad\quad +\int_{\Omega}F_t(x,y)(-\Delta)^s w(y)dy +\mathcal R_t(x)
\end{align}
   where we set 
\begin{align}\label{def-R-t}
 \mathcal R_t(x,y) := \iint_{\R^N\times\R^N}\Big(F_t(x,y)-F_t(x,z)\Big)\Big(w(y)-w(z)\Big)\Big\{\mathcal K_t(y,z)-\mathcal K_0(y,z)\Big\},  
\end{align}
with 
    $$ F_t(x,y):= \frac{G_{\Omega_t}(\phi_t(x),\phi_t(y))-G_s(x,y)}{t} .$$
    Indeed, the ratio in  \eqref{ratio} is equal to
\begin{align}\label{60B}
&\frac{1}{t}\small\text{$\iint_{\R^N\times\R^N}\Bigg\{\Big(G_{\Omega_t}(\phi_t(x),\phi_t(y))-G_{\Omega_t}(\phi_t(x),\phi_t(z))\Big)\mathcal K_t(y,z)-\Big(G_s(x,y)-G_s(x,z)\Big)\mathcal K_0(y,z)\Bigg\}$}\nonumber\\
&\quad\quad\qquad\qquad\times \Big(w(y)-w(z)\Big)dydz\nonumber\\
&=\small\text{$\iint_{\R^N\times\R^N}\Big(G_s(x,y)-G_s(x,z)\Big)\Big(w(y)-w(z)\Big)\frac{\mathcal K_t(y,z)-\mathcal K_0(y,z)}{t}$}\nonumber\\
&+\iint_{\R^N\times\R^N}\Big(F_t(x,y)-F_t(x,z)\Big)\Big(w(y)-w(z)\Big)\mathcal K_t(y,z)\nonumber\\
&=\iint_{\R^N\times\R^N}\Big(G_s(x,y)-G_s(x,z)\Big)\Big(w(y)-w(z)\Big)\frac{\mathcal K_t(y,z)-\mathcal K_0(y,z)}{t}\nonumber\\
&+\iint_{\R^N\times\R^N}\Big(F_t(x,y)-F_t(x,z)\Big)\Big(w(y)-w(z)\Big)\mathcal K_0(y,z)dydz+\mathcal R_t(x),
\end{align}
what leads to \eqref{60}.
Therefore, 
\begin{align} \label{60bis}
&\iint_{\R^N\times\R^N}\Big(G_s(x,y)-G_s(x,z)\Big)\Big(w(y)-w(z)\Big)\frac{\mathcal K_t(y,z)-\mathcal K_0(y,z)}{t}\nonumber\\
&\quad\quad +\int_{\Omega}F_t(x,y)(-\Delta)^s w(y)dy +\mathcal R_t(x,y) = 0 .
\end{align}
We are now going to pass to the limit as $t \rightarrow 0$. 
We start with the first term. 
By direct computation  (see e.g \cite[Eq (3.3)]{DFW}) we have 
\begin{equation}\label{expansion-k-t}
\mathcal K_t(x,y)-c_{N,s}|x-y|^{-N-2s}-t\mathcal K_Y(x,y)=O(t^2)|x-y|^{-N-2s}
\end{equation}
uniformly for all $x\neq y\in \R^N$.
Consequently and since 
$$
\iint_{\R^N\times\R^N}\Big|G_s(x,y)-G_s(x,z)\Big|\Big|w(y)-w(z)\Big||x-y|^{-N-2s}dxdy<\infty,
$$
(see Lemma \ref{lem-7.2}), we get by the dominated convergence theorem that 
\begin{align}\label{1st-lim}
    &\lim_{t\to 0} \iint_{\R^N\times\R^N}\Big(G_s(x,y)-G_s(x,z)\Big)\Big(w(y)-w(z)\Big)\frac{\mathcal K_t(y,z)-\mathcal K_0(y,z)}{t}\nonumber\\
    &=\iint_{\R^N\times\R^N}\Big(G_s(x,y)-G_s(x,z)\Big)\Big(w(y)-w(z)\Big)\mathcal K_Y(x,y)dydz\nonumber\\
    &=
    \mathcal E_s\Big(G_s(x,\cdot), w\Big).
\end{align}
Next,
it is also clear that 
\begin{align}\label{2nd-lim}
   \lim_{t\to 0}\int_{\Omega}F_t(x,y)(-\Delta)^s w(y)dy =\int_{\Omega} \partial_tG_t(\phi_t(x),\phi_t(y)){\big|_{t=0}}(-\Delta)^sw(y)dy.
\end{align}
And finally we claim that 
\begin{align}\label{3th-limit}
\lim_{t\to 0^+}\mathcal R_t(x,y)=0 .
\end{align}
The identity \eqref{eq-shap-green}, tested with the function $w$, then follows by combining \eqref{1st-lim}, \eqref{2nd-lim} and \eqref{3th-limit}, so that, up to proving \eqref{3th-limit} the proof of Lemma \ref{lem-eq-shape-deriv-Green} is finished. 
\par\;

Now we turn our attention to the proof of \eqref{3th-limit}.
Hence by using the decomposition \eqref{Eq-spliting-of-Green} of the fractional Green function $G_s$ 
into the sum of  the  fundamental solution and of the fractional Robin function, we obtain that  
\begin{align}
&\Big|t\Big(F_t(x,y)-F_t(x,z)\Big)\Big|\nonumber\\
&=\Big|\Big(G_{\Omega_t}(\phi_t(x),\phi_t(y))-G_{\Omega_t}(\phi_t(x),\phi_t(z))\Big)-\Big(G_s(x,y)-G_s(x,z)\Big)\Big|\nonumber\\
&\leq b_{N,s}\Big|\phi_t(x)-\phi_t(y)|^{2s-N}-|x-y|^{2s-N}\Big|+\Big|H_{\Omega_t}(\phi_t(x),\phi_t(y))-H_{\Omega}(x,y)\Big|\nonumber\\
&+b_{N,s}\Big||\phi_t(x)-\phi_t(z)|^{2s-N}-|x-z|^{2s-N}\Big|+\Big|H_{\Omega_t}(\phi_t(x),\phi_t(z))-H_{\Omega}(x,z)\Big|\label{esf}\\
&\leq b_{N,s}\Big|\phi_t(x)-\phi_t(y)|^{2s-N}-|x-y|^{2s-N}\Big|+\Big|H_{\Omega_t}(\phi_t(x),\phi_t(y))\Big|+\big|H_{\Omega}(x,y)\big|\nonumber\\
&+b_{N,s}\Big||\phi_t(x)-\phi_t(z)|^{2s-N}-|x-z|^{2s-N}\Big|+\Big|H_{\Omega_t}(\phi_t(x),\phi_t(z))\Big|+\big|H_{\Omega}(x,z)\big|\nonumber\\
& \leq b_{N,s}\Big|\phi_t(x)-\phi_t(y)|^{2s-N}-|x-y|^{2s-N}\Big|+\frac{b_{N,s}}{|\phi_t(x)-\phi_t(y)|^{N-2s}}+\big|H_{\Omega}(x,y)\big|\nonumber\\
&+b_{N,s}\Big||\phi_t(x)-\phi_t(z)|^{2s-N}-|x-z|^{2s-N}\Big|+\frac{b_{N,s}}{|\phi_t(x)-\phi_t(z)|^{N-2s}}+\big|H_{\Omega}(x,z)\big|\label{rt-}.
\end{align}
Using the property of the transformation $\phi_t$, it is not difficult to see that the LHS of \eqref{rt-} is bounded by $|x-y|^{-2s-N}+|x-z|^{-2s-N}$ for $|t|>0$ sufficiently small. That is, 
\begin{equation}\label{esti-diff-quot}
\Big|t\Big(F_t(x,y)-F_t(x,z)\Big)\Big|\leq C\big(|x-y|^{-2s-N}+|x-z|^{-2s-N}\big)\quad\text{for $|t|>0$ sufficiently small.}
\end{equation}
Here we used that for $t>0$ small enough, there holds (see e.g \cite[Section 3]{DFW}):
$$
\big|\phi_t(x)-\phi_t(y)\big|^{-N-2s}=|x-y|^{-N-2s}\Big(1+2t\frac{(Y(x)-Y(y))\cdot(x-y)}{|x-y|^2}+O(t^2)\Big)^{-\frac{N+2s}{2}}
$$

In view of \eqref{expansion-k-t} and \eqref{esti-diff-quot} we have 
\begin{align}
    &\Big|t\Big(F_t(x,y)-F_t(x,z)\Big)\Big(w(y)-w(z)\Big)\frac{\mathcal K_t(y,z)-\mathcal K_0(y,z)}{t}\Big|\nonumber\\
    &\leq C\big|w(y)-w(z)\big|\big(|x-y|^{-2s-N}+|x-z|^{-2s-N}\big)|y-z|^{-2s-N}.
\end{align}
And since 
$$
\iint_{\R^N\times\R^N}\frac{\big|w(y)-w(z)\big|\big(|x-y|^{-2s-N}+|x-z|^{N+2s}\big) }{|y-z|^{-2s-N}}dydz<\infty\qquad\text{(provided that $2s<1$)}
.$$
we conclude by the Lebesgue dominated convergence theorem and \eqref{esf} that $\lim_{t\to 0}\mathcal R_t(x,y)=0$. This finishes the proof of the lemma.

\subsection{Proof of Lemma  \ref{appen}}
\label{yet}
To prove  Lemma  \ref{appen}, we first establish the following formula. 
\begin{Lemma} \label{preumlem}
Let $x\in\Omega$ and  $Y\in C^{0,1}(\R^N,\R^N)$ be fixed. Then for all $v,w\in C^\infty_c(\R^N)$, there holds:
\begin{align}\label{FS-int-01}
     &\iint_{\R^N\times\R^N}\frac{(w(q)-w(p))(v(p)-v(q))}{|p-q|^{N+2s}}\mathcal W_{Y,x}(p,q)\, dp\, dq \nonumber\\
     &=-\int_{\R^N}\nabla w(p)\cdot \Big[DY(x)\cdot p\Big](-\Delta)^sv(p)dp-\int_{\R^N}\nabla v(p)\cdot \Big[DY(x)\cdot p\Big](-\Delta)^sw(p)dp.
\end{align}
   \end{Lemma}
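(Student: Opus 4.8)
The plan is to treat the left-hand side as a symmetric bilinear form in $v$ and $w$ and to exploit the fact that the kernel $|p-q|^{-N-2s}\,\mathcal W_{Y,x}(p,q)$ is exactly the ``linearized'' kernel one obtains by differentiating a pushed-forward fractional Dirichlet form under the linear flow $p\mapsto p+tDY(x)\cdot p$. Concretely, I would fix the linear vector field $Z(p):=DY(x)\cdot p$, which is globally Lipschitz, and apply the first-order shape-derivative identity already recorded in the paper, namely \cite[Theorem 1.3]{DFW2023} / \cite[Lemma 2.1]{DFW} (the $s$-analogue of the divergence identity $\int \nabla v\cdot Z\,(-\Delta)^sw = -\int\nabla w\cdot Z\,(-\Delta)^sv - \langle g_Z[v],w\rangle$). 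The deformation kernel $\mathcal K_Z$ associated with $Z$ via \eqref{defok} is, by definition, $\mathcal K_Z(p,q)=\frac{c_{N,s}}{2}\{\div Z(p)+\div Z(q)-(N+2s)\frac{(Z(p)-Z(q))\cdot(p-q)}{|p-q|^2}\}|p-q|^{-N-2s}$; since $Z$ is linear, $\div Z(p)=\div Z(q)=\tr DY(x)=\div Y(x)$ and $Z(p)-Z(q)=DY(x)\cdot(p-q)$, so $\mathcal K_Z(p,q)$ is precisely $|p-q|^{-N-2s}\,\mathcal W_{Y,x}(p,q)$ with $\mathcal W_{Y,x}$ as in \eqref{defWsam}. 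Hence $g_Z[v]$ paired against $w$ is, up to the overall sign convention, exactly the double integral on the left of \eqref{FS-int-01}.

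With this identification the proof is essentially a rewriting. First I would record that for $v,w\in C^\infty_c(\R^N)$ all integrals converge absolutely: the integrand of the left side is $O(|p-q|^{-N-2s+1})$ near the diagonal because $w(q)-w(p)=O(|p-q|)$, $v(p)-v(q)=O(|p-q|)$ and $|\mathcal W_{Y,x}(p,q)|\le C$, and it has fast decay at infinity by compact support; on the right side $\nabla w\cdot Z\in C^\infty$ with at most linear growth while $(-\Delta)^sv$ decays like $|p|^{-N-2s}$, so the product is integrable, and symmetrically for the other term. Then I would invoke the symmetric integration-by-parts identity: applying \cite[Theorem 1.3]{DFW2023} (or equivalently the computation in \cite[Lemma 4.2]{DFW}) with the vector field $Z$ and the pair $(v,w)$ gives
\begin{equation*}
\iint_{\R^N\times\R^N}(v(p)-v(q))(w(p)-w(q))\,\mathcal K_Z(p,q)\,dp\,dq
= -\int_{\R^N}\nabla v\cdot Z\,(-\Delta)^sw\,dp-\int_{\R^N}\nabla w\cdot Z\,(-\Delta)^sv\,dp,
\end{equation*}
and substituting $\mathcal K_Z(p,q)=|p-q|^{-N-2s}\mathcal W_{Y,x}(p,q)$, $Z(p)=DY(x)\cdot p$ and noting $(w(q)-w(p))(v(p)-v(q))=-(v(p)-v(q))(w(p)-w(q))$ is harmless because both factors are antisymmetrized — in fact the product $(w(p)-w(q))(v(p)-v(q))$ is already symmetric in $(p,q)$ — yields exactly \eqref{FS-int-01}.

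The only genuine care needed is to make sure the integration-by-parts identity I am quoting really applies to a vector field with linear (unbounded) growth and to test functions that are merely $C^\infty_c$ rather than, say, Schwartz; this is handled because $(-\Delta)^sv$ and $(-\Delta)^sw$ belong to $C^\infty(\R^N)$ with $|(-\Delta)^sv(p)|\lesssim (1+|p|)^{-N-2s}$, which beats the linear growth of $Z$, so all the boundary/regularization limits in the proof of that identity (the $(-\Delta)^s_\mu$ truncation used in \cite[Lemma 4.2]{DFW}) pass through verbatim. If one prefers a self-contained derivation avoiding any appeal to a vector field of linear growth, the alternative is a direct symmetric computation: write the left side as a principal value, use the substitution $p\leftrightarrow q$ to symmetrize, and recognize $\mathcal W_{Y,x}(p,q)|p-q|^{-N-2s}$ as $\frac{c_{N,s}}{2}\,\mathrm{div}_p\mathrm{div}_q$-type combination that reconstitutes $\nabla_p\cdot(Z(p)|p-q|^{-N-2s})$; integrating by parts once in $p$ and once in $q$ then produces the two Riesz-potential terms $\int \nabla w\cdot Z\,(-\Delta)^sv$ and $\int\nabla v\cdot Z\,(-\Delta)^sw$. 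I expect the main obstacle to be purely bookkeeping — keeping signs and the factor $c_{N,s}$ straight through the symmetrization and the two integrations by parts — rather than anything conceptual, since the statement is, at bottom, the linearization at the base point $x$ of the nonlocal Hadamard/Pohozaev identity \eqref{RS-DFW} already proved in the references.
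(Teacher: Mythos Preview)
Your proposal is correct and follows essentially the same route as the paper. Both arguments rest on the single observation that for the linear field $Z(p)=DY(x)\cdot p$ one has $\div Z\equiv\div Y(x)$ and $Z(p)-Z(q)=DY(x)\cdot(p-q)$, so that the deformation kernel $\mathcal K_Z$ of \eqref{defok} coincides with $|p-q|^{-N-2s}\mathcal W_{Y,x}(p,q)$; the identity then reduces to the known bilinear integration-by-parts formula for $\mathcal K_Z$. The only cosmetic difference is that the paper, rather than citing the bilinear result, reruns the integration-by-parts computation of \cite[Lemma 2.1]{DFW2023} for this specific $Z$ to obtain the quadratic identity \eqref{FS-int} and then polarizes, whereas you invoke the bilinear version directly; your remark that $Z$ is globally Lipschitz (so the cited proof applies verbatim, the decay of $(-\Delta)^s v$ absorbing the linear growth of $Z$) is exactly the point that makes this legitimate.
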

   We start with the proof of Lemma \ref{preumlem}.
\begin{proof}
We define the vector field $$\widetilde {Y_x}(p):=DY(x)\cdot p, $$ so that, for any $p$ and $q$ in $\R^N$, 
\begin{align}\label{re-mix}
    \frac{[DY(x)\cdot(p-q)]\cdot(p-q)}{|p-q|^2}=\frac{\big(\widetilde {Y_x}(p)-\widetilde {Y_x}(q)\big)\cdot(p-q)}{|p-q|^2},
\end{align}
and 
\begin{align}\label{div-Y}
    \div \widetilde {Y_x}(p)=\sum_{i=1}^N\Big<D\widetilde {Y_x}(p)\cdot e_i,e_i\Big>=\sum_{i=1}^N\Big<DY(x)\cdot e_i,e_i\Big>=\div Y(x),
\end{align}
where the divergence operator on the left hand side is taken with respect to the variable $p$.
In particular, 
\begin{align}
    &\iint_{\R^N\times\R^N}\frac{(w(q)-w(p))^2}{|p-q|^{N+2s}}\mathcal W_{Y,x}(p,q)\, dp\, dq \nonumber\\
    &=\frac{c_{N,s}}{2}\iint_{\R^N\times\R^N}\frac{(w(q)-w(p))^2}{|p-q|^{N+2s}}\Bigg\{ 2\div \widetilde {Y_x}(p)
    -(N+2s)\frac{\big(\widetilde {Y_x}(p)-\widetilde {Y_x}(q)\big)\cdot(p-q)}{|p-q|^2}
   \Bigg\}\, dp\, dq . \label{finalement}
\end{align}
We observe that 
\begin{align*}
    &-\frac12(N+2s) \iint_{\R^N\times\R^N}\frac{(w(p)-w(q))^2}{|p-q|^{N+2s}}\frac{\big(\widetilde {Y_x}(p)-\widetilde {Y_x}(q)\big)\cdot (p-q)}{|p-q|^2}\, dp\, dq \nonumber\\
    &=\lim_{\mu\to 0}\int_{\R^N}\Bigg[\int_{\R^N\setminus\overline{} B_\mu(p)}(w(p)-w(q))^2\nabla_q\big(|p-q|^{-2s-N}\big)\cdot \widetilde {Y_x}(q)dq\Bigg]dp.
\end{align*}
Applying the divergence theorem in $\R^N\setminus \overline{B_\gamma(p)}$, we get
\begin{align*}
    &-\frac{1}{2}(N+2s) \iint_{\R^N\times\R^N}\frac{(w(p)-w(q))^2}{|p-q|^{N+2s}}\frac{\big(\widetilde {Y_x}(p)-\widetilde {Y_x}(q)\big)\cdot (p-q)}{|p-q|^2}\, dp\, dq \nonumber\\
    &=\lim_{\mu\to 0}\int_{\R^N}\int_{\partial B_\mu(p)}\widetilde {Y_x}(q)\frac{(w(p)-w(q))^2}{|p-q|^{N+2s}}\, dq\, dp\nonumber\\
    &\quad-2\lim_{\mu\to 0}\int_{\R^N}\int_{\R^N\setminus\overline{B_\mu(p)}}\nabla w(q)\cdot \widetilde {Y_x}(q)\frac{w(p)-w(q)}{|p-q|^{N+2s}}\, dq\, dp\nonumber\\
    &\qquad\quad -\iint_{\R^N\times\R^N}\frac{(w(p)-w(q))^2}{|p-q|^{N+2s}}\div \widetilde {Y_x}\, dq\, dp.
\end{align*}
 From here we argue as in \cite[proof of Lemma 2.1]{DFW2023} to get that
\begin{align}
    &-\frac{c_{N,s} }{2}(N+2s) \iint_{\R^N\times\R^N}\frac{(w(p)-w(q))^2}{|p-q|^{N+2s}}\frac{\big(\widetilde {Y_x}(p)-\widetilde {Y_x}(q)\big)\cdot (p-q)}{|p-q|^2}\, dp\, dq \nonumber\\
    &=-2\int_{\R^N}\nabla w(p)\cdot \widetilde {Y_x}(p)(-\Delta)^sw(p)dp-
    c_{N,s} \div Y (x) \iint_{\R^N\times\R^N}\frac{(w(p)-w(q))^2}{|p-q|^{N+2s}}\, dp\, dq .\label{fina2}
\end{align}

Consequently, combining \eqref{finalement} and \eqref{fina2}, we observe that a cancellation occurs and we are left with: 
\begin{align*}
    &\iint_{\R^N\times\R^N}\frac{(w(q)-w(p))^2}{|p-q|^{N+2s}}\mathcal W_{Y,x}(p,q)\, dp\, dq =-2\int_{\R^N}\nabla w(p)\cdot \widetilde {Y_x}(p)(-\Delta)^sw(p)dp ,
\end{align*}
which also reads in terms of $Y$:
\begin{align}\label{FS-int}
    \iint_{\R^N\times\R^N}\frac{(w(q)-w(p))^2}{|p-q|^{N+2s}}\mathcal W_{Y,x}(p,q)\, dp\, dq =-2\int_{\R^N}\nabla w(p)\cdot \Big[DY(x)\cdot p\Big](-\Delta)^sw(p)dp.
\end{align}

The equation \eqref{FS-int}
 is a equality of two quadratic forms in $w$. By the polarisation identity, we deduce \eqref{FS-int-01} as the corresponding  equality for the associated symmetric bilinear forms.
\end{proof}
We are now going to deduce Lemma \ref{appen} from Lemma \ref{preumlem}. For the convenient of the reader, we recall the statement of Lemma \ref{appen} below.
\begin{Lemma}
Let $x\in\Omega$, $Y\in C^{0,1}(\R^N,\R^N)$ and let $w\in C^\infty_c(\R^N)$. Then
   \begin{align} 
   & \iint_{\R^N\times\R^N}\frac{\big(w(q)-w(p)\big(|p|^{2s-N}-|q|^{2s-N}\big)}{|p-q|^{N+2s}}\mathcal W_{Y,x}(p,q)\, dp\, dq \nonumber\\
     &= - (N-2s)\int_{\R^N}\frac{[DY(x)\cdot p]\cdot p}{|p|^{N-2s+2}}(-\Delta)^sw(p)dp    .
   \end{align}
   \end{Lemma}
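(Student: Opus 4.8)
The plan is to derive Lemma \ref{appen} from Lemma \ref{preumlem} by choosing $v(p) = |p|^{2s-N}$, which is exactly the fundamental solution $F_s(0,\cdot)$ up to the constant $b_{N,s}$, so that $(-\Delta)^s v = b_{N,s}^{-1}\delta_0$ in the sense of distributions. The only obstacle to applying Lemma \ref{preumlem} directly is that $v = |\cdot|^{2s-N}$ is not in $C^\infty_c(\R^N)$: it is singular at the origin and does not decay at infinity. So first I would argue that the identity \eqref{FS-int-01} of Lemma \ref{preumlem} extends by approximation to the pair $(v,w)$ with $v(p) = |p|^{2s-N}$ and $w \in C^\infty_c(\R^N)$. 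This requires checking that all four integrals appearing in \eqref{FS-int-01} are absolutely convergent for this pair. The left-hand double integral is finite because $w \in C^\infty_c$ controls the factor $w(q)-w(p)$, and one estimates $|v(p)-v(q)|$ against $|p-q|$ or $|p-q|^\alpha$ near the diagonal and against $|p|^{2s-N}+|q|^{2s-N}$ away from it, exactly as in the proof of Lemma \ref{Lem6.1} and Lemma \ref{lem-7.2}; the weight $\mathcal W_{Y,x}(p,q)$ is bounded by $C|p-q|^{-N-2s}$ times the same cancellation structure. On the right-hand side, $\nabla v(p) \cdot [DY(x)\cdot p]$ is $O(|p|^{2s-N})$ near $0$ and $O(|p|^{2s-N})$ at infinity, and $(-\Delta)^s w$ is bounded with $O(|p|^{-N-2s})$ decay, so that integral converges; the other right-hand integral $\int \nabla w \cdot [DY(x)\cdot p] (-\Delta)^s v$ becomes, after recognizing $(-\Delta)^s v = b_{N,s}^{-1}\delta_0$, simply the evaluation $b_{N,s}^{-1}\nabla w(0)\cdot[DY(x)\cdot 0] = 0$.

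The approximation itself I would carry out by setting $v_\varepsilon := \chi_\varepsilon \ast (v \, \zeta_{1/\varepsilon})$ or, more simply, $v_\varepsilon(p) := \zeta(\varepsilon p)\,(v \ast \chi_\varepsilon)(p)$ with $\chi_\varepsilon$ a standard mollifier and $\zeta \in C^\infty_c$ a cutoff equal to $1$ near the origin, so that $v_\varepsilon \in C^\infty_c(\R^N)$, $v_\varepsilon \to v$ locally uniformly away from $0$, $(-\Delta)^s v_\varepsilon \to (-\Delta)^s v = b_{N,s}^{-1}\delta_0$ appropriately, and the integrands are dominated uniformly in $\varepsilon$ by the convergent majorants just described. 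Passing to the limit in \eqref{FS-int-01} written for the pair $(v_\varepsilon, w)$, the term $-\int \nabla w(p)\cdot[DY(x)\cdot p](-\Delta)^s v_\varepsilon(p)\,dp$ tends to $-b_{N,s}^{-1}\nabla w(0)\cdot[DY(x)\cdot 0] = 0$, which kills one of the two right-hand terms entirely, while the other three terms converge to their counterparts with $v = |\cdot|^{2s-N}$. This yields
\begin{align*}
\iint_{\R^N\times\R^N}\frac{(w(q)-w(p))\big(|p|^{2s-N}-|q|^{2s-N}\big)}{|p-q|^{N+2s}}\mathcal W_{Y,x}(p,q)\,dp\,dq
= -\int_{\R^N}\nabla\big(|\cdot|^{2s-N}\big)(p)\cdot[DY(x)\cdot p]\,(-\Delta)^s w(p)\,dp.
\end{align*}
Finally I would compute $\nabla\big(|p|^{2s-N}\big) = (2s-N)|p|^{2s-N-2}p$, so that $\nabla\big(|\cdot|^{2s-N}\big)(p)\cdot[DY(x)\cdot p] = (2s-N)|p|^{2s-N-2}\,([DY(x)\cdot p]\cdot p)$, and substituting gives
\begin{align*}
\iint_{\R^N\times\R^N}\frac{(w(q)-w(p))\big(|p|^{2s-N}-|q|^{2s-N}\big)}{|p-q|^{N+2s}}\mathcal W_{Y,x}(p,q)\,dp\,dq
= -(N-2s)\int_{\R^N}\frac{[DY(x)\cdot p]\cdot p}{|p|^{N-2s+2}}\,(-\Delta)^s w(p)\,dp,
\end{align*}
which is exactly the claim of Lemma \ref{appen}.

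The main obstacle I expect is the rigorous justification of the limiting procedure near the singularity of $|\cdot|^{2s-N}$ at the origin — in particular, making precise the sense in which $(-\Delta)^s v_\varepsilon \rightharpoonup b_{N,s}^{-1}\delta_0$ and controlling the contribution of the region near $0$ in the double integral uniformly in $\varepsilon$, where both $v_\varepsilon$ and its gradient blow up. One clean way around this is to note that the constant $b_{N,s}$ is fixed by the normalization $(-\Delta)^s\big(b_{N,s}|\cdot|^{2s-N}\big) = \delta_0$, so it suffices to test the distributional identity against the smooth compactly supported function $p \mapsto \nabla w(p)\cdot[DY(x)\cdot p]$ (which vanishes at $0$), and then the singular term contributes $0$ with no delicate cancellation needed; the rest is dominated convergence with the explicit majorants above. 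Either way the heart of the matter is purely the integrability bookkeeping, which follows the template already used repeatedly in the paper.
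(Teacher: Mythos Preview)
Your approach is correct and is essentially the same as the paper's: both reduce Lemma~\ref{appen} to Lemma~\ref{preumlem} by approximating $|\cdot|^{2s-N}$ with smooth compactly supported functions and passing to the limit. The paper uses the explicit double cutoff $a_{\mu,R}(z)=\psi_\mu(z)\widetilde\psi_R(z)|z|^{2s-N}$ (vanishing near the origin and at infinity) rather than mollification, and then shows $\int\nabla w\cdot[DY(x)\cdot p](-\Delta)^s a_{\mu,R}\,dp\to 0$ by a somewhat laborious product-rule expansion of $(-\Delta)^s a_{\mu,R}$ into four pieces, each treated separately. Your observation that the test function $p\mapsto\nabla w(p)\cdot[DY(x)\cdot p]$ vanishes at $p=0$ (and is $C^\infty_c$) is exactly the structural reason the paper's computation succeeds, and invoking the distributional identity $(-\Delta)^s(b_{N,s}|\cdot|^{2s-N})=\delta_0$ against this test function is a cleaner way to see it. The paper's explicit cutoff has the minor advantage that $a_{\mu,R}$ is identically zero near the singularity, so no mollifier estimates are needed; your route trades that for a more transparent limiting argument.

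One caution: there is a sign inconsistency in the paper between the stated form of Lemma~\ref{preumlem} (with $(v(p)-v(q))$ on the left and minus signs on the right) and the polarization of \eqref{FS-int}; the version consistent with \eqref{FS-int} and with the statement of Lemma~\ref{appen} has $(v(q)-v(p))$ on the left, or equivalently plus signs on the right. Your final substitution inherits this: $-\int(2s-N)|p|^{2s-N-2}([DY(x)\cdot p]\cdot p)(-\Delta)^sw\,dp$ equals $(N-2s)\int\ldots$, not $-(N-2s)\int\ldots$. Once you track the signs from \eqref{FS-int} directly (rather than from the stated \eqref{FS-int-01}), everything matches.
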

\begin{proof}
Since we cannot apply directly Lemma \ref{preumlem} to the function $|\cdot|^{2s-N}$, we are going to truncate it near the origin thanks to a parameter $\mu$, and in the large  thanks to a parameter $R$, in order to apply  \eqref{FS-int-01};  and then we will let $\mu\to 0^+$ and $R\to\infty$ in the resulting identity. 
More precisely let $\rho\in C^\infty_c(B_2)$ with $\rho\equiv 1$ in $B_1$.
For $\mu\in (0,1)$ and $R>1$, 
we define $\psi_\mu(z)=1-\rho(\frac{|z|^2}{\mu^2})$ and $\widetilde{\psi}_R(z)=\rho(\frac{|z|^2}{R^2})$ for all $z\in\R^N$.
Let $a_{\mu,R}:\R^N\to\R$ be the function defined by 
\begin{align}\label{a-mu-r}
 a_{\mu,R}(z)=\psi_\mu(z)\widetilde{\psi}_R(z) |z|^{2s-N}.
\end{align}
Then we have
\begin{align}\label{conver-result}
&\iint_{\R^N\times\R^N}\frac{\big(w(q)-w(p)\big(|p|^{2s-N}-|q|^{2s-N}\big)}{|p-q|^{N+2s}}\mathcal W_{Y,x}(p,q)\, dp\, dq\nonumber\\
    &\qquad\qquad=
    \lim_{\mu\to 0^+}\lim_{R\to\infty}\iint_{\R^N\times\R^N}\frac{\big(w(q)-w(p)\big(a_{\mu,R}(p)-a_{\mu,R}(q)\big)}{|p-q|^{N+2s}}\mathcal W_{Y,x}(p,q)\, dp\, dq  .
\end{align}
We leave the proof of this to the reader.  We apply \eqref{FS-int-01} with $v=a_{\mu,R}$  to arrive at 
\begin{align}
    &\iint_{\R^N\times\R^N}\frac{\big(w(p)-w(q)\big(|p|^{2s-N}-|q|^{2s-N}\big)}{|p-q|^{N+2s}}\mathcal W_{Y,x}(p,q)\, dp\, dq \nonumber\\
    &=-\lim_{\mu\to 0}\lim_{R\to\infty}\int_{\R^N}\nabla a_{\mu,R}(p)\cdot \Big[DY(x)\cdot p\Big](-\Delta)^sw(p)dp\nonumber\\
    &\quad -\lim_{\mu\to 0}\lim_{R\to \infty}\int_{\R^N}\nabla w(p)\cdot \Big[DY(x)\cdot p\Big](-\Delta)^sa_{\mu,R}(p)dp. \label{combibi}
\end{align}
First we observe that 
\begin{align}
   & \lim_{\mu\to 0}\lim_{R\to\infty}\int_{\R^N}\nabla a_{\mu,R}(p)\cdot \Big[DY(x)\cdot p\Big](-\Delta)^sw(p)dp\nonumber\\
    &\quad = - (N-2s)\int_{\R^N}\frac{[DY(x)\cdot p]\cdot p}{|p|^{N-2s+2}}(-\Delta)^sw(p)dp . \label{combibi2}
\end{align}
Next we claim that 
\begin{align} \label{claim2prove}
 \lim_{\mu\to 0}\lim_{R\to \infty}\int_{\R^N}\nabla w(p)\cdot \Big[DY(x)\cdot p\Big](-\Delta)^sa_{\mu,R}(p)dp=0.   
\end{align}
To see this, we apply twice  the product rule \eqref{pl} for the fractional Laplacian to obtain
\begin{align}\label{expans-a-R-mu}
    (-\Delta)^sa_{\mu,R}&=(-\Delta)^s\Big[\psi_\mu |\cdot|^{2s-N}\widetilde{\psi}_R\Big]\nonumber\\
    &=|\cdot|^{2s-N}\psi_\mu(-\Delta)^s \widetilde{\psi}_R+|\cdot|^{2s-N}\widetilde{\psi}_R(-\Delta)^s\psi_\mu-|\cdot|^{2s-N}\mathcal I_s\big[\psi_\mu,\widetilde{\psi}_R\big] \nonumber\\
    &\qquad-\mathcal I_s\Big[|\cdot|^{2s-N}, \psi_\mu \widetilde{\psi}_R\Big],
\end{align}
where we used 
$$
(-\Delta)^s(|\cdot|^{2s-N})=0\qquad\text{in}\qquad \R^N\setminus \overline{B_\mu(0)}.
$$
By the scaling property of the fractional Laplacian we have 
\begin{align}
  (-\Delta)^s\psi_\mu(p)=-\mu^{-2s}(-\Delta)^s\Big(\rho\circ |\cdot|^2\Big)\big(\frac{p}{\mu}\big)\quad\text{for $\mu\in (0,1)$}, \\
  (-\Delta)^s \widetilde{\psi}_R(p)=R^{-2s}(-\Delta)^s\Big(\rho\circ |\cdot|^2\Big)\big(\frac{p}{R}\big)\quad\text{for $R>1$}.
\end{align}
Since $\rho\in C^\infty_c(\R^N)$, we have for all $p\in\R^N$, 
$$
\Big|(-\Delta)^s\Big(\rho\circ |\cdot|^2\Big)\big(\frac{p}{R}\big)\Big|\leq \frac{C(\rho)}{1+|p/R|^{N+2s}}\leq C(\rho).
$$
Using this, we estimate 
\begin{align}\label{flim-a-R-mu}
  &\Big|\int_{\R^N}\nabla w(p)\cdot \Big[DY(x)\cdot p\Big]|p|^{2s-N}\psi_\mu(p)(-\Delta)^s \widetilde{\psi}_R (p)dp \Big|\nonumber\\
  &\leq \frac{C}{R^{2s}}\int_{\R^N}\Big|\nabla w(p)\cdot \Big[DY(x)\cdot p\Big]|p|^{2s-N}\Big|dp\to 0\qquad\text{as}\qquad R\to\infty.
\end{align}
On the other hand, using the change of variable $\overline{p}=p/\mu$, we have 
\begin{align*}
    &\int_{\R^N}\nabla w(p)\cdot \Big[DY(x)\cdot p\Big]|p|^{2s-N}\widetilde{\psi}_R(p)(-\Delta)^s\psi_\mu(p)dp\nonumber\\
    &=-\int_{\R^N}\nabla w(\mu p)\cdot \Big[DY(x)\cdot \mu p\Big]\mu^{N-2s}|\mu p|^{2s-N}\widetilde{\psi}_R(\mu p)(-\Delta)^s\Big(\rho\circ |\cdot|^2\Big)(p)dp\nonumber\\
    &=-\mu\int_{\R^N}\nabla w(\mu p)\cdot \Big[DY(x)\cdot p\Big]| p|^{2s-N}\widetilde{\psi}_R(\mu p)(-\Delta)^s\Big(\rho\circ |\cdot|^2\Big)(p)dp.
\end{align*}
From this we deduce that 
\begin{align}\label{slim-a-R-mu}
    \lim_{\mu\to 0^+}\lim_{R\to\infty}\int_{\R^N}\nabla w(p)\cdot \Big[DY(x)\cdot p\Big]|p|^{2s-N}\widetilde{\psi}_R(p)(-\Delta)^s\psi_\mu(p)dp=0.
\end{align}
Next we claim that 
\begin{align}\label{lim-int-I-s}
    \lim_{\mu\to 0}\lim_{R\to \infty}\int_{\R^N}\nabla w(p)\cdot \Big[DY(x)\cdot p\Big]|p|^{2s-N}\mathcal I_s\big[\psi_\mu,\widetilde{\psi}_R\big](p)dp=0,
\end{align}
and 
    \begin{align}\label{lim-int-I-s-fund}
    \lim_{\mu\to 0}\lim_{R\to \infty}\int_{\R^N}\nabla w(p)\cdot \Big[DY(x)\cdot p\Big]\mathcal I_s\big[|\cdot|^{2s-N},\psi_\mu\widetilde{\psi}_R\big](p)dp=0.
\end{align}
We start with the proof of \eqref{lim-int-I-s}. 
We choose three compact sets $K_1\Subset K_2\Subset K_3\subset \R^N$ such that $\supp(w)\Subset K_1$ and $\supp(\rho)\Subset K_2$ and write 
\begin{align}
    &\frac{1}{c_{N,s}}\int_{\R^N}\nabla w(p)\cdot \Big[DY(x)\cdot p\Big]|p|^{2s-N}\mathcal I_s\big[\psi_\mu,\widetilde{\psi}_R\big](p)dp\nonumber\\
    &=\small\text{$\int_{K_1}\nabla w(p)\cdot \Big[DY(x)\cdot p\Big]|p|^{2s-N}\int_{\R^N}\frac{ \big(\rho(|p|^2/\mu^2)-\rho(|q|^2/\mu^2)\big)\big(\rho(|p|^2/R^2)-\rho(|q|^2/R^2)\big) }{ |p-q|^{N+2s} }\, dp\, dq$} \nonumber\\
    &=\small\text{$\int_{K_1}\nabla w(p)\cdot \Big[DY(x)\cdot p\Big]|p|^{2s-N}\int_{K_3}\frac{ \big(\rho(|p|^2/\mu^2)-\rho(|q|^2/\mu^2)\big)\big(\rho(|p|^2/R^2)-\rho(|q|^2/R^2)\big) }{ |p-q|^{N+2s} }\, dp\, dq$} \nonumber\\
    &+\small\text{$\int_{K_1}\nabla w(p)\cdot \Big[DY(x)\cdot p\Big]|p|^{2s-N}\int_{\R^N\setminus K_3}\frac{ \big(\rho(|p|^2/\mu^2)-\rho(|q|^2/\mu^2)\big)\big(\rho(|p|^2/R^2)-\rho(|q|^2/R^2)\big) }{ |p-q|^{N+2s} }\, dp\, dq$} .
\end{align}
On the one hand, since $\rho\in C^\infty_c(\R^N)$, we have 
\begin{align}\label{lim-int-I-s-1}
    &\small\text{$\Bigg|\int_{K_1}\nabla w(p)\cdot \Big[DY(x)\cdot p\Big]|p|^{2s-N}\int_{K_3}\frac{ \big(\rho(|p|^2/\mu^2)-\rho(|q|^2/\mu^2)\big)\big(\rho(|p|^2/R^2)-\rho(|q|^2/R^2)\big) }{ |p-q|^{N+2s} }\, dp\, dq \Bigg|$}\nonumber\\
    &\leq \small\text{$\frac{C}{\mu^2 R^2}\iint_{K_1\times K_3} |p|^{2s-N}\nabla w(p)\cdot \Big[DY(x)\cdot p\Big]\frac{\, dp\, dq }{|p-q|^{N+2s-2}}\to 0\qquad\text{as}\qquad R\to\infty.$}
\end{align}
On the other hand, we have 
\begin{align}
 &\small\text{$\Bigg|\int_{K_1}\nabla w(p)\cdot \Big[DY(x)\cdot p\Big]|p|^{2s-N}\int_{\R^N\setminus K_3}\frac{ \big(\rho(|p|^2/\mu^2)-\rho(|q|^2/\mu^2)\big)\big(\rho(|p|^2/R^2)-\rho(|q|^2/R^2)\big) }{ |p-q|^{N+2s} }\, dq\, dp \Bigg|$}\nonumber\\
 &\leq C\small\text{$\Bigg|\int_{K_1}|p|^{2s-N}\nabla w(p)\cdot \Big[DY(x)\cdot p\Big]\rho\big(\frac{|p|^2}{\mu^2}\big)\int_{\R^N\setminus K_3}\frac{\, dq\, dp}{|p-q|^{N+2s}}\Bigg|$}\nonumber\\
 &\leq C(K_1,K_3)\int_{K_1}|p|^{2s-N}\Big|\nabla w(p)\cdot \Big[DY(x)\cdot p\Big]\rho\big(\frac{|p|^2}{\mu^2}\big)\Big|dp.
\end{align}
By the Lebesgue dominated convergence theorem, it is clear that  
\begin{align}\label{lim-int-I-s-2}
    \lim_{\mu\to 0^+}\int_{K_1}|p|^{2s-N}\Big|\nabla w(p)\cdot \Big[DY(x)\cdot p\Big]\rho\big(\frac{|p|^2}{\mu^2}\big)\Big|dp=0.
\end{align}
The identity \eqref{lim-int-I-s} follows by combining \eqref{lim-int-I-s-1} and \eqref{lim-int-I-s-2}. \par \, 

Finally we prove \eqref{slim-a-R-mu}. For this we write 
\begin{align*}
    &\psi_\mu(p)\widetilde{\psi}_R(p)-\psi_\mu(q)\widetilde{\psi}_R(q)= \psi_{\mu}(p)\Big(\widetilde{\psi}_R(p)-\widetilde{\psi}_R(q)\Big)+\widetilde{\psi}_R(q)\Big(\psi_{\mu}(p)-\psi_{\mu}(q)\Big),
    \end{align*}
so that 
\begin{align}\label{slim-a-R-mu-1}
    &\mathcal I_s\Big[|\cdot|^{2s-N},\psi_\mu\widetilde{\psi}_R\Big](p)\nonumber\\
    &=\psi_\mu(p)\mathcal I_s\Big[|\cdot|^{2s-N},\widetilde{\psi}_R\Big](p)+c_{N,s}\int_{\R^N}\widetilde{\psi}_R(q)\frac{\Big(|p|^{2s-N}-|q|^{2s-N}\Big)\Big(\psi_{\mu}(p)-\psi_{\mu}(q)\Big)}{|p-q|^{N+2s}}dq.
\end{align}
Using the estimate \eqref{elem-id}, a similar argument as above gives also
\begin{align}\label{slim-a-R-mu-2}
    \lim_{\mu\to 0^+}\lim_{R\to\infty}\int_{\R^N}\nabla w(p)\cdot \Big[DY(x)\cdot p\Big]\psi_\mu(p)\mathcal I_s\Big[|\cdot|^{2s-N},\widetilde{\psi}_R\Big](p)dp=0.
\end{align}
Next, for every fixed $\varepsilon>0$, a similar argument as in \eqref{sec7-11} gives
\begin{align}\label{slim-a-R-mu-3}
    \lim_{\mu\to 0^+}&\lim_{R\to\infty}\int_{\R^N}\nabla w(p)\cdot \Big[DY(x)\cdot p\Big]\int_{\R^N}\widetilde{\psi}_R(q)\frac{\Big(|p|^{2s-N}-|q|^{2s-N}\Big)\Big(\psi_{\mu}(p)-\psi_{\mu}(q)\Big)}{|p-q|^{N+2s}}\, dq\, dp\nonumber\\
    &= \lim_{\mu\to 0^+}\int_{\R^N}\nabla w(p)\cdot \Big[DY(x)\cdot p\Big]\int_{\R^N}\frac{\Big(|p|^{2s-N}-|q|^{2s-N}\Big)\Big(\psi_{\mu}(p)-\psi_{\mu}(q)\Big)}{|p-q|^{N+2s}}\, dq\, dp\nonumber\\
    &=\lim_{\mu\to 0^+}\int_{B_{\frac{\varepsilon}{\mu}}(0)}\nabla w(\mu p)\cdot \Big[DY(x)\cdot \mu p\Big]\int_{B_{\frac{2\varepsilon}{\mu}}(0)}\frac{\Big(|p|^{2s-N}-|p|^{2s-N}\Big)\Big(\rho(|p|^2)-\rho(|q|^2)\Big)}{|p-q|^{N+2s}}\, dp\, dq \nonumber\\
    &=0.
\end{align}
by the Lebesgue dominated convergence theorem. In view of \eqref{slim-a-R-mu-1}, \eqref{slim-a-R-mu-2} and \eqref{slim-a-R-mu-3}, the proof of \eqref{slim-a-R-mu} is finished.

Recalling \eqref{expans-a-R-mu} and combining \eqref{flim-a-R-mu}, \eqref{slim-a-R-mu}, \eqref{lim-int-I-s} and \eqref{lim-int-I-s-fund} we arrive at \eqref{claim2prove}.

The combination of \eqref{combibi}, \eqref{combibi2} and \eqref{claim2prove} concludes the proof of Lemma \ref{appen}.

\end{proof}
\subsection{Proof of  \eqref{C2}.}
To proceed with the proofs of \eqref{C2}, we need some preliminary results. We start with the following results which are consequences of the estimates in \cite[Lemma 6.7, Lemma 6.8]{DFW}.
\begin{Lemma}\label{lem-crucial} Let $\Omega$ be a bounded open set of class $C^{1,1}$ and fix $h\in C^s(\R^N)$. Then for all $p\in \Omega$ there holds:
\begin{align}\label{77}
    |(-\Delta)^s\eta_k(p)|\leq C(N,s)\delta^{-2s}_\Omega(p)\quad\text{and}\quad (-\Delta)^s\eta_k(p)\to 0\quad\text{as}\quad k\to\infty, 
\end{align}
\begin{align}\label{79}
&\int_{\R^N}\frac{|\eta_k(p)-\eta_k(q)||h(p)-h(q)|}{|p-q|^{N+2s}}dq \leq C(N,s)\delta^{-s}_\Omega(p),\\
&\int_{\R^N}\frac{|\eta_k(p)-\eta_k(q)||h(p)-h(q)|}{|p-q|^{N+2s}}dq\to 0\quad \text{as}\quad k\to\infty\label{80},\\
&\Big|\mathcal I_s[\eta_k,\eta_k](p)\Big|\leq C(N,s)\delta^{-s}_\Omega(p)\quad\text{and}\quad  \mathcal I_s[\eta_k,\eta_k](p)\to 0\quad \text{as}\quad k\to\infty.\label{78}
\end{align}
\end{Lemma}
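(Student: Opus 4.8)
The statement to prove is Lemma~\ref{lem-crucial}, the collection of estimates \eqref{77}--\eqref{78} for the cut-off functions $\eta_k$. The approach is to reduce each estimate to a scaling argument near the boundary combined with the pointwise bounds on $(-\Delta)^s\eta_k$ and on the bilinear form $\mathcal I_s[\eta_k,h]$ that were established in \cite{DFW}; indeed, the statement already advertises that these are consequences of \cite[Lemma 6.7, Lemma 6.8]{DFW}, so the work is mostly bookkeeping of constants and a localization argument. First I would recall that, by definition \eqref{eta-k}, $\eta_k(y)=1-\rho(k\delta(y))$, hence $\eta_k$ differs from $1$ only on the strip $\Omega_{2/k}:=\{0<\delta<2/k\}$, and $\nabla\eta_k=-k\rho'(k\delta)\nabla\delta$, $\|\nabla\eta_k\|_\infty\leq Ck$. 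The key mechanism is that near a boundary point one can flatten the boundary and compare $\eta_k$ with the one-dimensional profile $1-\rho(kt)$ in the normal variable $t=\delta$; this is exactly the construction behind \cite[Proposition 6.3]{DFW} used already in the proof of Lemma~\ref{lemma-other}.

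For \eqref{77}: fix $p\in\Omega$ and set $d=\delta_\Omega(p)$. Split the principal value integral defining $(-\Delta)^s\eta_k(p)$ into the region $|p-q|<d/2$, where a second-order Taylor estimate gives a contribution bounded by $C\|D^2\eta_k\|_{L^\infty(B_{d/2}(p))}d^{2-2s}\leq Ck^2 d^{2-2s}$ when $kd\geq 1$ and $=0$ when $kd<1$ (since $\eta_k\equiv 1$ there), and the region $|p-q|\geq d/2$, where one uses $|\eta_k|\leq 1$ and $\int_{|p-q|\geq d/2}|p-q|^{-N-2s}dq=Cd^{-2s}$. In the regime $kd\geq 1$ one has $k^2d^{2-2s}=(kd)^2 d^{-2s}$... this is too crude; the correct route, which is the content of \cite[Lemma 6.7]{DFW}, is to instead use the sharper cancellation: since $\eta_k$ is Lipschitz globally with constant $Ck$ and equals a constant outside $\Omega_{2/k}$, one obtains $|(-\Delta)^s\eta_k(p)|\leq C\min(k^{2s},\,\mathrm{dist}(p,\partial\Omega)^{-2s})\cdot(\text{bounded})\leq C\delta_\Omega^{-2s}(p)$, uniformly in $k$, and the pointwise convergence $(-\Delta)^s\eta_k(p)\to 0$ follows because for each fixed $p$, once $k$ is large enough $\eta_k\equiv 1$ on a fixed ball around $p$, so the only remaining contribution comes from an ever-thinner far strip whose measure tends to zero against the integrable tail $|p-q|^{-N-2s}$ — dominated convergence closes it. For \eqref{79}--\eqref{80}, the same split is used: with $h\in C^s(\R^N)$ one has $|h(p)-h(q)|\leq [h]_{C^s}|p-q|^s$, and on $|p-q|<d/2$ one combines this with $|\eta_k(p)-\eta_k(q)|\leq Ck|p-q|$ (and $=0$ if $kd<1$), while on $|p-q|\geq d/2$ one uses $|\eta_k(p)-\eta_k(q)|\leq 2$; the resulting integral is $\leq C\delta_\Omega^{-s}(p)$ after the scaling, which is precisely \cite[Lemma 6.8]{DFW}, and the convergence \eqref{80} again follows by dominated convergence since $\eta_k(p)-\eta_k(q)\to 0$ pointwise for a.e.\ $q$ and the integrand is dominated by $C\min(|p-q|^{1+s-N-2s},|p-q|^{s-N-2s})$ up to the $k$-uniform constant. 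Finally \eqref{78} is the special case $h=\eta_k$ of the $\mathcal I_s$ estimate, handled identically, noting $\eta_k\in C^s$ uniformly (indeed Lipschitz) so the bound $C\delta_\Omega^{-s}(p)$ and the convergence to $0$ carry over verbatim.

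The main obstacle is making the constants genuinely uniform in $k$: the naive Taylor bound near $p$ produces a factor $k^2$, and one must exploit that this is active only on the thin strip $\Omega_{2/k}$ to trade powers of $k$ for powers of $\delta$. This is exactly why I would not reprove it from scratch but cite \cite[Lemma 6.7, Lemma 6.8, Proposition 6.3]{DFW}, where the change of variables straightening $\partial\Omega$ and the comparison with the one-dimensional profile are carried out; the remaining task here is merely to record that $h\in C^s(\R^N)$ (rather than the specific function appearing in \cite{DFW}) enters the estimates only through its seminorm $[h]_{C^s}$, so the cited bounds apply with no change. With that, all four displayed estimates and their accompanying limits follow, completing the proof of Lemma~\ref{lem-crucial}.
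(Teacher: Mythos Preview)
Your overall strategy---reduce everything to the boundary-flattening estimates of \cite[Proposition 6.3, Lemmas 6.7--6.8]{DFW} and invoke dominated convergence for the pointwise limits---is exactly what the paper does, and your treatment of \eqref{77} and \eqref{79}--\eqref{80} is fine once you defer to those citations. Your dominated-convergence argument for the limit in \eqref{77} (once $k>4/\delta_\Omega(p)$, $\eta_k\equiv 1$ on $B_{\delta_\Omega(p)/2}(p)$ and only a shrinking far strip contributes) is in fact a bit more transparent than the paper's route, which instead extracts the quantitative rate $|(-\Delta)^s\eta_k(p)|\leq Ck^{-1}\delta_\Omega^{-1-2s}(p)$ from the same \cite{DFW} bound.

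There is, however, a genuine gap in your handling of \eqref{78}. You write that it is ``the special case $h=\eta_k$'' of \eqref{79}, ``noting $\eta_k\in C^s$ uniformly (indeed Lipschitz)''. This is false: $\nabla\eta_k=-k\rho'(k\delta)\nabla\delta$, so $\|\nabla\eta_k\|_\infty\sim k$, and interpolating with $\|\eta_k\|_\infty\leq 1$ gives $[\eta_k]_{C^s}\sim k^s$, not a uniform constant. Plugging $h=\eta_k$ into \eqref{79} therefore yields at best $Ck^s\delta_\Omega^{-s}(p)$, which blows up with $k$. The estimate \eqref{78} cannot be read off from \eqref{79}; one has to treat \emph{both} factors $\eta_k(p)-\eta_k(q)$ via the boundary-flattening and the one-dimensional profile $\rho(k\,\cdot)$, exactly as in the derivation of \eqref{77}. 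Concretely, after the change of variables of \cite[Proposition 6.3]{DFW} one gets $\mathcal I_s[\eta_k,\eta_k](\Psi(r/k,\sigma))\leq Ck^{2s}/(1+r^{1+2s})$, the \emph{same} scaling as $(-\Delta)^s\eta_k$, and the uniform bound and the pointwise limit then follow by the same manipulation as in the paper's proof of \eqref{77}. So the fix is easy, but the justification you gave does not work.
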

\begin{proof} We briefly sketch the proof and for details we refer to \cite{DFW}.  We recall from \cite[Proposition 6.3]{DFW} that there exists $\varepsilon'>0$ with the property that
$$
\Big|\Big[(-\Delta)^s\eta_k\Big]\big(\Psi(\sigma,r/k)\big)
\Big| \leq C\frac{k^{2s}}{1+r^{1+2s}}\leq C\frac{k^{2s}}{1+r^{2s}}\quad  \text{for}\;\; k\in \N,\;\; 0\leq r \leq k\varepsilon',\; \sigma\in\partial\Omega
$$
where $\Psi: \partial\Omega\times (0,\varepsilon)\to \Omega^\varepsilon_+$, $(\sigma,r)\mapsto \sigma+r\nu(\sigma)$.  Here $\nu$ denotes the interior unit normal to the boundary. Letting $p = \Psi(\sigma, r/k)\in\Omega^{\varepsilon'}_+
:= \{x\in \Omega : 0< \dist(x,\partial\Omega)<\varepsilon'\}
$ we deduce that

\begin{equation}\label{Ds-etak-1}
\Big|(-\Delta)^s\eta_k(p)\Big|\leq C\frac{r^{2s}}{1+r^{2s}}(\frac{k}{r})^{2s}\leq C\delta^{-2s}_\Omega(p).
\end{equation}
We also have that
\begin{equation}\label{Ds-etak-2}
\Big|(-\Delta)^s\eta_k(p)\Big|\leq C\frac{k^{2s}}{1+k^{1+2s}(r/k)^{1+2s}}\leq \frac{C}{k}\delta^{-1-2s}_\Omega(p)\to 0\quad\text{as}\quad k\to\infty.
\end{equation}
On the other hand, if $p\in\Omega\setminus \Omega_+^{\varepsilon'}$ one easily checks (see \cite[Page 19]{DFW}) that 
\begin{equation}\label{Ds-etak-3}
 |(-\Delta)^s\eta_k(p)|\leq C(N,\varepsilon)\qquad\&\qquad (-\Delta)^s\eta_k(p)\to 0\quad\text{as}\quad k\to\infty.
\end{equation}
We get \eqref{77} by combining \eqref{Ds-etak-1}, \eqref{Ds-etak-2} and \eqref{Ds-etak-3}. The other estimates are proved similarly. We refer to \cite[Lemma 6.8]{DFW} for more details.
\end{proof}
The following convergence results will also be needed for the proofs.
\begin{Lemma}\label{lem-lim-G-k-H-k} Let $\eta_k$, $\psi_{\mu,x}$, $\psi_{\gamma,y}$ and $\mathcal K_Y(\cdot,\cdot)$ be defined as above.  Let us set 
\begin{align} \label{def-G-la}
\mathcal G_{k,\mu,\gamma}(x,y)&:=\iint_{\R^{2N}}\eta_k(q)u(q)\Big(\psi_{\mu,x}(p)-\psi_{\mu,x}(q)\Big)\Big(w(p)-w(q)\Big)\mathcal K_Y(p,q)\, dp\, dq , \\
\label{def-H-la}
 \mathcal H^0_{k,\mu,\gamma}(x,y)&:=\iint_{\R^{2N}}\eta_k^2(q)w(q)\Big(\psi_{\mu,x}(p)-\psi_{\mu,x}(q)\Big)\Big(a_{y,\gamma}(p)-a_{y,\gamma}(q)\Big)
\mathcal K_Y(p,q)\, dp\, dq ,
\end{align}
   with 
   $$w=G_s(x,\cdot),\quad  u =\eta_kG_s(y,\cdot)\psi_{y,\gamma} \quad \text{ and } \quad a_{y,\gamma}=G_s(y,\cdot)\psi_{y,\gamma}.$$
   Then we have 
   \begin{align}\label{lem-lim-h-k}
    &\lim_{\mu\to 0^+}\lim_{k\to\infty}\mathcal H^0_{k,\mu,\gamma}(x,y)=0,
    \end{align}
    and in the case $2s<1$, there also holds:
    \begin{align}\label{lem-lim-g-k}
       &\lim_{\gamma\to 0^+}\lim_{\mu\to 0^+}\lim_{k\to\infty}\mathcal G_{k,\mu,\gamma}(x,y)\nonumber\\
       &=G_s(y,x)b_{N,s}\iint_{\R^N\times\R^N}\frac{\big(\rho(|q|^2)-\rho(|p|^2)\big(|p|^{2s-N}-|q|^{2s-N}\big)}{|p-q|^{N+2s}}\mathcal W_{Y,x}(p,q)\, dp\, dq ,
   \end{align}
   where we recall the notation    \eqref{defWsam} for $\mathcal W_{Y,x}(p,q)$.
\end{Lemma}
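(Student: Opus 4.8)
The plan is to treat both $\mathcal{G}_{k,\mu,\gamma}$ and $\mathcal{H}^0_{k,\mu,\gamma}$ by the three-step scheme already used for Lemma \ref{ess-convergence-result} and Lemma \ref{lem-lim-G-x-y}: first pass to the limit $k\to\infty$, then localize near $x$, then rescale by $\widetilde\mu$ and let $\mu\to0^+$ (and finally $\gamma\to0^+$). For the first step I observe that in both quantities $\eta_k$ occurs only as the pointwise factor $\eta_k^2(q)$, never inside a difference; since $\eta_k^2(q)\to1$ and, for $\mu$ and $\gamma$ fixed, the remaining integrand lies in $L^1(\R^N\times\R^N)$ — the only delicate region being where one of $p,q$ is near $x$, which after the rescaling below (now with $\widetilde\mu$ held fixed) is majorized by the finite double integral of Lemma \ref{Lem6.1} — dominated convergence gives
\begin{align*}
\lim_{k\to\infty}\mathcal{G}_{k,\mu,\gamma}(x,y)&=\iint_{\R^{2N}} G_s(y,q)\,\psi_{\gamma,y}(q)\,\big(\psi_{\mu,x}(p)-\psi_{\mu,x}(q)\big)\big(G_s(x,p)-G_s(x,q)\big)\,\mathcal{K}_Y(p,q)\,dp\,dq,\\
\lim_{k\to\infty}\mathcal{H}^0_{k,\mu,\gamma}(x,y)&=\iint_{\R^{2N}} G_s(x,q)\,\big(\psi_{\mu,x}(p)-\psi_{\mu,x}(q)\big)\big(a_{y,\gamma}(p)-a_{y,\gamma}(q)\big)\,\mathcal{K}_Y(p,q)\,dp\,dq,
\end{align*}
with $a_{y,\gamma}=G_s(y,\cdot)\psi_{\gamma,y}$.

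For the localization step, fix $\varepsilon_0$ as in \eqref{Eq-choice-of-vareps}, so that $\overline{B_{2\varepsilon_0}(x)}\subset\Omega\setminus\{y\}$, and split $\R^{2N}$ into $B_{\varepsilon_0}(x)\times B_{\varepsilon_0}(x)$ and its complement. On the complement, $\psi_{\mu,x}(p)\neq\psi_{\mu,x}(q)$ forces one of $p,q$ into $B_{\sqrt2\widetilde\mu}(x)$, with $\widetilde\mu=\tfrac{\delta_\Omega(x)}{2\sqrt2}\mu$, while the other stays at distance $\geq\varepsilon_0/2$; then $|\mathcal{K}_Y(p,q)|\leq C\varepsilon_0^{-N-2s}$, the factors $G_s(y,\cdot)\psi_{\gamma,y}$ and $a_{y,\gamma}$ are bounded by $C(\gamma)$, $G_s(x,\cdot)\in L^1(\Omega)$ and $G_s(x,q)\leq C|x-q|^{2s-N}$, so exactly as in the proof of \eqref{sec7-11} the complement contributes $O\big(\int_{B_{\sqrt2\widetilde\mu}(x)}(1+|x-q|^{2s-N})\,dq\big)=O(\widetilde\mu^{\min(N,2s)})\to0$ as $\mu\to0^+$. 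It then remains to pass to the limit on $B_{\varepsilon_0}(x)^2$ through the change of variables $p=x+\widetilde\mu P$, $q=x+\widetilde\mu Q$. One has $\psi_{\mu,x}(p)-\psi_{\mu,x}(q)=\rho(|Q|^2)-\rho(|P|^2)$; by \eqref{Eq-spliting-of-Green}, $G_s(x,p)-G_s(x,q)=b_{N,s}\widetilde\mu^{2s-N}\big(|P|^{2s-N}-|Q|^{2s-N}\big)-\big(H_s(x,p)-H_s(x,q)\big)$ with the last difference $O(\widetilde\mu|P-Q|)$ since $H_s(x,\cdot)\in C^\infty_{\loc}(\Omega)$; $\widetilde\mu^{N+2s}\mathcal{K}_Y(x+\widetilde\mu P,x+\widetilde\mu Q)$ is bounded by $C|P-Q|^{-N-2s}$ and converges to $\mathcal{W}_{Y,x}(P,Q)\,|P-Q|^{-N-2s}$ (using $Y\in C^1$, which holds because $\phi_t\in C^2$); $G_s(y,q)\psi_{\gamma,y}(q)\to G_s(y,x)\psi_{\gamma,y}(x)$ by continuity; and $a_{y,\gamma}(p)-a_{y,\gamma}(q)=O(\widetilde\mu|P-Q|)$ because $a_{y,\gamma}$ is Lipschitz on $B_{2\varepsilon_0}(x)$.

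Counting powers of $\widetilde\mu$ (with $dp\,dq=\widetilde\mu^{2N}\,dP\,dQ$): in $\mathcal{G}$ the factors $\widetilde\mu^{2s-N}\cdot\widetilde\mu^{-N-2s}\cdot\widetilde\mu^{2N}$ combine to $1$, and the resulting $(P,Q)$-integral is absolutely convergent — it is the integral of Lemma \ref{Lem6.1} with the extra bounded factor $\mathcal{W}_{Y,x}$ — so dominated convergence (majorant again from Lemma \ref{Lem6.1}, the $H_s$-term contributing a vanishing $O(\widetilde\mu)$) yields, after $\gamma\to0^+$ so that $\psi_{\gamma,y}(x)=1$,
$$\lim_{\gamma\to0^+}\lim_{\mu\to0^+}\lim_{k\to\infty}\mathcal{G}_{k,\mu,\gamma}(x,y)=G_s(y,x)\,b_{N,s}\iint_{\R^N\times\R^N}\frac{\big(\rho(|q|^2)-\rho(|p|^2)\big)\big(|p|^{2s-N}-|q|^{2s-N}\big)}{|p-q|^{N+2s}}\,\mathcal{W}_{Y,x}(p,q)\,dp\,dq,$$
which is \eqref{lem-lim-g-k}. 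In $\mathcal{H}^0$ the extra $\widetilde\mu$ coming from the Lipschitz difference $a_{y,\gamma}(p)-a_{y,\gamma}(q)$ leaves an overall $O(\widetilde\mu)$ in front of a $(P,Q)$-integral which, after rescaling the domain $B_{\varepsilon_0}(x)^2$ to $\{|P|,|Q|\leq\varepsilon_0/\widetilde\mu\}$, grows at most like $\widetilde\mu^{-(1-2s)_+}$; hence the whole expression is $O(\widetilde\mu^{\min(1,2s)})\to0$, which is \eqref{lem-lim-h-k}. The two sub-regions $\{p\in B_{\sqrt2\widetilde\mu}(x)\}$ and $\{q\in B_{\sqrt2\widetilde\mu}(x)\}$ of the main piece are handled identically — they symmetrize in the $\mathcal{G}$-limit, and are both $O(\widetilde\mu)$ in the $\mathcal{H}^0$-case. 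The main obstacle I anticipate is the first step: showing that, for $\mu$ and $\gamma$ fixed, the integrands genuinely belong to $L^1(\R^N\times\R^N)$ with a $k$-uniform majorant, despite the singularity of $G_s(x,\cdot)$ at $x$ meeting the diagonal singularity of $\mathcal{K}_Y$; the key point is that $\psi_{\mu,x}(p)-\psi_{\mu,x}(q)$ vanishes when both $p,q\in B_{\widetilde\mu}(x)$, so the worst case is one point in $B_{\widetilde\mu}(x)$ and the other in the transition shell $B_{\sqrt2\widetilde\mu}(x)\setminus B_{\widetilde\mu}(x)$, where the $\widetilde\mu$-rescaled integrand is controlled precisely by the finite integral of Lemma \ref{Lem6.1}, everything else being dominated-convergence bookkeeping organized around that rescaling.
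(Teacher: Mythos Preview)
Your proof is correct and follows essentially the same route as the paper: localize near $x$, perform the change of variables $p=x+\widetilde\mu P$, $q=x+\widetilde\mu Q$, identify the limiting kernel $\mathcal W_{Y,x}$, and appeal to the dominated-convergence machinery built around Lemma~\ref{Lem6.1}. The only noteworthy difference is in the treatment of $\mathcal H^0_{k,\mu,\gamma}$: the paper simply bounds $|a_{y,\gamma}(x-\widetilde\mu P)-a_{y,\gamma}(x-\widetilde\mu Q)|\leq C$, observes that the resulting majorant $|Q|^{2s-N}|P-Q|^{-N-2s}|\rho(|P|^2)-\rho(|Q|^2)|$ is integrable when $2s<1$, and then uses dominated convergence with the pointwise limit $a_{y,\gamma}(x-\widetilde\mu P)-a_{y,\gamma}(x-\widetilde\mu Q)\to0$; you instead exploit the Lipschitz regularity of $a_{y,\gamma}$ near $x$ to extract an explicit factor $\widetilde\mu$ and obtain the quantitative rate $O(\widetilde\mu^{2s})$. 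Both arguments rely on $2s<1$ (in yours this is hidden in the growth estimate $\widetilde\mu^{-(1-2s)_+}$ for the rescaled integral), which is consistent with the standing hypothesis $s\in(0,1/2)$ of Theorem~\ref{thm.5}.
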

\begin{proof} We start with the proof of \eqref{lem-lim-g-k}. 
\par\;
By a similar argument as in the proof of Lemma \ref{ess-convergence-result} we have 
\begin{align*}
    &\lim_{\mu\to 0^+}\lim_{k\to\infty}\mathcal G_{k,\mu,x}(x,y)\nonumber\\
    &=\small\text{$\lim_{\mu\to 0^+}\int_{B_{\frac{\varepsilon}{\widetilde\mu}}(0)}a_{y,\gamma}(x-\widetilde\mu p)\int_{B_{\frac{2\varepsilon}{\widetilde\mu}}(0)}\widetilde\mu ^{N-2s}\big(\rho(|q|^2)-\rho(|p|^2)\big)\big(G_s(x,x-\widetilde\mu p)-G_s(x,x-\widetilde\mu q)\big)\widetilde{\mathcal K_Y}(p,q)$},
\end{align*}
where 
\begin{align*}
\widetilde{\mathcal K_Y}(p,q) :=|p-q|^{-2s-N}\mathcal W_Y(x-\widetilde\mu p,x-\widetilde\mu q),
\end{align*}
and 
\begin{align*}
&\mathcal W_Y(x-\widetilde\mu p,x-\widetilde\mu q)\nonumber\\
&=\frac{c_{N,s}}{2}\Bigg\{\div Y(x-\widetilde\mu p)+\div Y(x-\widetilde\mu q)+(N+2s)\frac{(Y(x-\widetilde\mu p)-Y(x-\widetilde\mu q))\cdot (p-q)}{\widetilde\mu|p-q|^2}\Bigg\}.    
\end{align*}
Since $$\big|a_{y,\gamma}(x-\widetilde\mu p,x-\widetilde\mu q)\big|\leq C, \qquad\big|\mathcal W_Y(x-\widetilde\mu p,x-\widetilde\mu q)\big|\leq C,$$
and \begin{align*}
    \lim_{\mu\to 0^+}\mathcal W_Y(x-\widetilde\mu p,x-\widetilde\mu q)&=\frac{c_{N,s}}{2}\Bigg\{ 2\div Y(x)-(N+2s)\frac{[DY(x)\cdot(p-q)]\cdot(p-q)}{|p-q|^2}\Bigg\},
\end{align*}
(here we used $\div Y\in C(\R^N)$); it follows by the Lebesgue dominated convergence theorem (see e.g proof of Lemma \ref{ess-convergence-result}) that 
\begin{align*}
    &\lim_{\mu\to 0^+}\lim_{k\to\infty}\mathcal G_{k,\mu,x}(x,y)\nonumber\\
    &=\small\text{$\lim_{\mu\to 0^+}\int_{B_{\frac{\varepsilon}{\widetilde\mu}}(0)}a_{y,\gamma}(x-\widetilde\mu p)\int_{B_{\frac{2\varepsilon}{\widetilde\mu}}(0)}\widetilde\mu ^{N-2s}\big(\rho(|q|^2)-\rho(|p|^2)\big)\big(G_s(x,x-\widetilde\mu p)-G_s(x,x-\widetilde\mu q)\big)\widetilde{\mathcal K_Y}(p,q)$}\nonumber\\
    &=G_s(y,x)\psi_{y,\gamma}(x)b_{N,s}\iint_{\R^N\times\R^N}\frac{\big(\rho(|q|^2)-\rho(|p|^2)\big(|p|^{2s-N}-|q|^{2s-N}\big)}{|p-q|^{N+2s}}\mathcal W_{Y,x}(p,q)\, dp\, dq .
\end{align*}
We conclude the proof of \eqref{lem-lim-g-k} by taking the limit w.r.t $\gamma$ into the identity above.
\par\;

Similarly, we have 
\begin{align}\label{lim-h-prime}
    &\lim_{\mu\to 0^+}\lim_{k\to\infty}\mathcal H^0_{k,\mu,\gamma}(x,y)\nonumber\\
    &=\lim_{\mu\to 0^+}\lim_{k\to\infty}\int_{\Omega}\eta_k^2(q)G_s(x,q)\int_{\R^N}\Big(\psi_{\mu,x}(p)-\psi_{\mu,x}(q)\Big)\Big(a_{y,\gamma}(p)-a_{y,\gamma}(q)\Big)
\mathcal K_Y(p,q)\, dp\, dq \nonumber\\
&=\small\text{$\lim_{\mu\to 0^+}\int_{B_{\frac{\varepsilon}{\widetilde\mu}}(0)}b_x(x-\widetilde\mu q)\int_{B_{\frac{2\varepsilon}{\widetilde\mu}}(0)}\big(\rho(|q|^2)-\rho(|p|^2)\big)\big(a_{y,\gamma}(x-\widetilde\mu p)-a_{y,\gamma}(x-\widetilde\mu q)\big)\widetilde{\mathcal K_Y}(p,q).$}
\end{align}
with $$b_x(x-\widetilde\mu q)=\widetilde\mu ^{N-2s}G_s(x,x-\widetilde\mu q).$$
 Moreover,
\begin{align}\label{h-est-1}
    \Big|b_x(x-\widetilde\mu q)\big(a_{y,\gamma}(x-\widetilde\mu p)-a_{y,\gamma}(x-\widetilde\mu q)\big)\widetilde{\mathcal K_Y}(p,q)\Big|\leq C|q|^{2s-N}|p-q|^{-2s-N},
\end{align}
and 
\begin{align}\label{h-est-2}
    \iint_{\R^{2N}}\frac{\big|\rho(|p|^2)-\rho(|q|^2)\big|}{|q|^{N-2s}|p-q|^{N+2s}}<\infty\qquad\text{(provided that $2s<1$)}.
\end{align}
In view of \eqref{lim-h-prime},\eqref{h-est-1} and \eqref{h-est-1} and the Lebesgue dominated convergence theorem, we deduce that 
\begin{align*}
    \lim_{\mu\to 0^+}\lim_{k\to\infty}\mathcal H^0_{k,\mu,\gamma}(x,y)=0,
\end{align*}
where we used that 
\begin{align*}
    &\lim_{\mu\to 0^+}\Big[a_{y,\gamma}(x-\widetilde\mu p)-a_{y,\gamma}(x-\widetilde\mu q)\Big]\nonumber\\
    &=\lim_{\mu\to 0^+}\Big[G_s(y,x-\widetilde\mu p)\psi_{y,\gamma}(x-\widetilde\mu p)-G_s(y,x-\widetilde\mu q)\psi_{y,\gamma}(x-\widetilde\mu q)\Big]\nonumber\\
    &=0.
\end{align*}
The proof of Lemma \ref{lem-lim-G-k-H-k} is therefore finished.
\end{proof}

With Lemma \ref{lem-crucial} and Lemma \ref{lem-lim-G-k-H-k} in hands we can now proceed with the proofs of \eqref{C1} and \eqref{C2}. We first start with the  proof of \eqref{C2}.

\begin{proof}[Proof of \eqref{C2}]
Recall
\begin{align} \label{defdeRam}
    \mathcal R_{k,\mu,\gamma}(x,y)=\pv\iint_{\R^{2N}}\Big(v(p)-v(q)\Big)\Big(u(q)w(p)-u(p)w(q)\Big)\mathcal K_Y(p,q)\, dp\, dq ,
\end{align}
where 
$$
w=G_s(x,\cdot), \quad v=\eta_k\psi_{\mu,x},\quad \& \quad u =\eta_kG_s(y,\cdot)\psi_{y,\gamma},
$$
and 
\begin{align*}
   \mathcal{K}_Y(x,y)=\frac{c_{N, s}}{2}\left[\operatorname{div} Y(x)+\operatorname{div} Y(y)-(N+2 s) \frac{(Y(x)-Y(y)) \cdot(x-y)}{|x-y|^{2}}\right]|x-y|^{-N-2 s}.
\end{align*}

On the one hand,
\begin{align}
    &v(p)-v(q)= \psi_{\mu,x}(p)\Big(\eta_k(p)-\eta_k(q)\Big)+\eta_k(q)\Big(\psi_{\mu,x}(p)-\psi_{\mu,x}(q)\Big),\\
    &u(q)w(p)-u(p)w(q)=u(q)\Big(w(p)-w(q)\Big)+w(q)\Big(u(q)-u(p)\Big),
    \end{align}
    so that 
\begin{align}
 &\Big( v(p)-v(q)\Big)\Big(u(q)w(p)-u(p)w(q)\Big)   \nonumber\\
 &=u(q)\Big(\eta_k(p)-\eta_k(q)\Big)\psi_{\mu,x}(p)\Big(w(p)-w(q)\Big)\nonumber\\
 &+\psi_{\mu,x}(p)w(q)\Big(\eta_k(p)-\eta_k(q)\Big)\Big(u(q)-u(p)\Big)\nonumber\\
 &+\eta_k(q)u(q)\Big(\psi_{\mu,x}(p)-\psi_{\mu,x}(q)\Big)\Big(w(p)-w(q)\Big)\nonumber\\
 &+\eta_k(q)w(q)\Big(\psi_{\mu,x}(p)-\psi_{\mu,x}(q)\Big)\Big(u(q)-u(p)\Big).
\end{align}
    
    On the other hand
    \begin{align}
    &\psi_{\mu,x}(p)\Big(w(p)-w(q)\Big)\nonumber\\
    &=\psi_{\mu,x}(q)G_s(x,q)-\psi_{\mu,x}(p)G_s(x,p)-G_s(x,q)\Big(\psi_{\mu,x}(q)-\psi_{\mu,x}(p)\Big), 
\end{align}
so that, replacing in the first term of the left hand side above, we arrive at 
\begin{align}
 &\Big( v(p)-v(q)\Big)\Big(u(q)w(p)-u(p)w(q)\Big)   \nonumber\\
 &=u(q)\Big(\eta_k(p)-\eta_k(q)\Big)\Big( \psi_{\mu,x}(q)G_s(x,q)-\psi_{\mu,x}(p)G_s(x,p)\Big)\nonumber\\
 &\quad -u(q)G_s(x,q)\Big(\eta_k(p)-\eta_k(q)\Big)\Big(\psi_{\mu,x}(q)-\psi_{\mu,x}(p)\Big)\nonumber\\
 &\quad\quad+\psi_{\mu,x}(p)w(q)\Big(\eta_k(p)-\eta_k(q)\Big)\Big(u(q)-u(p)\Big)\nonumber\\
 &\quad\quad\quad+\eta_k(q)u(q)\Big(\psi_{\mu,x}(p)-\psi_{\mu,x}(q)\Big)\Big(w(p)-w(q)\Big)\nonumber\\
 &\quad\quad\quad\quad+\eta_k(q)w(q)\Big(\psi_{\mu,x}(p)-\psi_{\mu,x}(q)\Big)\Big(u(q)-u(p)\Big).
\end{align}
Substituting this into \eqref{defdeRam} yields 
the following decomposition: 
\begin{align*}
    &\mathcal R_{k,\mu,\gamma}(x,y)= \mathcal  D_{k,\mu,\gamma}(x,y)-\mathcal E_{k,\mu,\gamma}(x,y)+\mathcal F_{k,\mu,\gamma}(x,y)+\mathcal G_{k,\mu,\gamma}(x,y)+\mathcal H_{k,\mu,\gamma}(x,y),
\end{align*}
with 
\begin{align*}
   \mathcal D_{k,\mu,\gamma}(x,y) &:= \iint_{\R^{2N}}u(q)\Big(\eta_k(p)-\eta_k(q)\Big)\Big(\psi_{\mu,x}(p)w(p)-\psi_{\mu,x}(q)w(q)\Big)\mathcal K_Y(p,q)\, dp\, dq ,
   \\
   \mathcal E_{k,\mu,\gamma}(x,y) &:=\iint_{\R^{2N}}-u(q)G_s(x,q)\Big(\eta_k(p)-\eta_k(q)\Big)\Big(\psi_{\mu,x}(q)-\psi_{\mu,x}(p)\Big)\mathcal K_Y(p,q)\, dp\, dq ,\\
  \mathcal F_{k,\mu,\gamma}(x,y) &:=\iint_{\R^{2N}}\psi_{\mu,x}(p)w(q)\Big(\eta_k(p)-\eta_k(q)\Big)\Big(u(q)-u(p)\Big)\mathcal K_Y(p,q)\, dp\, dq  ,
   \end{align*}
   where 
 $ \mathcal  G_{k,\mu,\gamma}(x,y)$ is given by    \eqref{def-G-la}, and 
\begin{align*}
  \mathcal  H_{k,\mu,\gamma}(x,y) :=\iint_{\R^{2N}}\eta_k(q)w(q)\Big(\psi_{\mu,x}(p)-\psi_{\mu,x}(q)\Big)\Big(u(q)-u(p)\Big)\mathcal K_Y(p,q)\, dp\, dq .
\end{align*}
Since $\Big|\mathcal K_Y(x,y)\Big|\leq C|x-y|^{-2s-N}$, we have: 
\begin{align*} 
    &\big|\mathcal D_{k,\mu,\gamma}(x,y)\big|\leq  C\int_{\Omega}|u(q)|\int_{\R^N}\frac{\big|\eta_k(p)-\eta_k(q)\big|\big| \psi_{\mu,x}(q)G_s(x,q)-\psi_{\mu,x}(p)G_s(x,p)\big|}{|p-q|^{N+2s}}\, dp\, dq ,\\ 
   & \big|\mathcal E_{k,\mu,\gamma}(x,y)\big|\leq C\int_{\Omega}G_s(x,q)\int_{\R^N}\frac{\big|\eta_k(p)-\eta_k(q)\big|\big| \psi_{\mu,x}(q)-\psi_{\mu,x}(p)\big|}{|p-q|^{N+2s}}\, dp\, dq ,
\end{align*}
Using Lemma \ref{lem-crucial}  and the estimates above, we easily check that
\begin{align}
\lim_{k\to\infty}\mathcal  D_{k,\mu,\gamma}(x,y)=0 
\quad \text{ and } \quad 
\lim_{k\to\infty}\mathcal E_{k,\mu,\gamma}(x,y)=0.\label{lim-E}
\end{align}

It remains to compute the limits of $\mathcal F_{k,\mu,\gamma}(x,y)$, $\mathcal G_{k,\mu,\gamma}(x,y)$ and $\mathcal H_{k,\mu,\gamma}(x,y)$. For this, we write 
\begin{align}\label{split-u}
&u(p)-u(q)\nonumber\\
&=\big(\eta_k(p)-\eta_k(q)\big)G_s(y,p)\psi_{\gamma,y}(p)+\eta_k(q)\big(G_s(y,p)\psi_{\gamma,y}(p)-G_s(y,q)\psi_{\gamma,y}(q)\big).
\end{align}
By \eqref{split-u}, we have the estimate: 
\begin{align*}
    |\mathcal F_{k,\mu,\gamma}|\leq &C\int_{\Omega}\psi_{\mu,x}(p)G_s(x,p)\int_{\R^N}\frac{(\eta_k(p)-\eta_k(q))^2}{|p-q|^{N+2s}}\, dq\, dp\\
    &+C\int_{\Omega}\psi_{\mu,x}(p)G_s(x,p)\int_{\R^N}\frac{\big|\eta_k(p)-\eta_k(q)\big|\big|G_s(y,p)\psi_{\gamma,y}(p)-G_s(y,q)\psi_{\gamma,y}(q)\big|}{|p-q|^{N+2s}}\, dq\, dp.
   \end{align*}
In view of this and Lemma \ref{lem-crucial} we get 
\begin{align}\label{lim-F}
\lim_{k\to\infty}F_{k,\mu,\gamma}(x,y)=0.    
\end{align}
Again using the decomposition \eqref{split-u} and 
recalling Lemma \ref{lem-crucial}, we check that

\begin{align}\label{lim-GH}
    \lim_{\gamma\to 0^+}\lim_{\mu\to 0^+}\lim_{k\to \infty} \mathcal H_{k,\mu,\gamma}(x,y)=\lim_{\gamma\to 0^+}\lim_{\mu\to 0^+}\lim_{k\to \infty}\mathcal H^0_{k,\mu,\gamma}(x,y)
\end{align}
with $\mathcal H^0_{k,\mu,\gamma}(x,y)$ given as in Lemma \ref{lem-lim-G-k-H-k}.
\par\;

We obtain \eqref{C2} by combining  \eqref{lim-E}, \eqref{lim-F}, \eqref{lim-GH} and Lemma \ref{lem-lim-G-k-H-k}.
\end{proof}

\subsection{Proof of  \eqref{C1}.}

Finally we prove \eqref{C1}.
 By the product rule \eqref{pl} for the fractional Laplacian, we have 
 \begin{align}\label{C1-1}
(-\Delta)^s\Big(\eta_k^2 G_s(y,\cdot)\psi_{\mu,x}\psi_{\gamma,y}\Big)&=\eta_k^2\psi_{\mu,x}(-\Delta)^s\Big(G_s(y,\cdot)\psi_{\gamma,y}\Big)+G_s(y,\cdot)\psi_{\gamma,y} (-\Delta)^s\Big( \eta_k^2\psi_{\mu,x}\Big)  \nonumber\\
&\quad\quad-\mathcal I_s\Big[\eta_k^2\psi_{\mu,x};G_s(y,\cdot)\psi_{\gamma,y} \Big].
 \end{align}
Firstly, since by assumption $\partial_tG_{\Omega_t}\Big(\phi_t(x),\phi_t(\cdot)\Big)\Big|_{t=0}\in C\big(\Omega\setminus\{x\}\big)$, a similar argument as in Lemma \ref{Lem4.2} , gives 
 \begin{align}\label{C1-2}
     \lim_{\gamma\to 0^+}\lim_{\mu\to 0^+}\lim_{k\to\infty}&\int_{\Omega}\eta_k^2(z)\psi_{\mu,x}(z)\partial_tG_{\Omega_t}\Big(\phi_t(x),\phi_t(z)\Big)\Big|_{t=0}(-\Delta)^s\Big(G_s(y,\cdot)\psi_{\gamma,y}\Big)dz\nonumber\\
     &=\partial_tG_{\Omega_t}\Big(\phi_t(x),\phi_t(y)\Big)\Big|_{t=0} .
 \end{align} 
 Indeed, we note that the assumption $w\in C(\Omega)$ in Lemma \ref{ess-convergence-result} is not strictly required. It can be released to $w$ is continuous at $x$ as stated in Remark \ref{rem-lem-2.6}.
 \par\;
 
Next we claim that 
\begin{align}\label{probleme}
\lim_{k\to \infty}\int_{\Omega}\partial_tG_{\Omega_t}\Big(\phi_t(x),\phi_t(z)\Big)\Big|_{t=0}\mathcal I_s\Big[\eta_k^2\psi_{\mu,x};\;G_s(y,\cdot)\psi_{\gamma,y} \Big](z)dz=0.
\end{align}
To see this, we write 
\begin{align}\label{final-spliting}
\mathcal I_s\Big[\eta_k^2\psi_{\mu,x}; \;&G_s(y,\cdot)\psi_{\gamma,y} \Big](z)=\psi_{\mu,x}(z)\mathcal I_s\Big[\eta_k^2;\;G_s(y,\cdot)\psi_{\gamma,y} \Big](z)\nonumber\\
&+c_{N,s}\int_{\R^N}\eta_k^2(h)\frac{\big(\psi_{x,\mu}(z)-\psi_{x,\mu}(h)\big)\big(G_s(y,z)\psi_{\gamma,y}(z)-G_s(y,h)\psi_{\gamma,y}(h)\big)}{|z-h|^{N+2s}}dh\nonumber\\
&=:\psi_{\mu,x}(z)\mathcal I_s\Big[\eta_k^2;\;G_s(y,\cdot)\psi_{\gamma,y} \Big](z)+\mathcal M_{k,\mu,\gamma}(x,z).
\end{align}
In the one hand, using the decomposition \eqref{spli-shap-deriv-green} and arguing as in the proof of [\eqref{lem-lim-h-k}, Lemma \ref{lem-lim-G-k-H-k}], it is not difficult to check that 
    \begin{align}\label{C1-4}
     \lim_{\gamma\to 0^+} \lim_{\mu\to 0^+}\lim_{k\to\infty}\int_{\Omega}\partial_tG_{\Omega_t}\Big(\phi_t(x),\phi_t(z)\Big)\Big|_{t=0}\mathcal M_{k,\mu,\gamma}(x,z)dz=0.
\end{align}
Indeed, by \eqref{spli-shap-deriv-green}, we may write
\begin{align}\label{justif1-C1-4}
    &\int_{\Omega}\partial_tG_{\Omega_t}\Big(\phi_t(x),\phi_t(z)\Big)\Big|_{t=0}\mathcal M_{k,\mu,\gamma}(x,z)dz\nonumber\\
    &=-b_{N,s}(N-2s)\int_{\Omega}\frac{(Y(x)-Y(z))\cdot(x-z)}{|x-z|^{N-2s+2}}\mathcal M_{k,\mu,\gamma}(x,z)dz\nonumber\\
   &\quad\quad- \int_{\Omega}\partial_tH_{\Omega_t}\Big(\phi_t(x),\phi_t(z)\Big)\Big|_{t=0}\mathcal M_{k,\mu,\gamma}(x,z)dz
\end{align}
As above, we know that when computing the limits, the integrals above can be reduced to an arbitrary small neighborhood of $x$. That is to say, we have  
\begin{align}\label{justif2-C1-4}
&\lim_{\gamma\to 0^+}\lim_{\mu\to 0^+}\lim_{k\to\infty}   \int_{\Omega}\frac{(Y(x)-Y(z))\cdot(x-z)}{|x-z|^{N-2s+2}}\mathcal M_{k,\mu,\gamma}(z)dz \nonumber\\
&=\lim_{\gamma\to 0^+}\lim_{\mu\to 0^+}\lim_{k\to\infty}   \int_{B_\varepsilon(x)}\frac{(Y(x)-Y(z))\cdot(x-z)}{|x-z|^{N-2s+2}}\mathcal M_{k,\mu,\gamma}(z)dz \nonumber\\
&=\lim_{\gamma\to 0^+}\lim_{\mu\to 0^+}  \int_{B_\varepsilon(x)}\frac{(Y(x)-Y(z))\cdot(x-z)}{|x-z|^{N-2s+2}}\mathcal I_s\big[\psi_{\mu,x},G_s(y,\cdot)\psi_{\mu,y}\big](z)dz\quad\text{for all $\varepsilon>0$} .
\end{align}
From here, we argue as in the proof of [\eqref{lem-lim-h-k}, Lemma \ref{lem-lim-G-k-H-k}] to get that 
\begin{align}\label{justif3-C1-4}
    \lim_{\gamma\to 0^+}\lim_{\mu\to 0^+}  \int_{B_\varepsilon(x)}\frac{(Y(x)-Y(z))\cdot(x-z)}{|x-z|^{N-2s+2}}\mathcal I_s\big[\psi_{\mu,x},G_s(y,\cdot)\psi_{\mu,y}\big](z)dz=0.
\end{align}
By a similar argument we also check that 
\begin{align}\label{justif4-C1-4}
&\lim_{\gamma\to 0^+}\lim_{\mu\to 0^+}  \int_{\Omega}\partial_tH_{\Omega_t}\Big(\phi_t(x),\phi_t(z)\Big)\Big|_{t=0}\mathcal I_s\big[\psi_{\mu,x},G_s(y,\cdot)\psi_{\mu,y}\big](z)dz\nonumber\\
&=\lim_{\gamma\to 0^+}\lim_{\mu\to 0^+}  \int_{B_\varepsilon(x)}\partial_tH_{\Omega_t}\Big(\phi_t(x),\phi_t(z)\Big)\Big|_{t=0}\mathcal I_s\big[\psi_{\mu,x},G_s(y,\cdot)\psi_{\mu,y}\big](z)dz=0.
\end{align}
Combining \eqref{justif1-C1-4}, \eqref{justif2-C1-4}, \eqref{justif3-C1-4} and \eqref{justif4-C1-4} we arrive at \eqref{C1-4}.
\par\;

In the other hand, denoting by $\Psi$ the transformations 
\begin{equation*}
    \Psi:\partial\Omega\times (0,\varepsilon)\to \Omega^\varepsilon_+, \;(\sigma,r)\mapsto \sigma+r\nu(\sigma)
\end{equation*}
 and arguing as in the proof of \cite[Lemma 6.8]{DFW} we get that there exists $\varepsilon'>0$ such that 
\begin{align}\label{i-s-eta-k-square}
   \Bigg| k^{-s}\mathcal I_s\Big[\eta_k^2;\;G_s(y,\cdot)\psi_{\gamma,y} \Big]\Big(\Psi\big(\frac{r}{k},\sigma\big)\Big)\Bigg|\leq \frac{C}{1+r^{1+s}}\quad\text{for}\;\;k\in \N^*, \;0\leq r<k\varepsilon',\;\sigma\in\partial\Omega.
\end{align}
We also know that, for all $\varepsilon>0$, there holds (see e.g \cite[Page 16]{DFW}) :
\begin{align}\label{Eq-pb}
&\lim_{k\to\infty}\int_{\Omega}\partial_tG_{\Omega_t}\Big(\phi_t(x),\phi_t(z)\Big)\Big|_{t=0}\psi_{\mu,x}(z)\mathcal I_s\Big[\eta_k^2;\;G_s(y,\cdot)\psi_{\gamma,y} \Big](z)dz\nonumber\\
&=\lim_{k\to\infty}\int_{\Omega^\varepsilon_+}\partial_tG_{\Omega_t}\Big(\phi_t(x),\phi_t(z)\Big)\Big|_{t=0}\psi_{\mu,x}(z)\mathcal I_s\Big[\eta_k^2;\;G_s(y,\cdot)\psi_{\gamma,y} \Big](z)dz .
\end{align}
In particular, \eqref{Eq-pb} holds also for $\varepsilon=\varepsilon'$ with $\varepsilon'$ given as in \eqref{i-s-eta-k-square}. Using once again the transformations $\Psi$ and changing variables, we get 
\begin{align}\label{lim-i-s-etak-square}
    &\Bigg|\int_{\Omega^\varepsilon_+}\partial_tG_{\Omega_t}\Big(\phi_t(x),\phi_t(z)\Big)\Big|_{t=0}\psi_{\mu,x}(z)\mathcal I_s\Big[\eta_k^2;\;G_s(y,\cdot)\psi_{\gamma,y} \Big](z)dz\Bigg|\nonumber\\
    &=\Bigg|\int_{\partial\Omega}\int_{0}^{\varepsilon'}\partial_tG_{\Omega_t}\Big(\phi_t(x),\phi_t(\Psi(r,\sigma))\Big)\Big|_{t=0}\psi_{\mu,x}(\Psi(r,\sigma))\mathcal I_s\Big[\eta_k^2;\;G_s(y,\cdot)\psi_{\gamma,y} \Big](\Psi(r,\sigma))j_{\Psi}(r,\sigma)drd\sigma\Bigg|\nonumber\\
    &=\frac{1}{k}\Bigg|\int_{\partial\Omega}\int_{0}^{k\varepsilon'}\partial_tG_{\Omega_t}\Big(\phi_t(x),\phi_t(\Psi(\frac{r}{k},\sigma))\Big)\Big|_{t=0}\psi_{\mu,x}(\Psi(\frac{r}{k},\sigma))\mathcal I_s\Big[\eta_k^2;\;G_s(y,\cdot)\psi_{\gamma,y} \Big](\Psi(\frac{r}{k},\sigma))j_{\Psi}(\frac{r}{k},\sigma)\Bigg|\nonumber\\
    &\leq \frac{C}{k^{1-s}}\int_{\partial\Omega}\int_{0}^{k\varepsilon'}\Bigg|\partial_tG_{\Omega_t}\Big(\phi_t(x),\phi_t(\Psi(\frac{r}{k},\sigma))\Big)\Big|_{t=0}\Bigg|drd\sigma\nonumber\\
    &\leq \frac{C}{k^{1-s}}\int_{\partial\Omega}\int_{0}^{\infty}\Bigg|\partial_tG_{\Omega_t}\Big(\phi_t(x),\phi_t(\Psi(\frac{r}{k},\sigma))\Big)\Big|_{t=0}\Bigg|drd\sigma\nonumber\\
    &\leq \frac{C}{k^{1-s}}\int_{\R^N}\Bigg|\partial_tG_{\Omega_t}\Big(\phi_t(x),\phi_t(z)\Big)\Big|_{t=0}\Bigg|dz\to 0\quad\text{as}\quad k\to\infty.
\end{align}
Note that the integral in \eqref{lim-i-s-etak-square} is finite since $\partial_tG_{\Omega_t}\Big(\phi_t(x),\phi_t(\cdot)\Big)\Big|_{t=0}=0$ in $\R^N\setminus\Omega$ and

$\partial_tG_{\Omega_t}\Big(\phi_t(x),\phi_t(\cdot)\Big)\Big|_{t=0}\in \mathcal L^1_s(\R^N)$ by assumption. The claim \eqref{probleme} follows from \eqref{final-spliting}, \eqref{C1-4} and \eqref{lim-i-s-etak-square}. In view of \eqref{C1-1}, \eqref{C1-2},  and \eqref{probleme}, to finish the proof of \eqref{C1} it is sufficient to prove that
 \begin{align}  \label{eq-15-01}
    &\lim_{\gamma\to 0^+} \lim_{\mu\to 0^+}\lim_{k\to\infty}\int_{\Omega}\partial_tG_{\Omega_t}\Big(\phi_t(x),\phi_t(z)\Big)\Big|_{t=0}G_s(y,z)\psi_{\gamma,y} (-\Delta)^s\Big( \eta_k^2\psi_{\mu,x}\Big)(z)dz\\
    &=-G_s(x,y)\int_{\R^N}\frac{[DY(x)\cdot z]\cdot z}{|z|^{N+2s-2}}(-\Delta)^s(\rho\circ |\cdot|^2)(z)dz.     \nonumber     
 \end{align}
 To see this, we use the fractional product rule once more to split: 
$$
(-\Delta)^s\Big( \eta_k^2\psi_{\mu,x}\Big)=
\eta_k^2(-\Delta)^s\psi_{\mu,x}
+\psi_{\mu,x}(-\Delta)^s\eta_k^2
-\mathcal I_s[\eta_k^2,\psi_{x,\mu}].
$$
Using Lemma \ref{lem-crucial} and that $\delta^{-s}_\Omega(\cdot)G_s(y,\cdot)\psi_{\gamma,y}(\cdot)\in L^\infty(\Omega)$;  one easily checks by the Lebesgue dominated convergence theorem that
\begin{align}\label{C1-3}
    &\lim_{\gamma\to 0^+} \lim_{\mu\to 0^+}\lim_{k\to\infty}\int_{\Omega}\partial_tG_{\Omega_t}\Big(\phi_t(x),\phi_t(z)\Big)\Big|_{t=0}G_s(y,z)\psi_{\gamma,y} \Big[\psi_{\mu,x}(-\Delta)^s\eta_k^2-\mathcal I_s[\eta_k^2,\psi_{x,\mu}]\Big](z)dz\nonumber\\
    &\qquad\qquad\qquad\qquad=0.
    \end{align}

Next, it follows from the decomposition \eqref{Eq-spliting-of-Green} of the fractional Green function $G_s$ that
\begin{align*}
G_{\Omega_t}\Big(\phi_t(x),\phi_t(z)\Big)=\frac{b_{N,s}}{|\phi_t(x)-\phi_t(z)|^{N-2s}}-H_{\Omega_t}\big(\phi_t(x),\phi_t(z)\big).
\end{align*} 
Consequently, 
\begin{align}\label{spli-shap-deriv-green}
&\partial_tG_{\Omega_t}\Big(\phi_t(x),\phi_t(z)\Big)\Big|_{t=0}\nonumber\\
&= -b_{N,s}(N-2s)\frac{(Y(x)-Y(z))\cdot(x-z)}{|x-z|^{N-2s+2}}-\partial_tH_{\Omega_t}\Big(\phi_t(x),\phi_t(z)\Big)\Big|_{t=0}.
\end{align}
Using this, we get:
\begin{align}
&\int_{\Omega}\eta_k^2(z)\partial_tG_{\Omega_t}\Big(\phi_t(x),\phi_t(z)\Big)\Big|_{t=0}G_s(y,z)\psi_{\gamma,y}(z) (-\Delta)^s \psi_{\mu,x}(z)dz  \nonumber\\
  &=-b_{N,s}(N-2s)\int_{\Omega}\eta_k^2(z)\frac{(Y(x)-Y(z))\cdot(x-z)}{|x-z|^{N-2s+2}}G_s(y,z)\psi_{\gamma,y}(z) (-\Delta)^s \psi_{\mu,x}(z)dz  \nonumber\\
  &\quad\quad\quad-\int_{\Omega}\eta_k^2(z)\partial_tH_{\Omega_t}\Big(\phi_t(x),\phi_t(z)\Big)\Big|_{t=0}G_s(y,z)\psi_{\gamma,y}(z) (-\Delta)^s \psi_{\mu,x}(z)dz. \label{inquiet}
\end{align}
As already remarked, when computing the limits, the integrals above can be reduced to an arbitrary small neighborhood of $x$, i.e, 
\begin{align}
    &\lim_{\gamma\to 0^+} \lim_{\mu\to 0^+}\lim_{k\to\infty}\int_{\Omega}\eta_k^2(z)\frac{(Y(x)-Y(z))\cdot(x-z)}{|x-z|^{N-2s+2}}G_s(y,z)\psi_{\gamma,y}(z) (-\Delta)^s \psi_{\mu,x}(z)dz \nonumber\\
    &=\lim_{\gamma\to 0^+} \lim_{\mu\to 0^+}\lim_{k\to\infty}\int_{B_\varepsilon(x)}\eta_k^2(z)\frac{(Y(x)-Y(z))\cdot(x-z)}{|x-z|^{N-2s+2}}G_s(y,z)\psi_{\gamma,y}(z) (-\Delta)^s \psi_{\mu,x}(z)dz ,  \label{tart1}
\end{align}
and 
\begin{align}
    &\lim_{\gamma\to 0^+} \lim_{\mu\to 0^+}\lim_{k\to\infty}\int_{\Omega}\eta_k^2(z)\partial_tH_{\Omega_t}\Big(\phi_t(x),\phi_t(z)\Big)\Big|_{t=0}G_s(y,z)\psi_{\gamma,y}(z) (-\Delta)^s \psi_{\mu,x}(z)dz\nonumber\\
    &=\lim_{\gamma\to 0^+} \lim_{\mu\to 0^+}\lim_{k\to\infty}\int_{B_\varepsilon(x)}\eta_k^2(z)\partial_tH_{\Omega_t}\Big(\phi_t(x),\phi_t(z)\Big)\Big|_{t=0}G_s(y,z)\psi_{\gamma,y}(z) (-\Delta)^s \psi_{\mu,x}(z)dz, \label{tart2}
\end{align}
for all $\varepsilon>0$. Next using \eqref{rescapsi}
and changing variables, we get
\begin{align*}
    &\lim_{\gamma\to 0^+}\lim_{\mu\to 0^+}\lim_{k\to\infty}\int_{\Omega}\eta_k^2(z)\frac{(Y(x)-Y(z))\cdot(x-z)}{|x-z|^{N-2s+2}}a_{y,\gamma}(z) (-\Delta)^s \psi_{\mu,x}(z)dz\nonumber\\
    &=-\lim_{\gamma\to 0^+}\lim_{\mu\to 0^+}\int_{B_{\frac{\varepsilon}{\widetilde\mu}}(0)}\frac{\widetilde\mu^{N-2s+1}(Y(x)-Y(x-\widetilde\mu z))\cdot z}{ |\widetilde\mu z|^{N-2s+2}}a_{y,\gamma}(x-\widetilde\mu z) (-\Delta)^s\big(\rho\circ |\cdot|^2\big)(z)dz ,
\end{align*}
where we recall that $a_{y,\gamma}(z)=G_s(y,z)\psi_{\gamma,y}(z)\in L^\infty(\Omega)$. Now since 

$$
\Big|\frac{\widetilde\mu^{N-2s+1}(Y(x)-Y(x-\widetilde\mu z))\cdot z}{ |\widetilde\mu z|^{N-2s+2}}a_{y,\gamma}(x-\widetilde\mu z)\Big|\leq C|z|^{2s-N},
$$
and
$$
\int_{\R^N}|z|^{2s-N}\big|(-\Delta)^s\big(\rho\circ |\cdot|^2\big)(z)\big|dz<\infty,
$$
it follows from the Lebesgue dominated convergence theorem that 

\begin{align} 
    &\lim_{\gamma\to 0^+}\lim_{\mu\to 0^+}\lim_{k\to\infty}\int_{\Omega}\eta_k^2(z)\frac{(Y(x)-Y(z))\cdot(x-z)}{|x-z|^{N-2s+2}}a_{y,\gamma}(z) (-\Delta)^s \psi_{\mu,x}(z)dz\nonumber\\
    &=-\lim_{\gamma\to 0^+}\lim_{\mu\to 0^+}\int_{B_{\frac{\varepsilon}{\widetilde\mu}}(0)}\frac{\widetilde\mu^{N-2s+1}(Y(x)-Y(x-\widetilde\mu z))\cdot z}{ |\widetilde\mu z|^{N-2s+2}}a_{y,\gamma}(x-\widetilde\mu z) (-\Delta)^s\big(\rho\circ |\cdot|^2\big)(z)dz \nonumber \\
    &=-G_s(x,y)\int_{\R^N}\frac{[DY(x)\cdot z]\cdot z}{|z|^{N+2s-2}}(-\Delta)^s(\rho\circ |\cdot|^2)(z)dz. \label{star}
\end{align}
Proceeding as above and since $\partial_tH_{\Omega_t}\Big(\phi_t(x),\phi_t(\cdot)\Big)\Big|_{t=0}\in C(\Omega)$ (by assumption), we easily check that
 \begin{align}\label{star2}
     \lim_{\gamma\to 0^+} \lim_{\mu\to 0^+}\lim_{k\to\infty}&\int_{B_\varepsilon(x)}\eta_k^2(z)\partial_tH_{\Omega_t}\Big(\phi_t(x),\phi_t(z)\Big)\Big|_{t=0}G_s(y,z)\psi_{\gamma,y}(z) (-\Delta)^s \psi_{\mu,x}(z)dz= 0   .
 \end{align}
By combining  \eqref{inquiet}, \eqref{tart1}, \eqref{tart2}, \eqref{star} and \eqref{star2}, we conclude that 
 \begin{align} 
    &\lim_{\gamma\to 0^+} \lim_{\mu\to 0^+}\lim_{k\to\infty}
\int_{\Omega}\eta_k^2(z)\partial_tG_{\Omega_t}\Big(\phi_t(x),\phi_t(z)\Big)\Big|_{t=0}G_s(y,z)\psi_{\gamma,y}(z) (-\Delta)^s \psi_{\mu,x}(z)dz \nonumber\\
 &=  - G_s(x,y)\int_{\R^N}\frac{[DY(x)\cdot z]\cdot z}{|z|^{N+2s-2}}(-\Delta)^s(\rho\circ |\cdot|^2)(z)dz . \label{tart5}
 \end{align}
Combining  \eqref{C1-3} and \eqref{tart5}, we 
 obtain     \eqref{eq-15-01} and therefore 
the proof of \eqref{C1}  is finished.

\section{Appendix}
\label{subsec-9.1}

The following properties of the Green function were used all along the manuscript. 
\begin{itemize}
    \item 
Let $\Omega$ be a bounded open set of $\R^N$ of class $C^{1,1}$. Then for all $x,y\in\Omega$ with $x\neq y$, there holds (see \cites{Tedeuz, CS})
\begin{equation}\label{eq:greenfunct-estimate}
    C_1\min\Big(\frac{1}{|x-y|^{N-2s}}, \frac{\delta^s_\Omega(x)\delta^s_\Omega(y)}{|x-y|^N}\Big)\leq \frac{G_s(x,y)}{\Lambda_{N,s}}\leq \min\Big(\frac{1}{|x-y|^{N-2s}}, C_2\frac{\delta^s_\Omega(x)\delta^s_\Omega(y)}{|x-y|^N}\Big)
\end{equation}
for some constant $C_1, C_2>0$ and an explicit constant $\Lambda_{N,s}$. 
\item If $\Omega$ is an arbitrary bounded open set of $\R^N$ , then (see e.g \cite[Corollary 3.3]{Bogdan2002}):
\begin{equation}\label{gradient-estimate-green-func}
    \big|\nabla _y G_s(x,y)\big|\leq N\frac{G_s(x,y)}{\min\{|x-y|,\delta_\Omega(y)\}}, \quad \text{for all $x,y\in \Omega$ with $x\neq y$}.
\end{equation}
It follows from \eqref{eq:greenfunct-estimate} and \eqref{gradient-estimate-green-func} that if $\Omega$ is of class $C^{1,1}$ and $x\in\Omega$, then $\nabla _y G_s(x,y)$ is uniformly in $x$ $L^1$-integrable in $\Omega$  provided that $s\in (1/2,1)$, see e.g \cite[Lemma 9]{Bogdan2011}.
\end{itemize}

\bigskip 
\bigskip \ \par \noindent {\bf Acknowledgements.}  This work was initiated when F. S. was visiting the  FAU DCN-AvH during some snowy days. He warmly thanks Enrique Zuazua and his team  for their kind hospitality. 


\end{document}